\documentclass{amsart}
\usepackage{amsmath,amssymb,mathrsfs}
\usepackage{amsthm,bbm,mdframed,url,MnSymbol,hyperref}

\usepackage{graphicx}
\usepackage{tikz}
\usetikzlibrary{shapes,arrows,backgrounds,fit,positioning}

\textwidth=16cm
\textheight=22cm
\hoffset=-1.5cm
\voffset=-2cm
\date{}
\allowdisplaybreaks
\def\tb{\qopname\relax o{tb}}
\def\lk{\qopname\relax o{lk}}
\def\tr{\qopname\relax o{tr}}

\newtheorem{theo}{Theorem}[section]
\newtheorem{lemm}{Lemma}[section]
\newtheorem{prop}{Proposition}[section]

\theoremstyle{remark}
\newtheorem{rema}{Remark}[section]

\theoremstyle{definition}
\newtheorem{defi}{Definition}[section]
\newtheorem*{conv}{Convention}

\graphicspath{{./pictures/}}	

\author {Ivan Dynnikov and Vladimir Shastin}
\address{\noindent V.A.Steklov Mathematical Institute of Russian Academy of Science, 8 Gubkina Str., Moscow 119991, Russia}
\address{\noindent St.\ Petersburg State University, Line 14th (Vasilyevsky Island), 29, Saint Petersburg, 199178, Russia}
\email{dynnikov@mech.math.msu.su}
\address{Department of Mechanics and Mathematics of Moscow State University, 1 Leninskije gory, Moscow 119991, Russia}
\address{\noindent St.\ Petersburg State University, Line 14th (Vasilyevsky Island), 29, Saint Petersburg, 199178, Russia}
\email{vashast@gmail.com}

\title{Distinguishing Legendrian knots with trivial
orientation-preserving symmetry group}

\begin{document}

\begin{abstract}
In a recent work of I.\,Dynnikov and M.\,Prasolov a new method of comparing Legendrian knots
is proposed. In general, to apply the method requires a lot of technical work. In particular, one needs
to search all rectangular diagrams of surfaces realizing certain dividing configurations. In this paper,
it is shown that, in the case when the orientation-preserving symmetry group of the knot is trivial,
this exhaustive search is not needed, which simplifies the procedure considerably.
This allows one to distinguish Legendrian knots in certain cases when the computation of
the known algebraic invariants is infeasible or not informative. In particular,
it is disproved here that
when~$A\subset\mathbb R^3$ is an annulus tangent to the standard contact structure
along~$\partial A$, then the two components
of~$\partial A$ are always equivalent Legendrian knots. A candidate counterexample
was proposed recently by I.\,Dynnikov and M.\,Prasolov, but the proof of the fact that the two components
of~$\partial A$ are not Legendrian equivalent was not given. Now this work is accomplished.
It is also shown here that the problem of comparing two Legendrian knots having the same topological
type is algorithmically solvable provided that the orientation-preserving symmetry
group of these knots is trivial.
\end{abstract}
\maketitle

\section*{Introduction}
Deciding whether or knot two Legendrian knots in~$\mathbb S^3$ having the same classical invariants (see definitions below)
are Legendrian isotopic is not an easy task in general. There are two major tools used for classification
of Legendrian knots of a fixed topological type: Legendrian knot invariants having algebraic
nature (see Chekanov~\cite{che2002}, Eliashberg~\cite{El}, Fuchs~\cite{fuchs2003}, Ng~\cite{ng2003,ng2011},
Ozsv\'ath--Szab\'o--Thurston~\cite{OST}, Pushkar'--Chekanov~\cite{PC2005}),
and Giroux's convex surfaces endowed with the characteristic
foliation (see Eliashberg--Fraser~\cite{EF1,EF2}, Etnyre--Honda~\cite{etho2001},
Etnyre--LaFountain--Tosun~\cite{etlafato2012}, Etnyre--Ng--V\'ertesi~\cite{etngve2013}, Etnyre--V\'ertesi~\cite{etver2016}).

The Legendrian knot atlas by W.\,Chongchitmate and L.\,Ng~\cite{chong2013} summarizes the classification results
for Legendrian knots having arc index at most~$9$. As one can see from~\cite{chong2013}
there are still many gaps in the classification even for knots with a small arc index/crossing number.
Namely, there are many pairs of Legendrian knot types which are conjectured to be distinct but
are not distinguished by means of the existing methods.

The works~\cite{representability,distinguishing} by I.\,Dynnikov and M.\,Prasolov propose
a new combinatorial technique for dealing with Giroux's convex surfaces.
This includes a combinatorial presentation of convex surfaces in~$\mathbb S^3$ and a method
that allows, in certain cases, to decide whether or not
a convex surface with a prescribed topological structure of the dividing set
exists.

The method of~\cite{representability,distinguishing} is useful for distinguishing
Legendrian knots, but it requires, in each individual case, a substantial amount
of technical work and a smart choice of a Giroux's convex surface
whose boundary contains one of the knots under examination.

In the present paper we show that there is a way to make this smart choice in
the case when the examined knots have no topological (orientation-preserving) symmetries,
so that the remaining technical work described in~\cite{distinguishing} becomes
unnecessary as the result is known in advance. This makes
the procedure completely algorithmic and allows us, in particular, to distinguish
two specific Legendrian knots for which computation of the known
algebraic invariants is infeasible due to the large complexity of the knot presentations.

The two knots in question are of interest due to the fact that they cobound an annulus embedded in~$\mathbb S^3$
and have zero relative Thurston--Bennequin and rotation invariants. They were proposed
in~\cite{representability} as a candidate counterexample to the claim of~\cite{Gos} that
the two boundary components of such an annulus must be Legendrian isotopic.

The main technical result of this paper was announced by us in~\cite{equiv}
without complete proof.
In~\cite{trleg} the method of this paper is used to show that one can
compare transverse link types in a similar fashion provided
that the orientation-preserving symmetry group of the links is trivial.
In a forthcoming paper by I.\,Dynnikov and M.\,Prasolov it will be
shown how to drop the no symmetry assumption and to produce
algorithms for comparing Legendrian and transverse link types in
the general case.

The paper is organized as follows.
In Section~\ref{legendrian-knot-sec} we recall the definition of a Legendrian knot, and introduce
the basic notation.
In Section~\ref{annuli-sec} we discuss annuli with Legendrian boundary whose
components have zero relative Thurston--Bennequin number.
In Section~\ref{s3-symmetry-group-sec} we define the orientation-preserving symmetry group
of a knot and introduce some $\mathbb S^3$-related notation.
In Section~\ref{rd_of_knots-sec} we recall the definition of a rectangular
diagram of a knot and discuss the relation of rectangular diagrams to Legendrian knots.
Section~\ref{rd_of_surfaces-sec} discusses rectangular diagrams of surfaces.
Here we describe the smart choice of a surface mentioned above (Lemma~\ref{special-diagram-exists}).
In Section~\ref{triviality-sec} we prove the triviality of the orientation-preserving symmetry group
of the concrete knots that are discussed in the paper (modulo the proof of hyperbolicity of
the two complicated knots cobounding an annulus but not Legendrian equivalent, which
is postponed till Section~\ref{append-sec}).
In Section~\ref{app-sec} we prove a number of statements about the non-equivalence of
the considered Legendrian knots.

\subsection*{Acknowledgement}
The work is supported by the Russian Science Foundation under grant~19-11-00151.

\section{Legendrian knots}\label{legendrian-knot-sec}
All general statements about knots in this paper can be extended to many-component links. To simplify
the exposition, we omit the corresponding formulations, which are pretty obvious but sometimes slightly more complicated.

All knots in this paper are assumed to be oriented. The knot obtained from a knot~$K$ by reversing the orientation
is denoted by~$-K$.

\begin{defi}
Let~$\xi$ be a \emph{contact structure} in the three-space~$\mathbb R^3$, that is, a smooth $2$-plane
distribution that locally has the form~$\ker\alpha$, where~$\alpha$ is a differential $1$-form
such that~$\alpha\wedge d\alpha$ does not vanish. A smooth curve~$\gamma$ in~$\mathbb R^3$
is called \emph{$\xi$-Legendrian} if it is tangent to~$\xi$ at every point~$p\in\gamma$.

\emph{A $\xi$-Legendrian knot} is a knot in~$\mathbb R^3$ which is a $\xi$-Legendrian curve.
Two $\xi$-Legendrian knots~$K$ and~$K'$ are said to be \emph{equivalent} if there is a
diffeomorphism~$\varphi:\mathbb R^3\rightarrow\mathbb R^3$ preserving~$\xi$ such that~$\varphi(K)=K'$
(this is equivalent to saying that there is an isotopy from~$K$ to~$K'$ through Legendrian knots).

The contact structure~$\xi_+=\ker(x\,dy+dz)$, where~$x,y,z$ are the coordinates in~$\mathbb R^3$,
will be referred to as \emph{the standard contact structure}. If~$\xi=\xi_+$ we often abbreviate~`$\xi$-Legendrian'
to `Legendrian'.
\end{defi}

In this paper we also deal with the following contact structure, which is a mirror image of~$\xi_+$:
$$\xi_-=\ker(x\,dy-dz).$$

We denote by~$r_-,r_\medvert:\mathbb R^3\rightarrow\mathbb R^3$ the orthogonal reflections in the $xy$- and $xz$-planes,
respectively: $r_-(x,y,z)=(x,y,-z)$, $r_\medvert(x,y,z)=(x,-y,z)$. Clearly, if~$K$ is a $\xi_+$-Legendrian knot, then~$r_-(K)$
and~$r_\medvert(K)$ are $\xi_-$-Legendrian
knots, and vice versa.
It is also clear that the contact structures~$\xi_+$ and~$\xi_-$ are invariant under
the transformation~$r_-\circ r_\medvert:(x,y,z)\mapsto(x,-y,-z)$
(however, if the contact structures are endowed with an orientation, then the latter is flipped).

It is well known that a Legendrian knot in~$\mathbb R^3$ is uniquely recovered from its
\emph{front projection}, which is defined as the projection to the $yz$-plane along
the $x$-axis, provided that this projection is generic (a projection is \emph{generic} if it has only finitely many cusps
and only double self-intersections, which are also required to be disjoint from cusps).
Note that a front projection
always has cusps, since the tangent line to the projection cannot be parallel to the $z$-axis.
Note also that at every double point of the projection the arc having smaller
slope $dz/dy$ is overpassing.

An example of a generic front projection is shown in Figure~\ref{front-fig}.
\begin{figure}[ht]
\includegraphics[scale=0.6]{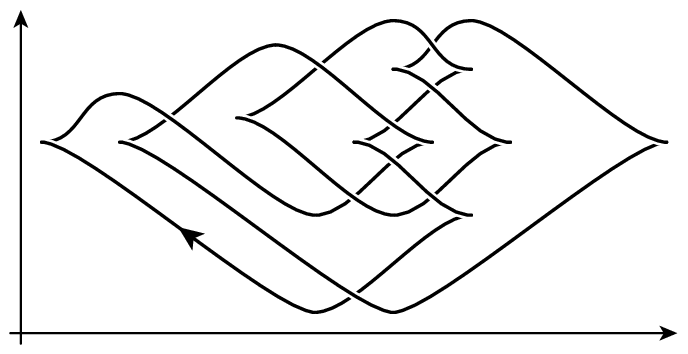}\put(-8,13){$y$}\put(-188,94){$z$}
\caption{Front projection of a Legendrian knot}\label{front-fig}
\end{figure}

There are two well known integer invariants of Legendrian knots called Thurston--Bennequin number and rotation number. We recall their
definitions.

\begin{defi}
\emph{The Thurston--Bennequin number} $\tb(K)$ of a Legendrian knot~$K$ having generic front projection is defined
as
$$\tb(K)=w(K)-c(K)/2,$$
where~$w(K)$ is the writhe of the projection (that is, the algebraic number of double points),
and~$c(K)$ is the total number of cusps of the projection.
\end{defi}

\begin{defi}
A cusp of a front projection is said to be \emph{oriented up} if the outgoing arc appears above the incoming one,
and \emph{oriented down} otherwise (see Figure~\ref{cusps-fig}).
\begin{figure}[ht]
\begin{tabular}{ccc}
\includegraphics[scale=0.5]{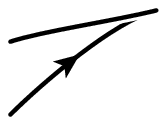}\hskip3mm\includegraphics[scale=0.5]{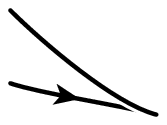}\hskip3mm
\includegraphics[scale=0.5]{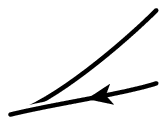}\hskip3mm\includegraphics[scale=0.5]{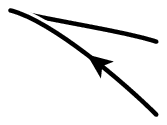}&\hbox to 1cm{\hss}&
\includegraphics[scale=0.5]{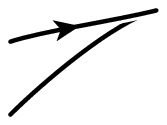}\hskip3mm\includegraphics[scale=0.5]{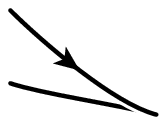}\hskip3mm
\includegraphics[scale=0.5]{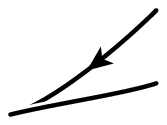}\hskip3mm\includegraphics[scale=0.5]{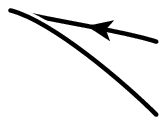}\\
oriented up&&oriented down
\end{tabular}
\caption{Cusps oriented up and down}\label{cusps-fig}
\end{figure}

\emph{The rotation number} $r(K)$ of a Legendrian knot~$K$ having generic front projection is
defined as
$$r(K)=(c_{\mathrm{down}}(K)-c_{\mathrm{up}}(K))/2,$$
where~$c_{\mathrm{down}}(K)$ (respectively, $c_{\mathrm{up}}(K)$)
is the number of cusps of the front projection of~$K$ oriented down (respectively, up).
\end{defi}

For instance, if~$K$ is the Legendrian knot shown in Figure~\ref{front-fig}, then~$\tb(K)=-10$, $r(K)=1$.

The topological meaning of~$\tb$ and~$r$ is as follows. Let~$v$ be a normal vector field to~$\xi$. Then~$\tb(K)$
is the linking number~$\lk(K,K')$, where~$K'$ is obtained from~$K$ by a small shift along~$v$.
The rotation number~$r(K)$ is equal to the degree of the map~$K\rightarrow\mathbb S^1$ defined
in a local parametrization~$(x(t),y(t),z(t))$ of~$K$ by~$(x,y,z)\mapsto(\dot x,\dot y)/\sqrt{\dot x^2+\dot y^2}$.

If~$K$ is a Legendrian knot, then by \emph{the classical invariants of~$K$} one means the topological type of~$K$ together
with~$\tb(K)$ and~$r(K)$.

Sometimes the classical invariants determine the equivalence class of a Legendrian knot completely
(in which case the knot is said to be Legendrian simple).
This occurs, for instance, when~$K$ is an unknot~\cite{EF1,EF2}, a figure eight knot, or a torus knot~\cite{etho2001}.
But many examples of Legendrian non-simple knots are known.

\begin{defi}
Let~$K$, $K'$ be Legendrian knots. We say that~$K'$ is obtained from~$K$ by a \emph{positive stabilization} (respectively, \emph{negative
stabilization}), and~$K$ is obtained from~$K'$ by a \emph{positive destabilization} (respectively, \emph{negative destabilization}),
if there are Legendrian knots~$K''$, $K'''$ equivalent to~$K$ and~$K'$, respectively, such that
the front projection of~$K'''$ is obtained from the front projection of~$K''$ by a local
modification shown in Figure~\ref{stab-fig}(a) (respectively, Figure~\ref{stab-fig}(b)).
\begin{figure}[ht]
\begin{tabular}{ccc}
\includegraphics[scale=0.25]{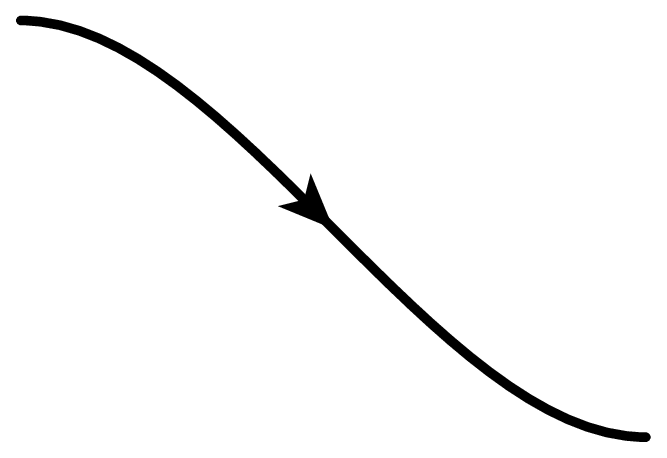}\put(-48,-5){$K''$}\raisebox{26pt}{$\longleftrightarrow$}
\includegraphics[scale=0.25]{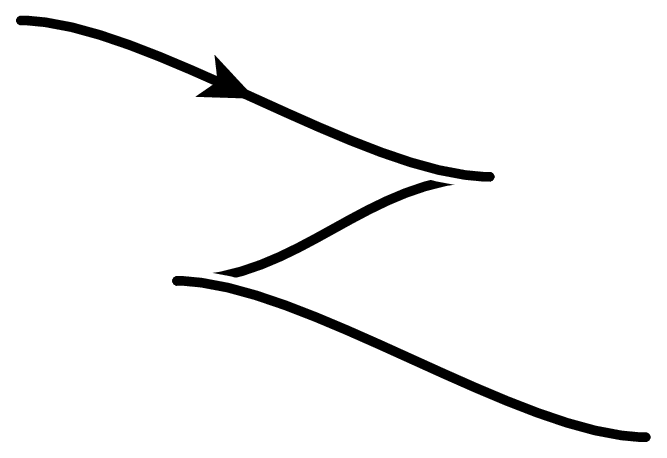}\put(-48,-5){$K'''$}&\hbox to 1 cm{\hss}&
\includegraphics[scale=0.25]{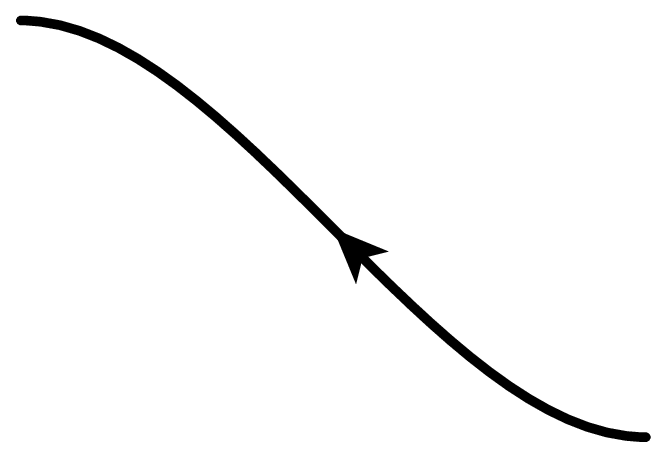}\put(-48,-5){$K''$}\raisebox{26pt}{$\longleftrightarrow$}
\includegraphics[scale=0.25]{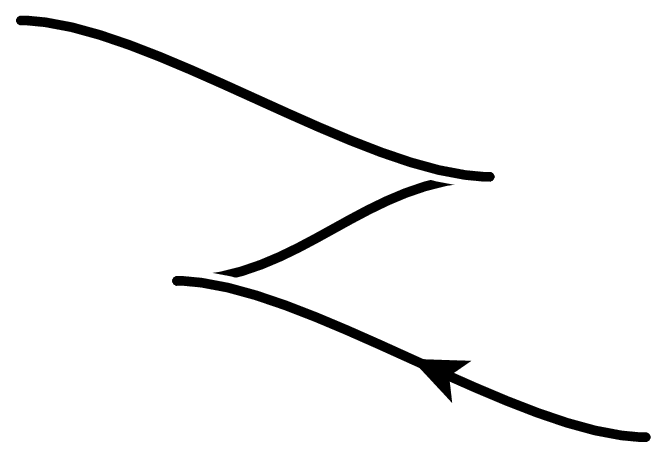}\put(-48,-5){$K'''$}\\[5mm]
(a)&&(b)
\end{tabular}
\caption{Stabilizations and destabilizations of Legendrian knots: (a) positive; (b) negative}\label{stab-fig}
\end{figure}
\end{defi}

A positive (respectively, negative) stabilization shifts the~$(\tb,r)$ pair
of the Legendrian knot by~$(-1,1)$ (respectively, by~$(-1,-1)$),
so stabilizations and destabilizations always change the equivalence class of a Legendrian knot.
If~$K$ is a Legendrian knot we denote by~$S_+(K)$ (respectively, $S_-(K)$) the result of a positive
(respectively, negative) stabilization applied to~$K$.

One can see
that the equivalence class of the Legendrian knot~$S_+(K)$ is well defined.
If~$\mathscr L$ is an equivalence class of Legendrian knots, then by~$S_+(\mathscr L)$
(respectively, $S_-(\mathscr L)$) we denote the class~$\{S_+(K):K\in\mathscr L\}$
(respectively,~$\{S_-(K):K\in\mathscr L\}$).

\begin{rema}
In the case of
links having more than one component, the result of a stabilization, viewed up to Legendrian equivalence,
depends on which component of the link the modification shown in Figure~\ref{stab-fig} is applied to,
so the notation should be refined accordingly.
\end{rema}

As shown in~\cite{futa97} any two Legendrian knots that have the same topological type can be obtained
from one another by a sequence of stabilizations and destabilizations.

\begin{defi}
If~$K$ is a $\xi_+$-Legendrian or $\xi_-$-Legendrian knot then the image of~$K$ under the transformation~$r_-\circ r_\medvert$
is called \emph{the Legendrian mirror of~$K$} and denoted by~$\mu(K)$.
\end{defi}

Note that in terms of the respective front projections Legendrian mirroring is just a rotation by~$\pi$ around the origin.
It preserves the Thurston--Bennequin number of the knot and reverses the sign of its rotation number.
Thus, if~$K$ is a Legendrian knot with~$r(K)=0$, then~$K$ and~$\mu(K)$ have the same classical invariants.
However, it happens pretty often in this case
that~$\mu(K)$ and~$K$ are not equivalent Legendrian knots (see examples in Section~\ref{app-sec}).

Similarly, if~$K$ is a Legendrian knot whose topological type is invertible,
then~$-\mu(K)$ and~$K$ have the same classical invariants, but may not be equivalent Legendrian knots.

\begin{defi}
If~$K$ is a $\xi_-$-Legendrian knot, the Thurston--Bennequin and rotation numbers of~$K$, as well as positive and
negative stabilizations,
are defined by using the mirror image~$r_\medvert(K)$ as follows:
$$\tb(K)=\tb(r_\medvert(K)),\quad r(K)=r(r_\medvert(K)),\quad S_+(K)=
r_\medvert\bigl(S_+(r_\medvert(K))\bigr),\quad S_-(K)=r_\medvert\bigl(S_-(r_\medvert(K))\bigr).$$
\end{defi}

\section{Annuli}\label{annuli-sec}

\begin{defi}
Let~$K$ be a Legendrian knot, and let~$F$ be an oriented compact surface embedded in~$\mathbb R^3$
such that~$K\subset\partial F$ and the orientation of~$K$ agrees with the induced orientation of~$\partial F$.
Let also~$v$ be a normal vector field to~$\xi_+$. By \emph{the Thurston--Bennequin number
of~$K$ relative to~$F$} denoted~$\tb(K;F)$ we call the intersection index of~$F$ with a knot obtained from~$K$
by a small shift along~$v$.

If~$F$ is an arbitrary compact surface embedded in~$\mathbb R^3$ such that~$K\subset\partial F$, then~$\tb(K;F)$
is defined as~$\tb(K;F')$, where~$F'$ is the appropriately oriented
intersection of a small tubular neighborhood~$U$ of~$K$ with~$F$ (the shift of~$K$ along~$v$ should then be chosen so
small that the shifted knot does not escape from~$U$).
\end{defi}

Let~$K$ be a Legendrian knot, and let~$F\subset\mathbb R^3$ be a compact surface such that~$K\subset\partial F$.
It is elementary to see that the following three conditions are equivalent:
\begin{enumerate}
\item
$\tb(K;F)=0$;
\item
$F$ is isotopic relative to~$K$ to a surface~$F'$ such that~$F'$ is tangent to~$\xi_+$ along~$K$;
\item
$F$ is isotopic relative to~$K$ to a surface~$F'$ such that~$F'$ is transverse to~$\xi_+$ along~$K$.
\end{enumerate}

In 3-dimensional contact topology, Giroux's convex surfaces play a fundamental role~\cite{Gi,gi1,giroux01}.
Especially important are convex annuli with Legendrian boundary and
relative Thurston--Bennequin numbers of both boundary component equal to zero,
since, vaguely speaking, any closed convex surface, viewed up
to isotopy in the class of convex surfaces, can be build up from such annuli
by gluing along a Legendrian graph.

Let~$A\subset\mathbb R^3$ be an annulus with boundary consisting of two Legendrian
knots~$K_1$ and~$K_2$ such that~$\tb(K_1;A)=\tb(K_2;A)=0$, $\partial A=K_1\cup(-K_2)$.
Then the knots~$K_1$ and~$K_2$ have the same classical invariants,
and it is natural to ask whether they must always be equivalent as Legendrian knots.

A quick look at this problem reveals no obvious reason why~$K_1$ and~$K_2$ must
be equivalent, but constructing a counterexample appears to be tricky.

Theorem~8.1 of~\cite{Gos}, which is given without a complete proof, implies
that $K_1$ and~$K_2$ are always equivalent
Legendrian knots even in
a more general situation in which~$\mathbb R^3$ is replaced by an arbitrary tight contact $3$-manifold.

However, counterexamples to this more general claim appeared earlier in a work of P.\,Ghiggini~\cite{ghi} (without a special
emphasis on the phenomenon), the
simplest of which is as follows. Endow the three-dimensional torus~$\mathbb T^3=(\mathbb R/\mathbb Z)^3$
with the contact structure~$\xi=\ker\bigl(\sin(2\pi z)\,dx+\cos(2\pi z)\,dy\bigr)$, and take for~$A$
the annulus~$(\mathbb R/\mathbb Z)\times\{0\}\times[0;1/2]$. This annulus is clearly tangent to~$\xi$
along~$\partial A$, but the boundary components are not Legendrian isotopic according to~\cite[Proposition~7.1]{ghi}
(the fact that~$(\mathbb T^3,\xi)$ is a tight contact manifold was established earlier by E.\,Giroux).

In this example, and in similar ones from~\cite{ghi}, any connected component of~$\partial A$ can be taken to the other
by a contactomorphism of~$(\mathbb T^3,\xi)$. So, it is important here that the group of contactomorphisms
of~$(\mathbb T^3,\xi)$ is disconnected, which is not the case for the standard contact structure on~$\mathbb R^3$.
Another feature of this example is that the boundary components of~$A$ are not null-homologous.

The following statement shows that the assertion of~\cite[Theorem~8.1]{Gos} is false in the case of~$\mathbb R^3$, too.

\begin{theo}\label{not-equiv-thm}
There exists an oriented annulus~$A\subset\mathbb R^3$ with boundary
$\partial A=K_1\cup(-K_2)$ such that~$K_1$ and~$K_2$ are nonequivalent Legendrian knots
having zero Thurston--Bennequin number relative to~$A$.
\end{theo}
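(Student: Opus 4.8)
The plan is to exhibit an explicit annulus $A$ and prove that its two boundary components are not Legendrian equivalent. As flagged in the introduction, the natural candidate is the annulus proposed in~\cite{representability}; one takes the two boundary knots $K_1$, $K_2$ to be Legendrian representatives of some fixed topological knot type, with equal classical invariants forced by the existence of $A$ together with the hypothesis $\tb(K_i;A)=0$ (so that $A$ can be isotoped rel $\partial A$ to be tangent to $\xi_+$ along its boundary, as recorded in the three equivalent conditions above). The substance of the proof is entirely in the non-equivalence, and here I would invoke the machinery set up in the rest of the paper rather than any algebraic invariant — indeed the whole point of the paper is that the algebraic invariants are either infeasible to compute or uninformative for these knots.

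Concretely, first I would fix rectangular diagrams for $K_1$ and $K_2$ (these are the combinatorial presentations introduced in Section~\ref{rd_of_knots-sec}) and present the annulus $A$ itself as a rectangular diagram of a surface in the sense of Section~\ref{rd_of_surfaces-sec}. Second, I would verify that the orientation-preserving symmetry group of the underlying topological knot type is trivial — this is exactly what Section~\ref{triviality-sec} is devoted to for the concrete knots under consideration, so I would simply cite that computation. Third, with triviality of the symmetry group in hand, I would apply Lemma~\ref{special-diagram-exists}: this is the ``smart choice'' result that says one may take a particular normalized rectangular diagram of a convex surface whose boundary contains one of the knots, bypassing the exhaustive search over all such diagrams that the general method of~\cite{distinguishing} requires. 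Fourth, assuming for contradiction that $K_1$ and $K_2$ are Legendrian equivalent, I would run the Dynnikov--Prasolov obstruction from~\cite{distinguishing}: a Legendrian equivalence would force the existence of a convex surface realizing a prescribed dividing configuration, and Lemma~\ref{special-diagram-exists} reduces the existence question to checking a finite, explicitly describable combinatorial condition on the normalized diagram, which one shows fails.

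The main obstacle I expect is twofold. The first difficulty is bookkeeping: one must correctly set up the rectangular diagram of the annulus $A$ and the associated dividing configuration so that the hypotheses of Lemma~\ref{special-diagram-exists} and of the obstruction in~\cite{distinguishing} genuinely apply — in particular one must track orientations carefully (note the roles of $\mu$, $r_-$, $r_\medvert$ above and the fact that $-K_2$, not $K_2$, sits in $\partial A$), since a Legendrian equivalence $K_1\simeq K_2$ rather than $K_1\simeq \pm\mu(K_2)$ is what is being ruled out. The second and genuinely hard part is the finite combinatorial verification itself: even after the reduction, showing that no rectangular diagram of a surface realizes the required dividing set is a delicate case analysis, and the complexity of these particular knots is precisely why the problem was open. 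I would organize this verification as a sequence of auxiliary claims in Section~\ref{app-sec}, each eliminating a family of potential surfaces, culminating in the contradiction that establishes non-equivalence and hence Theorem~\ref{not-equiv-thm}.
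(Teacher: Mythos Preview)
Your proposal misses the central idea and, as written, would not close. The gap is that you never engage the $\xi_-$-Legendrian structure. Lemma~\ref{special-diagram-exists} is not invoked directly in the proof of Theorem~\ref{not-equiv-thm}; it is consumed inside the proof of Theorem~\ref{main-theo}, and it is Theorem~\ref{main-theo} that one actually applies. That theorem says: for a knot with trivial orientation-preserving symmetry group, two rectangular diagrams $R_1,R_2$ satisfy $\mathscr L_+(R_1)=\mathscr L_+(R_2)$ \emph{and} $\mathscr L_-(R_1)=\mathscr L_-(R_2)$ if and only if they are related by exchange moves. Assuming only $\mathscr L_+(R_1)=\mathscr L_+(R_2)$ (your ``assume for contradiction that $K_1\simeq K_2$'') yields nothing by itself; Proposition~\ref{auxiliary-prop} and the machinery behind it require both the $+$ and the $-$ equivalences simultaneously. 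Without first establishing the $\xi_-$-equivalence you are thrown back onto the unreduced exhaustive search of~\cite{distinguishing} that this paper is expressly designed to bypass.

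The actual argument is this. The annulus $\widehat\Pi$ from~\cite{representability} has a very particular combinatorial form (each rectangle meets the next at its bottom-left vertex), and this lets one write down an \emph{explicit} sequence consisting of one type~II stabilization, a chain of exchange moves sliding along the annulus, and one type~II destabilization that carries $R_1$ to $R_2$ as rectangular diagrams. By Theorem~\ref{rect-desc-of-leg-theo} this proves $\mathscr L_-(R_1)=\mathscr L_-(R_2)$. One then observes that $R_1$ and $R_2$ are rigid (admit no non-trivial exchange move) and are not combinatorially equivalent, so they are not exchange-related. Proposition~\ref{monster-knot-has-trivial-group-prop} supplies triviality of the symmetry group, and Theorem~\ref{main-theo} then forces $\mathscr L_+(R_1)\ne\mathscr L_+(R_2)$. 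There is no ``delicate case analysis eliminating families of potential surfaces'': the combinatorial verification you dreaded is the one-line rigidity observation, and the real content is the explicit $\xi_-$-equivalence extracted from the annulus --- a step your outline omits entirely.
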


The proof is by producing an explicit example, and the example we use here
is proposed by~I.\,Dynnikov and M.\,Prasolov in~\cite{representability}. Front projections of the Legendrian knots from this example
are shown in Figure~\ref{monster-knots-fig}. It is shown in~\cite{representability} that
they cobound an embedded annulus such that~$\tb(K_1;A)=\tb(K_2;A)=0$,
and it has been remaining unproved that~$K_1$ and~$K_2$
are not Legendrian equivalent.

\begin{figure}[ht]
\includegraphics[scale=1.3]{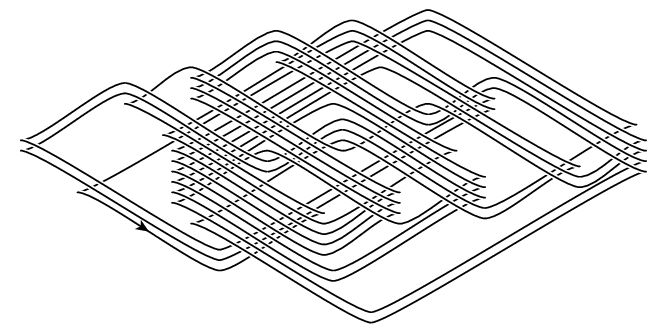}\put(-300,20){$K_1$}\\
\includegraphics[scale=1.3]{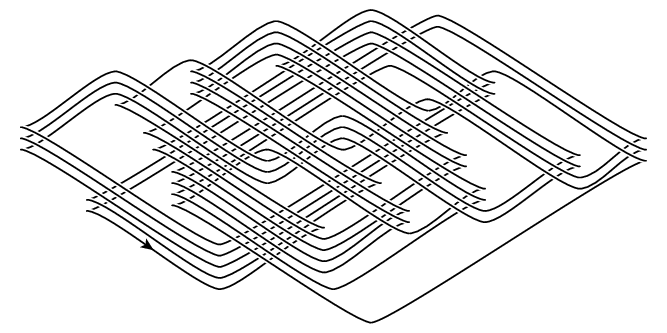}\put(-300,20){$K_2$}
\caption{Nonequivalent Legendrian knots~$K_1$ and~$K_2$ cobounding an annulus~$A$
such that~$\tb(K_1;A)=\tb(K_2;A)=0$, $\partial A=K_1\cup(-K_2)$}\label{monster-knots-fig}
\end{figure}

The proof of Theorem~\ref{not-equiv-thm} is given in Section~\ref{app-sec}.

\section{$\mathbb S^3$ settings. The orientation-preserving symmetry group}\label{s3-symmetry-group-sec}

By~$\mathbb S^3$ we denote the unit $3$-sphere in~$\mathbb R^4$,
which we identify with the group~$SU(2)$ in the standard way. We use the following
parametrization of this group:
$$(\theta,\varphi,\tau)\mapsto\begin{pmatrix}\cos(\pi\tau/2)e^{\mathbbm i\varphi}&\sin(\pi\tau/2)e^{\mathbbm i\theta}\\
-\sin(\pi\tau/2)e^{-\mathbbm i\theta}&\cos(\pi\tau/2)e^{-\mathbbm i\varphi}\end{pmatrix},$$
where~$(\theta,\varphi,\tau)\in(\mathbb R/(2\pi\mathbb Z))\times(\mathbb R/(2\pi\mathbb Z))\times[0;1]$.
The coordinate system~$(\theta,\varphi,\tau)$ can also be viewed as the one coming from the join
construction~$\mathbb S^3\cong\mathbb S^1*\mathbb S^1$, with~$\theta$ the coordinate
on~$\mathbb S^1_{\tau=1}$, and~$\varphi$ on~$\mathbb S^1_{\tau=0}$. Let~$\alpha_+$ be the following right-invariant
$1$-form on~$\mathbb S^3\cong SU(2)$:
$$\alpha_+(X)=\frac12\tr\left(X^{-1}\begin{pmatrix}\mathbbm i&0\\0&-\mathbbm i\end{pmatrix}dX\right)=
\sin^2\Bigl(\frac{\pi\tau}2\Bigr)\,d\theta+\cos^2\Bigl(\frac{\pi\tau}2\Bigr)\,d\varphi.$$

It is known (see~\cite{Ge}) that, for any point~$p\in\mathbb S^3$, there is a diffeomorphism~$\phi$ from~$\mathbb R^3$ to~$\mathbb S^3\setminus\{p\}$
that takes the contact structure~$\xi_+$ to the one defined by~$\alpha_+$, that is, to $\ker\alpha_+$.
For this reason, the latter is denoted by~$\xi_+$, too. Two Legendrian knots in~$\mathbb R^3$
are equivalent if and only if so are their images under~$\phi$ in~$\mathbb S^3$.
We will switch between the~$\mathbb R^3$ and~$\mathbb S^3$ settings depending on which is more
suitable in the current context. The~$\mathbb R^3$ settings are usually more
visual, but sometimes are not appropriate. In particular, the definition of the knot
symmetry group given below requires the~$\mathbb S^3$ settings.

\begin{defi}
Let~$K$ be a smooth knot in~$\mathbb S^3$.
Denote by~$\mathrm{Diff}^*(\mathbb S^3;K)$ the group of diffeomorphisms of~$\mathbb S^3$
preserving the orientation of~$\mathbb S^3$ and the orientation of~$K$, and
by~$\mathrm{Diff}^*_0(\mathbb S^3;K)$ the connected component of this
group containing the identity.
The group~$\mathrm{Diff}^*(\mathbb S^3;K)/\mathrm{Diff}^*_0(\mathbb S^3;K)$
is called \emph{the orientation-preserving symmetry group of~$K$}
and denoted~$\mathrm{Sym^*}(K)$.
\end{defi}

Clearly the group~$\mathrm{Sym^*}(K)$ depends only on the topological type of~$K$.
In this paper we are dealing with knots~$K$ for which~$\mathrm{Sym^*}(K)$
is a trivial group.

In the $\mathbb S^3$ settings, we also define the mirror image~$\xi_-$ of~$\xi_+$ as
$$\xi_-=\ker\left(\sin^2\Bigl(\frac{\pi\tau}2\Bigr)\,d\theta-\cos^2\Bigl(\frac{\pi\tau}2\Bigr)\,d\varphi\right).$$

\section{Rectangular diagrams of knots}\label{rd_of_knots-sec}

We denote by~$\mathbb T^2$ the two-dimensional torus~$\mathbb S^1\times\mathbb S^1$,
and by~$\theta$ and~$\varphi$ the angular coordinates on the first and the second~$\mathbb S^1$ factor, respectively.

\begin{defi}
\emph{An oriented rectangular diagram of a link} is a finite subset~$R\subset\mathbb T^2$
with an assignment~`$+$' or~`$-$' to every point in~$R$ such that every meridian~$\{\theta\}\times\mathbb S^1$
and every longitude~$\mathbb S^1\times\{\varphi\}$ contains either no or exactly two points from~$R$,
and in the latter case one of the points is assigned~`$+$' and the other~`$-$'.
The points in~$R$ are called \emph{vertices} of~$R$, and the pairs~$\{u,v\}\subset R$
such that~$\theta(u)=\theta(v)$ (respectively, $\varphi(u)=\varphi(v)$) are called \emph{vertical edges}
(respectively, \emph{horizontal edges}) of~$R$.

\emph{A rectangular diagram of a link} is defined similarly, without assignment `$+$' or~`$-$'
to vertices.

An (oriented) rectangular diagram~$R$ of a link is called \emph{an \emph(oriented\emph) rectangular diagram of a knot}
if it is \emph{connected} in the sense that, for any two vertices~$v,v'\in R$, there exists
a sequence~$v_0=v,v_1,v_2,\ldots,v_k=v'$ of vertices of~$R$ such that
any pair~$v_{i-1},v_i$ of successive elements in it is an edge of~$R$.
\end{defi}

From the combinatorial point of view, oriented rectangular diagrams of links are the same thing as grid diagrams~\cite{mos}
viewed up to cyclic permutations of rows and columns. They are also nearly the same
thing as arc-presentations (see \cite{simplification}).

\begin{conv}
In this paper we mostly work with oriented knots and knot diagrams. For brevity,
unless a rectangular diagram is explicitly specified as unoriented it is assumed
to be oriented.
\end{conv}

With every rectangular diagram of a knot~$R$ one associates a knot, denoted~$\widehat R$, in~$\mathbb S^3$
as follows. For a vertex~$v\in R$ denote by~$\widehat v$ the image of the
arc~$v\times[0;1]$ in~$\mathbb S^3\cong\mathbb S^1*\mathbb S^1=(\mathbb T^2\times[0;1])/{\sim}$
oriented from~$0$ to~$1$ if~$v$ is assigned~`$+$', and from~$1$ to~$0$ otherwise.
The knot~$\widehat R$ is by definition~$\bigcup_{v\in V}\widehat v$.

To get a planar diagram of a knot in~$\mathbb R^3$ equivalent to~$\widehat R$ one can proceed a follows.
Cut the torus~$\mathbb T^2$ along a meridian and a longitude not passing through a vertex of~$R$
to get a square. For every edge~$\{u,v\}$ of~$R$ join~$u$ and~$v$ by a straight
line segment, and let vertical segments overpass horizontal ones at every crossing point.
Vertical edges are oriented from `$+$' to~`$-$', and the horizontal ones from~`$-$' to~`$+$', see Figure~\ref{rdiagram-fig}.
\begin{figure}[ht]
\includegraphics[scale=.3]{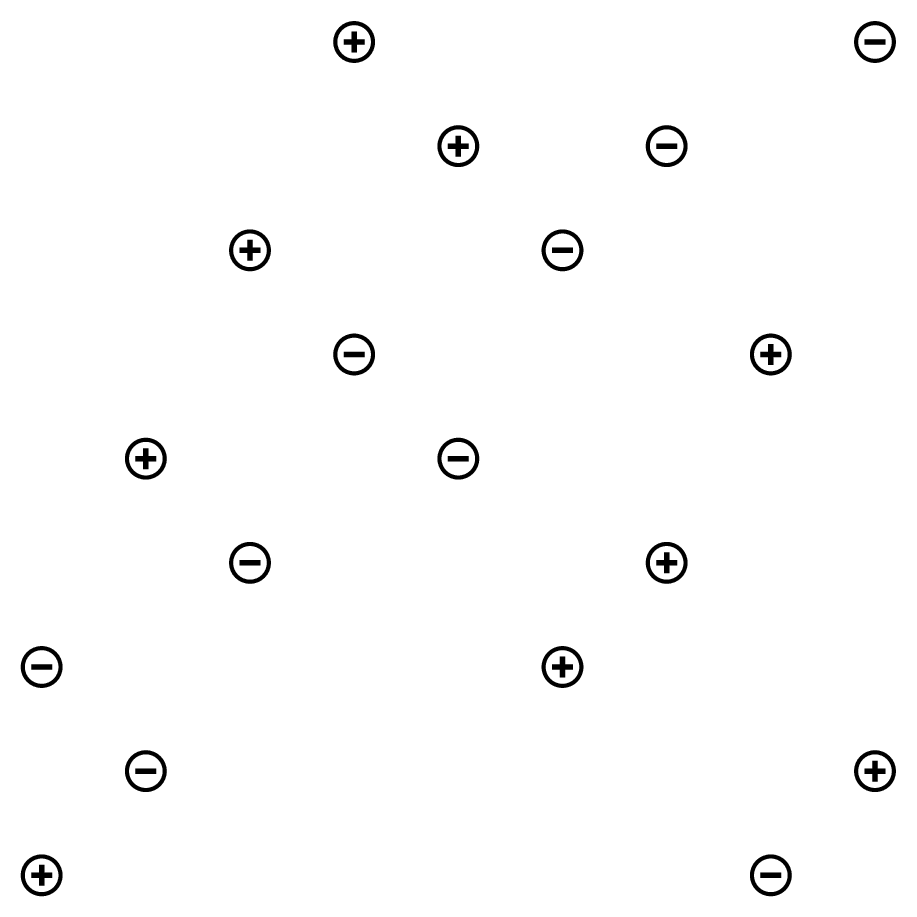}\hskip2cm
\includegraphics[scale=.3]{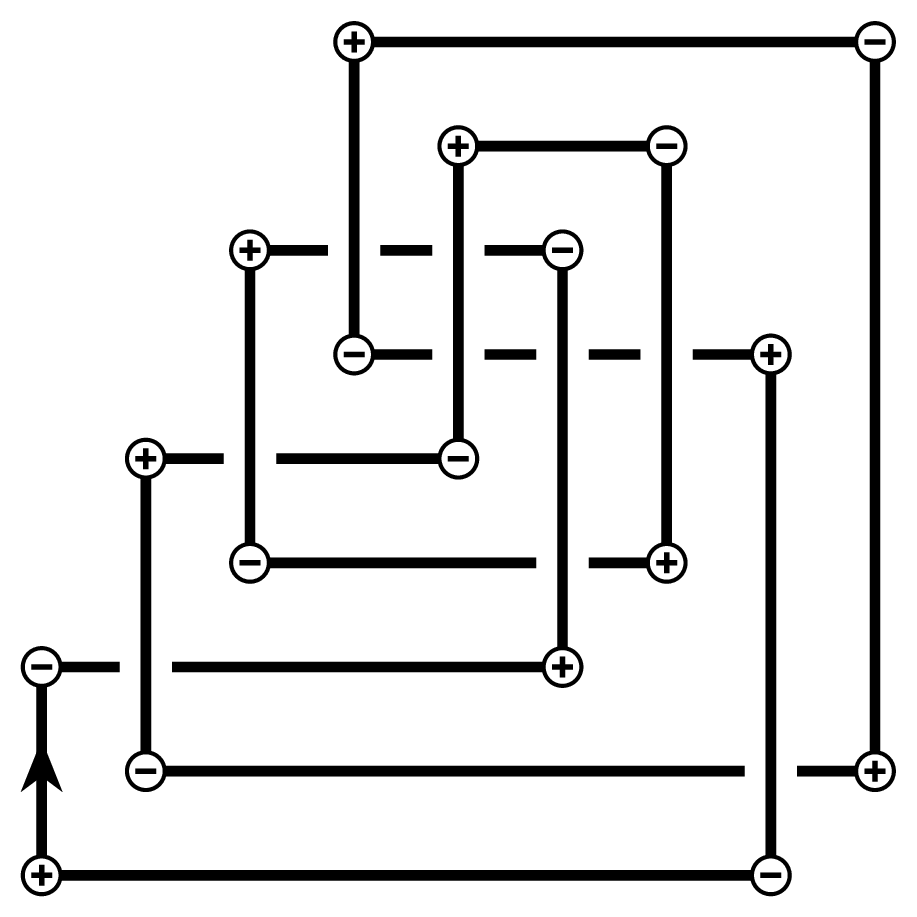}
\caption{A rectangular diagram of a knot and the corresponding planar diagram}\label{rdiagram-fig}
\end{figure}
One can show (see~\cite{simplification}) that the obtained planar diagram represents
a knot equivalent to~$\widehat R$.

For two distinct points~$x,y\in\mathbb S^1$ we denote by~$[x;y]$ the arc of~$\mathbb S^1$ such that,
with respect to the standard orientation of~$\mathbb S^1$, it has the starting point at~$x$,
and the end point at~$y$.

\begin{defi}\label{moves-def}
Let~$R_1$ and~$R_2$ be rectangular diagrams of a knot such that,
for some~$\theta_1,\theta_2,\varphi_1,\varphi_2\in\mathbb S^1$, the following holds:
\begin{enumerate}
\item
$\theta_1\ne\theta_2$, $\varphi_1\ne\varphi_2$;
\item
the symmetric difference~$R_1\triangle R_2$ is~$\{\theta_1,\theta_2\}\times\{\varphi_1,\varphi_2\}$;
\item
$R_1\triangle R_2$ contains an edge of one of the diagrams~$R_1$, $R_2$;
\item
none of~$R_1$ and~$R_2$ is a subset of the other;
\item
the intersection of the rectangle~$[\theta_1;\theta_2]\times[\varphi_1;\varphi_2]$
with~$R_1\cup R_2$ consists of its vertices, that is, $\{\theta_1,\theta_2\}\times\{\varphi_1,\varphi_2\}$;
\item
each~$v\in R_1\cap R_2$ is assigned the same sign in~$R_1$ as in~$R_2$.
\end{enumerate}
Then we say that the passage~$R_1\mapsto R_2$ is \emph{an elementary move}.

An elementary move~$R_1\mapsto R_2$ is called:
\begin{itemize}
\item
\emph{an exchange move} if~$|R_1|=|R_2|$,
\item
\emph{a stabilization move} if~$|R_2|=|R_1|+2$, and
\item
\emph{a destabilization move} if~$|R_2|=|R_1|-2$,
\end{itemize}
where~$|R|$ denotes the number of vertices of~$R$.
\end{defi}

We distinguish two \emph{types} and four \emph{oriented types} of stabilizations and destabilizations as follows.

\begin{defi}
Let~$R_1\mapsto R_2$ be a stabilization, and let~$\theta_1,\theta_2,\varphi_1,\varphi_2$ be as in Definition~\ref{moves-def}.
Denote by~$V$ the set of vertices of the rectangle~$[\theta_1;\theta_2]\times[\varphi_1;\varphi_2]$.
We say that the stabilization~$R_1\mapsto R_2$ and the destabilization~$R_2\mapsto R_1$
are of \emph{type~\rm I} (respectively, of \emph{type~\rm II}) if
$R_1\cap V\in\{(\theta_1,\varphi_1),(\theta_2,\varphi_2)\}$
(respectively, $R_1\cap V\in\{(\theta_1,\varphi_2),(\theta_2,\varphi_1)\}$).

Let~$\varphi_0\in\{\varphi_1,\varphi_2\}$ be such that~$\{\theta_1,\theta_2\}\times\{\varphi_0\}\subset R_2$.
The stabilization~$R_1\mapsto R_2$ and the destabilization~$R_2\mapsto R_1$
are of \emph{oriented type~$\overrightarrow{\mathrm I}$}
(respectively, of \emph{oriented type~$\overrightarrow{\mathrm{II}}$}) if they are of type~I (respectively, of type~II)
and~$(\theta_2,\varphi_0)$ is a positive vertex of~$R_2$.
The stabilization~$R_1\mapsto R_2$ and the destabilization~$R_2\mapsto R_1$
are of \emph{oriented type~$\overleftarrow{\mathrm I}$}
(respectively, of \emph{oriented type~$\overleftarrow{\mathrm{II}}$}) if they are of type~I (respectively, of type~II)
and~$(\theta_2,\varphi_0)$ is a negative vertex of~$R_2$.
\end{defi}

Our notation for stabilization types follows~\cite{bypasses}. The correspondence with the notation of~\cite{OST} is as follows:

\centerline{\begin{tabular}{|l|c|c|c|c|}
\hline
notation of~\cite{bypasses}&
$\overrightarrow{\mathrm I}$&$\overleftarrow{\mathrm I}$&$\overrightarrow{\mathrm{II}}$&$\overleftarrow{\mathrm{II}}$\\\hline
notation of~\cite{OST}&
\emph{X:NE}, \emph{O:SW}&
\emph{X:SW}, \emph{O:NE}&
\emph{X:SE}, \emph{O:NW}&
\emph{X:NW}, \emph{O:SE}\\\hline
\end{tabular}}

\vskip1em
With every rectangular diagram of a knot~$R$ we associate an equivalence
class~$\mathscr L_+(R)$ of $\xi_+$-Legendrian knots and an equivalence
class~$\mathscr L_-(R)$ of $\xi_-$-Legendrian knots as follows.
The front projection of a representative of~$\mathscr L_+(R)$ (respectively, of~$\mathscr L_-(R)$)
is obtained
from~$R$ in the following three steps: (1) produce a conventional planar diagram from~$R$
as described above; (2) rotate it counterclockwise (respectively, clockwise)
by any angle between~$0$ and~$\pi/2$; (3) smooth out. See Figure~\ref{associated-legendrian-fig} for an example.
\begin{figure}[ht]
\begin{tabular}{ccccc}
\includegraphics[scale=.4]{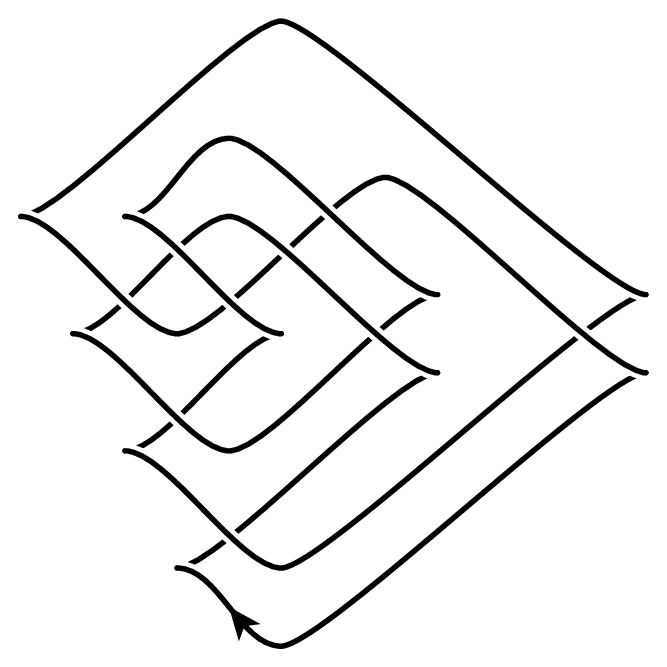}&\qquad&
\includegraphics[scale=.25]{rd2.eps}&\qquad&
\includegraphics[scale=.4]{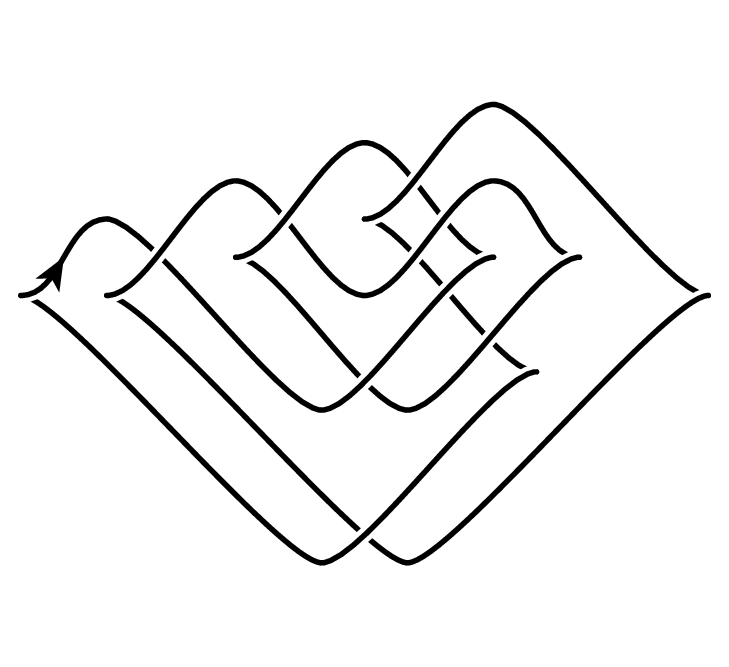}\\
a representative of $\mathscr L_+(R)$&&$R$&&a representative of $\mathscr L_-(R)$
\end{tabular}
\caption{Legendrian knots associated with a rectangular diagram of a knot}\label{associated-legendrian-fig}
\end{figure}

\begin{theo}[\cite{OST}]\label{rect-desc-of-leg-theo}
Let~$R_1$ and~$R_2$ be rectangular diagrams of a knot. The classes~$\mathscr L_+(R_1)$
and~$\mathscr L_+(R_2)$ \emph(respectively, $\mathscr L_-(R_1)$ and~$\mathscr L_-(R_2)$\emph)
coincide if and only if the diagrams~$R_1$ and~$R_2$ are related by a finite sequence
of elementary moves in which all stabilizations and destabilizations are of type~I \emph(respectively, of type~II\emph).

Moreover, if~$R_1\mapsto R_2$ is a stabilization of oriented type~$T$, then
$$\mathscr L_-(R_2)=\left\{\begin{aligned}
S_-(\mathscr L_-(R_1)),\quad&\text{if }T=\overleftarrow{\mathrm{I}},\\
S_+(\mathscr L_-(R_1)),\quad&\text{if }T=\overrightarrow{\mathrm{I}},
\end{aligned}\right.\hskip1cm
\mathscr L_+(R_2)=\left\{\begin{aligned}
S_+(\mathscr L_+(R_1)),\quad&\text{if }T=\overleftarrow{\mathrm{II}},\\
S_-(\mathscr L_+(R_1)),\quad&\text{if }T=\overrightarrow{\mathrm{II}}.
\end{aligned}\right.$$
\end{theo}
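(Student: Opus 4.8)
The plan is to reduce Theorem~\ref{rect-desc-of-leg-theo} to the analogous statement already available for the $\xi_-$-case (or, equivalently, the grid-diagram/Legendrian dictionary of~\cite{OST}) together with the elementary relation between $\xi_+$ and $\xi_-$ provided by the reflection~$r_\medvert$. First I would recall that applying~$r_\medvert$ to a planar diagram of~$\widehat R$ is, at the level of rectangular diagrams, the operation $R\mapsto \bar R$ that reflects $\mathbb T^2$ in the $\varphi$-coordinate (i.e. $\varphi\mapsto-\varphi$), simultaneously swapping the two horizontal-edge roles; on front projections this is precisely the reflection that interchanges~$\xi_+$ and~$\xi_-$, so $\mathscr L_+(R)=r_\medvert\bigl(\mathscr L_-(\bar R)\bigr)$. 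Under $R\mapsto\bar R$ an elementary move of one oriented type is carried to an elementary move of the ``mirror'' oriented type: type~I moves go to type~II moves and vice versa, and the decorations $\overrightarrow{\phantom{I}}$, $\overleftarrow{\phantom{I}}$ are interchanged (this is a direct check on the four local pictures of $[\theta_1;\theta_2]\times[\varphi_1;\varphi_2]$ with their sign assignments). Composing with the fact that $r_\medvert$ conjugates $S_\pm$ into $S_\pm$ by definition (last Definition of Section~\ref{legendrian-knot-sec}), the $\xi_+$-half of the theorem becomes a formal consequence of the $\xi_-$-half.

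So the real content is the $\xi_-$-statement, and I would prove it in two directions. For the ``if'' direction I would check, move by move, that each elementary move of the allowed kind does not change~$\mathscr L_-$, except that a type-II stabilization changes it by a stabilization~$S_\pm$ as claimed. An exchange move corresponds on the front projection to a sequence of Legendrian Reidemeister~II and~III moves and planar isotopies, hence preserves the Legendrian class. A type-II stabilization/destabilization corresponds to the local front modification of Figure~\ref{stab-fig}: one verifies by drawing the small rectangle, rotating it clockwise by a small angle and smoothing, that the oriented type $\overleftarrow{\mathrm{II}}$ produces the picture of Figure~\ref{stab-fig}(a) (a positive stabilization) and $\overrightarrow{\mathrm{II}}$ produces Figure~\ref{stab-fig}(b) (a negative stabilization); this simultaneously establishes the displayed formulas. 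The key point to get right here is the bookkeeping of which corner of the rectangle carries which sign after the clockwise rotation, since this is exactly what distinguishes a positive cusp-pair from a negative one and hence determines the sign of the stabilization; I expect this local case analysis to be the main obstacle, as it is where a wrong orientation convention would go unnoticed. Type-I stabilizations, by contrast, would introduce a cusp configuration that changes the $\xi_+$-front but not the $\xi_-$-front — this is why only type-II moves are allowed on the $\xi_-$-side.

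For the ``only if'' direction I would invoke the completeness half of the rectangular-diagram/Legendrian correspondence. The cleanest route is to note that two rectangular diagrams with $\mathscr L_-(R_1)=\mathscr L_-(R_2)$ give Legendrian-isotopic knots, pass to grid-diagram language where this correspondence is the theorem of~\cite{OST} (grid moves realizing Legendrian isotopy are commutation moves and (de)stabilizations of the types \emph{X:NW}, \emph{X:SE}, \emph{O:NW}, \emph{O:SE}), and then translate those grid moves back into our elementary moves via the dictionary in the table preceding the theorem; the moves listed there are exactly the exchange moves and the type-II (oriented types $\overrightarrow{\mathrm{II}},\overleftarrow{\mathrm{II}}$) stabilizations/destabilizations, which is what we want. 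The only care needed is that cyclic permutations of rows and columns — which are allowed in grid diagrams but are already built into the toroidal picture — be accounted for, and that a grid commutation move that happens not to satisfy condition~(5) of Definition~\ref{moves-def} be decomposed into a sequence of our exchange moves; both are standard and were treated in~\cite{simplification,bypasses}. Combining the two directions with the mirroring reduction of the first paragraph finishes the proof.
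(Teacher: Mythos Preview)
The paper does not prove Theorem~\ref{rect-desc-of-leg-theo}; it is stated without proof as a known background result (the grid-diagram/Legendrian dictionary of~\cite{OST}, see also~\cite{bypasses}), and the text moves directly on to Theorem~\ref{main-theo}. So there is no ``paper's own proof'' to compare against. Your overall architecture --- reduce the $\xi_+$ statement to the $\xi_-$ statement via the reflection $r_\medvert$, verify the effect of each elementary move on the front by a local picture, and then invoke the completeness half from~\cite{OST} --- is exactly the standard route taken in those references.

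There is, however, a genuine bookkeeping error in your ``if'' paragraph. You assert that a type~II stabilization changes $\mathscr L_-$ by a Legendrian stabilization~$S_\pm$, and that a type~I stabilization ``changes the $\xi_+$-front but not the $\xi_-$-front''. This is backwards: the theorem says type~II moves \emph{preserve} $\mathscr L_-$ (that is the content of the ``respectively'' clause in the first sentence), while it is type~I stabilizations that act on $\mathscr L_-$ as $S_\pm$ (that is the left-hand displayed formula). Your own ``only if'' paragraph already contradicts your ``if'' paragraph: there you correctly match the OST moves \emph{X:NW}, \emph{X:SE}, \emph{O:NW}, \emph{O:SE} to type~II via the table and correctly conclude that these are the $\mathscr L_-$-preserving moves. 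If you actually carry out the clockwise rotation of a type~II stabilization you will see the new corner smooths out with no new cusps, whereas a type~I stabilization is what produces the zigzag of Figure~\ref{stab-fig}. Swapping~I and~II throughout that middle paragraph (and then re-deriving which of $\overrightarrow{\phantom{I}}/\overleftarrow{\phantom{I}}$ gives $S_+$ versus $S_-$ on the $\xi_-$ side) repairs the argument.
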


The following is the key result of the present work.

\begin{theo}\label{main-theo}
Let~$K$ be a knot with trivial orientation-preserving symmetry group, and
let~$R_1$ and~$R_2$ be rectangular diagrams of knots
isotopic to~$K$. Then the following two conditions are equivalent:
{\def\theenumi{\roman{enumi}}
\begin{enumerate}
\item
we have~$\mathscr L_+(R_1)=\mathscr L_+(R_2)$ and~$\mathscr L_-(R_1)=\mathscr L_-(R_2)$\emph;
\item
the diagram $R_2$ can be obtained from~$R_1$ by a sequence of exchange moves.
\end{enumerate}}
\end{theo}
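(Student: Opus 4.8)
The plan is to prove the two implications separately, with the implication (ii)$\Rightarrow$(i) being the easy direction and (i)$\Rightarrow$(ii) requiring the real work. For (ii)$\Rightarrow$(i), observe that exchange moves are elementary moves containing no stabilizations or destabilizations, so Theorem~\ref{rect-desc-of-leg-theo} immediately gives $\mathscr L_\pm(R_1)=\mathscr L_\pm(R_2)$: any sequence of exchange moves trivially satisfies the constraint on stabilization types for both signs simultaneously. No further argument is needed here.

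For (i)$\Rightarrow$(ii), I would start from Theorem~\ref{rect-desc-of-leg-theo}: the hypothesis $\mathscr L_+(R_1)=\mathscr L_+(R_2)$ gives a sequence of elementary moves from $R_1$ to $R_2$ in which all (de)stabilizations are of type~I, and likewise $\mathscr L_-(R_1)=\mathscr L_-(R_2)$ gives a (possibly different) sequence in which all (de)stabilizations are of type~II. The goal is to produce a third sequence using \emph{only} exchange moves. The natural strategy is to pass to the connection with rectangular diagrams of surfaces (Section~\ref{rd_of_surfaces-sec}) and to the machinery of \cite{representability,distinguishing}: a type-I (de)stabilization changes $\mathscr L_-$ by a $\xi_-$-stabilization/destabilization, so the existence of the two sequences with complementary type constraints should be repackaged as saying that $\widehat R_1$ and $\widehat R_2$ are isotopic as topological knots (which they are, both being isotopic to $K$) \emph{and} that this isotopy can be realized in a way compatible with both contact structures $\xi_+$ and $\xi_-$ simultaneously. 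This is exactly the kind of statement for which the ``smart choice'' of a convex surface (Lemma~\ref{special-diagram-exists}) is designed. Concretely, I would use that lemma to produce a rectangular diagram of a surface (an annulus or the relevant dividing configuration) whose existence is guaranteed precisely when $\mathrm{Sym^*}(K)$ is trivial, and then invoke the combinatorial surface calculus to cancel all stabilization/destabilization pairs, leaving a sequence of exchange moves.

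The triviality of $\mathrm{Sym^*}(K)$ enters in an essential way and is where the main obstacle lies: without it, the two sequences of elementary moves (one respecting type~I, one respecting type~II) witness only that $\widehat R_1$ and $\widehat R_2$ are related by \emph{some} ambient diffeomorphism compatible with each contact structure, but these diffeomorphisms need not be isotopic to the identity rel nothing --- they could differ by a nontrivial symmetry of $K$, and then one cannot merge the two pieces of data into a single exchange-only sequence. When $\mathrm{Sym^*}(K)=1$, any orientation-preserving diffeomorphism of $\mathbb S^3$ carrying $\widehat R_1$ to a representative in the same topological class and preserving the knot orientation is isotopic, through such diffeomorphisms, to one not moving the knot; this rigidity is what lets the surface produced by Lemma~\ref{special-diagram-exists} be chosen canonically and forces the stabilizations to be removable. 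I expect the bulk of the proof to consist of: (a) translating condition (i) into a statement about simultaneous $\xi_+$- and $\xi_-$-Legendrian realizations of the same topological isotopy; (b) invoking Lemma~\ref{special-diagram-exists} to obtain the distinguished surface diagram; and (c) running the combinatorial moves on that surface diagram to extract an exchange-only sequence, with step (c) being the most technically involved but step (a), where triviality of the symmetry group is used to rule out the ``twisted'' gluings, being the genuine conceptual core.
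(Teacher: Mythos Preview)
Your easy direction (ii)$\Rightarrow$(i) is fine and matches the paper. For (i)$\Rightarrow$(ii) you have located the right ingredients---Lemma~\ref{special-diagram-exists} and the machinery of \cite{distinguishing}---but the way you assemble them is off in two places.

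First, the special surface of Lemma~\ref{special-diagram-exists} exists for \emph{any} rectangular diagram~$R$; its construction has nothing to do with~$\mathrm{Sym}^*(K)$. The point of that lemma is purely combinatorial: it produces a~$\Pi$ with~$R_1\subset\partial\Pi$ whose \emph{dividing code determines its combinatorial type}. So your sentence ``whose existence is guaranteed precisely when $\mathrm{Sym}^*(K)$ is trivial'' mislocates the role of the symmetry hypothesis.

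Second, the paper never tries to merge the two move-sequences or to ``cancel stabilization/destabilization pairs''. The argument is shorter and more rigid. One applies Proposition~\ref{auxiliary-prop} (this is the step you are missing): from~$\mathscr L_\pm(R_1)=\mathscr L_\pm(R_2)$ and the results of \cite[Theorems~2.1, 2.2]{distinguishing} one gets a diagram~$\Pi'$ with the \emph{same dividing code} as~$\Pi$ and a diagram~$R_2'\subset\partial\Pi'$ obtained from~$R_2$ by exchange moves, together with a homeomorphism carrying~$\widehat R_1$ to~$\widehat R_2'$ tile-by-tile. The triviality of~$\mathrm{Sym}^*(K)$ is used \emph{inside} the proof of Proposition~\ref{auxiliary-prop}, exactly to make the $+$-realization and the $-$-realization isotopic rel~$\widehat R_2$ so that \cite[Theorem~2.2]{distinguishing} applies. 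Once Proposition~\ref{auxiliary-prop} is in hand, the special property of~$\Pi$ from Lemma~\ref{special-diagram-exists} forces~$\Pi'$ to be combinatorially equal to~$\Pi$, hence~$R_2'=R_1$, and you are done: $R_1$ and~$R_2$ are exchange-equivalent. There is no surface calculus to ``run'' and no stabilizations to cancel; the uniqueness of the combinatorial type does all the work.
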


The proof is given in the next section.

\section{Rectangular diagrams of surfaces}\label{rd_of_surfaces-sec}

Here we recall some definitions from~\cite{representability,distinguishing}.

By a \emph{rectangle} we mean a subset~$r\subset\mathbb T^2$ of the form~$[\theta_1;\theta_2]\times[\varphi_1;\varphi_2]$.
Two rectangles $r_1$, $r_2$ are said to be \emph{compatible}
if their intersection satisfies one of the following:
\begin{enumerate}
\item $r_1\cap r_2$ is empty;
\item $r_1\cap r_2$ is a subset of vertices of $r_1$ (equivalently: of~$r_2$);
\item $r_1\cap r_2$ is a rectangle disjoint from the vertices of both rectangles $r_1$ and $r_2$.
\end{enumerate}

\begin{defi}\label{rect-diagr-def}
\emph{A rectangular diagram of a surface} is a collection $\Pi=\{r_1,\ldots,r_k\}$
of pairwise compatible rectangles in~$\mathbb T^2$ such
that every meridian $\{\theta\}\times\mathbb S^1$ and every longitude $\mathbb S^1\times\{\varphi\}$
of the torus contains at most two free vertices, where by \emph{a free vertex}
we mean a point that is a vertex of exactly one rectangle in~$\Pi$.

The set of all free vertices of $\Pi$ is called \emph{the boundary of $\Pi$} and
denoted by $\partial\Pi$.
\end{defi}

One can see that the boundary of a rectangular diagram of a surface is an unoriented
rectangular diagram of a link. In particular, for any rectangle~$r$, the boundary of~$\{r\}$
is the set of vertices of~$r$, and~$\widehat{\partial\{r\}}$ is an unknot.

With every rectangular diagram of a surface~$\Pi$ one associates a $C^1$-smooth surface~$\widehat\Pi\subset\mathbb S^3$
with piecewise smooth boundary, as we now describe.

By \emph{the torus projection} we mean the map~$\mathfrak t:\mathbb S^3\setminus\bigl(\mathbb S^1_{\tau=1}\cup\mathbb S^1_{\tau=0}\bigr)\rightarrow\mathbb T^2$
defined by~$(\theta,\varphi,\tau)\mapsto(\theta,\varphi)$.
With every rectangle~$r\subset\mathbb T^2$ one can associate a disc~$\widehat r\subset\mathbb S^3$ having the form of a curved
quadrilateral contained in~$\overline{\mathfrak t^{-1}(r)}$ and spanning the loop~$\widehat{\partial\{r\}}$
so that the following conditions hold:
\begin{enumerate}
\item
for each rectangle~$r$,
the restriction of~$\mathfrak t$ to the interior of~$\widehat r$ is a one-to-one map onto the interior or~$r$;
\item
if~$r_1$ and~$r_2$ are compatible rectangles, then the interiors of~$\widehat r_1$ and~$\widehat r_2$ are disjoint;
\item
if~$r=[\theta_1;\theta_2]\times[\varphi_1;\varphi_2]$, then~$\widehat r$ is tangent to~$\xi_+$
along the sides~$\widehat{(\theta_1,\varphi_2)}$ and~$\widehat{(\theta_2,\varphi_1)}$,
and to~$\xi_-$ along the sides~$\widehat{(\theta_1,\varphi_1)}$ and~$\widehat{(\theta_2,\varphi_2)}$.
\end{enumerate}
An explicit way to define the discs~$\widehat r$, which are referred to as \emph{tiles},
is given in~\cite[Subsection~2.3]{representability}.

The surface~$\widehat\Pi$ associated with a rectangular diagram of a surface~$\Pi$ is then defined
as
$$\widehat\Pi=\bigcup_{r\in\Pi}\widehat r.$$
One can show that we have~$\partial\widehat\Pi=\widehat{\partial\Pi}$
and, for any connected component~$R$ of~$\partial\Pi$, the relative Thurston--Bennequin number~$\tb_+(\widehat R;\widehat\Pi)$
(respectively, $\tb_-(\widehat R;\widehat\Pi)$) equals minus half the number of vertices of~$R$ which are bottom-right or top-left
(respectively, bottom-left or top-right) vertices of some rectangles of~$\Pi$.

On every rectangular diagram of a surface~$\Pi$ we introduce two binary relations, $\udotdot$ and~$\ddotdot$, that
keep the information about which vertices are shared between two rectangles from~$\Pi$. Namely,
if~$r_1,r_2\in\Pi$, then $r_1\udotdot r_2$ means that~$r_1$ and~$r_2$ have the form
$$r_1=[\theta_1;\theta_2]\times[\varphi_1;\varphi_2],\quad r_2=[\theta_2;\theta_3]\times[\varphi_2;\varphi_3],$$
and~$r_1\ddotdot r_2$ means that~$r_1$ and~$r_2$ have the form
$$r_1=[\theta_1;\theta_2]\times[\varphi_2;\varphi_3],\quad r_2=[\theta_2;\theta_3]\times[\varphi_1;\varphi_2].$$

\begin{prop}\label{auxiliary-prop}
Let~$R_1$ and~$R_2$ be rectangular diagrams of a knot such that the knots~$\widehat R_1$ and~$\widehat R_2$
are topologically equivalent and have trivial orientation-preserving symmetry group. Suppose
that~$\mathscr L_+(R_1)=\mathscr L_+(R_{2})$ and~$\mathscr L_-(R_1)=\mathscr L_-(R_{2})$.

Then, for any rectangular diagram of a surface~$\Pi=\{r_1,\ldots,r_m\}$ such that~$R_1\subset\partial\Pi$, there exists
a rectangular diagram of a surface~$\Pi'=\{r_1',\ldots,r_m'\}$ and a rectangular diagram of a knot~$R_2'$ such that:
\begin{enumerate}
\item
$R_2$ and~$R_2'$ are related by a sequence of exchange moves;
\item
there exists an orientation preserving self-homeomorphism of~$\mathbb S^3$ that takes~$\widehat R_1$ to~$\widehat R_2'$,
and~$\widehat r_i$ to~$\widehat r_i'$, $i=1,\ldots,m$;
\item
$r_i\udotdot r_j\Leftrightarrow r_i'\udotdot r_j'$,
$r_i\ddotdot r_j\Leftrightarrow r_i'\ddotdot r_j'$.
\end{enumerate}
\end{prop}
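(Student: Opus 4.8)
The plan is to reduce Proposition~\ref{auxiliary-prop} to a statement about abstract surfaces and then use the triviality of $\mathrm{Sym}^*(K)$ to upgrade an abstract homeomorphism to one compatible with the combinatorial data. First I would invoke Theorem~\ref{main-theo}: since $\mathscr L_+(R_1)=\mathscr L_+(R_2)$ and $\mathscr L_-(R_1)=\mathscr L_-(R_2)$ and the two knots are topologically equivalent with trivial symmetry group, the diagram $R_2$ is obtained from $R_1$ by a sequence of exchange moves. Exchange moves do not change the combinatorial surface in which a diagram is embedded as part of its boundary, so it suffices to treat the case $R_2=R_1$; then I must produce $\Pi'$ and $R_2'=R_1$ (up to exchanges) together with the required self-homeomorphism of $\mathbb S^3$. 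Actually the real content is: given $\Pi$ with $R_1\subset\partial\Pi$, I want a self-homeomorphism of $\mathbb S^3$ carrying $\widehat R_1$ to itself and carrying $\Pi$ to another diagram $\Pi'$ with the same incidence pattern $\udotdot,\ddotdot$; but an arbitrary self-homeomorphism fixing $\widehat R_1$ setwise need not arise from a combinatorial operation on diagrams, so the point is rather to show that whatever ambiguity exists can be absorbed.

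The key technical step is to understand the group of orientation-preserving self-homeomorphisms of $\mathbb S^3$ preserving $\widehat R_1$ with its orientation, modulo isotopy through such homeomorphisms. By definition this is $\mathrm{Sym}^*(\widehat R_1)$, which is assumed trivial. Hence any orientation-preserving self-homeomorphism $h$ of $\mathbb S^3$ with $h(\widehat R_1)=\widehat R_1$ (respecting orientations) is isotopic to the identity through homeomorphisms fixing $\widehat R_1$. I would then use this as follows: the surface $\widehat\Pi$ is a specific embedded surface whose boundary contains $\widehat R_1$; there is some flexibility in how one realizes a given combinatorial diagram $\Pi$ as a subset of $\mathbb T^2$, namely one may reparametrize $\mathbb T^2$ by an orientation-preserving diffeomorphism isotopic to the identity that permutes the relevant meridian/longitude coordinates, and such reparametrizations are exactly what is detected by exchange moves on $R_1$ and by the incidence relations $\udotdot,\ddotdot$ on $\Pi$. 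Concretely I would set up a correspondence between (a) ambient isotopy classes of the pair $(\mathbb S^3,\widehat\Pi\cup\widehat R_1)$ rel nothing and (b) equivalence classes of combinatorial diagrams $\Pi$ under the moves that preserve $\udotdot,\ddotdot$, and show the forgetful map to isotopy classes of $(\mathbb S^3,\widehat R_1)$ has the claimed rigidity because $\mathrm{Sym}^*$ is trivial.

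Here is the order of the steps I would carry out. Step one: by Theorem~\ref{rect-desc-of-leg-theo} and Theorem~\ref{main-theo}, reduce to the case where $R_2$ is obtained from $R_1$ by exchange moves, and note that an exchange move extends to a move on any $\Pi$ with $R_1\subset\partial\Pi$ preserving $\widehat\Pi$ up to ambient isotopy and preserving $\udotdot,\ddotdot$; so we may assume $R_1=R_2$ and must find $\Pi'$, $R_2'$ as required, possibly with $R_2'$ differing from $R_1$ by exchanges again. Step two: characterize the "combinatorial type" of $(\Pi,R_1)$ that is an invariant of the topological pair $(\widehat\Pi,\widehat R_1)$ up to orientation-preserving homeomorphism of $\mathbb S^3$ — I claim it is exactly the pair of relations $(\udotdot,\ddotdot)$ together with the isotopy class of $\widehat R_1$ — and that two diagrams with the same combinatorial type and isotopic boundary knots are related by exchange moves on the boundary plus coordinate permutations extending to $\Pi$. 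Step three: given $\Pi$, pick any orientation-preserving homeomorphism $\psi:\mathbb S^3\to\mathbb S^3$ with $\psi(\widehat R_1)=\widehat R_2(=\widehat R_1)$ — it exists because the knots are topologically equivalent — look at $\psi(\widehat\Pi)$, approximate/isotope it to some $\widehat{\Pi'}$ for a genuine combinatorial diagram $\Pi'$ (this uses the representability machinery of \cite{representability,distinguishing}), and check that the triviality of $\mathrm{Sym}^*$ forces $\psi$ to be isotopic, rel $\widehat R_1$, to a homeomorphism realizing a combinatorial move, so that $\Pi'$ has the same $\udotdot,\ddotdot$ as $\Pi$ and $R_2'=\partial\Pi'\cap(\text{boundary knot})$ differs from $R_2$ only by exchanges.

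The main obstacle, I expect, is Step two and the interface in Step three: namely proving that the pair of incidence relations $(\udotdot,\ddotdot)$, rather than some finer data, is a complete invariant of $(\widehat\Pi,\widehat R_1)$ up to orientation-preserving homeomorphism, and that the only residual ambiguity in reconstructing a diagram from the topological surface is resolved by exchange moves on the boundary. This is where one genuinely needs $\mathrm{Sym}^*(K)$ trivial: without it, different diagrams $\Pi$, $\Pi'$ realizing homeomorphic pairs $(\widehat\Pi,\widehat R_1)$ could have genuinely different combinatorial incidence patterns related by a nontrivial symmetry of the knot, and the conclusion (3) would fail. Controlling the passage between the smooth/topological category (the tiles $\widehat r$ and the surface $\widehat\Pi$) and the combinatorial category (the relations $\udotdot,\ddotdot$) — i.e.\ showing every ambient isotopy of $\widehat\Pi$ can be tracked combinatorially up to exchange moves — will require careful use of the explicit tile construction of \cite[Subsection~2.3]{representability} and is likely the most delicate part of the argument.
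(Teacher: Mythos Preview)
Your proposal has a fatal circularity: in Step one you invoke Theorem~\ref{main-theo} to reduce to the case $R_1=R_2$, but in the paper Theorem~\ref{main-theo} is \emph{deduced from} Proposition~\ref{auxiliary-prop} (together with Lemma~\ref{special-diagram-exists}). So you cannot use it here. Your Step two is also wrong as stated: you claim that the pair of incidence relations $(\udotdot,\ddotdot)$ is a complete invariant of the topological pair $(\widehat\Pi,\widehat R_1)$ up to orientation-preserving homeomorphism, but the paper explicitly notes (just before Lemma~\ref{special-diagram-exists}, with a reference to~\cite[Figure~2.2]{distinguishing}) that two diagrams with the same dividing code need \emph{not} be combinatorially equivalent. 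That is precisely why Lemma~\ref{special-diagram-exists} is needed later: one has to manufacture a special~$\Pi$ for which the dividing code does determine the combinatorial type.

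The paper's actual proof is quite different and much shorter: it quotes two black-box results, Theorems~2.1 and~2.2 of~\cite{distinguishing}, about realizing dividing configurations by rectangular diagrams. One takes the canonic dividing configuration $D=(\delta_+,\delta_-)$ of~$\widehat\Pi$; from $\mathscr L_\pm(R_1)=\mathscr L_\pm(R_2)$ one gets $\pm$-compatibility of~$\widehat\Pi$ with~$R_2$, and Theorem~2.1 of~\cite{distinguishing} then produces a proper $+$-realization $(\Pi_+,\phi_+)$ of~$\delta_+$ and a proper $-$-realization $(\Pi_-,\phi_-)$ of~$\delta_-$ at~$R_2$. The triviality of $\mathrm{Sym}^*$ enters at exactly one point: it guarantees that $\phi_+$ and $\phi_-$ are isotopic through maps preserving~$\widehat R_2$, which is the hypothesis needed to apply Theorem~2.2 of~\cite{distinguishing} and obtain the simultaneous realization $(\Pi',\phi)$ of~$D$ together with~$R_2'$ related to~$R_2$ by exchanges. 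You correctly identified that the role of $\mathrm{Sym}^*=\{1\}$ is to kill an ambiguity between two maps, but the two maps in question are the $\pm$-realization maps coming from~\cite{distinguishing}, not an arbitrary ambient homeomorphism versus a combinatorial move.
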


\begin{proof}
This statement is a consequence of the results of~\cite[Section~2]{distinguishing}, namely, of Theorems~2.1 and~2.2,
as we will now see.
The reader is referred to~\cite[Section~2]{distinguishing} for the terminology that we use here.

Denote by~$D=(\delta_+,\delta_-)$ a canonic dividing configuration of~$\widehat\Pi$.
By hypothesis we have~$\tb_+(R_1)=\tb_+(R_2)$ and~$\tb_-(R_1)=\tb_-(R_2)$,
which implies that~$\widehat\Pi$ is both $+$-compatible and $-$-compatible with~$R_2$.
By~\cite[Theorem~2.1]{distinguishing} there exist a proper $+$-realization~$(\Pi_+,\phi_+)$ of~$\delta_+$
and a proper $-$-realization~$(\Pi_-,\phi_-)$ of~$\delta_-$ at~$R_2$.

Since the orientation-preserving symmetry group of~$\widehat R_2$ is trivial there
is an isotopy from~$\phi_+$ to~$\phi_-$ preserving~$\widehat R_2$.
One can clearly find a $-$-realization~$(\Pi_-,\phi_-')$ at~$R_2$ of an abstract
dividing set equivalent to~$\delta_-$ such that
there be an isotopy from~$\phi_+$ to~$\phi_-'$ that fixes~$\widehat R_2$ pointwise.

By~\cite[Theorem~2.2]{distinguishing}
this implies the existence of a proper realization~$(\Pi',\phi)$ of~$D$ and a rectangular diagram of a knot~$R_2'$
obtained from~$R_2$ by a sequence of exchange moves, and
such that~$\phi(\widehat R_1)=\widehat R_2'$, which is just a reformulation of the assertion of
Proposition~\ref{auxiliary-prop}.
\end{proof}

\begin{defi}
Two rectangular diagrams of a surface (or of a knot) are said to be \emph{combinatorially equivalent}
if one can be taken to the other by a homeomorphism~$\mathbb T^2\rightarrow\mathbb T^2\cong\mathbb S^1\times\mathbb S^1$ of the form
$f\times g$, where~$f$ and~$g$ are orientation preserving homeomorphisms of the circle~$\mathbb S^1$.
\end{defi}

Let~$\Pi$ be a rectangular diagram of a surface. The relations~$\udotdot$ and~$\ddotdot$ on~$\Pi$ defined above
constitute what is called in~\cite{distinguishing} the (equivalence class of a) \emph{dividing code} of~$\Pi$.
In other words, two diagrams~$\Pi_1$ and~$\Pi_2$ have equivalent dividing codes
if there is a bijection~$\Pi_1\rightarrow\Pi_2$ that preserves the relations~$\udotdot$ and~$\ddotdot$.
In general, this does not imply that the diagrams~$\Pi_1$ and~$\Pi_2$ are combinatorially equivalent
(see~\cite[Figure~2.2]{distinguishing} for an example).

\begin{lemm}\label{special-diagram-exists}
For any rectangular diagram of a link~$R$, there exists a rectangular diagram of a surface~$\Pi$
such that the following holds\emph:
\begin{enumerate}
\item
$R\subset\partial\Pi$\emph;
\item
whenever a rectangular diagram of a surface~$\Pi'$ has the same
dividing code as~$\Pi$ has, the diagrams~$\Pi$ and~$\Pi'$ are combinatorially equivalent.
\end{enumerate}
\end{lemm}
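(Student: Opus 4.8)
The plan is to construct $\Pi$ explicitly from the combinatorial data of $R$, and to make sure that the resulting dividing code determines every rectangle of $\Pi$ up to the action of $(\text{Homeo}^+\mathbb S^1)\times(\text{Homeo}^+\mathbb S^1)$. The natural first move is to reduce to the case when $R$ is, up to combinatorial equivalence, in some normal position: we may assume the $\theta$- and $\varphi$-coordinates of its vertices lie in prescribed finite subsets of $\mathbb S^1$, say equally spaced points, so that ``combinatorially equivalent'' for the diagrams we build becomes a purely finite matter. After this, since condition~(1) only asks $R\subset\partial\Pi$ and not $R=\partial\Pi$, I have a lot of freedom: I will choose $\Pi$ to be a thin neighborhood of $\widehat R$, i.e. take each horizontal and each vertical edge of $R$ and replace it by a very thin rectangle running alongside it (shrinking each edge slightly so the rectangles of neighboring edges overlap only near the vertices of $R$), then fill in a small square tile near each vertex of $R$ so that the whole collection is connected. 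Concretely, for a vertical edge $\{\theta\}\times\{\varphi_1,\varphi_2\}$ I place a rectangle $[\theta-\varepsilon;\theta+\varepsilon]\times[\varphi_1';\varphi_2']$ with $\varphi_i'$ very close to $\varphi_i$, and symmetrically for horizontal edges, and at each vertex $(\theta_0,\varphi_0)\in R$ a tiny square $[\theta_0-\varepsilon;\theta_0+\varepsilon]\times[\varphi_0-\varepsilon;\varphi_0+\varepsilon]$; one checks the pairwise compatibility and the ``at most two free vertices per meridian/longitude'' condition by direct inspection, so $\Pi$ is a legitimate rectangular diagram of a surface with $R\subset\partial\Pi$.

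The heart of the matter is then condition~(2): I must show that the dividing code of this $\Pi$ rigidifies it. The key observation is that in my $\Pi$ the relations $\udotdot$ and $\ddotdot$ encode the full cyclic ordering of the rectangles around $\widehat R$. Indeed, two edge-rectangles are related by $\udotdot$ or $\ddotdot$ exactly when the corresponding edges of $R$ share a vertex, and the ``diagonal'' type ($\udotdot$ versus $\ddotdot$) records which of the two corners at that shared vertex the connecting tile sits in. Since $R$ is connected, the incidence graph ``edges of $R$ meeting at vertices'' is exactly the cycle(s) forming $\widehat R$, and the $\udotdot$/$\ddotdot$ labeling along it records, at every vertex, whether the turn is NE/SW or NW/SE. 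From this data one recovers, for any $\Pi'$ with the same dividing code, a bijection $\Pi\to\Pi'$ under which the edge-rectangles of $\Pi'$ must be arranged around their boundary knot in the same cyclic order with the same corner types; standard planarity/rigidity of rectangular diagrams of knots (each meridian and longitude carrying at most two free vertices forces the coordinates to be determined up to order-preserving reparametrization once the combinatorial pattern is fixed) then yields that $\Pi$ and $\Pi'$ are combinatorially equivalent. I expect to invoke here the elementary fact, implicit in the definitions, that a rectangular diagram of a link is determined up to combinatorial equivalence by the pairing of its rows and columns together with the sign pattern — and that the thin-neighborhood construction makes the dividing code carry precisely that pairing.

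The main obstacle, and the point that needs real care, is ruling out ``exotic'' $\Pi'$ of the kind alluded to in the remark right before the lemma, where equality of dividing codes fails to force combinatorial equivalence (\cite[Figure~2.2]{distinguishing}). The reason that phenomenon cannot occur for my $\Pi$ is that its rectangles are so thin and so sparsely placed that no rectangle of $\Pi$ contains another rectangle's vertex in its interior and no two rectangles cross in the ``type~(3)'' fashion — every intersection is of ``corner'' type. A diagram $\Pi'$ with the same $\udotdot$/$\ddotdot$ pattern inherits this: the relations being bijective forces the combinatorial adjacency structure, and one argues that any $\Pi'$ realizing it must again consist of mutually corner-adjacent rectangles tracing out a knot, with no room for the crossing configurations that break rigidity. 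So the plan is: (i) normalize $R$; (ii) build the thin-neighborhood $\Pi$ and verify it is a valid rectangular diagram of a surface containing $R$ in its boundary; (iii) read off from $\udotdot,\ddotdot$ the cyclic combinatorial structure of $\widehat R$ and the corner types; (iv) show any $\Pi'$ with the same code has rectangles in corner-only position and hence, via the order-rigidity of rectangular link diagrams, is combinatorially equivalent to $\Pi$. Step~(iv) is where the genuine work lies; the rest is bookkeeping.
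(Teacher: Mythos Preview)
Your construction has two problems, one minor and one fatal.

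The minor one: as you describe it, the vertex square $[\theta_0-\varepsilon;\theta_0+\varepsilon]\times[\varphi_0-\varepsilon;\varphi_0+\varepsilon]$ and the adjacent edge-rectangle $[\theta_0-\varepsilon;\theta_0+\varepsilon]\times[\varphi_0+\varepsilon;\varphi']$ share a full side, not just corners; that intersection type is not allowed by the compatibility rules in Definition~\ref{rect-diagr-def}. This is fixable by rearranging the construction so that consecutive pieces share only a single corner (which is essentially what the paper's~$\Pi_1$ does), so I would not hold it against you.

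The fatal one is your rigidity claim in~(iii)--(iv). The dividing code records \emph{only} which pairs of rectangles share a corner and of which diagonal type. For a thin annular neighbourhood of~$\widehat R$ this data is, up to the crossing cuts, just a cycle of rectangles chained by $\udotdot$ relations---the same cycle for \emph{every} rectangular knot diagram with the same number of edges. It carries no information about the cyclic order of the~$\theta_i$'s among themselves (nor of the~$\varphi_i$'s): two rectangles of your~$\Pi$ that are far apart along the knot are simply unrelated in the dividing code, so nothing tells you how their $\theta$-intervals sit relative to one another on~$\mathbb S^1$. Your appeal to ``standard planarity/rigidity of rectangular diagrams of knots'' is exactly the missing step: a rectangular diagram is determined by the \emph{two cyclic orders}, and those are precisely what the dividing code of a thin tube fails to encode. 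The remark preceding the lemma, and the counterexample it cites, are pointing at this phenomenon, not at type~(3) crossings.

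The paper's construction addresses this directly. After building the thin tube~$\Pi_2$, it takes the symmetric difference with a full family of vertical annuli and then of horizontal annuli (Steps~3 and~4). The effect is that every~$\theta_{i,j}$ now occurs as the $\theta$-coordinate of a rectangle vertex lying on one \emph{fixed} longitude (for instance $\varphi=\varphi_{1,1}$), so the dividing code, by recording which rectangles touch along that longitude, pins down the full cyclic order of the~$\theta_{i,j}$'s; and symmetrically for the~$\varphi_{i,j}$'s. That global ``coordinate grid'' is the idea you are missing: rigidity is achieved not by keeping~$\Pi$ sparse, but by making it dense enough that every coordinate level is linked to every other through shared rectangle vertices.
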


\begin{proof}
For simplicity we assume that~$R$ is connected. In the case of a many-component link
the proof is essentially the same, but a cosmetic change of notation is needed.

Let
$$(\theta_1,\varphi_1),\ (\theta_1,\varphi_2),\ (\theta_2,\varphi_2),\ldots,
(\theta_{n-1},\varphi_n),\ (\theta_n,\varphi_n),\ (\theta_n,\varphi_1)$$
be the vertices of~$R$. We put~$\theta_0=\theta_n$ and~$\varphi_0=\varphi_n$.

Pick an~$\varepsilon>0$ not larger than the length
of any of the intervals~$[\theta_i;\theta_j]$ and~$[\varphi_i;\varphi_j]$, $i\ne j$.
For~$i\in\{1,2,\ldots,n\}$ and~$j\in\{0,1,2,3,4,5\}$ denote:
$$\theta_{i,j}=\theta_i+\frac{j\varepsilon}6,\quad
\varphi_{i,j}=\varphi_i+\frac{j\varepsilon}6.$$

The sought-for diagram~$\Pi$ is constructed in the following four steps illustrated in Figure~\ref{special-rd-fig}.
\begin{figure}[ht]
\includegraphics[scale=.31]{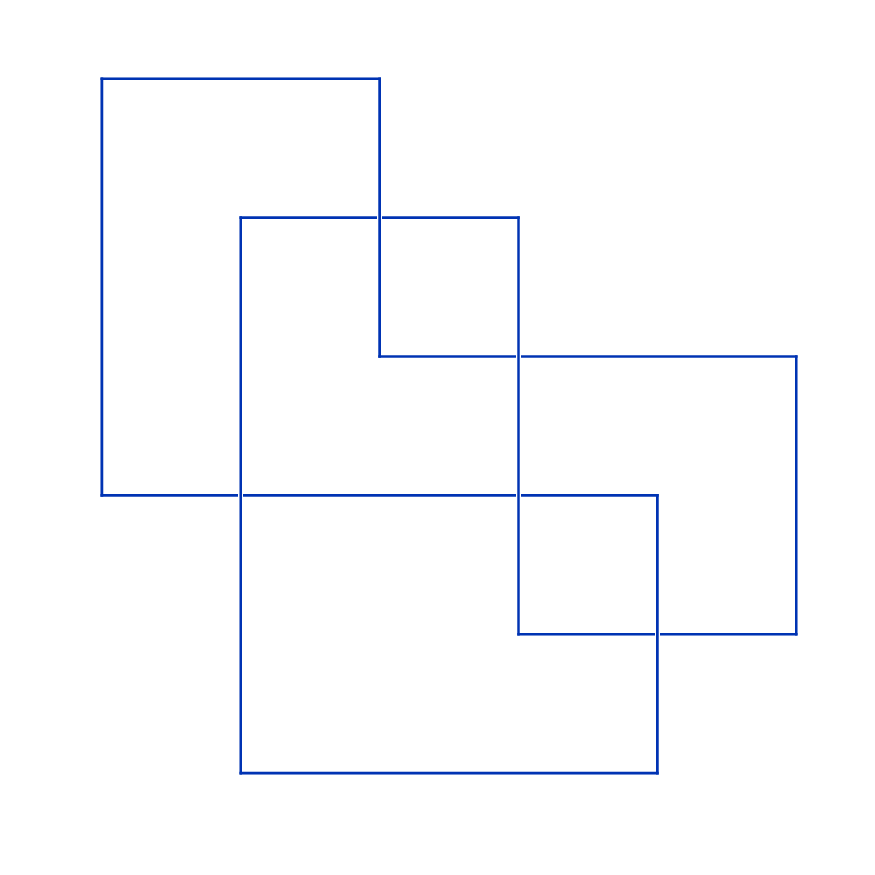}\put(-70,-7){$R$}\quad
\includegraphics[scale=.31]{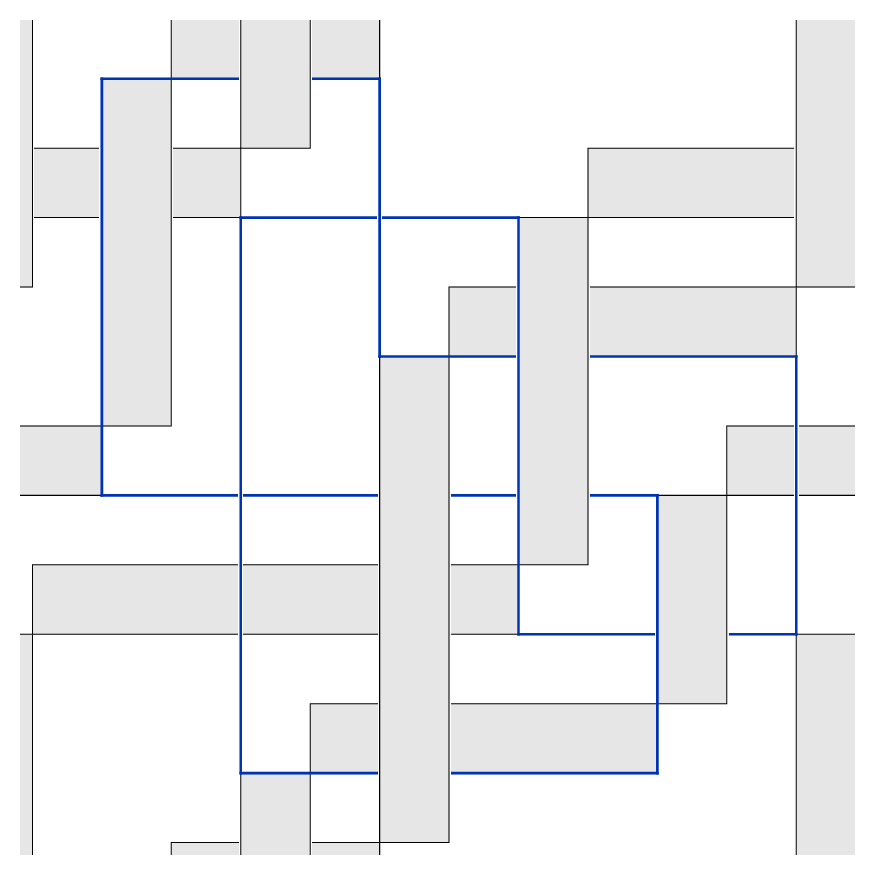}\put(-70,-7){$\Pi_1$}\quad
\includegraphics[scale=.31]{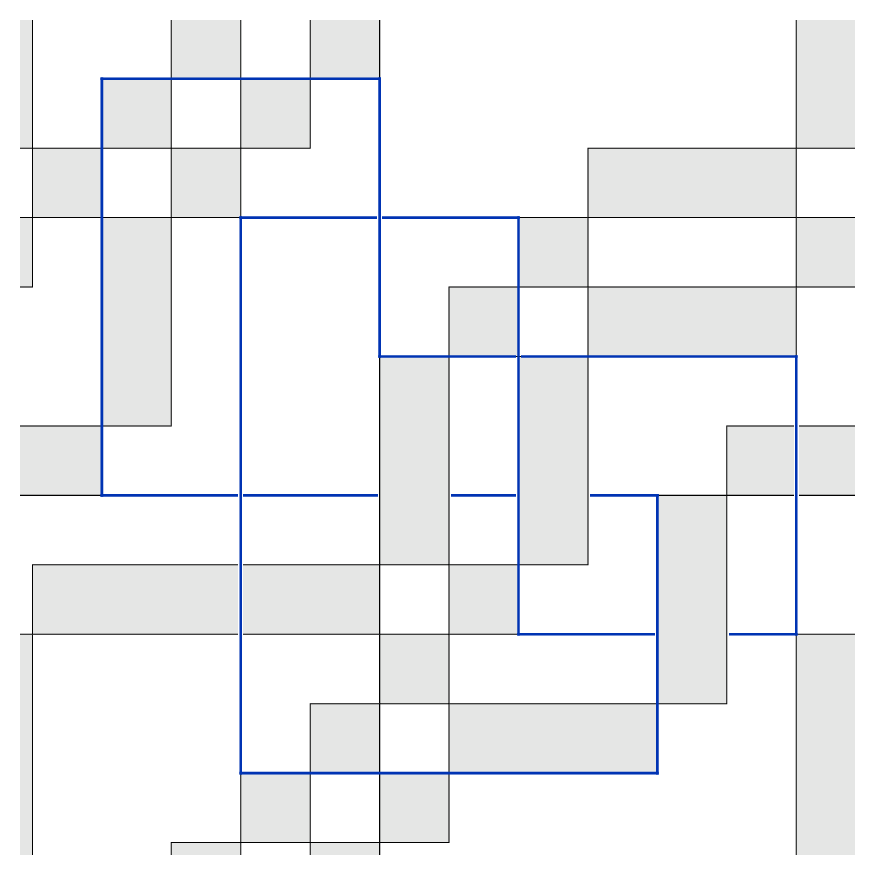}\put(-70,-7){$\Pi_2$}

\includegraphics[scale=.5]{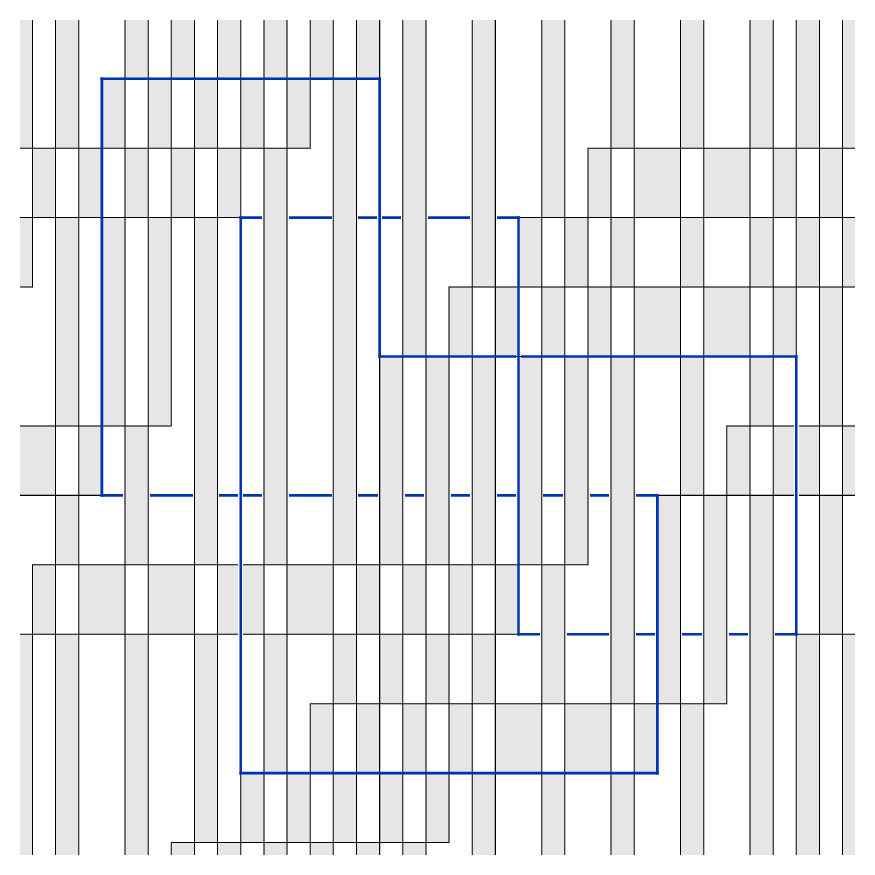}\put(-105,-7){$\Pi_3$}\quad
\includegraphics[scale=.5]{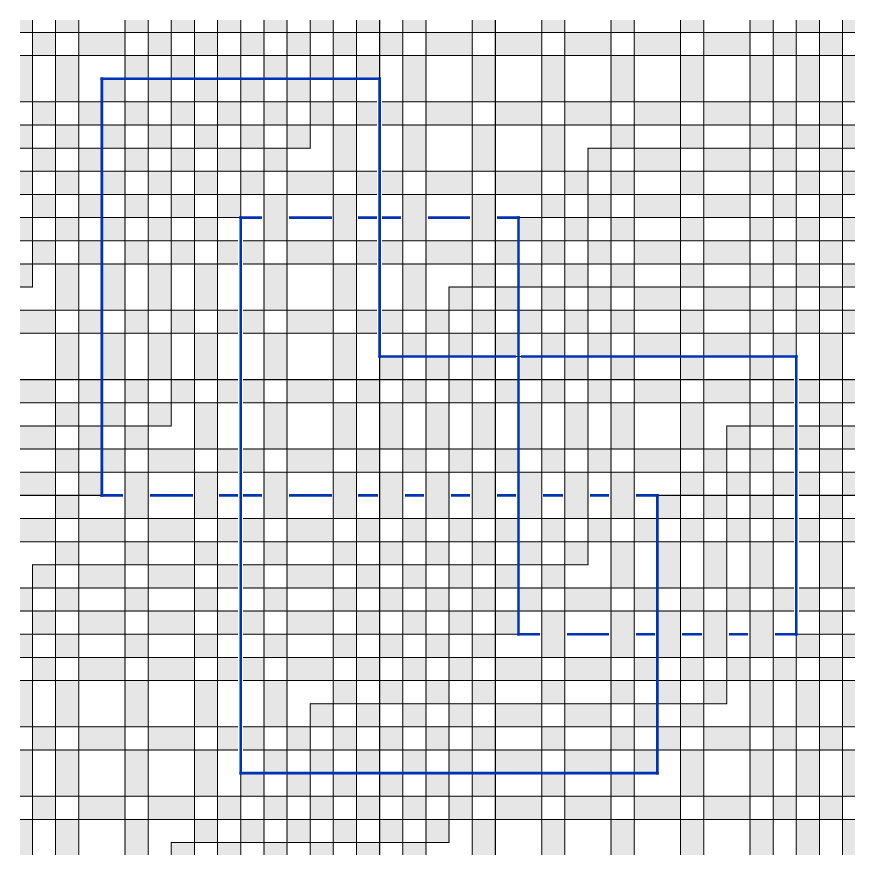}\put(-105,-7){$\Pi$}
\caption{Constructing the diagram~$\Pi$ in the proof of Lemma~\ref{special-diagram-exists}}\label{special-rd-fig}
\end{figure}

\smallskip\noindent\emph{Step~1.} Put
$$\Pi_1=\{[\theta_{i,0};\theta_{i,3}]\times[\varphi_{i,3};\varphi_{i+1,0}],
[\theta_{i,3};\theta_{i+1,0}]\times[\varphi_{i+1,0};\varphi_{i+1,3}]
\}_{i=0,1,\ldots,n}.$$

\smallskip\noindent\emph{Step~2.}
A rectangular diagram of a surface is uniquely defined by the union of its rectangles.
Define~$\Pi_2$ so that
$$\bigcup_{r\in\Pi_2}r=\overline{\bigcup_{r\in\Pi_1}r\,\setminus
\bigcup_{r,r'\in\Pi_1;\,r\ne r'}(r\cap r')}.$$

\smallskip\noindent\emph{Step~3.}
Define~$\Pi_3$ by
$$\bigcup_{r\in\Pi_3}r=\overline{\bigcup_{r\in\Pi_2}r\,\triangle
\bigcup_{i=1}^n\bigl(([\theta_{i,1};\theta_{i,2}]\cup[\theta_{i,4};\theta_{i,5}])\times\mathbb S^1\bigr)}.$$

\smallskip\noindent\emph{Step~4.}
Finally,~$\Pi$ is defined by
$$\bigcup_{r\in\Pi}r=\overline{\bigcup_{r\in\Pi_3}r\,\triangle
\bigcup_{i=1}^n\bigl(\mathbb S^1\times([\varphi_{i,1};\varphi_{i,2}]\cup[\varphi_{i,4};\varphi_{i,5}])\bigr)}.$$

One can see that~$R\subset\partial\Pi_1=\partial\Pi_2=\partial\Pi_3=\partial\Pi$. We claim that the combinatorial
type of~$\Pi$ is uniquely recovered from the dividing code of~$\Pi$.

Indeed, suppose we have forgotten the values of~$\theta_{i,j}$ and~$\varphi_{i,j}$, and
keep only the information about which pairs~$(\theta_{i,j},\varphi_{i',j'})$
are vertices of which rectangles in~$\Pi$ (this information is extracted from the dividing code).

For any~$i\in\{1,2,\ldots,n\}$ and~$j\in\{1,2,4,5\}$ the point~$(\theta_{i,j},\varphi_{1,1})$
is a vertex of some rectangle in~$\Pi$. Hence
the cyclic order on $\{\theta_{i,j}\}_{i\in\{1,2,\ldots,n\};\,j\in\{1,2,4,5\}}\subset\mathbb S^1$
is prescribed by the dividing code.

For each~$i\in\{1,2,\ldots,n\}$, we denote by~$i^-$ the unique element of~$\{1,2,\ldots,n\}$
such that~$(\theta_{i^-};\theta_i)\times\mathbb S^1$ does not contain vertices of~$R$.
One can see that for any~$i\in\{1,2,\ldots,n\}$ there exist $j,j'\in\{1,2,\ldots,n\}$
such that $(\theta_{i^-,5},\varphi_{j,1})$, $(\theta_{i,0},\varphi_{j,1})$, $(\theta_{i,1},\varphi_{j,1})$,
$(\theta_{i,2},\varphi_{j',1})$, $(\theta_{i,3},\varphi_{j',1})$, $(\theta_{i,4},\varphi_{j',1})$
are vertices of some rectangles in~$\Pi$. This prescribes the cyclic order on~$\{\theta_{i^-,5},\theta_{i,0},\theta_{i,1}\}$
and~$\{\theta_{i,2},\theta_{i,3},\theta_{i,4}\}$ for any~$i$. Therefore,
the cyclic order on~$\{\theta_{i,j}\}_{i\in\{1,2,\ldots,n\};\,j\in\{0,1,2,3,4,5\}}$ is
completely determined by the dividing code.

Similarly, completely determined by the dividing code
is the cyclic order on~$\{\varphi_{i,j}\}_{i\in\{1,2,\ldots,n\};\,j\in\{0,1,2,3,4,5\}}$,
and hence so is the combinatorial type of~$\Pi$.
\end{proof}

\begin{proof}[Proof of Theorem~\ref{main-theo}]
By Lemma~\ref{special-diagram-exists} we can find a rectangular diagram of a surface~$\Pi$
such that~$R_1\subset\partial\Pi$ and the combinatorial type of~$\Pi$ is determined by
the dividing code of~$\Pi$. We pick such~$\Pi$ and apply Proposition~\ref{auxiliary-prop}.
Since the combinatorial type of~$\Pi$ is determined by the dividing code of~$\Pi$,
we may strengthen the assertion of Proposition~\ref{auxiliary-prop} in this case
by claiming additionally that~$\Pi'=\Pi$ and~$R_2'=R_1$, which implies the assertion of the theorem.
\end{proof}

\section{Triviality of the orientation-preserving symmetry groups of some knots}\label{triviality-sec}
We use Rolfsen's knot notation~\cite{rolf}. Knots with crossing number $\leqslant10$
are well-studied (see \cite{Henry_Weeks,Kodama_Sakuma}), and the existing results about them imply the following.

\begin{prop}\label{table-knots-prop}
The orientation-preserving symmetry group of each of
the knots $9_{42}$, $9_{43}$, $9_{44}$, $9_{45}$, $10_{128}$, and~$10_{160}$
is trivial.
\end{prop}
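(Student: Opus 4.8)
The plan is to deduce Proposition~\ref{table-knots-prop} from the existing literature on symmetry groups of low-crossing knots rather than to compute anything from scratch. The key point is to relate the orientation-preserving symmetry group~$\mathrm{Sym}^*(K)$, as defined in this paper via~$\mathrm{Diff}^*(\mathbb S^3;K)/\mathrm{Diff}^*_0(\mathbb S^3;K)$, to the \emph{mapping class group of the pair}~$(\mathbb S^3,K)$, which is what is tabulated in references such as~\cite{Henry_Weeks} and~\cite{Kodama_Sakuma} (these give, for each knot with crossing number at most~$10$, the full symmetry group together with the information of which symmetries are invertible, positively/negatively amphichiral, etc.). By work going back to Hatcher's proof of the Smale conjecture and its relatives, the inclusion~$\mathrm{Isom}(\mathbb S^3\setminus K)\hookrightarrow\mathrm{Diff}(\mathbb S^3\setminus K)$ is a homotopy equivalence for hyperbolic complements, and more generally the mapping class group of~$(\mathbb S^3,K)$ is a finite group that can be read off from the geometric decomposition of the complement; so~$\mathrm{Sym}^*(K)$ is precisely the subgroup of the full mapping class group of~$(\mathbb S^3,K)$ consisting of classes that preserve both the orientation of~$\mathbb S^3$ and the orientation of~$K$.

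First I would make this identification precise: a diffeomorphism~$\varphi$ of~$\mathbb S^3$ preserving~$K$ setwise lies in~$\mathrm{Diff}^*_0(\mathbb S^3;K)$ if and only if it is isotopic to the identity through such diffeomorphisms, so~$\mathrm{Sym}^*(K)$ injects into~$\pi_0$ of the group of orientation-preserving diffeomorphisms of~$\mathbb S^3$ preserving~$K$ with orientation; this~$\pi_0$ is the group conventionally denoted the symmetry group of the knot, and the condition of preserving the orientation of~$K$ cuts out the ``no reversal'' part while preserving the orientation of~$\mathbb S^3$ cuts out the ``no mirror'' part. Thus~$\mathrm{Sym}^*(K)$ is trivial exactly when~$K$ admits no nontrivial orientation-preserving symmetry of~$\mathbb S^3$ that also preserves the orientation of~$K$, equivalently when the only such symmetry is isotopically trivial. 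Second, I would invoke the tables: each of~$9_{42},9_{43},9_{44},9_{45},10_{128},10_{160}$ is listed in~\cite{Henry_Weeks,Kodama_Sakuma} as \emph{chiral and non-invertible} (i.e.\ it has no symmetry reversing the orientation of~$\mathbb S^3$ and none reversing the orientation of~$K$), and moreover its full symmetry group is trivial; since the orientation-and-orientation-preserving part is a subgroup of the full symmetry group, it is a fortiori trivial. One should double-check each of the six knots individually against the Henry--Weeks computation (which is the one usually cited for the~$\le 10$ crossing symmetry data) and, where the complement is hyperbolic, note that the symmetry group equals the isometry group of the complement, which SnapPea/SnapPy computes to be trivial for precisely these examples.

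The main obstacle, such as it is, is not mathematical depth but bookkeeping and the passage between conventions: different sources package ``symmetry group'' differently (some give~$\mathrm{MCG}(\mathbb S^3,K)$, some give the subgroup fixing an orientation of~$\mathbb S^3$, some phrase things in terms of the four elementary symmetries invertibility/amphichirality), so the real work is to argue cleanly that the group this paper calls~$\mathrm{Sym}^*(K)$ is the subgroup of the classical symmetry group consisting of classes that are simultaneously orientation-preserving on~$\mathbb S^3$ and orientation-preserving on~$K$, and then to confirm that for each of the six knots that subgroup is trivial. A secondary point to handle is the homotopy-theoretic input: for the hyperbolic ones (all six have hyperbolic complement) one uses Mostow rigidity plus Gabai's and Hatcher's results to identify~$\pi_0\mathrm{Diff}$ of the complement with the isometry group, so that ``isotopically trivial'' in the definition matches ``isotopic to identity'' in the tabulated data; citing~\cite{Henry_Weeks} is enough since that paper already performs exactly this reduction. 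With these two identifications in place the proposition follows immediately by inspection of the tables.
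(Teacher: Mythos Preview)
Your overall strategy---reduce to published symmetry-group computations for low-crossing knots---is exactly what the paper does, but you have the facts wrong in a way that breaks the argument as written. You assert that each of $9_{42},9_{43},9_{44},9_{45},10_{128},10_{160}$ is ``chiral and non-invertible'' with \emph{trivial} full symmetry group. In fact all six knots are \emph{invertible} (this is visible already in Rolfsen's pictures and is recorded in the tables you cite), and their full symmetry groups are~$\mathbb Z_2$, not trivial. So the step ``the orientation-and-orientation-preserving part is a subgroup of the full symmetry group, which is trivial, hence a fortiori trivial'' fails: the full group is~$\mathbb Z_2$, and you must explain why its nontrivial element does not lie in~$\mathrm{Sym}^*(K)$.

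The correct reasoning is the one the paper gives: since each knot is invertible, the nontrivial symmetry is an orientation-reversing involution on~$K$ (it preserves the orientation of~$\mathbb S^3$ but flips the orientation of~$K$), so it is excluded from~$\mathrm{Sym}^*(K)$ by definition, and~$\mathrm{Sym}^*(K)$ is trivial. The paper then cites specific sources for the~$\mathbb Z_2$ computation: Sakuma for the elliptic Montesinos knots $9_{42}$--$9_{45}$, Boileau--Zimmermann for~$10_{128}$, and L\"udicke plus Hartley for~$10_{160}$. Your plan to quote \cite{Henry_Weeks,Kodama_Sakuma} directly is a perfectly acceptable alternative route to the same~$\mathbb Z_2$ answer, but you must read off the right information (symmetry group~$\mathbb Z_2$, generated by an inversion) rather than the incorrect claim of triviality and non-invertibility.
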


The concrete sources for this statement are as follows.
All the knots listed in Proposition~\ref{table-knots-prop}
are known to be invertible (this can be seen from their pictures in~\cite{rolf}),
so the assertion is equivalent to saying that the symmetry group of each of the
knots is~$\mathbb Z_2$.

The knots~$9_{42}$, $9_{43}$, $9_{44}$, $9_{45}$ and $10_{128}$ are Montesinos knots (these
are introduced in~\cite{Mont}):
$$
9_{42}=K\Bigl(\frac{2}{5}, \frac{1}{3}, \frac{-1}{2}\Bigr),\
9_{43}=K\Bigl(\frac{3}{5}, \frac{1}{3}, \frac{-1}{2}\Bigr),\
9_{44}=K\Bigl(\frac{2}{5}, \frac{2}{3}, \frac{-1}{2}\Bigr),\
9_{45}=K\Bigl(\frac{3}{5}, \frac{2}{3}, \frac{-1}{2}\Bigr),\
10_{128}=K\Bigl(\frac{3}{7}, \frac{1}{3}, \frac{-1}{2}\Bigr).
$$

The knots~$9_{42}$, $9_{43}$, $9_{44}$, $9_{45}$ are elliptic Montesinos knots,
for which the symmetry group is computed by M.\,Sakuma~\cite{Sakuma}.
The symmetry group of the knot~$10_{128}$ is computed by M.\,Boileau and B.\,Zimmermann~\cite{BoZim}.
Both works are based on the technique which is due to F.\,Bonahon and L.\,Sieben\-mann~\cite{BoSie}.

The fact that the knot~$10_{160}$ is not periodic is established by~U.\,L\"udicke~\cite{Ludicke},
and that it is not freely periodic is shown by R.\,Hartley~\cite{Hartley}.

\begin{prop}\label{monster-knot-has-trivial-group-prop}
The orientation-preserving symmetry group of the \emph(topologically equivalent\emph)
knots~$K_1$ and~$K_2$ in Figure~\ref{monster-knots-fig} is trivial.
\end{prop}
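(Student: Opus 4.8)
The plan is to identify the knot type of $K_1$ (equivalently $K_2$) with a known knot from Rolfsen's table — or with a knot obtained from such a table knot by a standard operation (connect sum, cabling, or the like) — whose orientation-preserving symmetry group has already been determined in the literature, and then invoke that determination together with Proposition~\ref{table-knots-prop}. Concretely, I would first extract the underlying topological knot type from the front projections in Figure~\ref{monster-knots-fig}: compute a rectangular (grid) presentation, simplify it, and read off a classical invariant — the Jones or Alexander polynomial, or the hyperbolic volume and symmetry group as computed by SnapPy — that pins down the knot up to finitely many candidates. Since $K_1$ and $K_2$ cobound an annulus, the most natural guess is that the knot is a cable or a knot with an obvious incompressible annulus in its complement, so I would pay particular attention to the JSJ/geometric decomposition of $\mathbb S^3\setminus K_1$.

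Next, once the knot type is identified, I would compute $\mathrm{Sym}^*(K)$ as follows. If the complement is hyperbolic, then by Mostow rigidity the full symmetry group equals the (finite) isometry group of the complete hyperbolic structure, which is computed rigorously by SnapPy's \texttt{symmetry\_group} together with the verified-computation routines; one then restricts to the subgroup preserving the orientation of $\mathbb S^3$ and of $K$. If instead the complement is a graph manifold or has a nontrivial JSJ decomposition, I would apply the Bonahon--Siebenmann machinery (as cited in the paragraph following Proposition~\ref{table-knots-prop}) to reduce the computation to the symmetry groups of the hyperbolic pieces glued along tori, each of which is again accessible. In either case the goal is to show $\mathrm{Sym}^*(K_1)$ is trivial. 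If the knot turns out to be exactly one of $9_{42},9_{43},9_{44},9_{45},10_{128},10_{160}$, the result is immediate from Proposition~\ref{table-knots-prop}; more likely the knot has larger crossing number, and I would cite the analogous computation for that specific knot from \cite{Henry_Weeks} or \cite{Kodama_Sakuma}, or carry it out directly with verified software.

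The main obstacle I expect is twofold. First, \emph{identifying} the topological type of $K_1$ from the rather complicated front in Figure~\ref{monster-knots-fig} is itself nontrivial: the diagram is large (this is precisely why the algebraic invariants are ``infeasible'' to compute by hand), so I would rely on a machine computation of a minimal grid number and a canonical form, and I must argue that this computation is rigorous (e.g.\ the Dynnikov-move simplification is algorithmic and the resulting small diagram can be matched against a table). Second, even after identification, if the complement is hyperbolic I must ensure the isometry-group computation is \emph{verified} rather than merely numerical — this is where I would lean on SnapPy's \texttt{verify} module or on an exact triangulation — and I must be careful to pass correctly from the full mapping-class-group-level symmetry group to the quotient $\mathrm{Diff}^*(\mathbb S^3;K)/\mathrm{Diff}^*_0(\mathbb S^3;K)$ appearing in the definition, using that for knots in $\mathbb S^3$ this quotient is finite and is realized by isometries. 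Modulo these verification issues, the proof is a matter of invoking the existing classification results once the knot type is in hand.
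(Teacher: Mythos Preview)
Your plan rests on identifying $K_1$ with a tabulated knot, but this is not going to happen: the front in Figure~\ref{monster-knots-fig} comes from a rectangular diagram with $37$ vertical and $37$ horizontal edges, and the Dowker--Thistlethwaite code supplied in the paper has $254$ crossings. This knot is orders of magnitude beyond Rolfsen's table, beyond \cite{Henry_Weeks} and \cite{Kodama_Sakuma}, and beyond any existing census, so the ``look it up'' step simply fails. Your remark that the cobounding annulus suggests $K_1$ is a cable or has an essential annulus in its complement is also a misreading: the annulus $A$ lives in $\mathbb S^3$ with $\partial A=K_1\cup(-K_2)$, not in the complement of $K_1$; it merely witnesses that $K_1$ and $K_2$ are topologically isotopic and says nothing about the JSJ decomposition of $\mathbb S^3\setminus K_1$.

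Your fallback---feed the knot to SnapPy and use verified hyperbolic structure plus isometry-group computation---is in principle viable, and indeed the paper remarks that SnapPy confirms the result. But the paper does not take that route for its proof. Instead it argues in two independent steps. First, it computes the Alexander polynomial $\Delta(t)$ (degree~$20$) and uses Murasugi's criterion to exclude every prime period~$p$, and Hartley's criterion to exclude every free period: for primes $p<100$ one checks that $\Delta(t^p)$ is irreducible, while for $p>100$ a Diophantine analysis based on Newton sums and root-location bounds shows that $\Delta(t^p)$ has no self-reciprocal factor of degree~$20$. This rules out all finite-order elements of $\mathrm{Sym}^*(K_1)$. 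Second, to conclude triviality one must know the knot is hyperbolic; the paper proves this in the Appendix by a tailored normal-surface argument (exploiting rigidity of the rectangular diagram) showing that the complement contains no essential torus. None of this requires identifying the knot or trusting a black-box hyperbolic-geometry computation.
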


\begin{proof}
We use the classical methods of the above mentioned works with some technical improvements needed
for reducing the amount of computations. `A direct check' below refers
to a computation that requires only a few minutes of a modern computer's processor time
and standard well known algorithms.

The first direct check is to see that the Alexander polynomial of~$K_1$ and~$K_2$ is
\begin{multline}\label{alexander-poly}
\Delta(t)=t^{20}-t^{19}+t^{18}-3\,t^{17}+3\,t^{16}-5\,t^{15}+10\,t^{14}-5\,t^{13}+6\,t^{12}-14\,t^{11}+15\,t^{10}-\\
14\,t^{9}+6\,t^{8}-5\,t^{7}+10\,t^{6}-5\,t^{5}+3\,t^{4}-3\,t^{3}+t^{2}-t+1.
\end{multline}

According to Murasugi~\cite{Murasugi}, if a knot has period~$p$, with~$p$ prime, then the Alexander polynomial
of this knot reduced modulo~$p$ is either the~$p$th power of a polynomial with coefficients in~$\mathbb Z_p$
or has a factor of the form~$(1+t+\ldots+t^d)^{p-1}$, where~$d\geqslant1$. It is a direct check that
neither of these occurs in the case of the polynomial~\eqref{alexander-poly}
for prime~$p\leqslant19$, and for~$p>19$ the corresponding verification is trivial.

According to {Hartley}~\cite{Hartley}, to prove that our knot has not a free period equal to~$p$ it suffices
to ensure that~$\Delta(t^p)$ does not have a self-reciprocal factor of degree~$\deg\Delta(t)=20$.
For prime~$p<100$ it can be checked directly that~$\Delta(t^p)$ is irreducible.

Suppose, for some prime~$p>100$, we have a factorization~$\Delta(t^p)=f(t)\cdot g(t)$ with self-reciprocal~$f(t),g(t)\in\mathbb Z[t]$
such that~$\deg f=20$. Since~$\Delta(0)=1$ we may assume~$f(0)=1$ without loss of generality.
For a self-reciprocal polynomial~$q(t)$ of even degree we denote by~$\widetilde q(t)$ the Laurent polynomial~$t^{-(\deg q)/2}q(t)$.

For any~$\alpha\in\{1,e^{\pi\mathbbm i/3},\mathbbm i,e^{2\pi\mathbbm i/3},-1\}$ we have
\begin{enumerate}
\item
$\alpha^p\in\{\alpha,\overline\alpha\}$;
\item
$\widetilde\Delta(\alpha)=\widetilde\Delta(\overline\alpha)$, $\widetilde f(\alpha)=\widetilde f(\overline\alpha)$;
\item
$\Delta(\alpha),f(\alpha),g(\alpha)\in\mathbb Z$.
\end{enumerate}

For~$a=(a_1,a_2,a_3,a_4,a_5)$, denote by~$\ell_a(t)\in\mathbb R[t]$ a self-reciprocal polynomial of even degree
not exceeding~$8$ such that~$\widetilde\ell_a(t)$ takes the values~$a_1,a_2,a_3,a_4,a_5$ at the
points~$t=1,e^{\pi\mathbbm i/3},\mathbbm i,e^{2\pi\mathbbm i/3},-1$, respectively.
This polynomial is clearly unique.

Now let~$a\in\mathbb Z^5$ be the list of values of~$\widetilde f$ at the points~$1,e^{\pi\mathbbm i/3},\mathbbm i,e^{2\pi\mathbbm i/3},-1$.
Then the polynomial~$t^{10}\bigl(\widetilde f(t)-\widetilde\ell(t)\bigr)$ is divisible by~$(t^6-1)(t^2+1)$.
Since this polynomial is also self-reciprocal, it is actually divisible by~$(t^6-1)(t^2+1)(t-1)$. Thus, we have
\begin{multline}\label{f-decomp-eq}
f(t)=t^{10}\,\widetilde\ell_a(t)+(t^6-1)(t^2+1)(t-1)\times{}\\
(t^{11}+b_1t^{10}+b_2t^9+b_3t^8+b_4t^7+b_5t^6+
b_5t^5+b_4t^4+b_3t^3+b_2t^2+b_1t+1).\end{multline}
Since~$\widetilde\ell_a$ may have non-zero coefficients only in front of~$t^k$ with~$k\in[-4;4]$,
we see that~$f(t)\in\mathbb Z[t]$ implies~$b_i\in\mathbb Z$, $i=1,\ldots,5$, and~$\ell_a(t)\in\mathbb Z[t]$.

One easily finds that the values of~$\widetilde\Delta(t)$ at the points~$t=1,e^{\pi\mathbbm i/3},\mathbbm i,e^{2\pi\mathbbm i/3},-1$
are~$1,-7,17,13,113$, respectively. Therefore, $a_1,a_2,a_3,a_4,a_5$ must be divisors of~$1,-7,13,13,113$, respectively. Together
with the condition~$\ell_a(t)\in\mathbb Z[t]$ this leaves us only the following 32 options for~$a$:
\begin{align*}
a&=\pm(1,1,1,1,1),&\ell_a(t)&=\pm1;\\
a&=\pm(1,1,1,13,1),&\ell_a(t)&=\pm(-2t^8+2t^7-2t^5+5t^4-2t^3+2t-2);\\
a&=\pm(1,1,17,1,1),&\ell_a(t)&=\pm(4t^8-4t^6+t^4-4t^2+4);\\
a&=\pm(1,1,17,13,1),&\ell_a(t)&=\pm(2t^8+2t^7-4t^6-2t^5+5t^4-2t^3-4t^2+2t+2);\\
a&=\pm(1,-1,1,1,113),&\ell_a(t)&=\pm(5t^8-9t^7+14t^6-19t^5+19t^4-19t^3+14t^2-9t+5);\\
a&=\pm(1,-1,1,13,113),&\ell_a(t)&=\pm(3t^8-7t^7+14t^6-21t^5+23t^4-21t^3+14t^2-7t+3);\\
a&=\pm(1,-1,17,1,113),&\ell_a(t)&=\pm(9t^8-9t^7+10t^6-19t^5+19t^4-19t^3+10t^2-9t+9);\\
a&=\pm(1,-1,17,13,113),&\ell_a(t)&=\pm(7t^8-7t^7+10t^6-21t^5+23t^4-21t^3+10t^2-7t+7);\\
a&=\pm(1,7,1,1,1),&\ell_a(t)&=\pm(-t^8-t^7+t^5+3t^4+t^3-t-1);\\
a&=\pm(1,7,1,13,1),&\ell_a(t)&=\pm(-3t^8+t^7-t^5+7t^4-t^3+t-3);\\
a&=\pm(1,7,17,1,1),&\ell_a(t)&=\pm(3t^8-t^7-4t^6+t^5+3t^4+t^3-4t^2-t+3);\\
a&=\pm(1,7,17,13,1),&\ell_a(t)&=\pm(t^8+t^7-4t^6-t^5+7t^4-t^3-4t^2+t+1);\\
a&=\pm(1,-7,1,1,113),&\ell_a(t)&=\pm(6t^8-8t^7+14t^6-20t^5+17t^4-20t^3+14t^2-8t+6);\\
a&=\pm(1,-7,1,13,113),&\ell_a(t)&=\pm(4t^8-6t^7+14t^6-22t^5+21t^4-22t^3+14t^2-6t+4);\\
a&=\pm(1,-7,17,1,113),&\ell_a(t)&=\pm(10t^8-8t^7+10t^6-20t^5+17t^4-20t^3+10t^2-8t+10);\\
a&=\pm(1,-7,17,13,113),&\ell_a(t)&=\pm(8t^8-6t^7+10t^6-22t^5+21t^4-22t^3+10t^2-6t+8).
\end{align*}

It is another direct check that all roots of~$\Delta$ are located inside the circle~$\{z\in\mathbb C:|z|<3/2\}$.
Therefore, the roots of~$f$ are contained in the circle~$\{z\in\mathbb C:|z|<(3/2)^{1/p}\}$.

For~$k\in\mathbb N$, denote by~$p_k$ the $k$th Newton's sum of~$f$, that is, the sum of the $k$th powers of the roots.
They must
be integers, and we have the following estimate for their absolute values:
\begin{equation}\label{newton-ineq}
|p_k|<20\cdot(3/2)^{k/p}.
\end{equation}
Since~$p>100$, this implies, in particular, that
\begin{equation}\label{newton-ineq-5}
|p_k|\leqslant20\quad\text{for }k=1,2,3,4,5.
\end{equation}

Denote by~$c_k$, $k=1,2,\ldots,19$, the coefficients of~$f$: $f=1+c_1t+c_2t^2+\ldots+c_{19}t^{19}+t^{20}$,
$c_i=c_{20-i}$.
The first (equivalently: the last) five of them are related with~$p_i$ by the following Newton's identities:
$$\begin{aligned}
-p_1&=c_1,\\
-p_2&=c_1p_1+2c_2,\\
-p_3&=c_1p_2+c_2p_1+3c_3,\\
-p_4&=c_1p_3+c_2p_2+c_3p_1+4c_4,\\
-p_5&=c_1p_4+c_2p_3+c_3p_2+c_4p_1+5c_5.
\end{aligned}$$
This Diophantine system has exactly 971\,865 solutions satisfying~\eqref{newton-ineq-5},
which can be searched (another direct check).
The coefficients~$b_1,b_2,b_3,b_4,b_5$ in~\eqref{f-decomp-eq} can obviously be expressed through~$c_1,c_2,c_3,c_4,c_5$.
Thus, we get only $32\cdot 971865=31\,099\,680$ possible candidates for~$f$, and it is the last direct check
that the $k$th Newton's sum of each of the obtained polynomials violates~\eqref{newton-ineq}
for some~$k\leqslant31$ with any~$p>100$. A contradiction.

We have thus established that the orientation preserving symmetry group of the knots~$K_1$ and~$K_2$
has no finite-order elements. It remains to ensure that these knots are not satellite knots (that is, they are hyperbolic).
A way to verify this is explained in the Appendix.
\end{proof}

Proposition~\ref{monster-knot-has-trivial-group-prop} is also directly confirmed by the SnapPy program~\cite{SnapPy}.
For the reader's convenience, we provide here a Dowker--Thistlethwaite code of the diagram
of~$K_1$ shown in Figure~\ref{monster-knots-fig} (the numeration of the crossings starts from the arrowhead):

\smallskip
\begin{mdframed}[leftmargin=5mm, linewidth=0pt, skipbelow=0pt]
$-462$, $-346$, $-76$, $-218$, $156$, $472$, $356$, $66$, $208$, $126$, $-324$, $444$, $132$, $202$, $60$, $362$,
$180$, $-478$, $-338$, $-284$, $-452$, $246$, $302$, $188$, $-460$, $400$, $-296$, $-492$, $-450$, $-286$, $-230$, $-88$,
$-172$, $-122$, $-418$, $352$, $468$, $160$, $-276$, $220$, $154$, $474$, $334$, $384$, $412$, $502$, $-442$, $-24$,
$134$, $200$, $58$, $40$, $146$, $-366$, $-184$, $-222$, $-80$, $8$, $314$, $264$, $380$, $416$, $506$, $-168$,
$-92$, $-234$, $-290$, $-446$, $196$, $54$, $456$, $-488$, $-46$, $-282$, $-340$, $-480$, $-430$, $-272$, $-116$, $-424$,
$-372$, $-256$, $508$, $20$, $504$, $414$, $382$, $266$, $-84$, $-226$, $36$, $150$, $278$, $344$, $-396$, $-458$,
$-52$, $-294$, $-494$, $-448$, $-288$, $-232$, $-90$, $-170$, $-124$, $68$, $354$, $470$, $158$, $-274$, $-428$, $152$,
$476$, $336$, $386$, $410$, $500$, $94$, $-22$, $136$, $198$, $56$, $42$, $-392$, $-140$, $192$, $50$, $2$,
$320$, $-350$, $-72$, $-214$, $-12$, $-332$, $176$, $312$, $114$, $426$, $-482$, $-342$, $106$, $304$, $388$,
$408$, $498$, $-96$, $-238$, $402$, $250$, $-142$, $-394$, $-104$, $-364$, $-182$, $-224$, $-82$, $-10$, $-216$, $-74$,
$-348$, $-464$, $166$, $128$, $206$, $64$, $358$, $-268$, $-434$, $-32$, $306$, $108$, $368$, $-484$, $162$, $466$,
$-376$, $-420$, $-120$, $-174$, $-86$, $-228$, $38$, $148$, $280$, $-186$, $4$, $318$, $260$, $-70$, $-212$, $-14$,
$-330$, $-436$, $-30$, $-102$, $-244$, $454$, $144$, $486$, $-252$, $194$, $-138$, $-242$, $-100$, $-28$, $-438$, $-328$,
$-16$, $-210$, $378$, $262$, $316$, $6$, $-78$, $112$, $310$, $178$, $360$, $62$, $204$, $130$, $-236$, $292$,
$496$, $406$, $390$, $-44$, $-490$, $-298$, $398$, $254$, $164$, $-258$, $-374$, $-422$, $-118$, $-270$, $-432$, $-34$,
$308$, $110$, $370$, $-48$, $190$, $300$, $248$, $404$, $-240$, $-98$, $-26$, $-440$, $-326$, $-18$, $-322$.
\end{mdframed}

\section{Applications}\label{app-sec}

\begin{theo}
There exists an algorithm that decides in finite time whether or not two
given Legendrian knots,~$L_1$ and~$L_2$, say, are equivalent provided that
they are topologically equivalent and have trivial orientation-preserving symmetry
group.
\end{theo}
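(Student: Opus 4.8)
The plan is to turn the problem into one of recognizing exchange-equivalence of rectangular diagrams, which is decidable, by combining Theorems~\ref{rect-desc-of-leg-theo} and~\ref{main-theo} with the Fuchs--Tabachnikov stabilization theorem and the algorithmic solvability of the topological knot recognition problem. First I would represent the inputs combinatorially: it is a standard fact that every equivalence class of $\xi_+$-Legendrian knots equals $\mathscr L_+(R)$ for some rectangular diagram $R$, and such an $R$ can be produced algorithmically from a front projection; so let $R_1,R_2$ be rectangular diagrams with $\mathscr L_+(R_i)$ the class of $L_i$. Then $L_1$ and $L_2$ are Legendrian equivalent if and only if $\mathscr L_+(R_1)=\mathscr L_+(R_2)$. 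The whole mechanism rests on the following asymmetry contained in the ``Moreover'' part of Theorem~\ref{rect-desc-of-leg-theo}: a stabilization move of oriented type $\overrightarrow{\mathrm I}$ or $\overleftarrow{\mathrm I}$ leaves $\mathscr L_+$ unchanged and replaces $\mathscr L_-(R)$ by $S_+(\mathscr L_-(R))$, respectively $S_-(\mathscr L_-(R))$ --- so one may stabilize the $\xi_-$-side arbitrarily without disturbing the $\xi_+$-side.

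Next, $\mathscr L_-(R_1)$ and $\mathscr L_-(R_2)$ are $\xi_-$-Legendrian knots of a common topological type, namely that of $\widehat{R_1}\cong\widehat{R_2}$. By the Fuchs--Tabachnikov theorem~\cite{futa97}, two Legendrian representatives of one knot type admit a common stabilization; since $S_+$ and $S_-$ commute up to equivalence, the stabilizations of a fixed knot form a directed family and this common stabilization is reached using stabilizations only. Invoking the algorithmic solvability of the knot recognition problem --- which moreover provides a computable bound on the number of Reidemeister moves relating equivalent diagrams, hence, through the effectiveness of the Fuchs--Tabachnikov construction, on the number of stabilizations needed --- one obtains a computable bound $B=B(|R_1|+|R_2|)$ such that: if $L_1$ and $L_2$ are Legendrian equivalent, then there are diagrams $R_1',R_2'$ obtained from $R_1,R_2$ by at most $B$ stabilization moves of oriented types $\overrightarrow{\mathrm I},\overleftarrow{\mathrm I}$ with $\mathscr L_+(R_1')=\mathscr L_+(R_2')$ and $\mathscr L_-(R_1')=\mathscr L_-(R_2')$. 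Here the $\mathscr L_-$-equality is arranged by the previous paragraph (choosing the numbers of $\overrightarrow{\mathrm I}$- and $\overleftarrow{\mathrm I}$-moves so as to realize a common stabilization of $\mathscr L_-(R_1)$ and $\mathscr L_-(R_2)$), while the $\mathscr L_+$-equality persists because type~$\mathrm I$ moves do not change $\mathscr L_+$.

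The topological type of $R_1'$ and $R_2'$ is the common underlying knot $K$ of $L_1,L_2$, which by hypothesis has trivial orientation-preserving symmetry group, so Theorem~\ref{main-theo} applies and shows that $R_1'$ and $R_2'$ are related by a sequence of exchange moves. Since exchange moves preserve the number of vertices and there are only finitely many combinatorial types of rectangular diagrams with a given number of vertices, exchange-equivalence of two diagrams is decidable by a finite search. The algorithm is thus: build $R_1,R_2$ from $L_1,L_2$; compute $B$; for every pair of sequences of at most $B$ stabilization moves of oriented types $\overrightarrow{\mathrm I},\overleftarrow{\mathrm I}$ applied to $R_1$ and to $R_2$, test whether the two resulting diagrams are exchange-equivalent; declare $L_1,L_2$ equivalent exactly when some test succeeds. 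For correctness, a successful test forces $\mathscr L_+$ of the two resulting diagrams to coincide (Theorem~\ref{rect-desc-of-leg-theo}), whence $L_1$ and $L_2$ are Legendrian equivalent; and if $L_1$ and $L_2$ are equivalent then the pair $R_1',R_2'$ produced above occurs among the tested pairs and passes. I expect the main obstacle to be the construction of $B$ in the second step: this is where one is forced to appeal to the (in practice highly non-constructive) effective solution of the topological knot recognition problem, and it is the only ingredient beyond a direct assembly of the results established above.
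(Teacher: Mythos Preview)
Your argument is correct, and the overall architecture---reduce to an exchange-equivalence test via Theorem~\ref{main-theo} by first forcing the $\mathscr L_-$-classes to agree while keeping $\mathscr L_+$ fixed---is exactly the paper's. The difference lies in how the $\mathscr L_-$-side is controlled. The paper does not compute any bound~$B$: instead it invokes~\cite[Theorem~7]{bypasses}, which guarantees that a single rectangular diagram~$R_3$ with $\mathscr L_+(R_3)=\mathscr L_+(R_1)$ and $\mathscr L_-(R_3)=\mathscr L_-(R_2)$ always exists (whether or not $L_1\sim L_2$). One then searches, by enumerating sequences of elementary moves from~$R_1$ in which all type~I (de)stabilizations precede all type~II ones, until such an~$R_3$ is found; the search halts because the target exists. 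A single exchange-equivalence test of~$R_3$ against~$R_2$ then decides the question. This sidesteps entirely the appeal to effective Reidemeister bounds that you flag as the ``main obstacle.''

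Your route trades the dependence on~\cite[Theorem~7]{bypasses} for a dependence on an effective form of Fuchs--Tabachnikov, which in turn rests on a computable bound for Reidemeister moves between equivalent diagrams. That bound is not a consequence of Haken-type decidability of knot recognition, as your phrasing suggests, but is a separate and much more recent result (Coward--Lackenby). With that correction your approach goes through; alternatively you could avoid the a~priori bound altogether by first \emph{finding} a Reidemeister sequence by enumeration (it exists since the knots are topologically equivalent) and extracting~$B$ from it via the constructive Fuchs--Tabachnikov argument. Either way your proof is valid, but the paper's use of~\cite[Theorem~7]{bypasses} keeps everything inside the rectangular-diagram framework and makes the termination argument one line.
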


\begin{proof}
It is understood that~$L_1$ and~$L_2$ are presented in a combinatorial
way that allows to recover actual curves in~$\mathbb R^3$.
Whichever presentation is chosen, it can always be converted into rectangular
diagrams. So, we assume that we are given two rectangular diagrams of a knot,
$R_1$ and~$R_2$, say, such that~$\mathscr L_+(R_1)\ni L_1$ and~$\mathscr L_+(R_2)\ni L_2$.

By~\cite[Theorem~7]{bypasses} there exists a rectangular diagram of a knot~$R_3$
such that~$\mathscr L_+(R_3)=\mathscr L_+(R_1)$ and~$\mathscr L_-(R_3)=\mathscr L_-(R_2)$.
According to Theorem~\ref{rect-desc-of-leg-theo} this is equivalent to saying
that there exists a sequence of elementary moves transforming~$R_1$ to~$R_3$ (respectively,
$R_3$ to~$R_2$) including only exchange moves and type~I (respectively, type~II)
stabilizations and destabilizations. Therefore, such an~$R_3$
can be found by an exhaustive search of sequences of elementary moves
starting at~$R_1$ in which all type~I stabilizations and destabilizations occur before
all type~II ones. Indeed, the combinatorial types of such sequences are enumerable.The search terminates once a sequence with
the above properties arriving at~$R_2$ is encountered. By~\cite[Theorem~7]{bypasses}
this must eventually happen.

Once~$R_3$ is found we check whether or not it is related to~$R_2$
by a sequence of exchange moves. The latter can produce only finitely many combinatorial
types of diagrams from the given one, so this process is finite.
According to Theorem~\ref{main-theo} the diagrams~$R_2$ and~$R_3$
are related by a sequence of exchange moves if and only if~$\mathscr L_+(R_2)=\mathscr L_+(R_3)$,
which is equivalent to~$\mathscr L_+(R_1)=\mathscr L_+(R_2)$.
\end{proof}

Now we use Theorem~\ref{main-theo} to establish some facts that are left in~\cite{chong2013}
as conjectures. These involve knots with trivial orientation-preserving symmetry group that are
listed in Proposition~\ref{table-knots-prop} above.

For a rectangular diagram of a knot~$R$, the set of all rectangular diagrams
obtained from~$R$ by a sequence of exchange moves is called
\emph{the exchange class of~$R$}.

In what follows we use the following notation system. $\xi_+$-Legendrian classes of knots having topological
type~$m_n$ are denoted~$m_n^{k+}$, $k=1,2,\ldots$, or simply~$m_n^+$ if we need to consider only
one Legendrian class and its images under~$\mu$ and orientation reversal. Similarly,
for $\xi_-$-Legendrian we use notation of the form~$m_n^{k-}$ or~$m_n^-$, and
for exchange classes~$m_n^{k\mathrm R}$ or~$m_n^{\mathrm R}$.

The $\xi_\pm$-Legendrian classes and exchange classes of our interest are
defined by specifying a representative.
In order to help the reader to see the correspondence with the notation of~\cite{chong2013}
we define the~$\xi_-$-Legendrian classes via their mirror images, which are $\xi_+$-Legendrian classes.

We use the same notation for natural operations on (exchange classes of) rectangular diagrams as for Legendrian knots:
`$-$' for orientation reversal, $r_\medvert$ and~$r_-$ for the horizontal and the vertical flip, respectively,
and~$\mu$ for~$r_\medvert\circ r_-$. One can see that if~$X$ is an exchange class,
then~$\mathscr L_\pm(-X)=-\mathscr L_\pm(X)$, $\mathscr L_\pm(\mu(X))=\mu(\mathscr L_\pm(X))$,
$\mathscr L_\pm(r_\medvert(X))=r_\medvert(\mathscr L_\mp(X))$.

\tikzset{res/.style={ellipse,draw,minimum height=0.5cm,minimum width=0.8cm}}
\tikzset{literal/.style={rectangle,draw,minimum height=0.5cm,minimum width=0.8cm,text width = 1.2 cm, align = center}}
\tikzset{hfit/.style={rounded rectangle,draw, inner xsep=0pt},
           vfit/.style={rounded corners,draw}}
\begin{figure}[ht!]
\includegraphics[scale=.5]{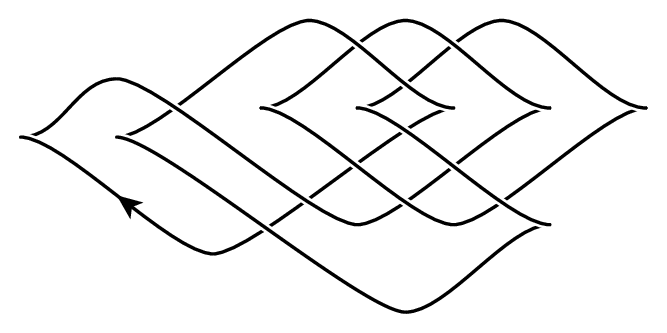}\put(-95,0){$9_{42}^+$}
\includegraphics[scale=.5]{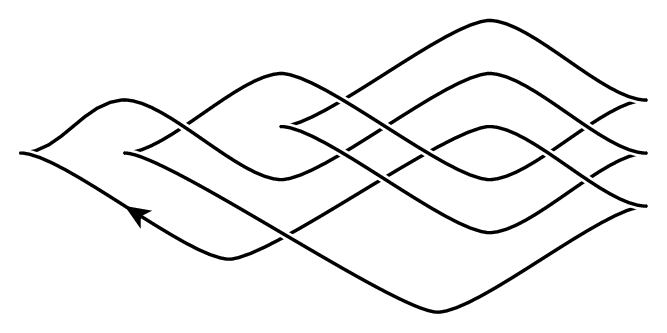}\put(-95,0){$r_\medvert(9_{42}^-)$}
\hskip5mm
\includegraphics[scale=.18]{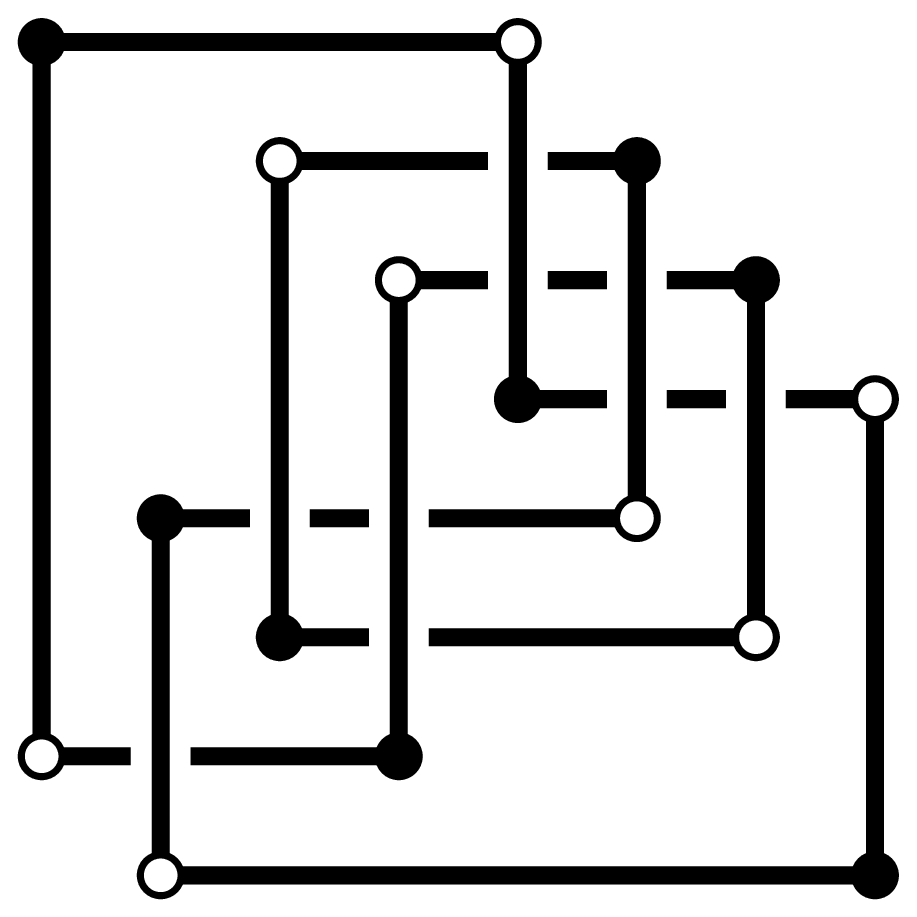}\put(-45,-10){$9_{42}^{\mathrm R}$}
\caption{Legendrian knots in Proposition~\ref{9_42_prop} and an exchange class representing both}\label{9_42_fig}
\end{figure}
\begin{prop}\label{9_42_prop}
For the classes~$9_{42}^+$ and~$9_{42}^-$ whose representatives are shown in Figure~\ref{9_42_fig}, we have
$9_{42}^+{}=-9_{42}^+\ne\mu(9_{42}^+)$ and $9_{42}^-\ne-9_{42}^-{}=\mu(9_{42}^-)$.
\end{prop}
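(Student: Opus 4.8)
The plan is to reduce the two asserted non-equivalences to a single application of Theorem~\ref{main-theo}, which is available because $9_{42}$ has trivial orientation-preserving symmetry group by Proposition~\ref{table-knots-prop}. First I would observe that the two statements $9_{42}^+\ne\mu(9_{42}^+)$ and $9_{42}^-\ne-9_{42}^-$ are in fact equivalent to each other and can be treated simultaneously: applying the Legendrian mirror $\mu$ preserves $\tb$ and flips $r$, while orientation reversal together with the relations $\mathscr L_\pm(r_\medvert(X))=r_\medvert(\mathscr L_\mp(X))$, $\mathscr L_\pm(\mu(X))=\mu(\mathscr L_\pm(X))$, $\mathscr L_\pm(-X)=-\mathscr L_\pm(X)$ recorded just before the statement lets one pass between the $\xi_+$ and $\xi_-$ pictures. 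Concretely, if $R$ is the exchange-class representative $9_{42}^{\mathrm R}$ shown in Figure~\ref{9_42_fig}, then $\mathscr L_+(R)=9_{42}^+$ and $\mathscr L_-(R)$ is the class displayed as $r_\medvert(9_{42}^-)$ after a flip; so it is enough to show that $R$ and $\mu(R)$ (equivalently, a suitable flip of $R$) are \emph{not} related by a sequence of exchange moves.

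The core step is to apply Theorem~\ref{main-theo} in its contrapositive form to the pair $R_1=R$, $R_2=\mu(R)$ (or the appropriate flip). Both diagrams are rectangular diagrams of knots isotopic to $9_{42}$, which has trivial $\mathrm{Sym}^*$; hence by Theorem~\ref{main-theo}, $R$ and $\mu(R)$ are related by exchange moves \emph{iff} $\mathscr L_+(R)=\mathscr L_+(\mu(R))$ and $\mathscr L_-(R)=\mathscr L_-(\mu(R))$, i.e.\ iff $9_{42}^+=\mu(9_{42}^+)$ and $9_{42}^-=\mu(9_{42}^-)$. Thus it suffices to exhibit \emph{one} surface obstruction: I would produce a rectangular diagram of a surface $\Pi$ with $R\subset\partial\Pi$ whose associated convex surface $\widehat\Pi$ carries a dividing configuration that is realizable relative to $R$ but provably \emph{not} realizable relative to $\mu(R)$ — this is exactly the situation the machinery of~\cite{distinguishing} is designed to detect, and Lemma~\ref{special-diagram-exists} guarantees we may choose $\Pi$ so that its combinatorial type is pinned down by its dividing code, so no exhaustive search over realizing diagrams is needed. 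In practice this means: pick $\Pi$ of the special form built in Lemma~\ref{special-diagram-exists}, compute its dividing code, and check (a finite computation) that the resulting abstract dividing configuration admits a proper realization at $R$ but the obstruction of~\cite{distinguishing} rules it out at $\mu(R)$ — or, symmetrically, realize it at $\mu(R)$ and obstruct it at $R$.

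Finally I would assemble the pieces: the surface obstruction shows $\mathscr L_+(R)\ne\mathscr L_+(\mu(R))$ (or $\mathscr L_-(R)\ne\mathscr L_-(\mu(R))$), hence by Theorem~\ref{main-theo} $R$ and $\mu(R)$ are not exchange-equivalent, and therefore — unwinding the identifications in the first paragraph — $9_{42}^+\ne\mu(9_{42}^+)$; the equivalent statement $9_{42}^-\ne-9_{42}^-$ follows by the same $r_\medvert$/$\mu$/orientation-reversal dictionary. The main obstacle I anticipate is the middle step: choosing the right diagram of a surface $\Pi$ and verifying the realizability/non-realizability of its dividing configuration. This is where the real content lies, and although Lemma~\ref{special-diagram-exists} removes the need for an exhaustive search over surface diagrams with a given dividing code, one still has to identify a configuration that is genuinely asymmetric under $\mu$ and then carry out the combinatorial realizability check from~\cite[Section~2]{distinguishing}; getting a small enough $\Pi$ for the computation to be humanly checkable is the delicate part.
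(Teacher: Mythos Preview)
Your proposal has a genuine logical gap in how Theorem~\ref{main-theo} is being applied. The theorem says that two rectangular diagrams are exchange-related if and only if \emph{both} $\mathscr L_+$ and $\mathscr L_-$ agree. So from the mere fact that $R$ and $\mu(R)$ are not exchange-equivalent you can only conclude that \emph{at least one} of $\mathscr L_+(R)=\mathscr L_+(\mu(R))$ or $\mathscr L_-(R)=\mathscr L_-(\mu(R))$ fails; you cannot tell which. This is precisely the information you need, and your fallback --- running the surface realizability/obstruction machinery of~\cite{distinguishing} to pin down which one fails --- is exactly the laborious step that Theorem~\ref{main-theo} is meant to eliminate. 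Also, the two assertions $9_{42}^+\ne\mu(9_{42}^+)$ and $9_{42}^-\ne-9_{42}^-$ are not a priori equivalent; the dictionary you invoke relates $\mu$ to $\mu$ and $-$ to $-$, but does not turn one claim into the other.

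The missing idea, which the paper uses, is to interpose an \emph{intermediate} exchange class. Concretely, one checks that $-9_{42}^{\mathrm R}$ is exchange-inequivalent to both $9_{42}^{\mathrm R}$ and $\mu(9_{42}^{\mathrm R})$, but that a single type~I stabilization makes $9_{42}^{\mathrm R}$ and $-9_{42}^{\mathrm R}$ exchange-equivalent (hence $\mathscr L_+$ agree), while a single type~II stabilization makes $-9_{42}^{\mathrm R}$ and $\mu(9_{42}^{\mathrm R})$ exchange-equivalent (hence $\mathscr L_-$ agree). Now Theorem~\ref{main-theo} bites: from the first pair, since $\mathscr L_+$ agree but the diagrams are not exchange-related, $\mathscr L_-$ must differ, giving $9_{42}^-\ne-9_{42}^-$; from the second pair, since $\mathscr L_-$ agree, $\mathscr L_+$ must differ, giving $9_{42}^+\ne\mu(9_{42}^+)$. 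All the required checks are finite exchange-class computations on rectangular diagrams --- no surface $\Pi$, no dividing codes, no realizability obstruction. Your plan skips this alignment step and so never isolates which Legendrian class is responsible for the exchange-inequivalence.
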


\begin{proof}
We use the exchange class~$9_{42}^{\mathrm R}$ of the diagram shown in Figure~\ref{9_42_fig} on the right. Black
vertices are positive, and white ones are negative.

It is an easy check that the diagram representing the class~$9_{42}^{\mathrm R}$
in Figure~\ref{9_42_fig} admits no non-trivial (that is, changing the combinatorial
type) exchange move, and its combinatorial type changes under reversing
the orientation and under its composition with the rotation~$\mu$. We conclude from this
that
\begin{equation}\label{9-42-ne-eq}
9_{42}^{\mathrm R}\ne-9_{42}^{\mathrm R}\quad\text{and}\quad-9_{42}^{\mathrm R}\ne\mu(9_{42}^{\mathrm R}).
\end{equation}

Now we verify directly that
$S_{\overrightarrow{\mathrm I}}(9_{42}^{\mathrm R})=S_{\overrightarrow{\mathrm I}}(-9_{42}^{\mathrm R})$:
$$\begin{array}{ccccccccccccccccc}
\includegraphics[width=30pt]{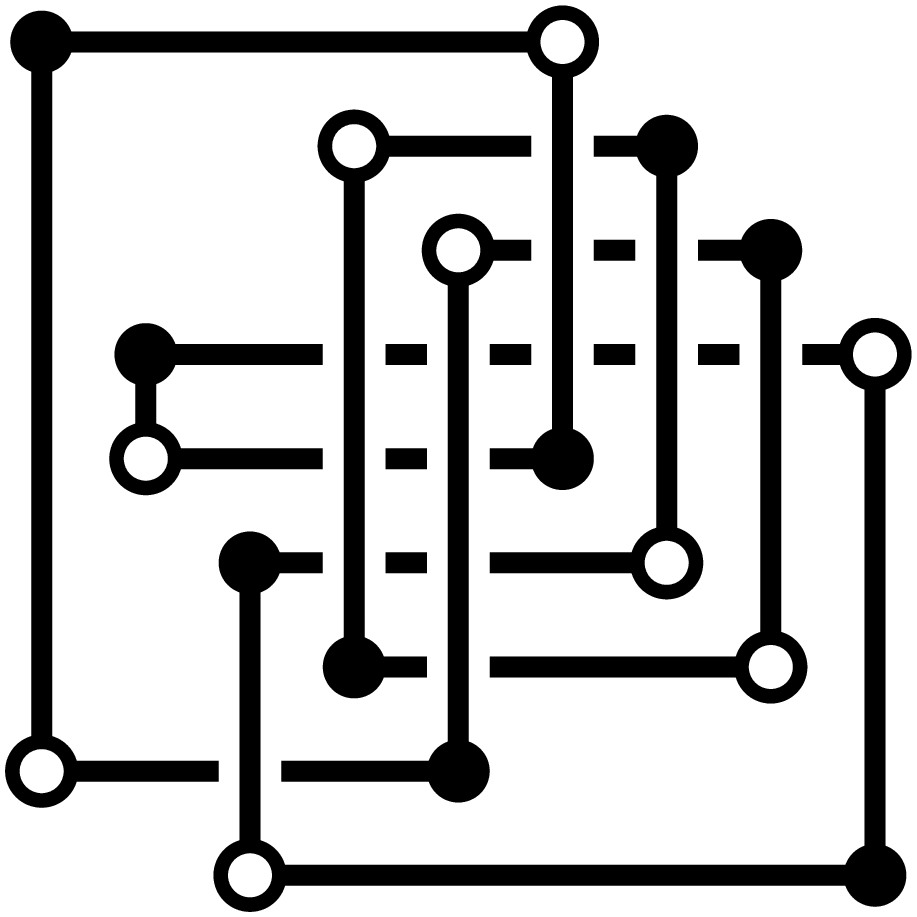}&\kern-.6em\raisebox{12pt}{$\rightarrow$}\kern-.6em&
\includegraphics[width=30pt]{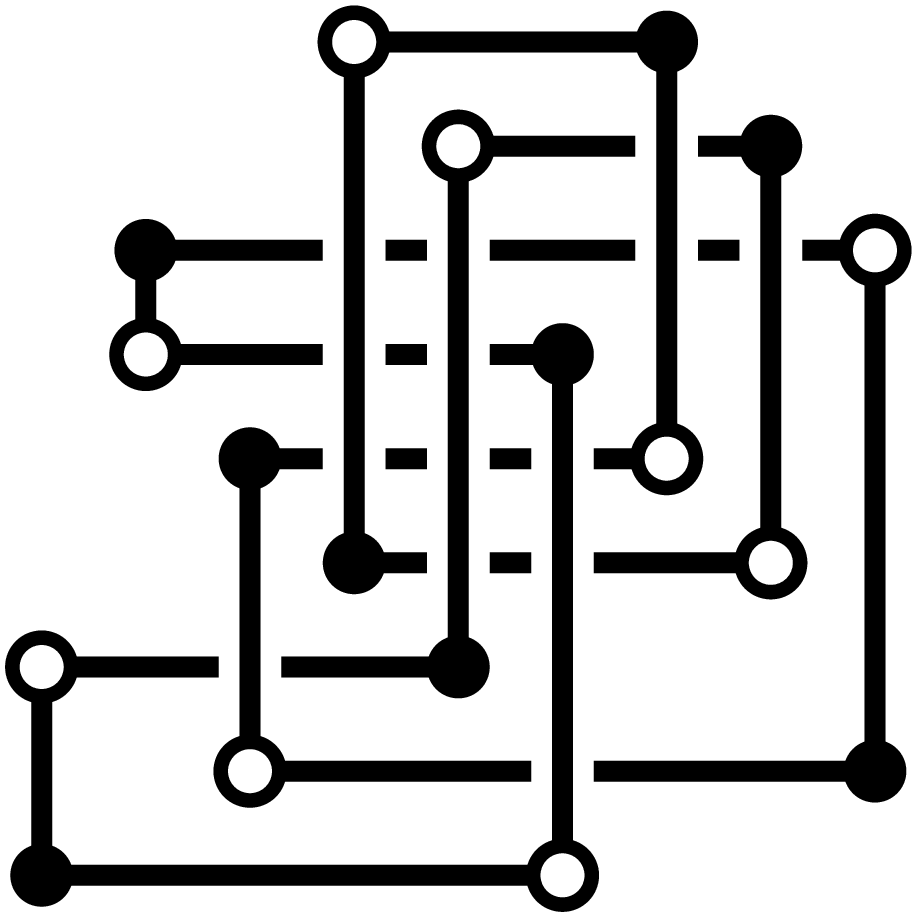}&\kern-.6em\raisebox{12pt}{$\rightarrow$}\kern-.6em&
\includegraphics[width=30pt]{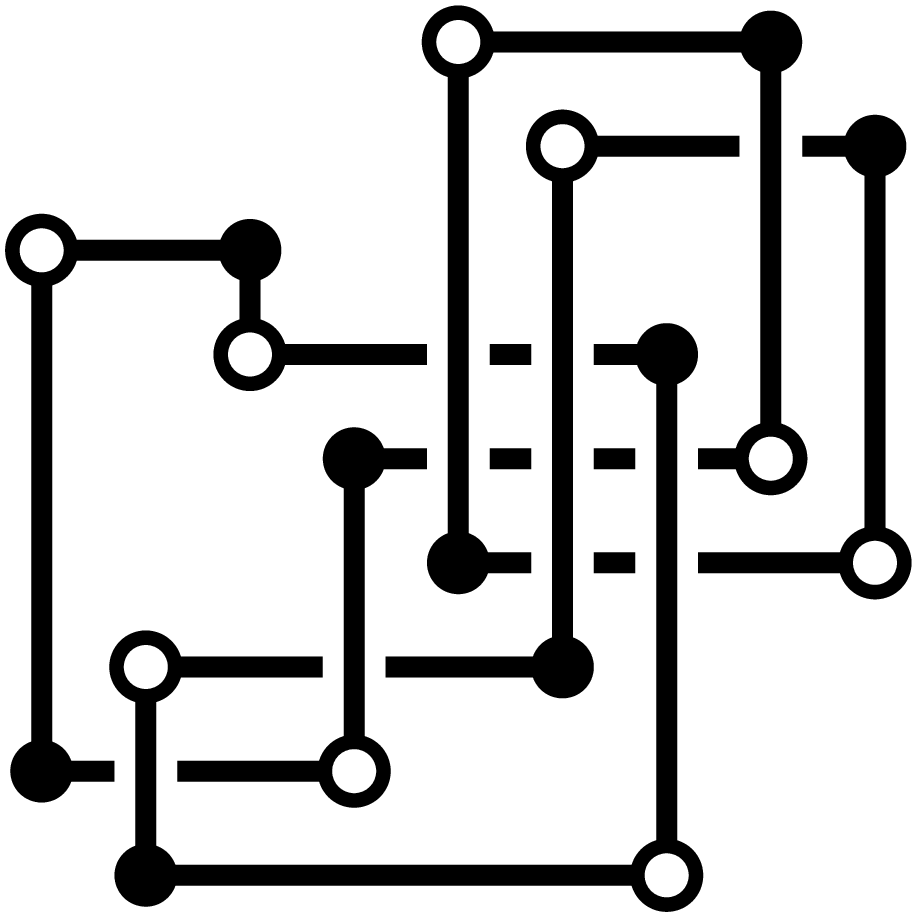}&\kern-.6em\raisebox{12pt}{$\rightarrow$}\kern-.6em&
\includegraphics[width=30pt]{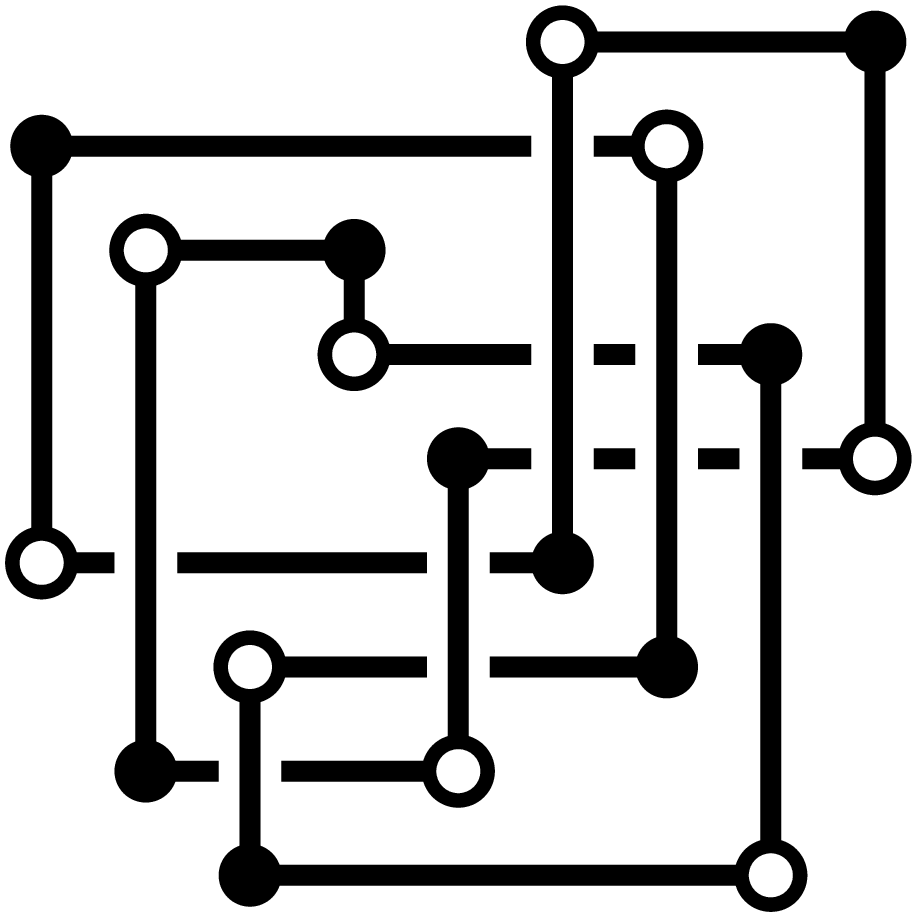}&\kern-.6em\raisebox{12pt}{$\rightarrow$}\kern-.6em&
\includegraphics[width=30pt]{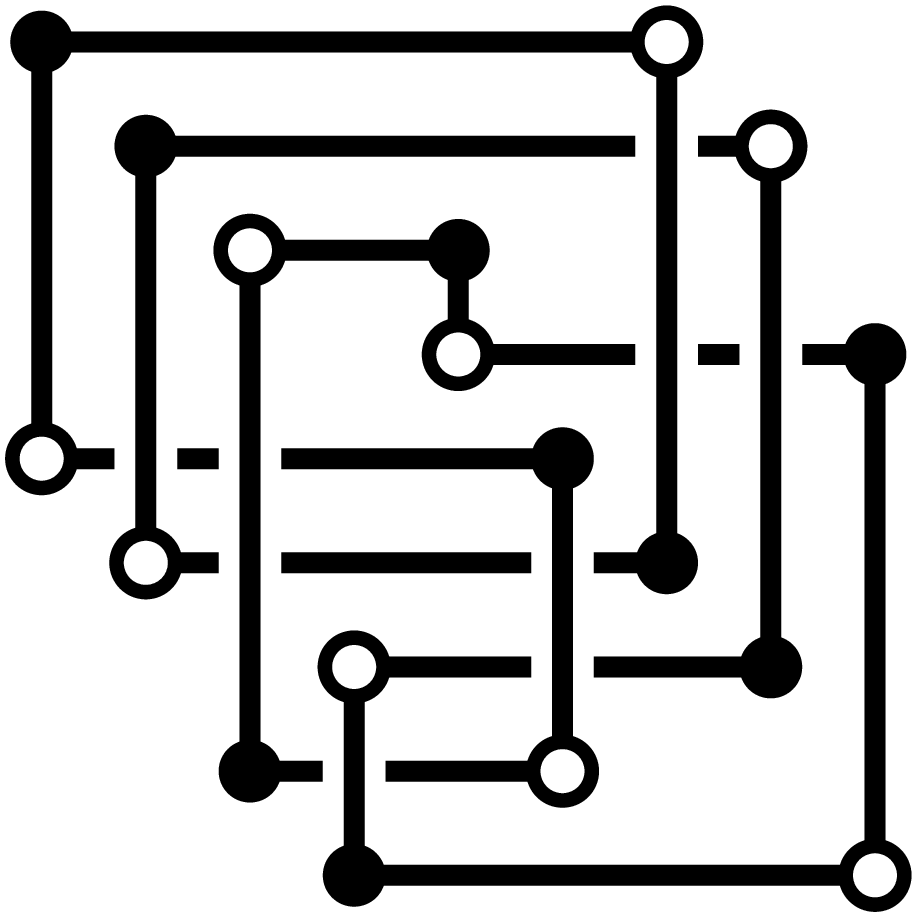}&\kern-.6em\raisebox{12pt}{$\rightarrow$}\kern-.6em&
\includegraphics[width=30pt]{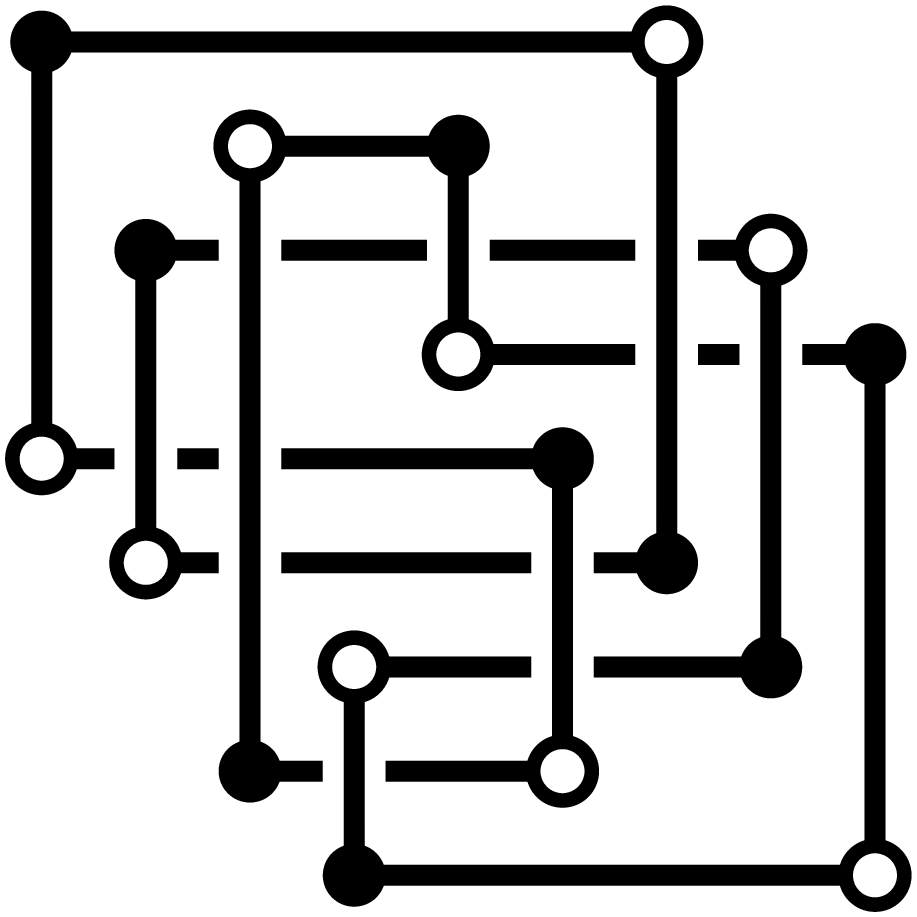}&\kern-.6em\raisebox{12pt}{$\rightarrow$}\kern-.6em&
\includegraphics[width=30pt]{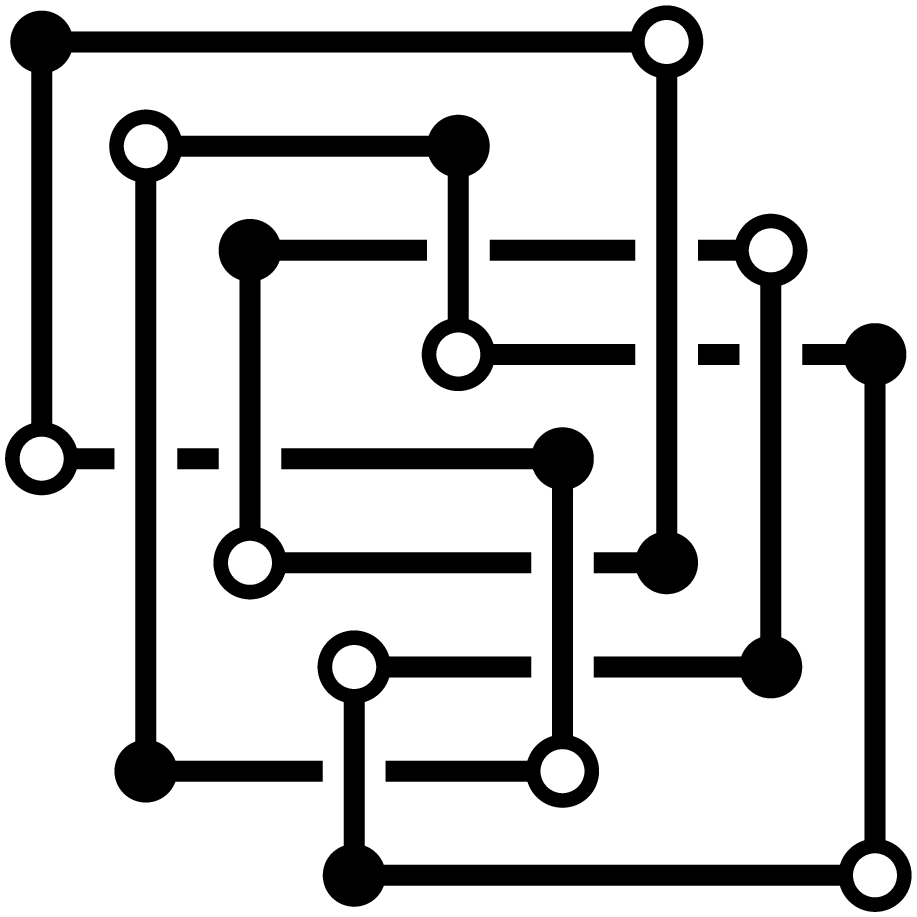}&\kern-.6em\raisebox{12pt}{$\rightarrow$}\kern-.6em&
\includegraphics[width=30pt]{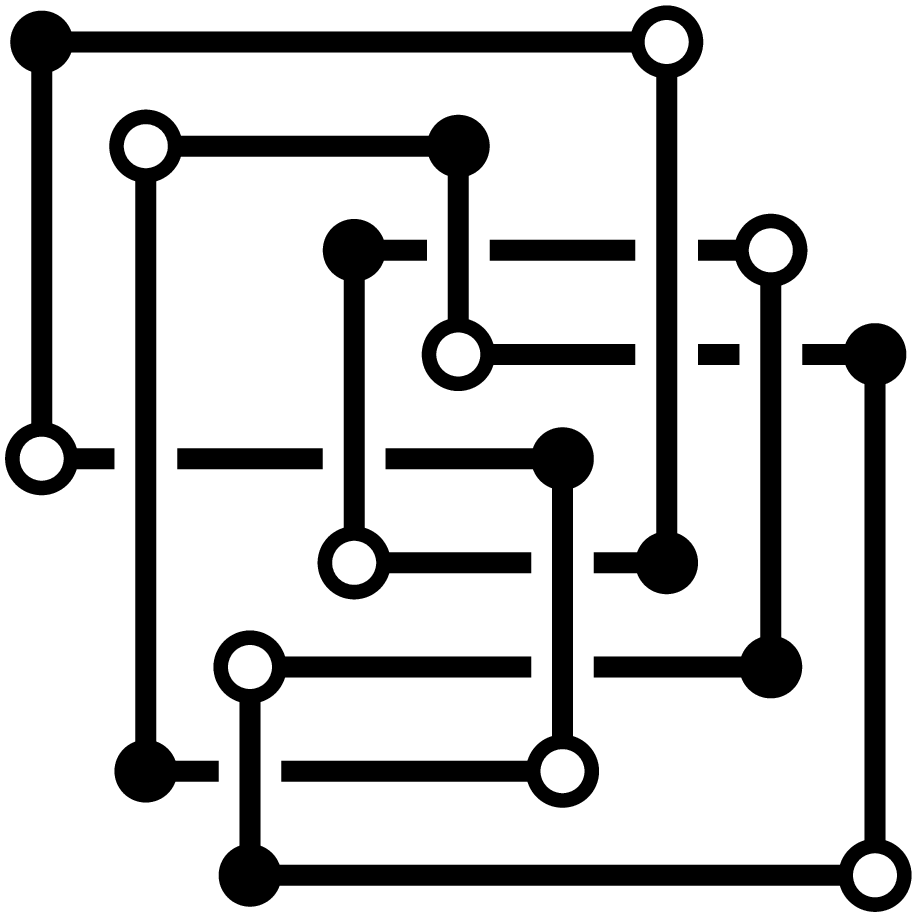}&\kern-.6em\raisebox{12pt}{$\rightarrow$}\kern-.6em&
\includegraphics[width=30pt]{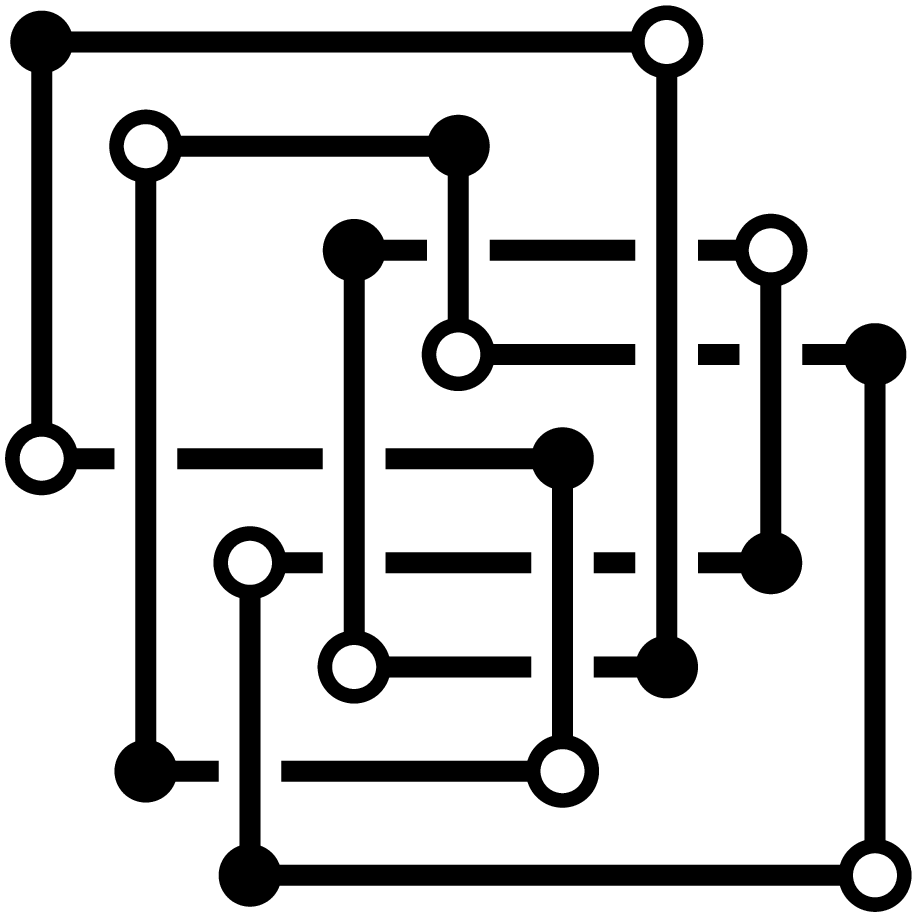}\\
&&&&&&&&&&&&&&&&\downarrow\\
&&
\includegraphics[width=30pt]{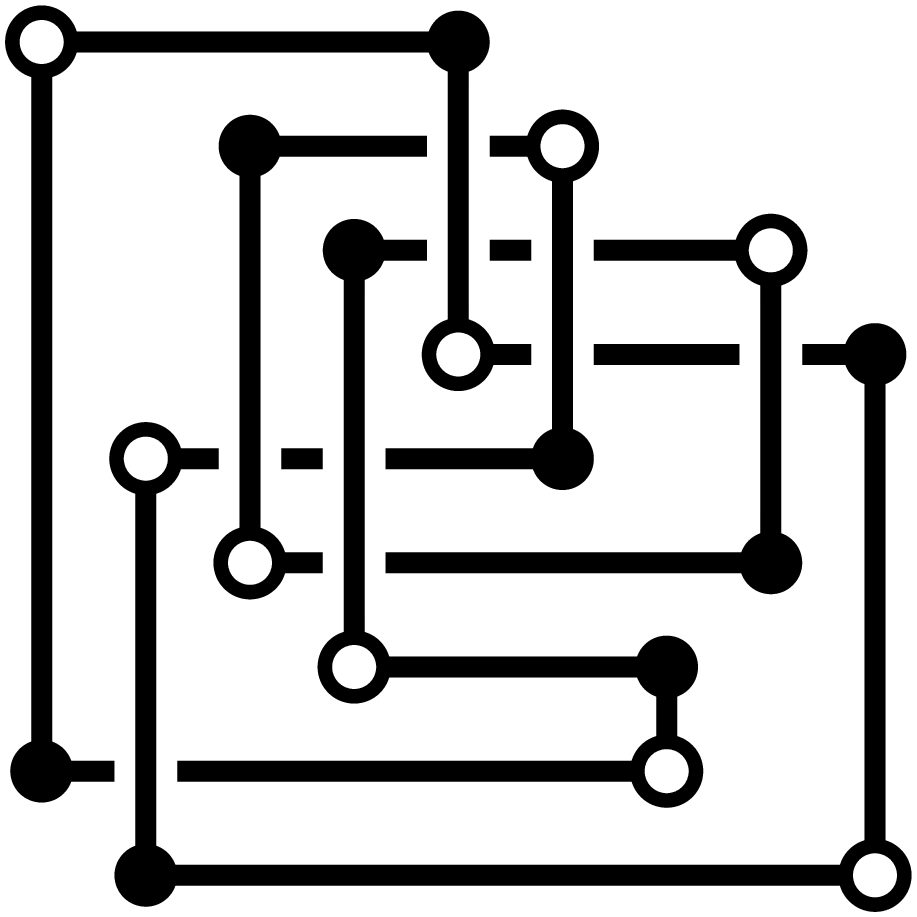}&\kern-.6em\raisebox{12pt}{$\leftarrow$}\kern-.6em&
\includegraphics[width=30pt]{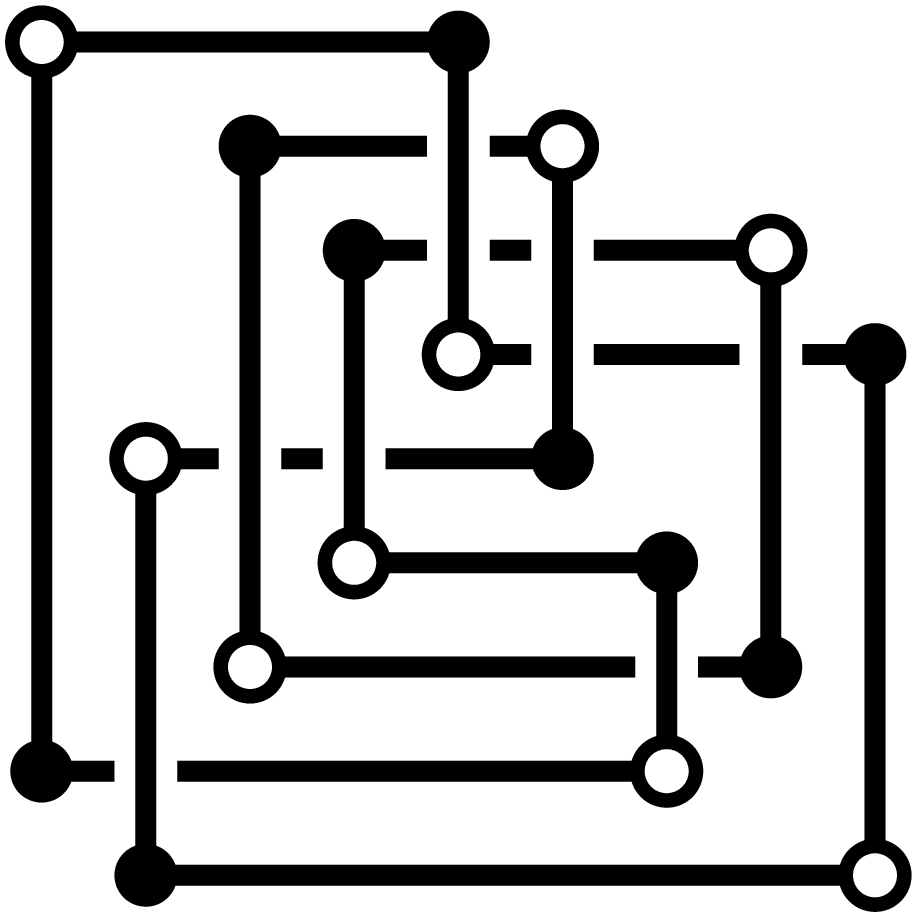}&\kern-.6em\raisebox{12pt}{$\leftarrow$}\kern-.6em&
\includegraphics[width=30pt]{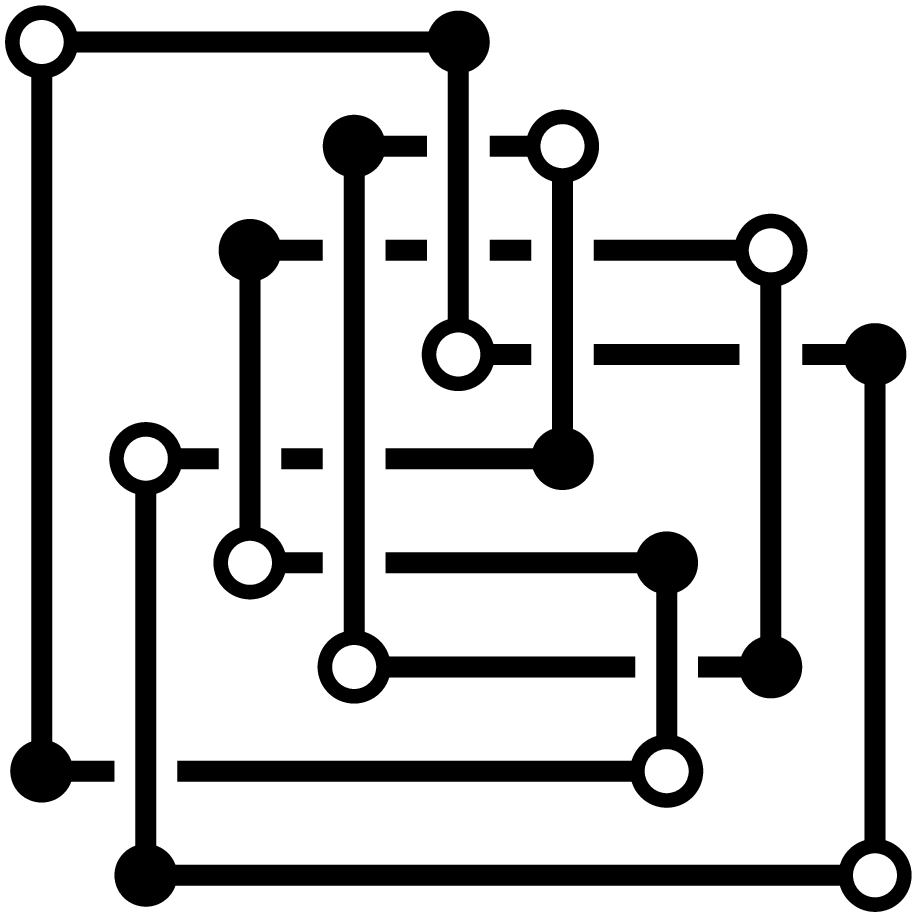}&\kern-.6em\raisebox{12pt}{$\leftarrow$}\kern-.6em&
\includegraphics[width=30pt]{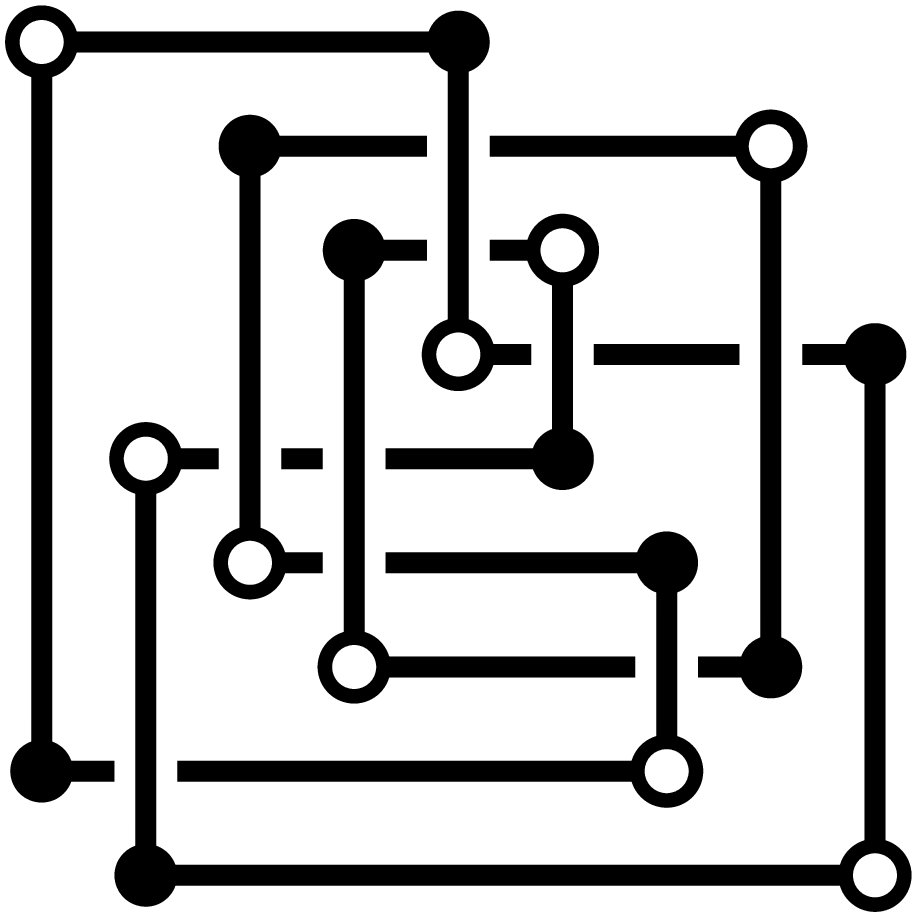}&\kern-.6em\raisebox{12pt}{$\leftarrow$}\kern-.6em&
\includegraphics[width=30pt]{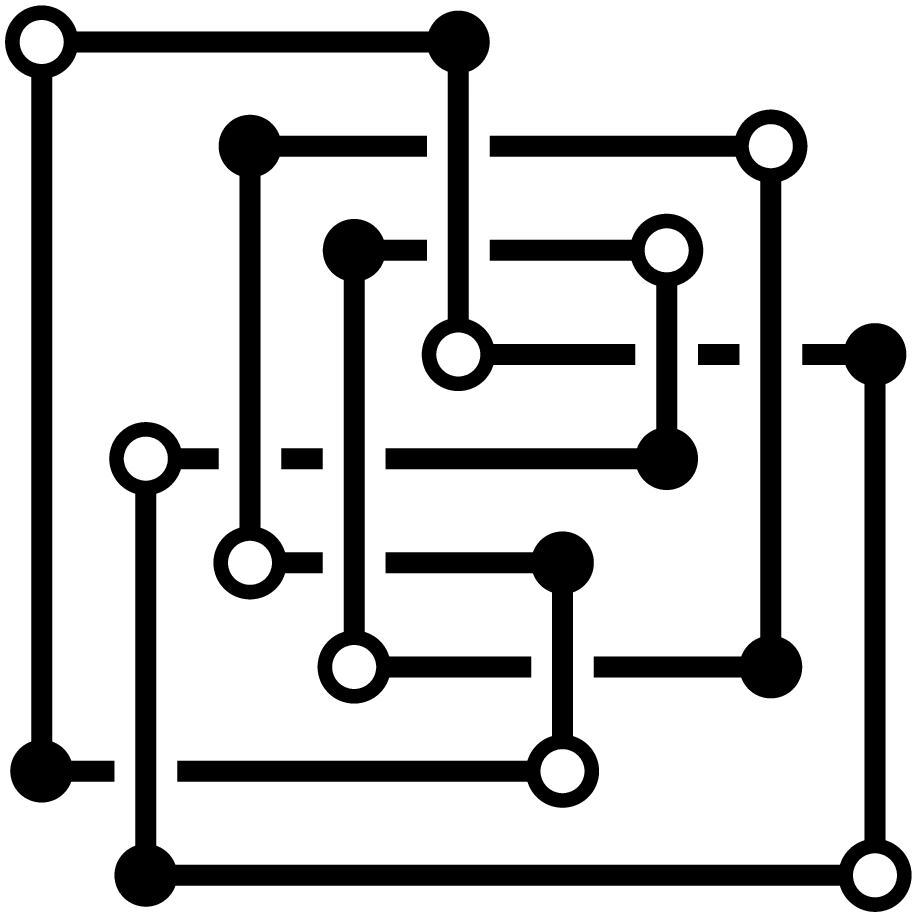}&\kern-.6em\raisebox{12pt}{$\leftarrow$}\kern-.6em&
\includegraphics[width=30pt]{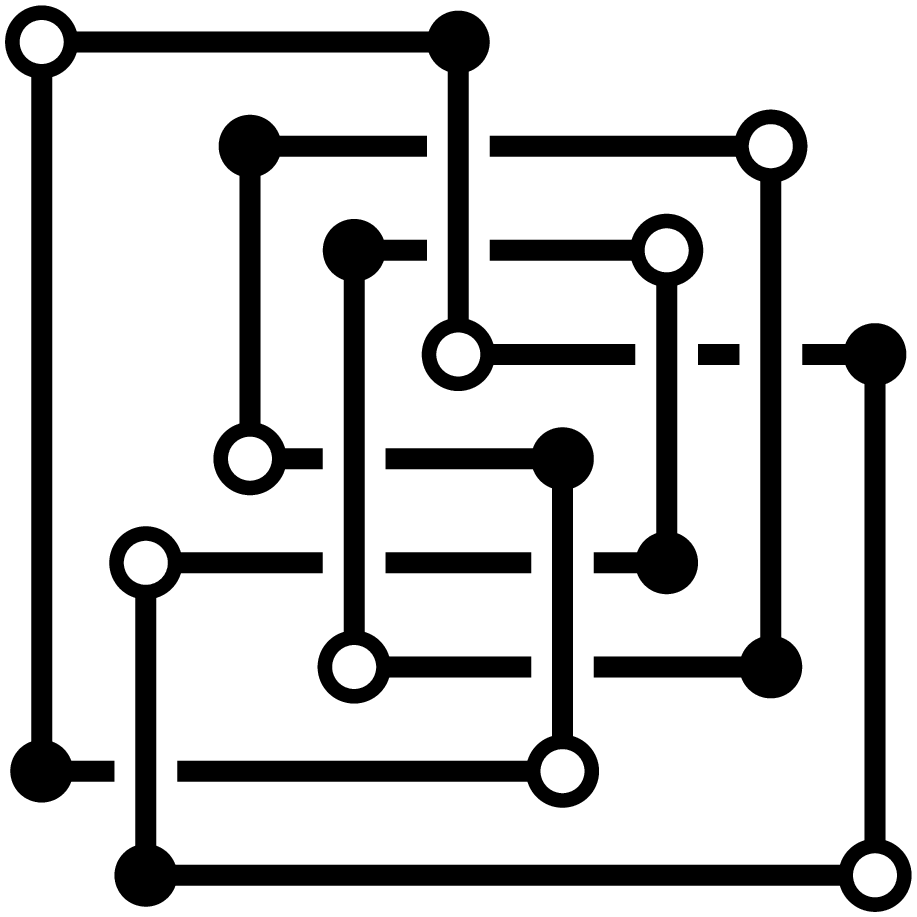}&\kern-.6em\raisebox{12pt}{$\leftarrow$}\kern-.6em&
\includegraphics[width=30pt]{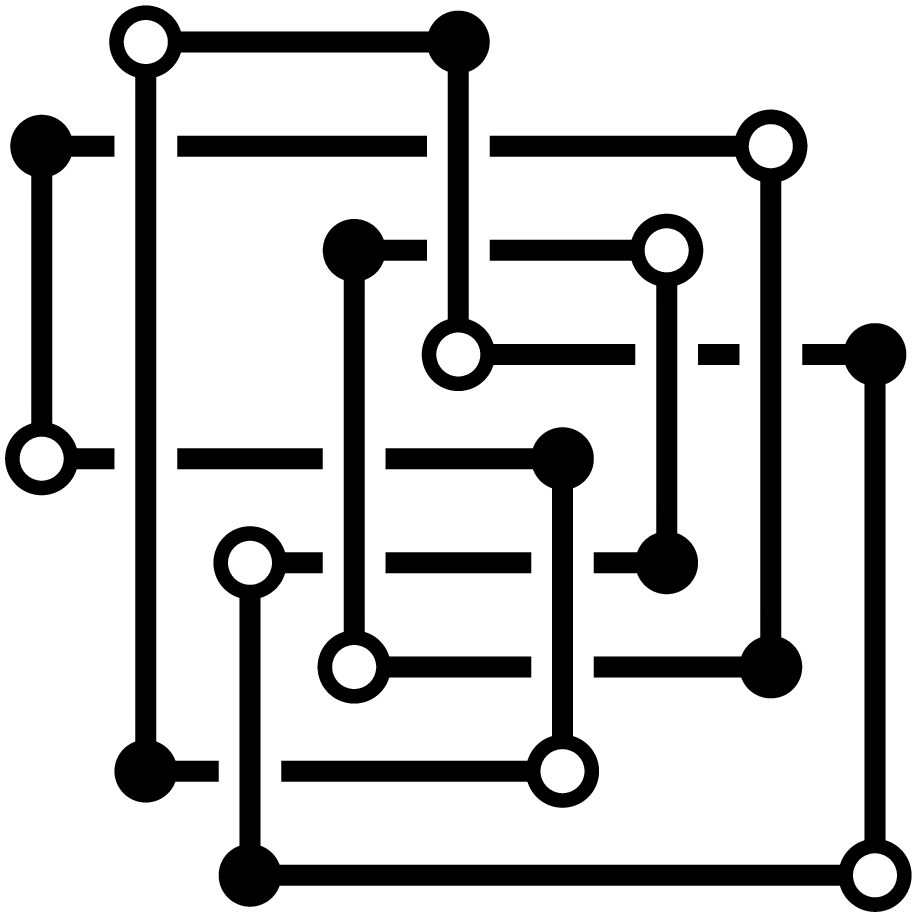}&\kern-.6em\raisebox{12pt}{$\leftarrow$}\kern-.6em&
\includegraphics[width=30pt]{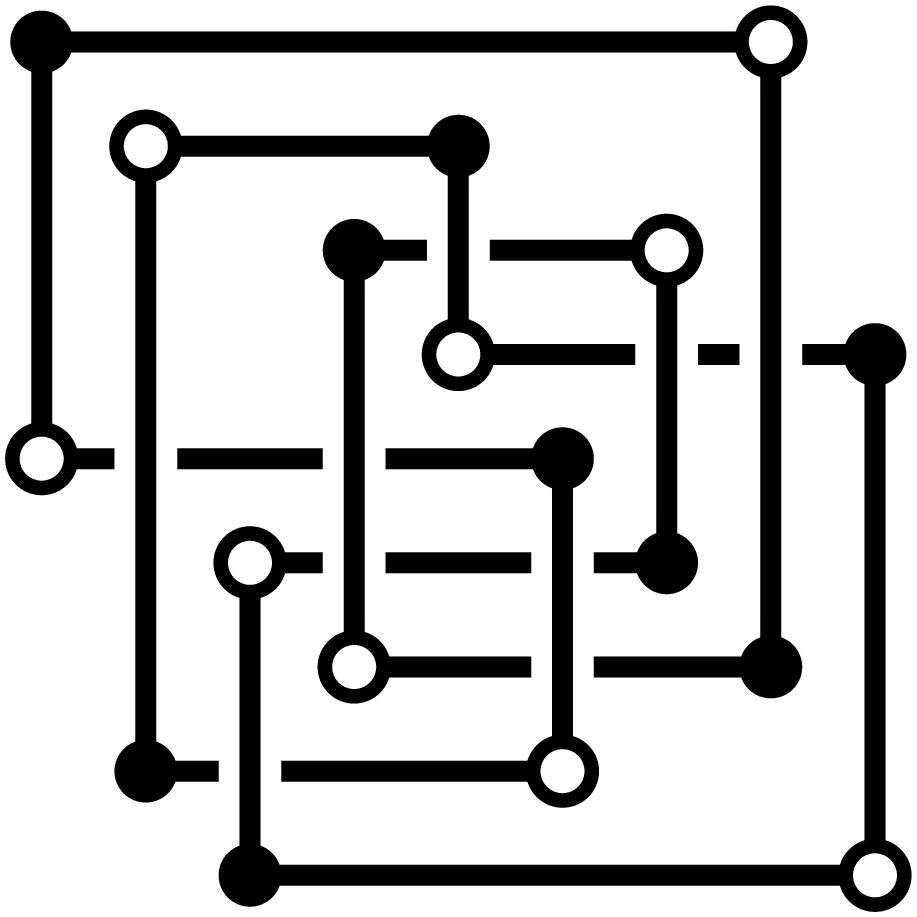}\end{array},$$
and
$S_{\overrightarrow{\mathrm{II}}}(-9_{42}^{\mathrm R})=S_{\overrightarrow{\mathrm{II}}}(\mu(9_{42}^{\mathrm R}))$:
$$\begin{array}{ccccccccc}
\includegraphics[width=30pt]{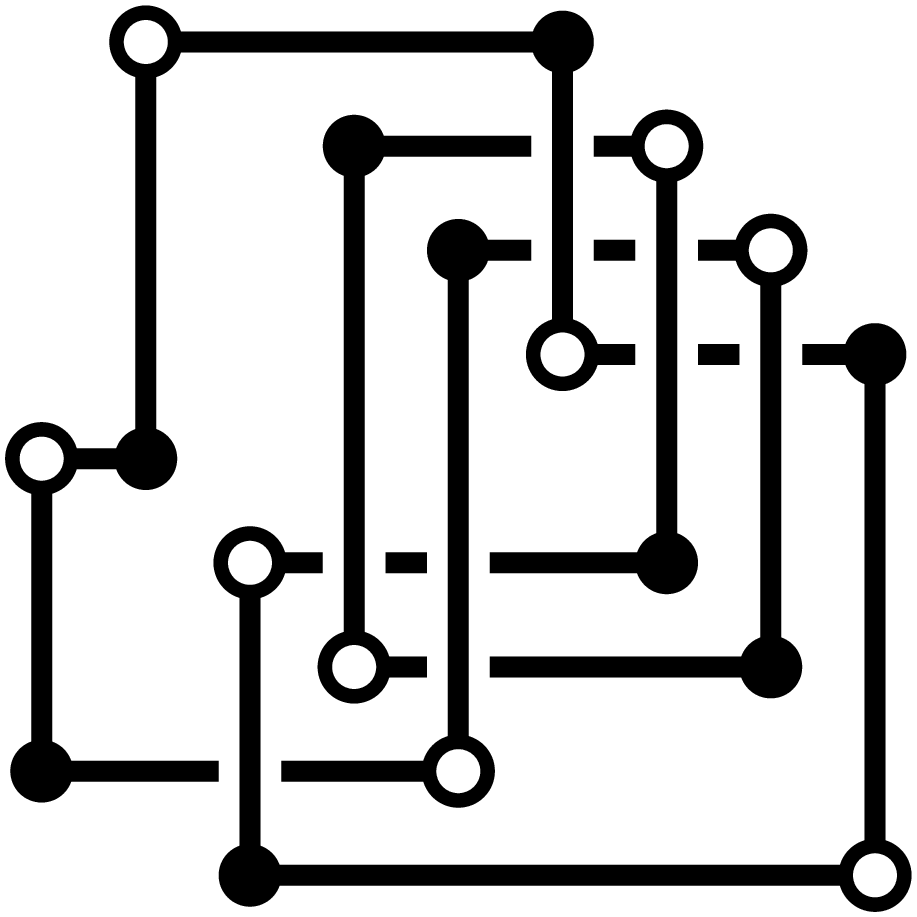}&\kern-.6em\raisebox{12pt}{$\rightarrow$}\kern-.6em&
\includegraphics[width=30pt]{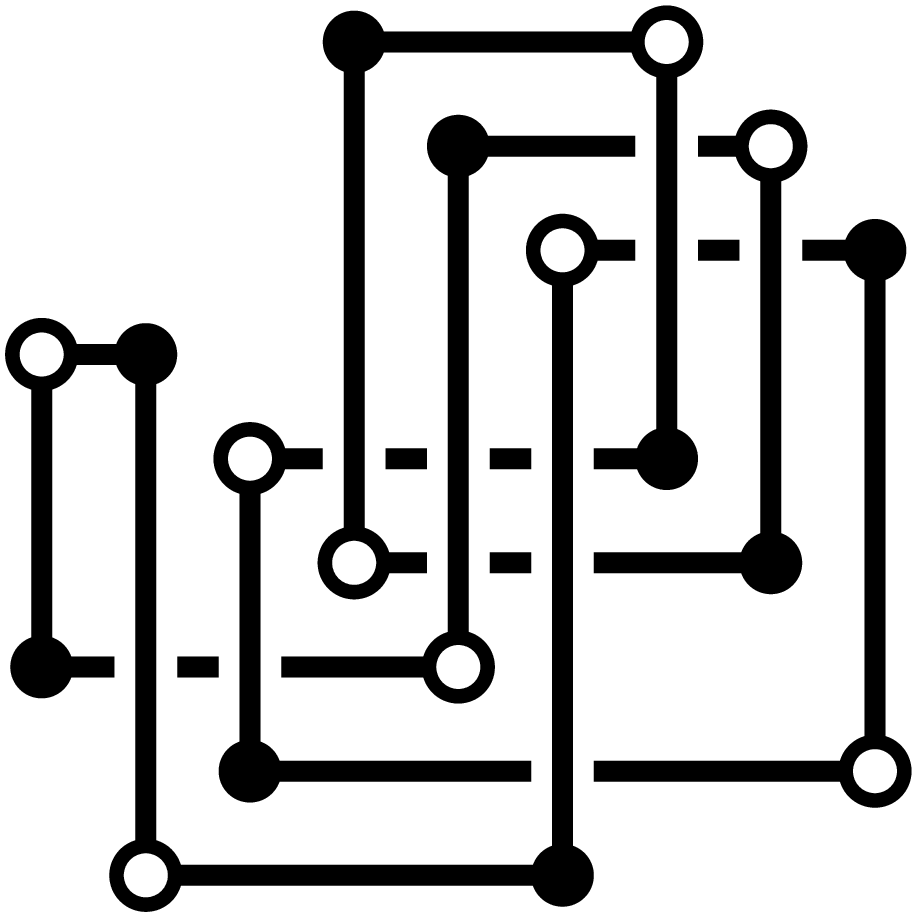}&\kern-.6em\raisebox{12pt}{$\rightarrow$}\kern-.6em&
\includegraphics[width=30pt]{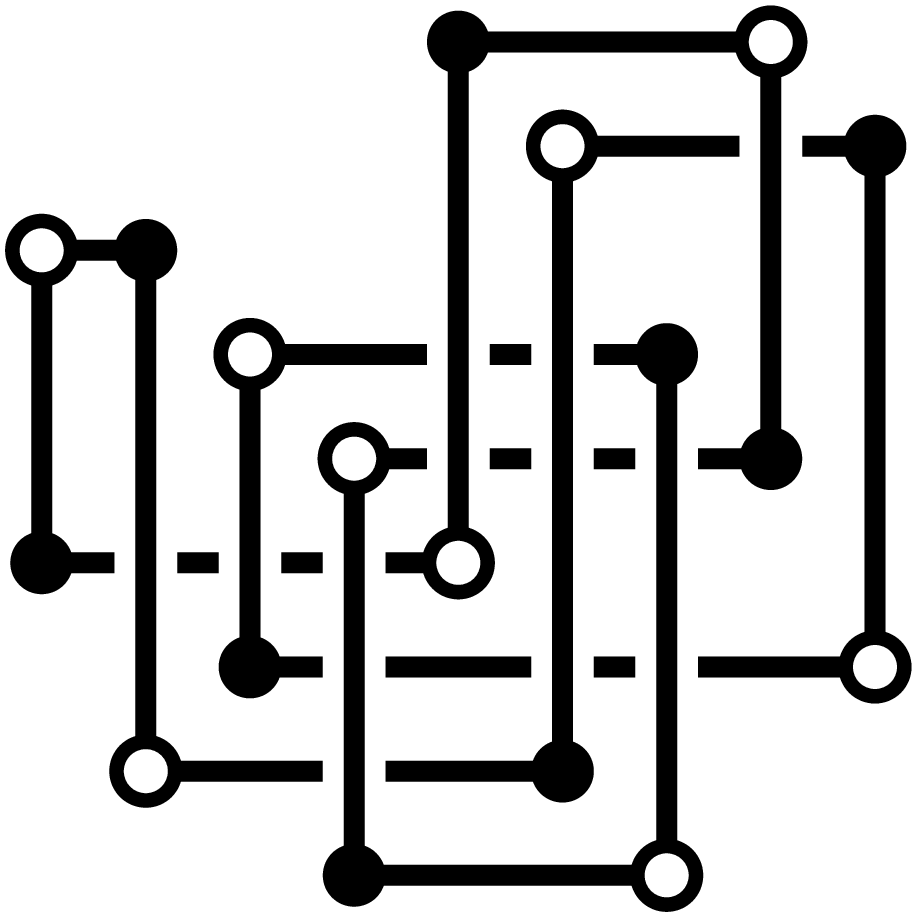}&\kern-.6em\raisebox{12pt}{$\rightarrow$}\kern-.6em&
\includegraphics[width=30pt]{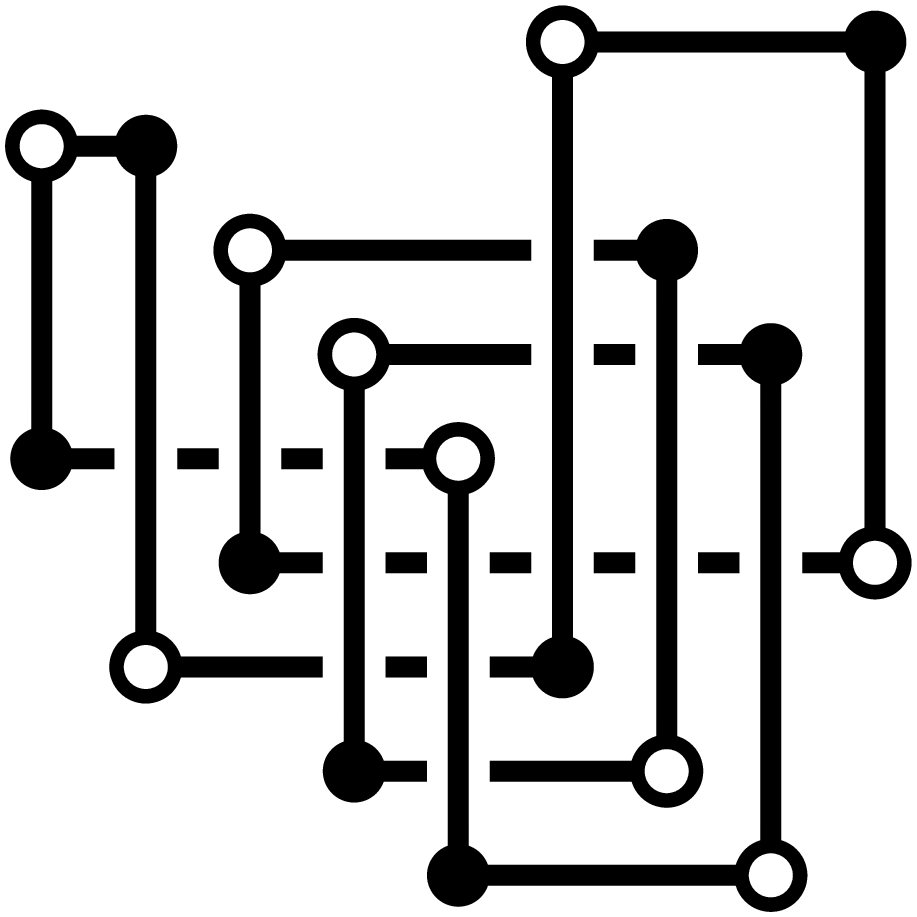}&\kern-.6em\raisebox{12pt}{$\rightarrow$}\kern-.6em&
\includegraphics[width=30pt]{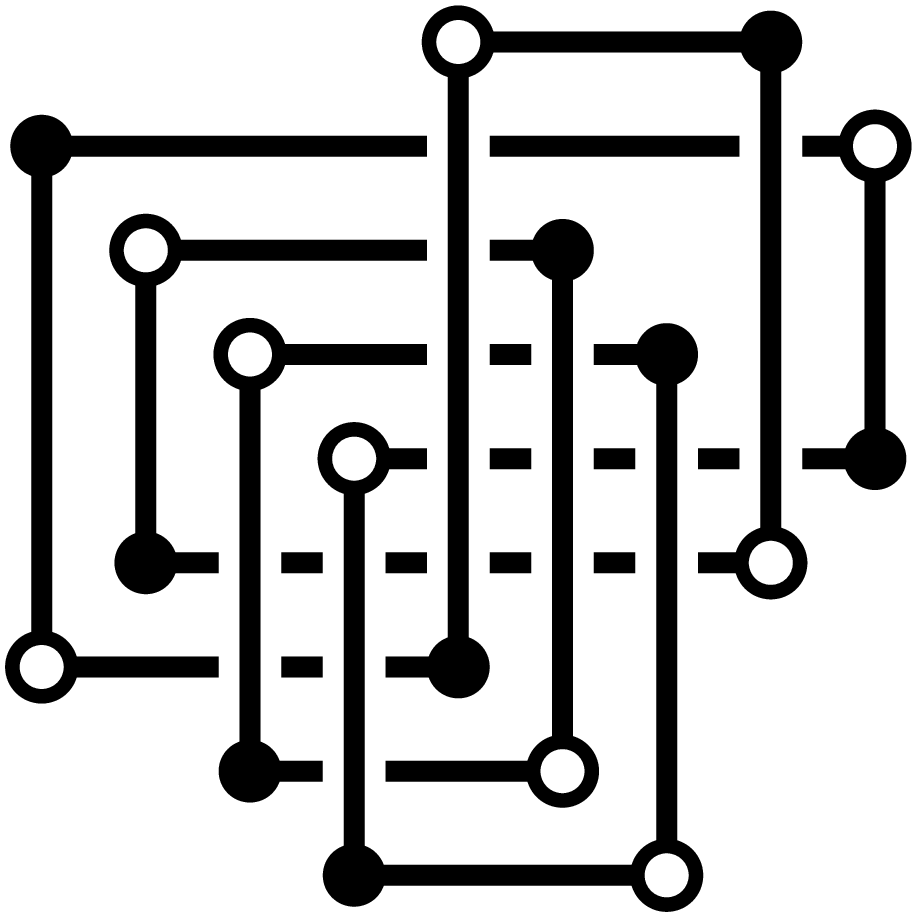}\\
&&&&&&&&\downarrow\\
\includegraphics[width=30pt]{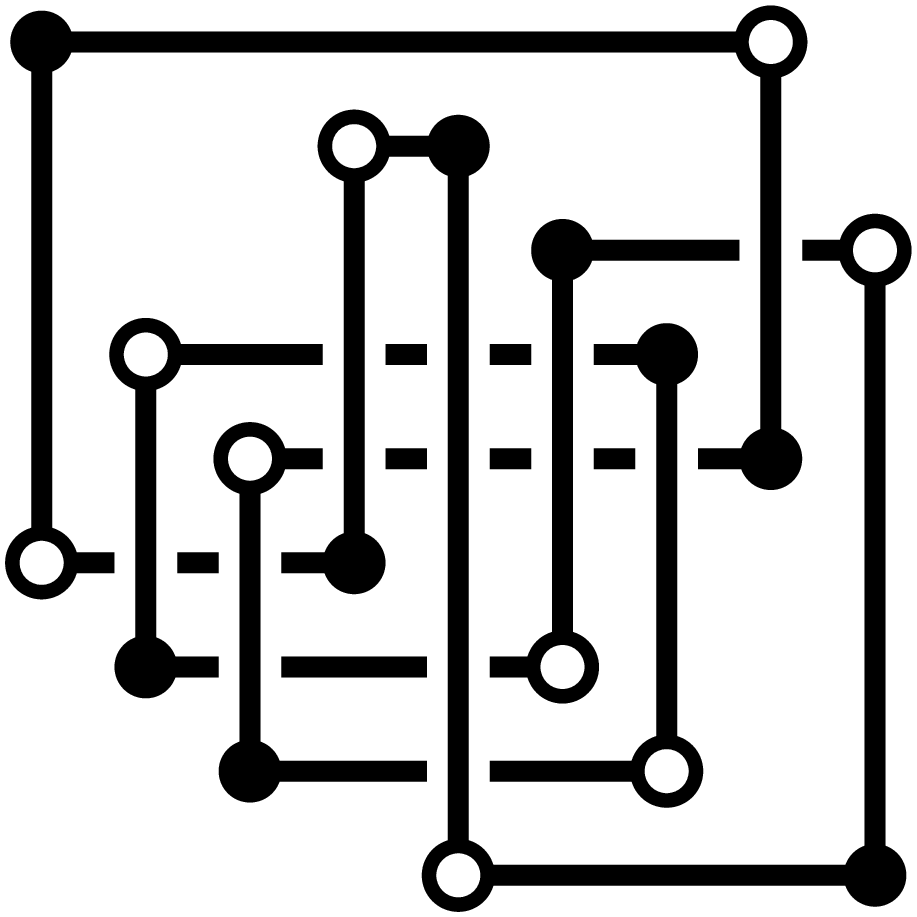}&\kern-.6em\raisebox{12pt}{$\leftarrow$}\kern-.6em&
\includegraphics[width=30pt]{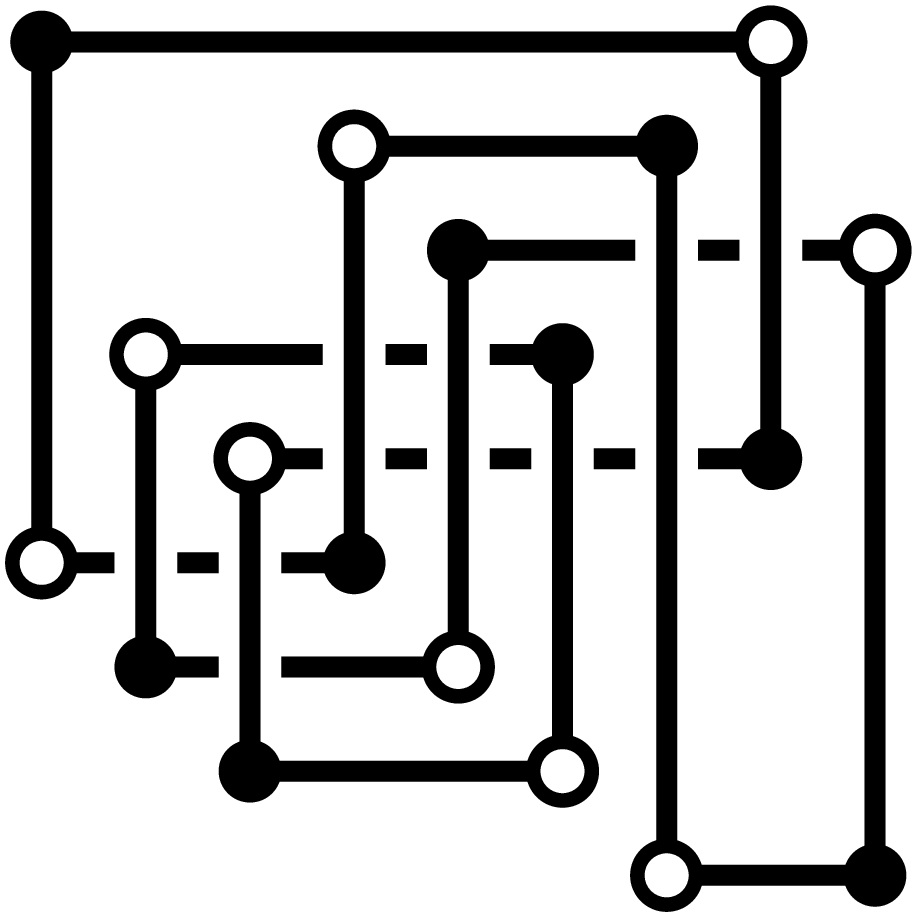}&\kern-.6em\raisebox{12pt}{$\leftarrow$}\kern-.6em&
\includegraphics[width=30pt]{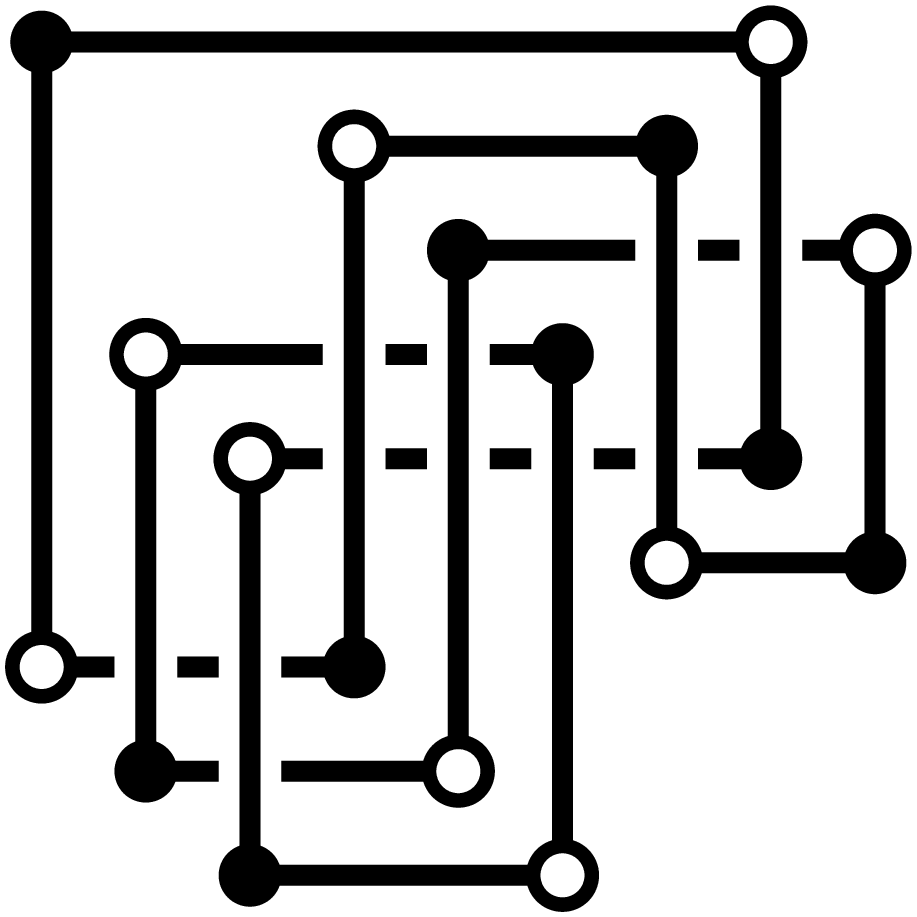}&\kern-.6em\raisebox{12pt}{$\leftarrow$}\kern-.6em&
\includegraphics[width=30pt]{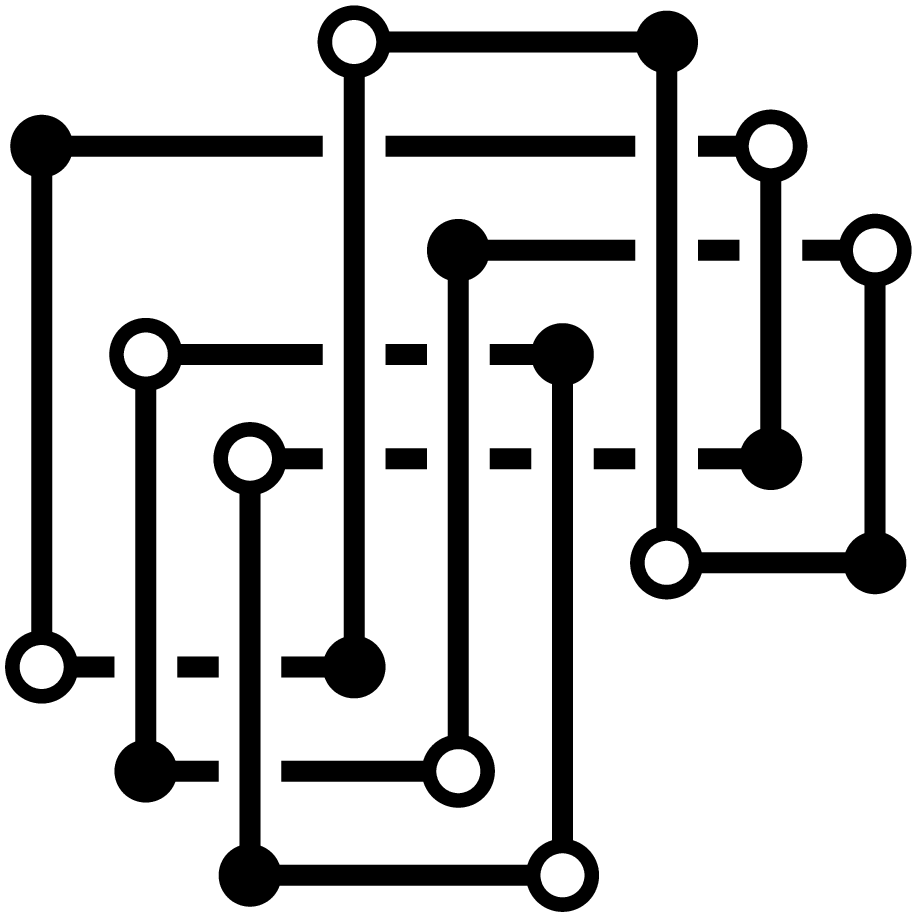}&\kern-.6em\raisebox{12pt}{$\leftarrow$}\kern-.6em&
\includegraphics[width=30pt]{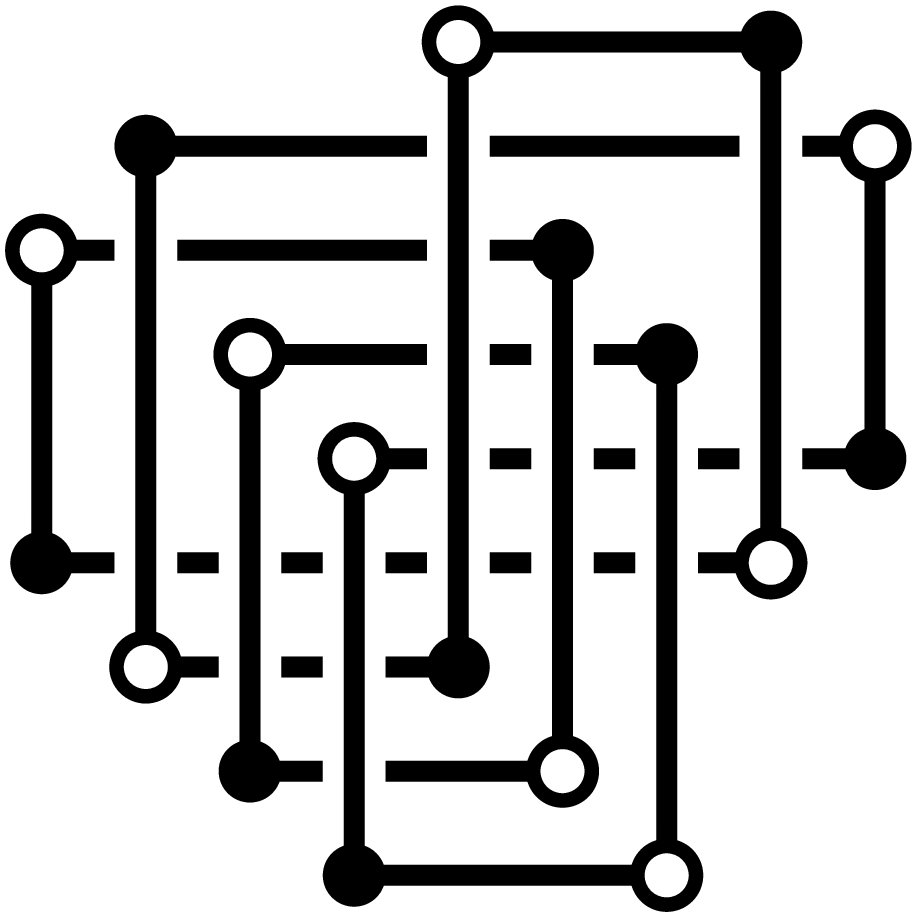}
\end{array}.$$
By Theorem~\ref{rect-desc-of-leg-theo} this implies
\begin{equation}\label{9-42-eq}
\mathscr L_+(9_{42}^{\mathrm R})=\mathscr L_+(-9_{42}^{\mathrm R})\quad\text{and}\quad
\mathscr L_-(-9_{42}^{\mathrm R})=\mathscr L_-(\mu(9_{42}^{\mathrm R})).
\end{equation}
From Proposition~\ref{table-knots-prop} and Theoremd~\ref{main-theo}
we conclude:
$$\eqref{9-42-ne-eq}\ \&\ \eqref{9-42-eq}
\quad\Rightarrow\quad
\mathscr L_-(9_{42}^{\mathrm R})\ne\mathscr L_-(-9_{42}^{\mathrm R})\quad\text{and}\quad
\mathscr L_+(-9_{42}^{\mathrm R})\ne\mathscr L_+(\mu(9_{42}^{\mathrm R})).$$

The front projections in Figure~\ref{9_42_fig} are obtained as described
in Section~\ref{rd_of_knots-sec} (see Figure~\ref{associated-legendrian-fig})
from rectangular diagrams that can be easily guessed from the pictures.
Using Theorem~\ref{rect-desc-of-leg-theo} we find that~$9_{42}^+=\mathscr L_+(9_{42}^{\mathrm R})$:
$$\begin{array}{ccccccccccccccc}
\includegraphics[width=30pt]{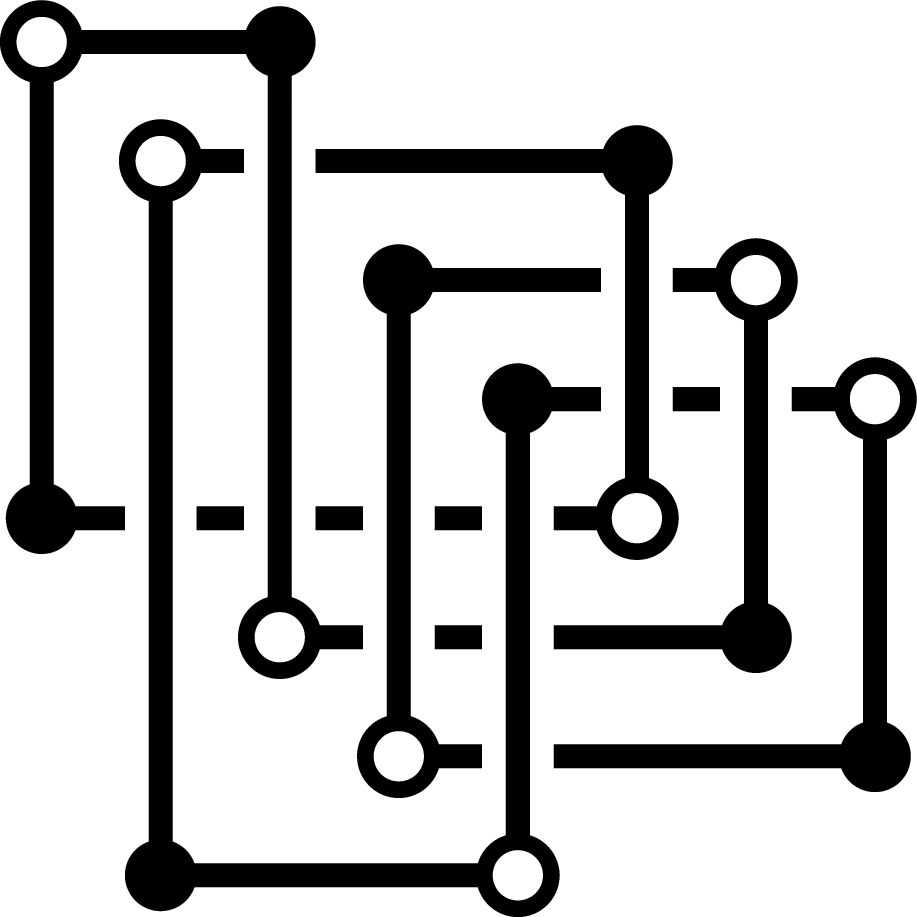}&\kern-.6em\raisebox{12pt}{$\rightarrow$}\kern-.6em&
\includegraphics[width=30pt]{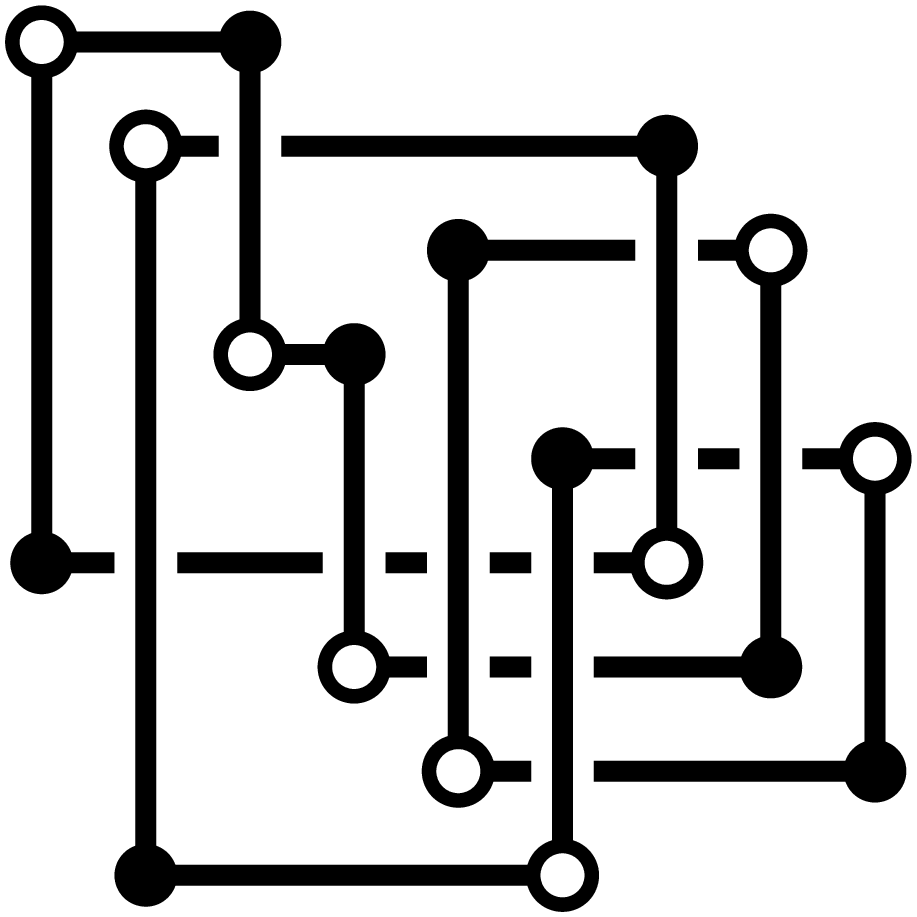}&\kern-.6em\raisebox{12pt}{$\rightarrow$}\kern-.6em&
\includegraphics[width=30pt]{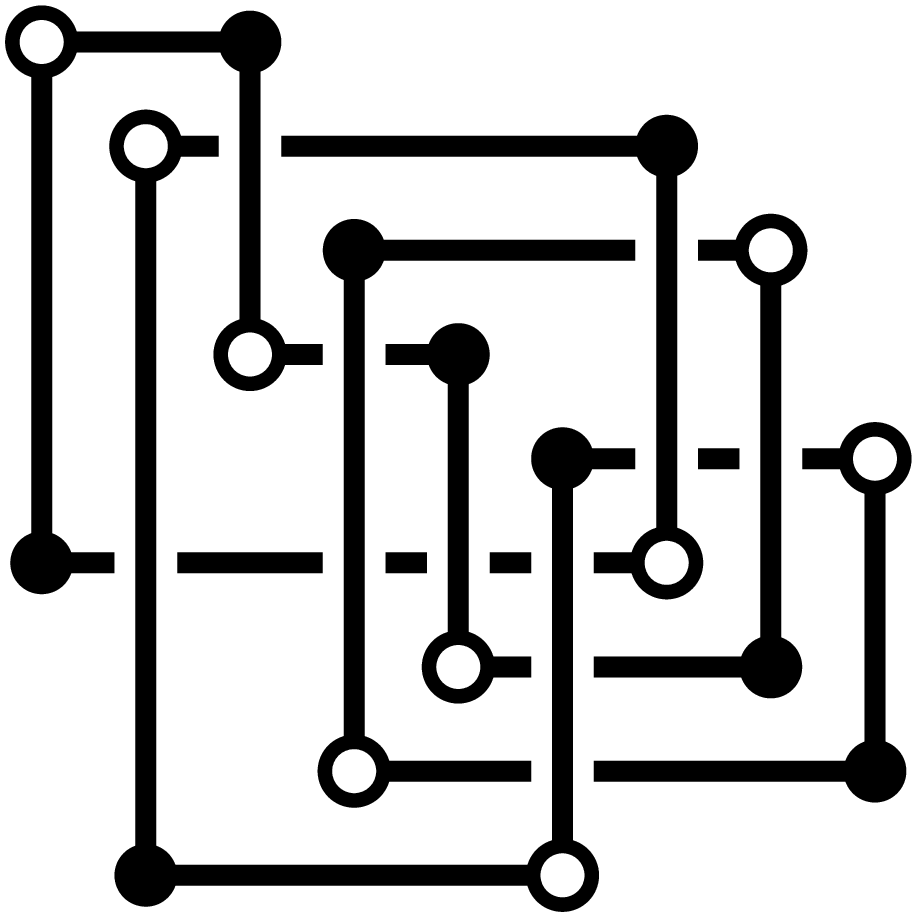}&\kern-.6em\raisebox{12pt}{$\rightarrow$}\kern-.6em&
\includegraphics[width=30pt]{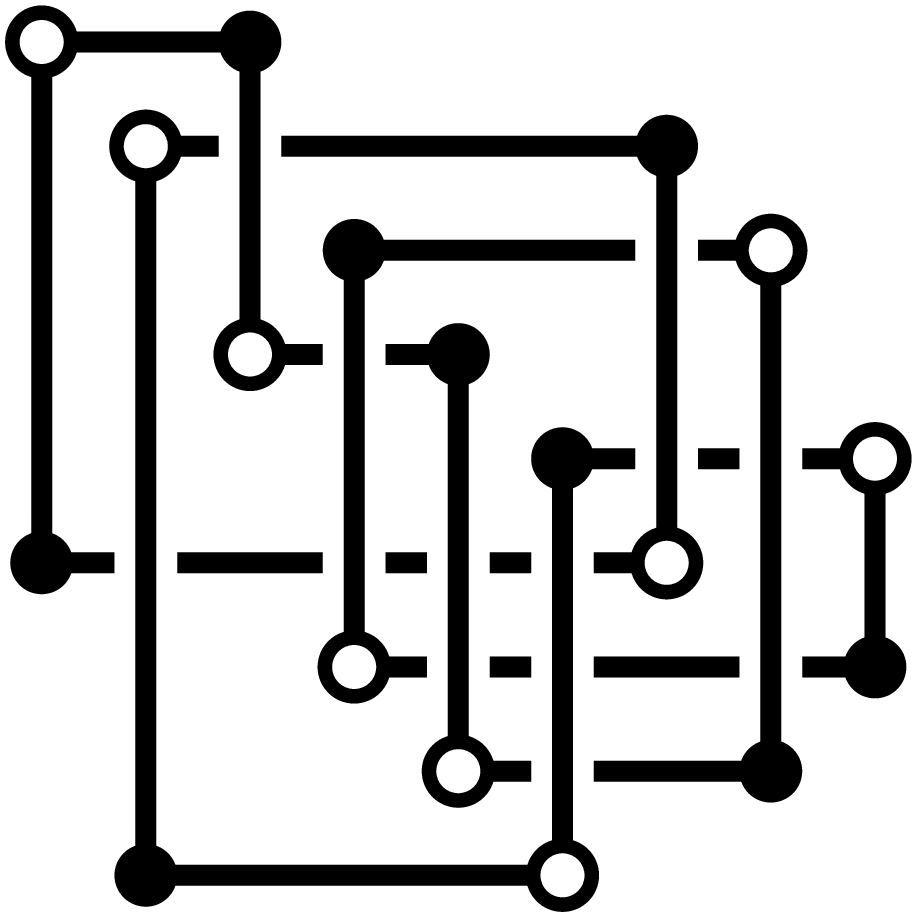}&\kern-.6em\raisebox{12pt}{$\rightarrow$}\kern-.6em&
\includegraphics[width=30pt]{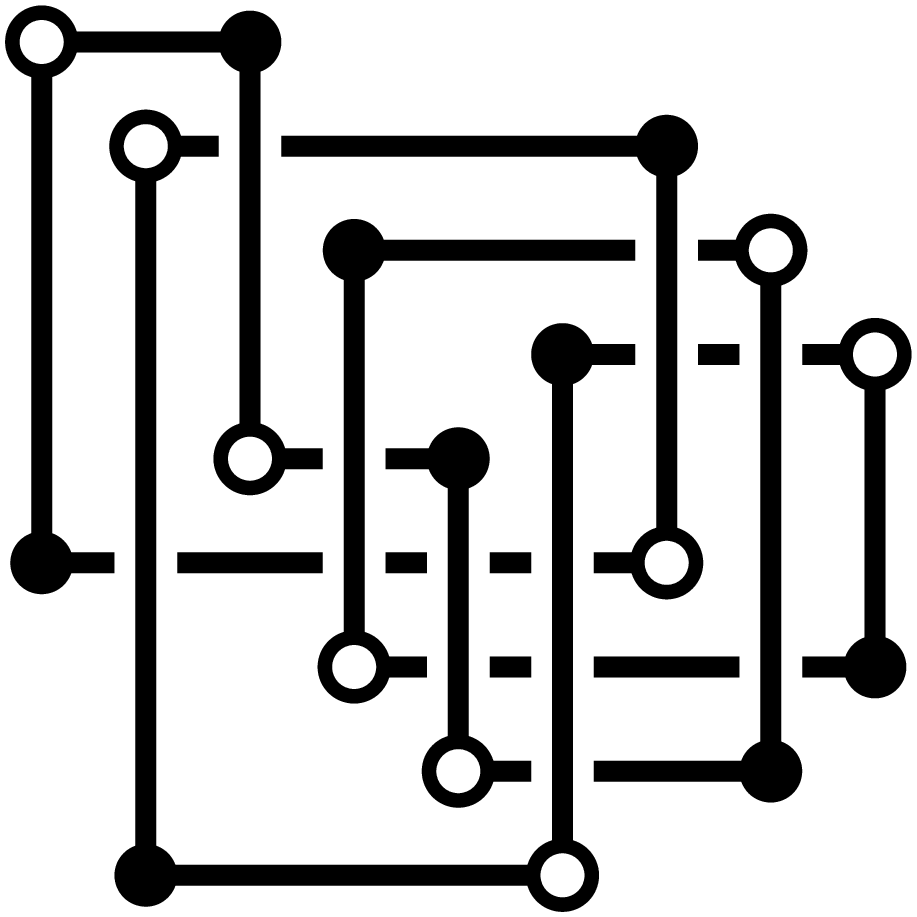}&\kern-.6em\raisebox{12pt}{$\rightarrow$}\kern-.6em&
\includegraphics[width=30pt]{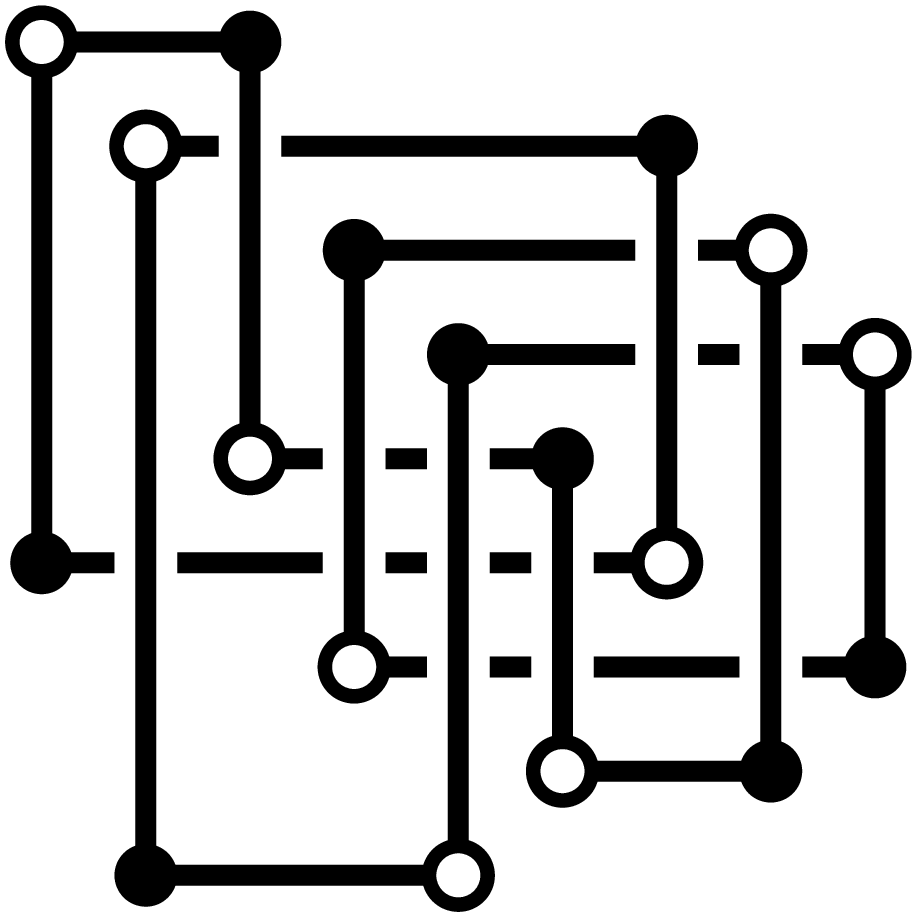}&\kern-.6em\raisebox{12pt}{$\rightarrow$}\kern-.6em&
\includegraphics[width=30pt]{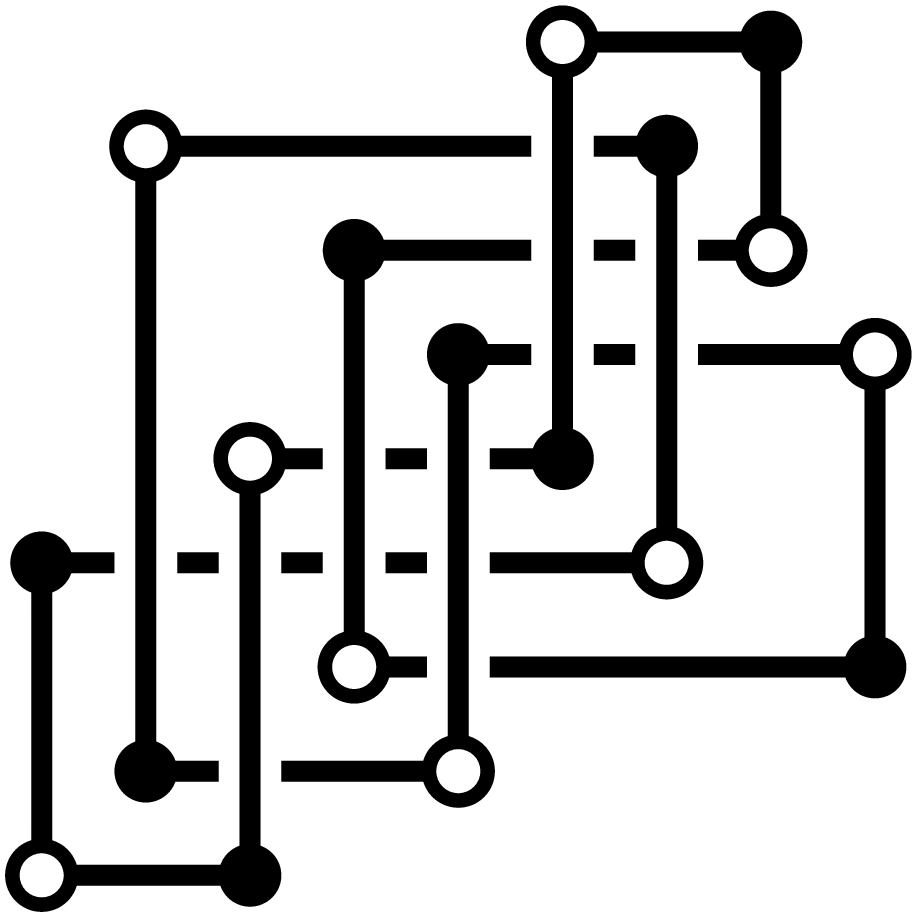}&\kern-.6em\raisebox{12pt}{$\rightarrow$}\kern-.6em&
\includegraphics[width=30pt]{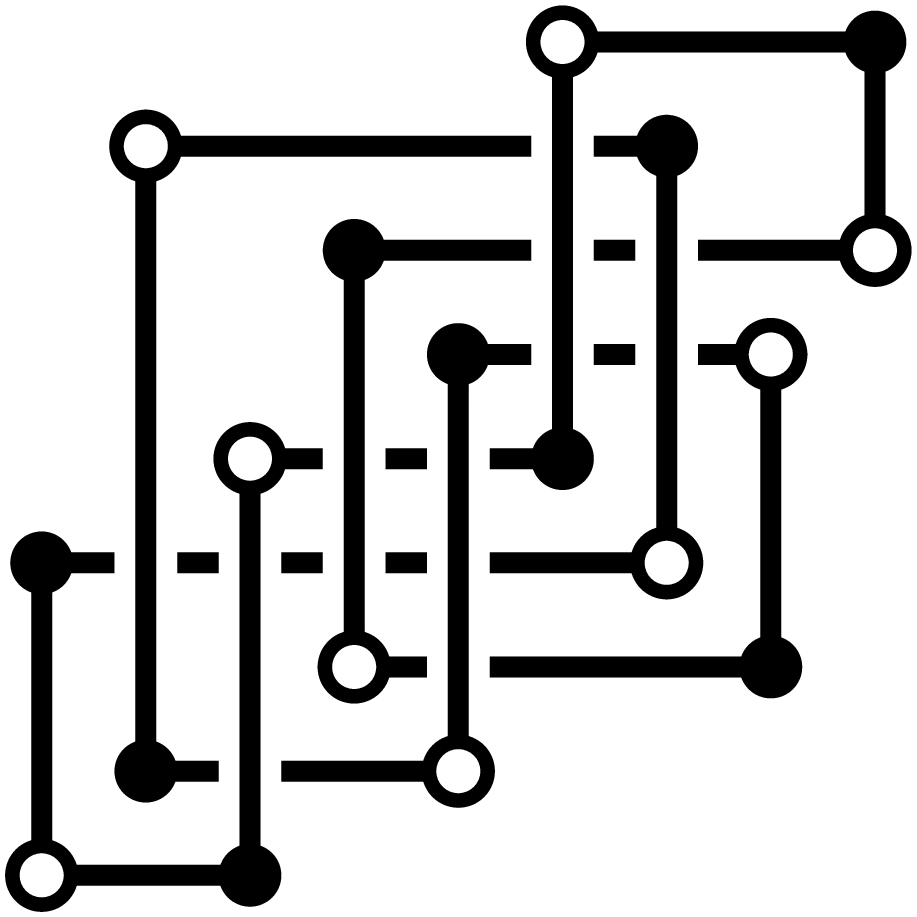}\\
&&&&&&&&&&&&&&\downarrow\\
\includegraphics[width=30pt]{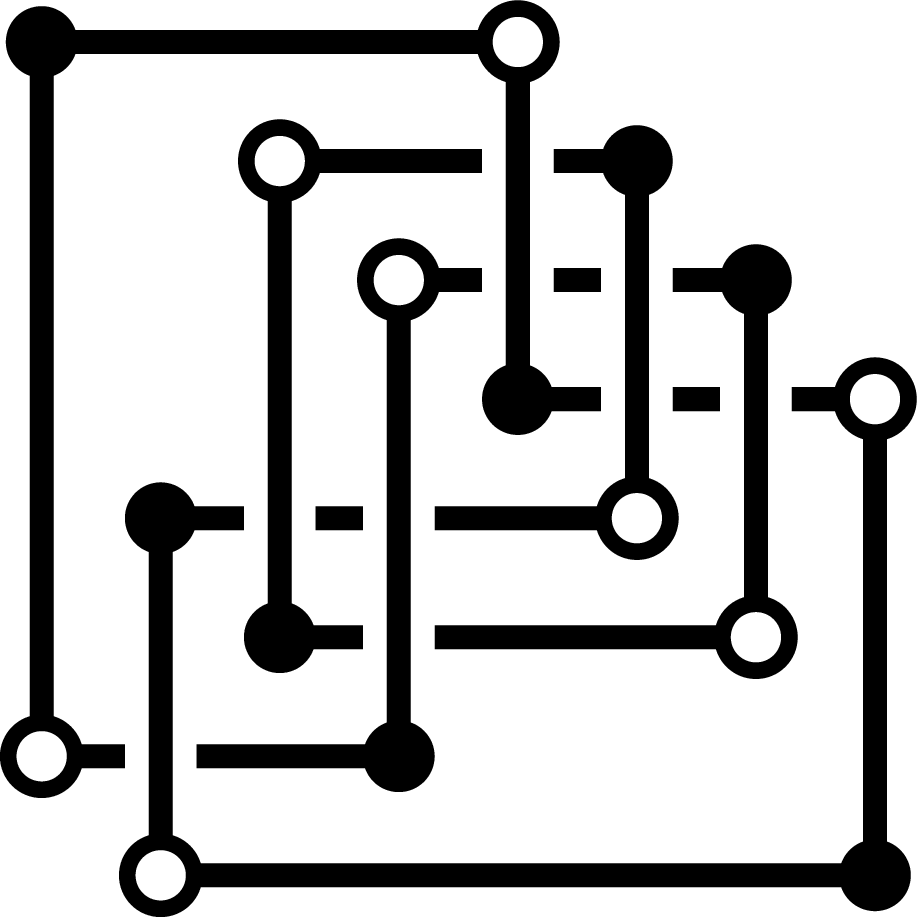}&\kern-.6em\raisebox{12pt}{$\leftarrow$}\kern-.6em&
\includegraphics[width=30pt]{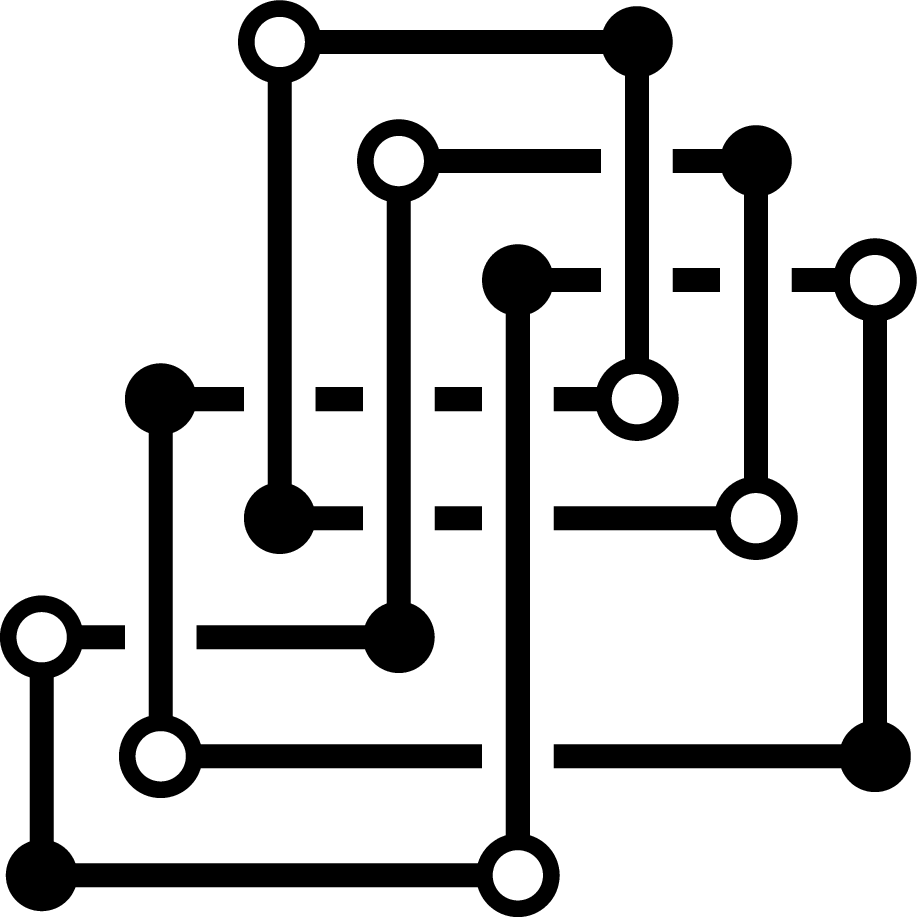}&\kern-.6em\raisebox{12pt}{$\leftarrow$}\kern-.6em&
\includegraphics[width=30pt]{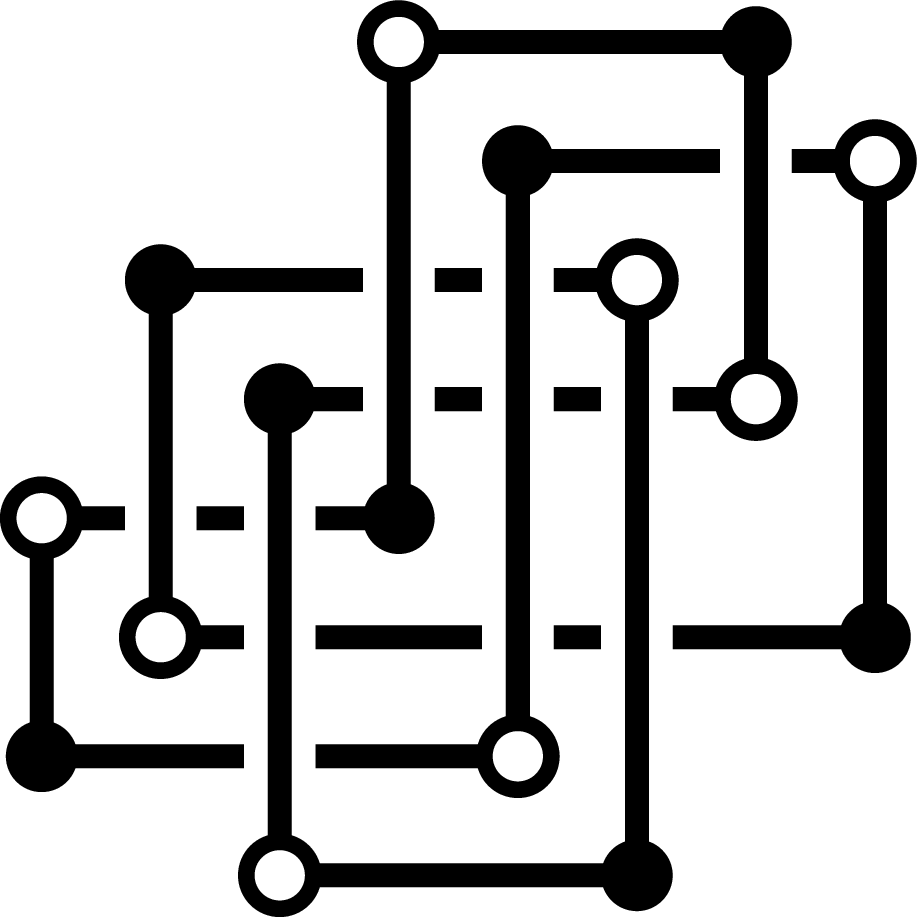}&\kern-.6em\raisebox{12pt}{$\leftarrow$}\kern-.6em&
\includegraphics[width=30pt]{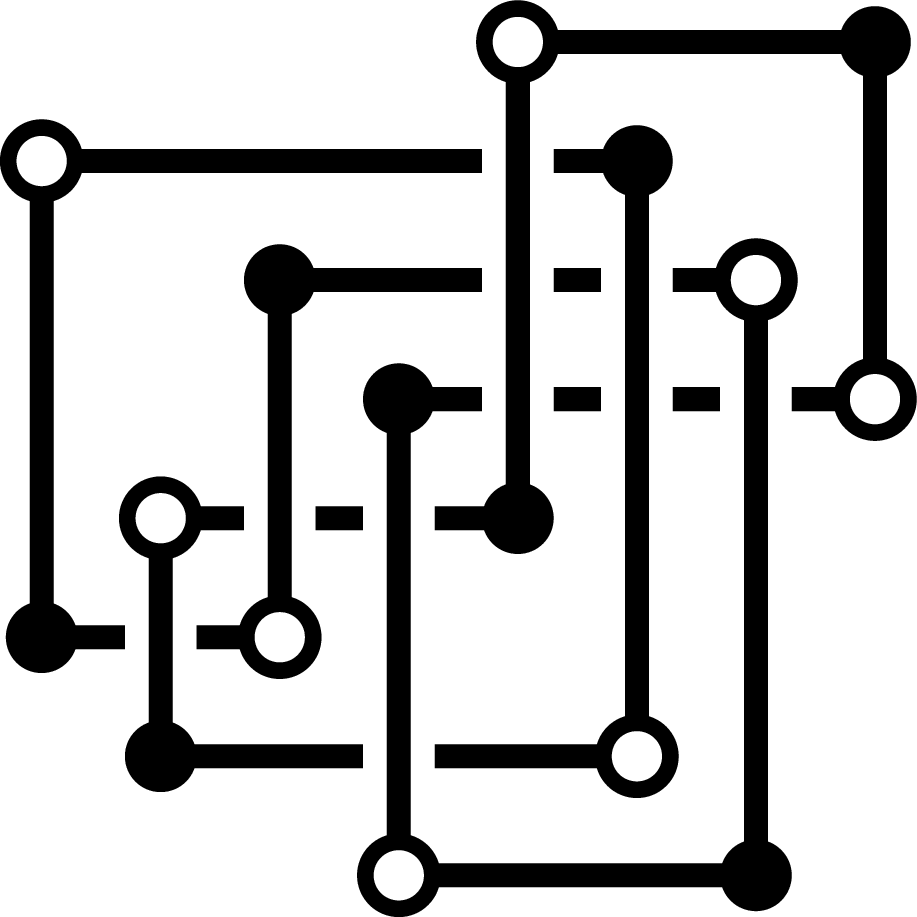}&\kern-.6em\raisebox{12pt}{$\leftarrow$}\kern-.6em&
\includegraphics[width=30pt]{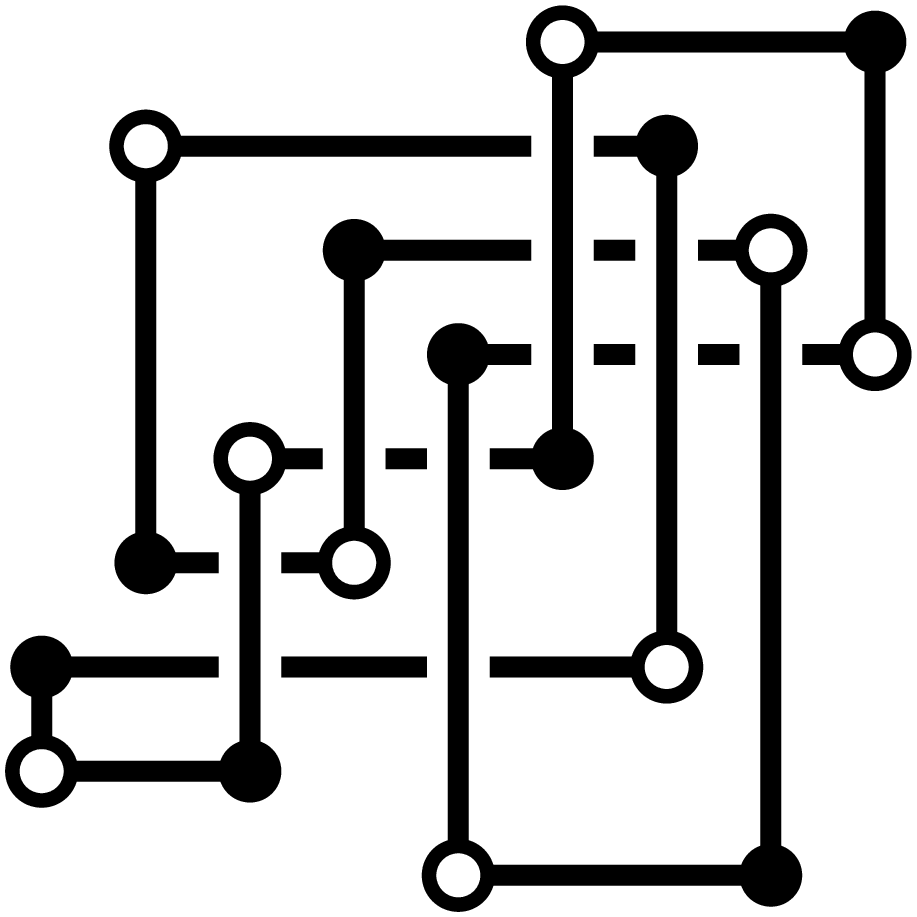}&\kern-.6em\raisebox{12pt}{$\leftarrow$}\kern-.6em&
\includegraphics[width=30pt]{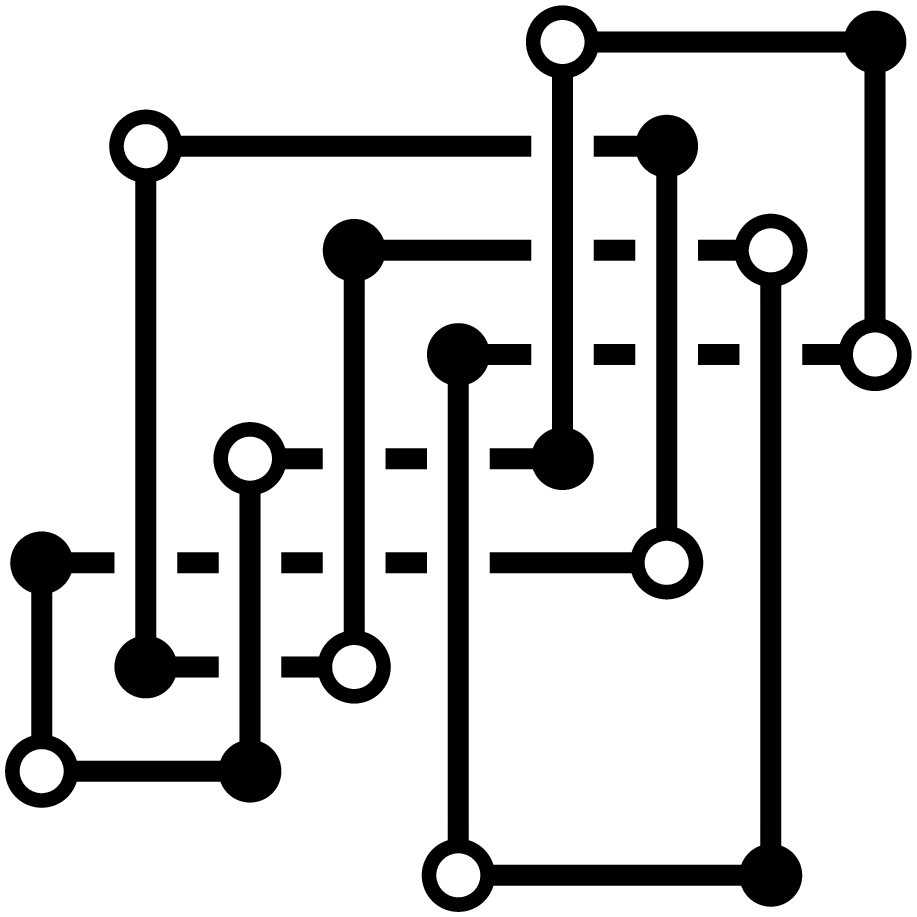}&\kern-.6em\raisebox{12pt}{$\leftarrow$}\kern-.6em&
\includegraphics[width=30pt]{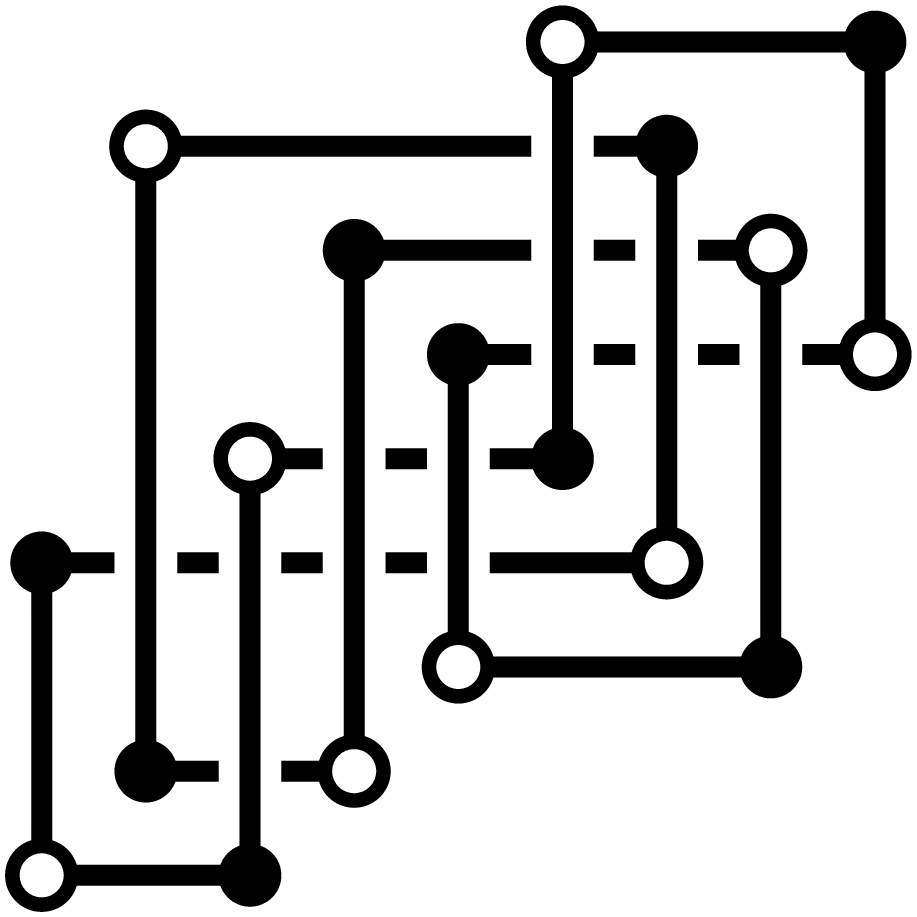}&\kern-.6em\raisebox{12pt}{$\leftarrow$}\kern-.6em&
\includegraphics[width=30pt]{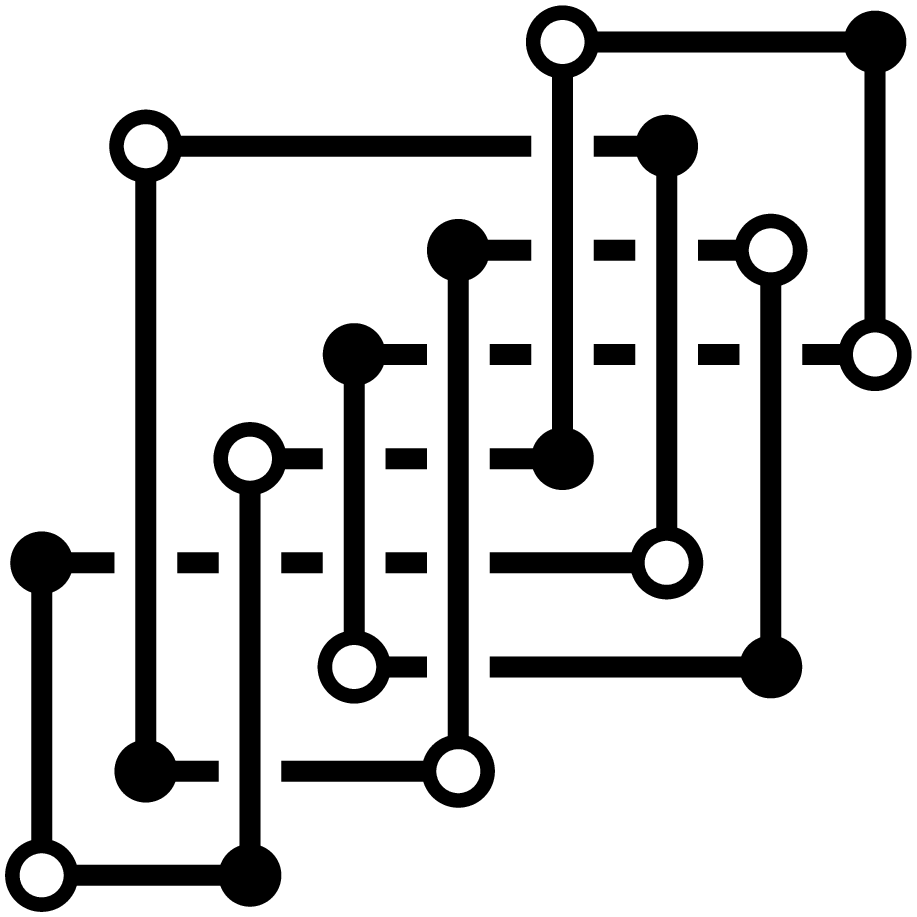}
\end{array},$$
and~$9_{42}^-=\mathscr L_-(9_{42}^{\mathrm R})$:
$$\begin{array}{ccccccccccccccccc}
\includegraphics[width=30pt]{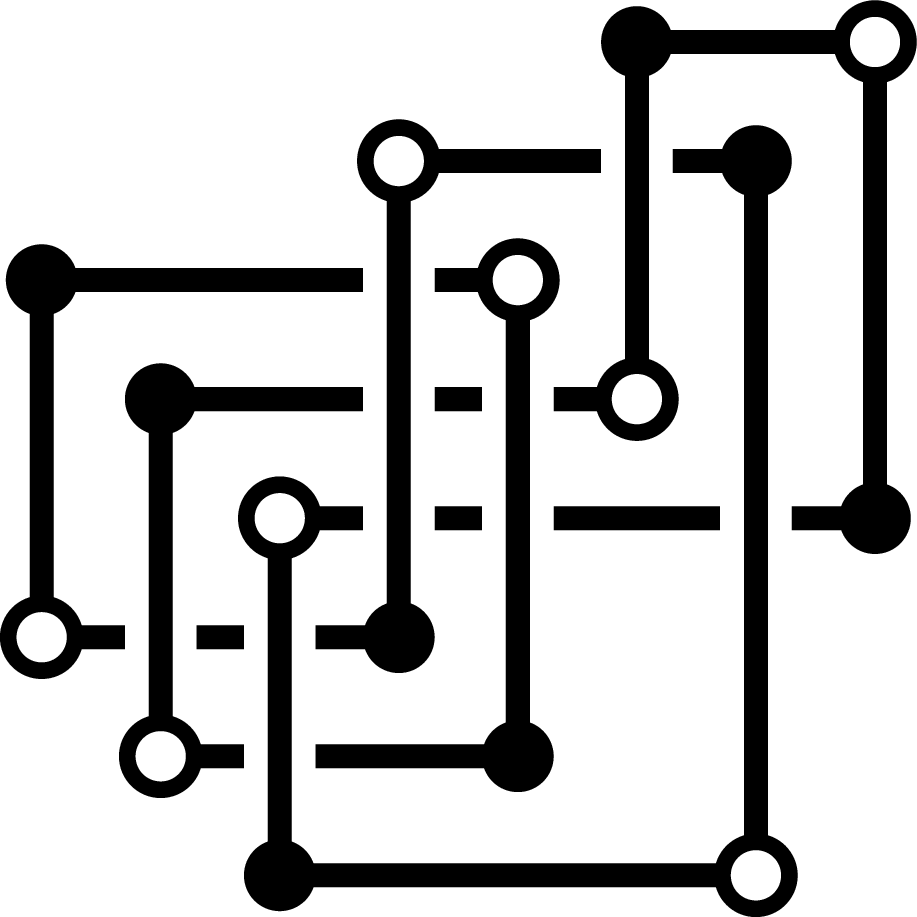}&\kern-.6em\raisebox{12pt}{$\rightarrow$}\kern-.6em&
\includegraphics[width=30pt]{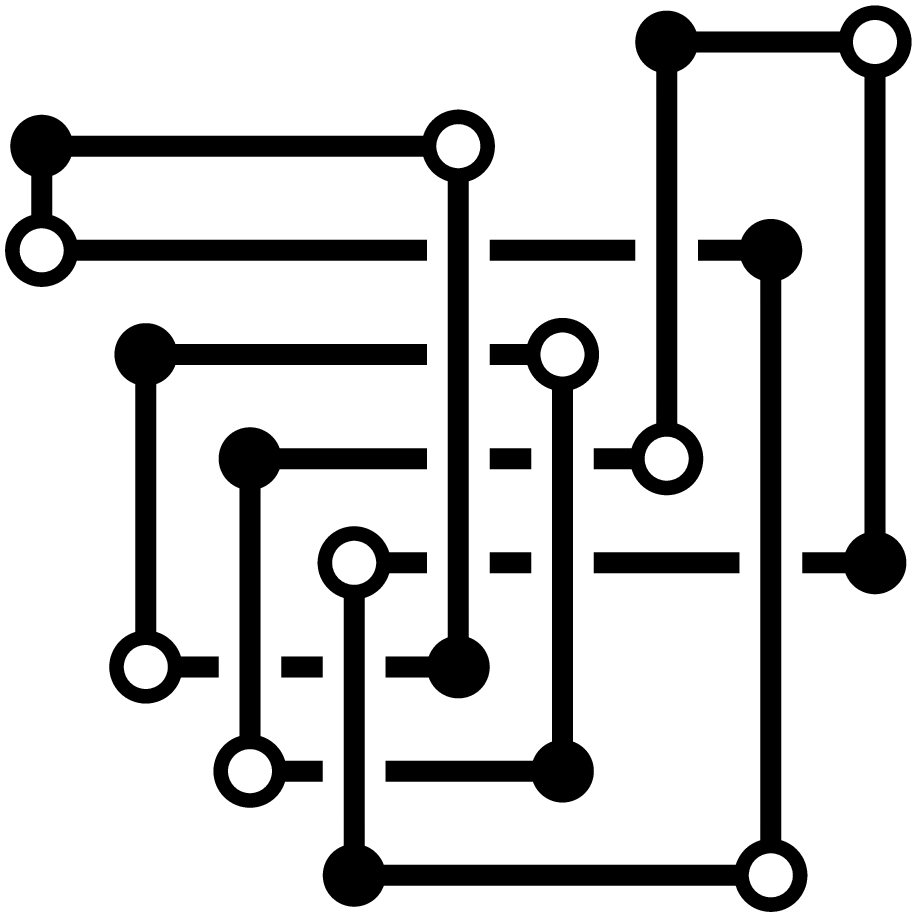}&\kern-.6em\raisebox{12pt}{$\rightarrow$}\kern-.6em&
\includegraphics[width=30pt]{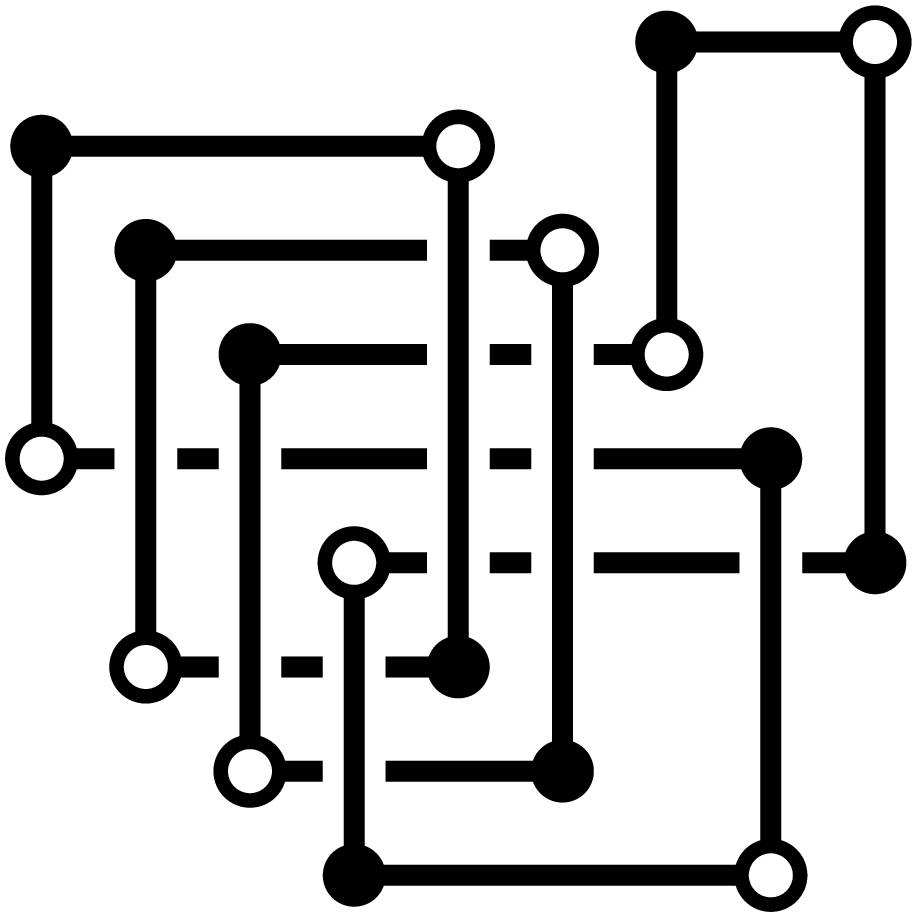}&\kern-.6em\raisebox{12pt}{$\rightarrow$}\kern-.6em&
\includegraphics[width=30pt]{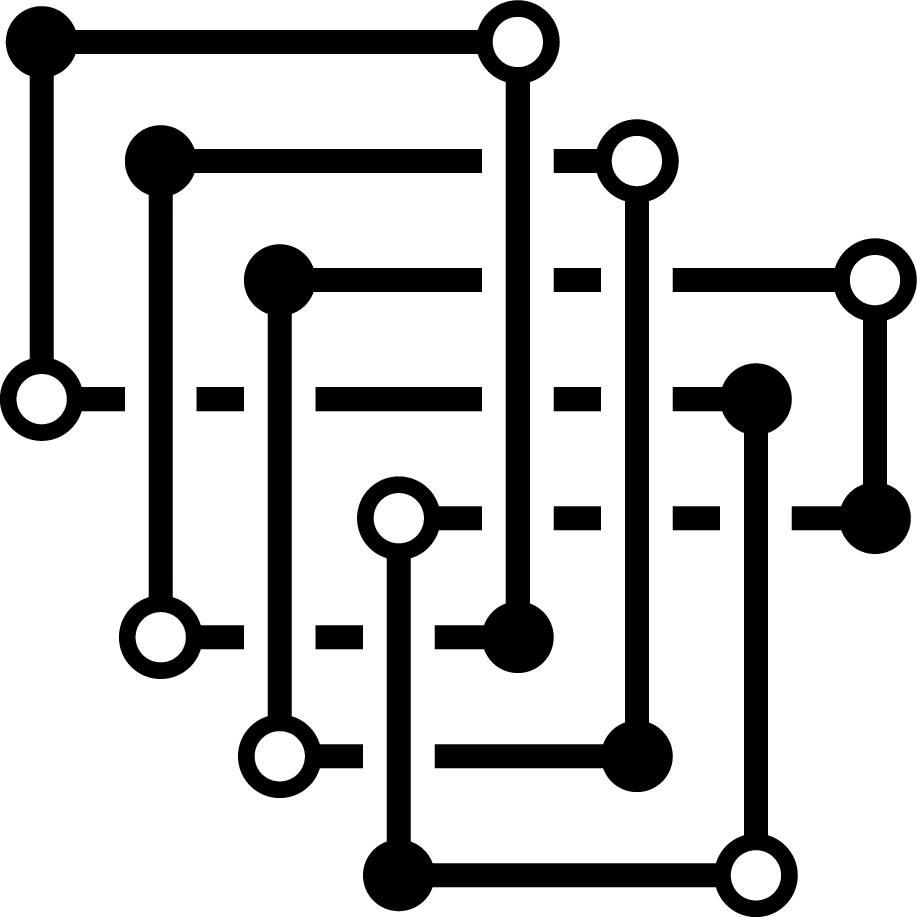}&\kern-.6em\raisebox{12pt}{$\rightarrow$}\kern-.6em&
\includegraphics[width=30pt]{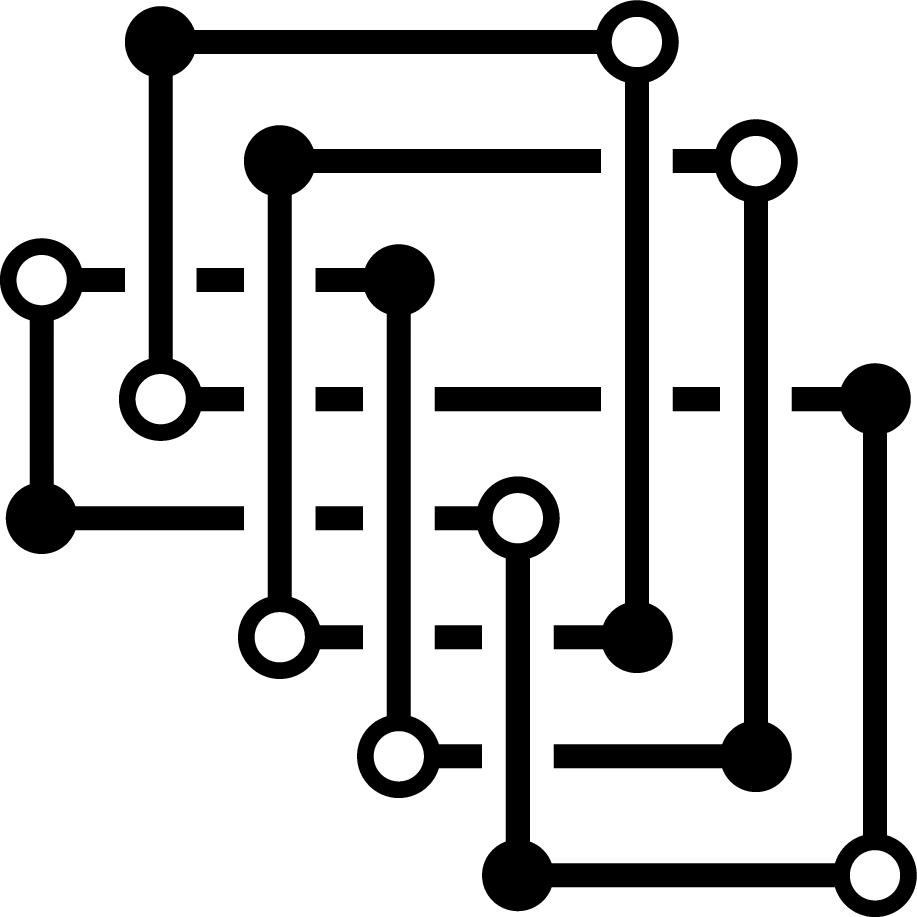}&\kern-.6em\raisebox{12pt}{$\rightarrow$}\kern-.6em&
\includegraphics[width=30pt]{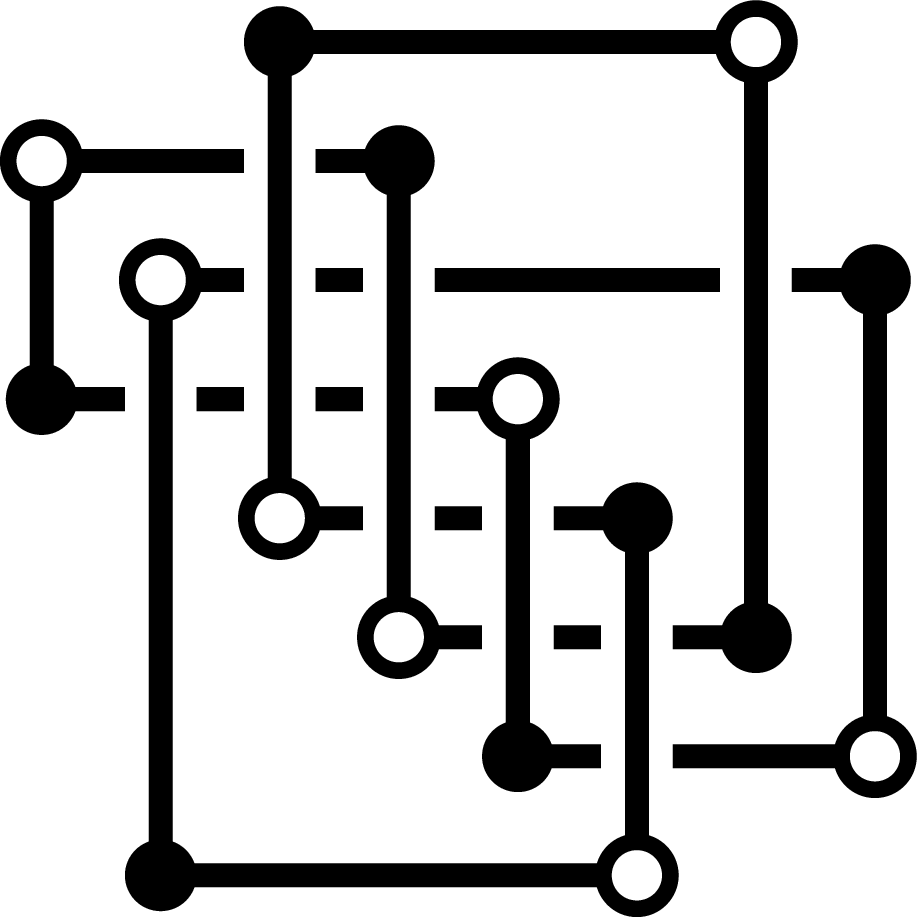}&\kern-.6em\raisebox{12pt}{$\rightarrow$}\kern-.6em&
\includegraphics[width=30pt]{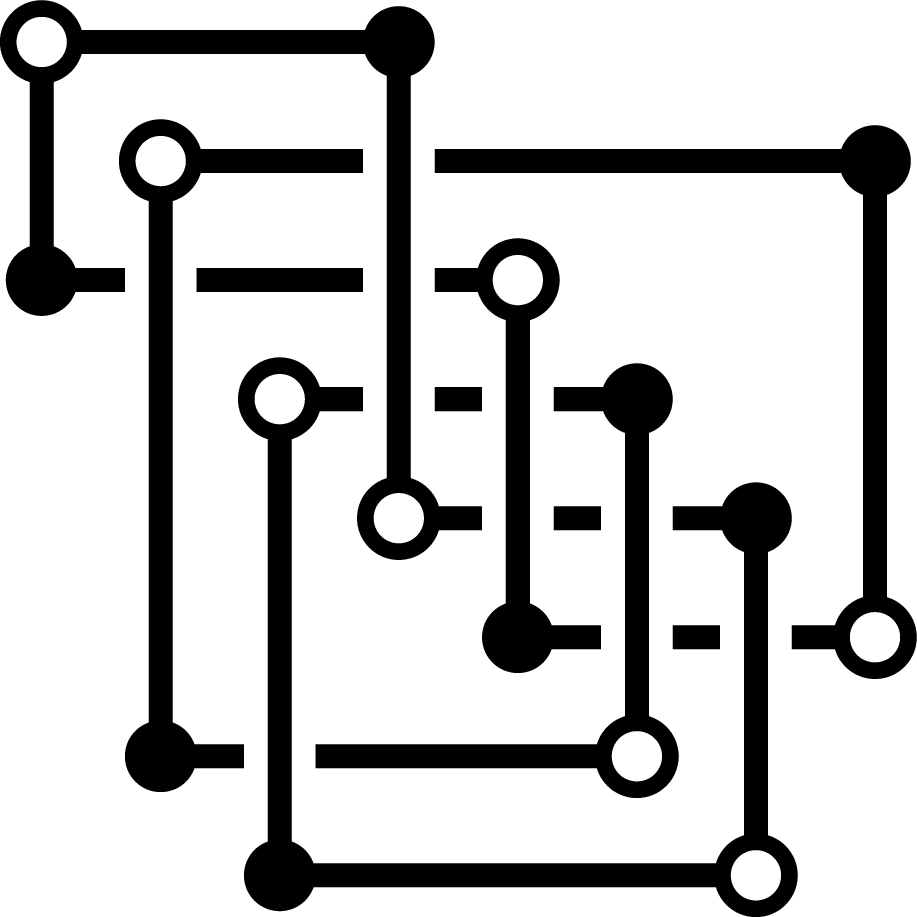}&\kern-.6em\raisebox{12pt}{$\rightarrow$}\kern-.6em&
\includegraphics[width=30pt]{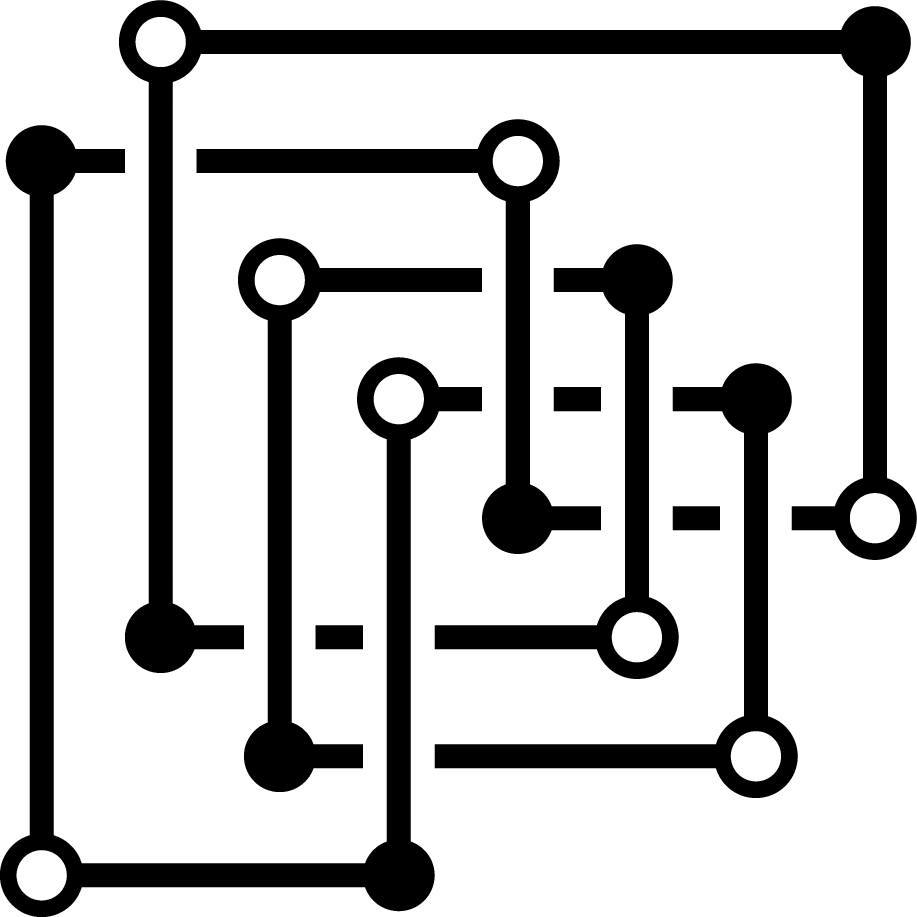}&\kern-.6em\raisebox{12pt}{$\rightarrow$}\kern-.6em&
\includegraphics[width=30pt]{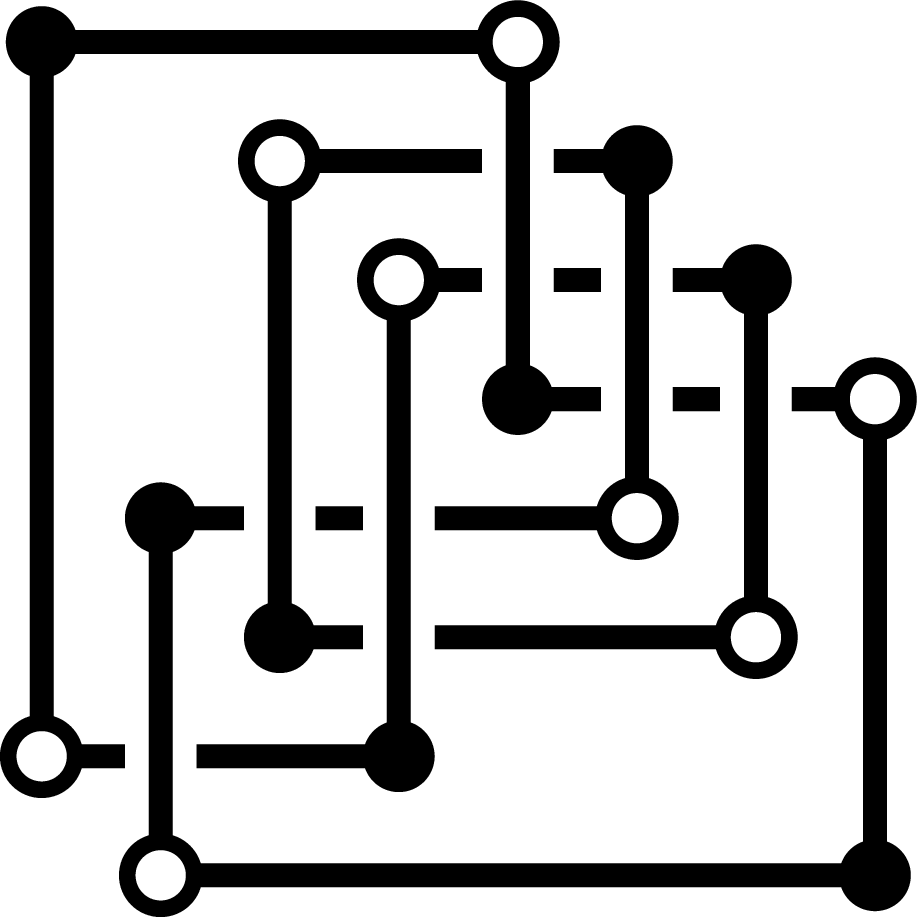}\\
\end{array},$$
which completes the proof.
\end{proof}

The proof of Proposition~\ref{9_42_prop} is summarized in Figure~\ref{42-fig}. In what follows
we present the proofs by similar schemes omitting the verbal description.
For a routine check of all equalities and inequalities of exchange classes used
in the proofs the reader is referred to~\cite{anc}.

In the proofs of Propositions~\ref{9_44_prop}, \ref{9_45_prop}, and~\ref{10_160_prop}
we may also silently use symmetries: an inequality~$X\ne Y$, where~$X$ and~$Y$
are some Legendrian or exchange classes, is equivalent to any of~$-X\ne-Y$, $\mu(X)\ne\mu(Y)$.
Another use of symmetries is as follows. If~$X$ and~$Y$ are Legendrian classes
such that~$X=-X$ and~$Y\ne-Y$ (similarly for~$\mu$ or~$-\mu$ in place of~`$-$'), then we immediately know
that~$X\notin\{Y,-Y,\mu(Y),-\mu(Y)\}$.

\begin{figure}[ht!]
\begin{center}
\begin{tikzpicture}[node distance = 2 cm,auto, ]
\node[] (1) {\footnotesize $9_{42}^{\mathrm R}$} ;
\node[left = 0.5cm of 1] (1v) {} ;
\node[left = 0.15cm of 1v] (1-) {\footnotesize $9_{42}^{-}$} ;
\node[right = 1cm of 1] (dot1) {$\bullet$} ;
\node[above = 0.45cm of dot1] (1+) {\footnotesize $9_{42}^{+}$} ;
\node[above = 0.15cm of dot1] (dot1v1) {} ;
\node[below = 0.15cm of dot1] (dot1v2) {} ;
\node[right = 1cm of dot1] (-1) {\footnotesize $-9_{42}^{\mathrm R}$} ;
\node[right = 1cm of -1] (dot2) {$\bullet$} ;
\node[above = 0.2cm of dot2] (-1-) {\footnotesize $-9_{42}^{-}$} ;
\node[right = 1cm of dot2] (mu1) {\footnotesize $\mu\bigl(9_{42}^{\mathrm R}\bigr)$} ;
\node[right = 0.8cm of mu1] (mu1+) {\footnotesize $\mu\bigl(9_{42}^{+}\bigr)$} ;
\node[right = 0.35cm of mu1] (mu1v) {} ;
\node[above = 0.15cm of mu1v] (dot2v1) {} ;
\node[below = 0.15cm of mu1v] (dot2v2) {} ;
\path[->,draw]
(1) edge node[midway,above = 1pt,fill=white,inner sep=0pt] {\footnotesize $\overrightarrow{\mathrm{I}}$} (dot1)
(-1) edge node[midway,above = 1pt,fill=white,inner sep=0pt] {\footnotesize $\overrightarrow{\mathrm{I}}$} (dot1)
(-1) edge node[midway,above = 1pt,fill=white,inner sep=0pt] {\footnotesize $\overrightarrow{\mathrm{II}}$} (dot2)
(mu1) edge node[midway,above = 1pt,fill=white,inner sep=0pt] {\footnotesize $\overrightarrow{\mathrm{II}}$} (dot2);
\begin{pgfonlayer}{background}
  \node[fit=(1)(1v), vfit] {};
  \node[fit=(1)(dot1)(-1)(dot1v1)(dot1v2), hfit] {};
  \node[fit=(-1)(dot2)(mu1), vfit] {};
  \node[fit=(mu1)(mu1v)(dot2v1)(dot2v2), hfit] {};
\end{pgfonlayer}
\end{tikzpicture}
\end{center}
\caption{Proof of Proposition~\ref{9_42_prop}} \label{42-fig}
\end{figure}

\begin{figure}[ht!]
\includegraphics[scale=.5]{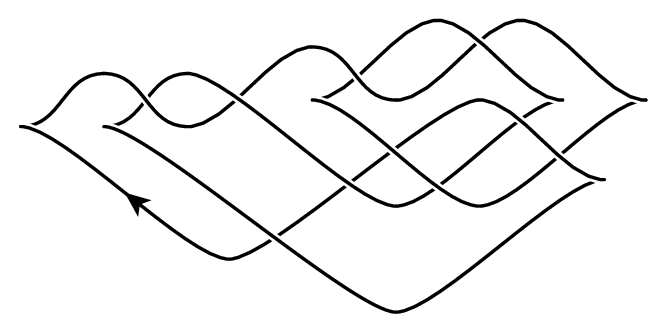}\put(-95,0){$9_{43}^+$}
\hskip1cm
\includegraphics[scale=.5]{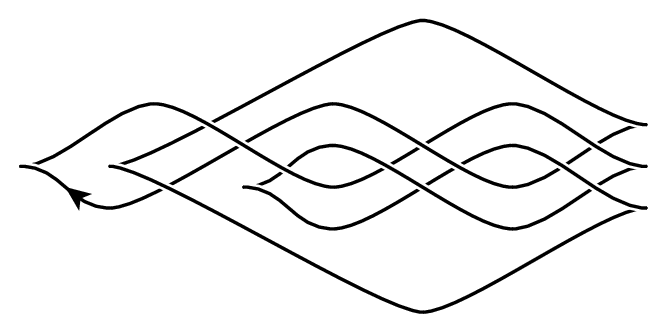}\put(-95,0){$r_\medvert(9_{43}^-)$}
\caption{Legendrian knots in Proposition~\ref{9_43_prop}}\label{9-43-knots-fig}
\end{figure}

\begin{prop}\label{9_43_prop}
For the $\xi_\pm$-Legendrian classes whose representatives are shown in Figure~\ref{9-43-knots-fig}, we have
$9_{43}^+\ne-9_{43}^+$ and $9_{43}^-\ne-\mu(9_{43}^-)$.
\end{prop}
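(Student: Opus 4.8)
The plan is to mimic exactly the scheme that proves Proposition~\ref{9_42_prop}. First I would fix an explicit rectangular diagram of a knot, call its exchange class $9_{43}^{\mathrm R}$, such that applying Theorem~\ref{rect-desc-of-leg-theo} to a sequence of exchange moves and type~I (resp. type~II) stabilizations recovers the $\xi_+$-Legendrian class $9_{43}^+$ (resp. the $\xi_-$-Legendrian class $9_{43}^-$) from the representatives displayed in Figure~\ref{9-43-knots-fig}. Concretely one checks $\mathscr L_+(9_{43}^{\mathrm R})=9_{43}^+$ and $\mathscr L_-(9_{43}^{\mathrm R})=9_{43}^-$ by reading off the associated Legendrian fronts via the three-step recipe (planar diagram, rotation, smoothing) and comparing $\tb$ and $r$ against the pictures.

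Next I would perform the combinatorial comparisons at the level of exchange classes. The target inequalities translate, under $\mathscr L_\pm$, into statements about exchange classes: using $\mathscr L_\pm(-X)=-\mathscr L_\pm(X)$ and $\mathscr L_\pm(\mu(X))=\mu(\mathscr L_\pm(X))$ it suffices to verify that $9_{43}^{\mathrm R}\ne-9_{43}^{\mathrm R}$ and $-9_{43}^{\mathrm R}\ne-\mu(9_{43}^{\mathrm R})$ (equivalently $9_{43}^{\mathrm R}\ne\mu(9_{43}^{\mathrm R})$), and then that after a single suitable oriented stabilization the classes become equal: $S_{\overrightarrow{\mathrm I}}(9_{43}^{\mathrm R})=S_{\overrightarrow{\mathrm I}}(-9_{43}^{\mathrm R})$ and $S_{\overrightarrow{\mathrm{II}}}(-9_{43}^{\mathrm R})=S_{\overrightarrow{\mathrm{II}}}(-\mu(9_{43}^{\mathrm R}))$, or whatever pair of oriented stabilizations makes the corresponding diagrams exchange-equivalent. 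Each of these is a finite computation: an exchange class produces only finitely many combinatorial types of diagrams, so equality and inequality of exchange classes are decidable by exhaustive search, and this is what "one directly checks" refers to.

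Having done that, the logical assembly is immediate. By Proposition~\ref{table-knots-prop} the knot $9_{43}$ has trivial orientation-preserving symmetry group, so Theorem~\ref{main-theo} applies: since the stabilized diagrams are exchange-equivalent while the unstabilized ones are not, the relevant $\xi_+$- or $\xi_-$-Legendrian classes before stabilization must already be distinct — for otherwise Theorem~\ref{rect-desc-of-leg-theo} together with Theorem~\ref{main-theo} would force an exchange equivalence that the direct check has ruled out. This gives $\mathscr L_+(9_{43}^{\mathrm R})\ne\mathscr L_+(-9_{43}^{\mathrm R})$, i.e. $9_{43}^+\ne-9_{43}^+$, and $\mathscr L_-(-9_{43}^{\mathrm R})\ne\mathscr L_-(-\mu(9_{43}^{\mathrm R}))$, i.e. $-9_{43}^-\ne-\mu(9_{43}^-)$, hence $9_{43}^-\ne-\mu(9_{43}^-)$; combined with $\mathscr L_\pm(9_{43}^{\mathrm R})=9_{43}^\pm$ this is exactly the assertion.

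The main obstacle is purely computational rather than conceptual: one must exhibit a single diagram $9_{43}^{\mathrm R}$ for which all of the above exchange-class equalities and inequalities actually hold, and verifying the inequalities requires certifying that two potentially large orbits under exchange moves do not meet. The delicate point is choosing the oriented stabilization types ($\overrightarrow{\mathrm I}$ vs $\overleftarrow{\mathrm I}$, $\overrightarrow{\mathrm{II}}$ vs $\overleftarrow{\mathrm{II}}$) consistently with the dictionary in Theorem~\ref{rect-desc-of-leg-theo} so that the stabilized diagrams land in the same exchange class; everything else is the same bookkeeping as in Figure~\ref{42-fig}, and I would present the proof by an analogous diagram, omitting the verbal description as the paper announces.
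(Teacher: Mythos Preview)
Your logical bookkeeping has the direction of the implications reversed. Type~I moves preserve~$\mathscr L_+$, so if $S_{\overrightarrow{\mathrm I}}(9_{43}^{\mathrm R})=S_{\overrightarrow{\mathrm I}}(-9_{43}^{\mathrm R})$ held as exchange classes, Theorem~\ref{rect-desc-of-leg-theo} would give $\mathscr L_+(9_{43}^{\mathrm R})=\mathscr L_+(-9_{43}^{\mathrm R})$, that is $9_{43}^+=-9_{43}^+$, which is the opposite of what you want. Combined with $9_{43}^{\mathrm R}\ne-9_{43}^{\mathrm R}$ and Theorem~\ref{main-theo}, such a check yields $\mathscr L_-(9_{43}^{\mathrm R})\ne\mathscr L_-(-9_{43}^{\mathrm R})$, not the $\mathscr L_+$ inequality you claim. (Look again at the $9_{42}$ argument: the type~I check produces $9_{42}^-\ne-9_{42}^-$, and the type~II check produces $9_{42}^+\ne\mu(9_{42}^+)$.) The same reversal occurs in your second check. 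So as written, your proposed verifications are logically incompatible with the conclusion.

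Swapping the stabilization types does not rescue the single-diagram scheme either. To prove $9_{43}^+\ne-9_{43}^+$ via Theorem~\ref{main-theo} one needs two exchange classes with \emph{equal} $\mathscr L_-$ and with $\mathscr L_+$ equal to $9_{43}^+$ and $-9_{43}^+$ respectively; using $R$ and $-R$ would force $\mathscr L_-(R)=-\mathscr L_-(R)$, a symmetry of the $\xi_-$-class that there is no reason to expect. The paper's proof circumvents this by introducing an auxiliary exchange class $9_{43}^{2\mathrm R}$ and building the chain
$-9_{43}^{1\mathrm R}\;\text{---}\;9_{43}^{2\mathrm R}\;\text{---}\;9_{43}^{1\mathrm R}\;\text{---}\;-\mu(9_{43}^{1\mathrm R})$
in which consecutive terms alternately share $\mathscr L_+$ (via type~I moves) and $\mathscr L_-$ (via type~II moves). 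The middle pair $9_{43}^{2\mathrm R},\,9_{43}^{1\mathrm R}$ then has common $\mathscr L_-$ and distinguishes $-9_{43}^+$ from $9_{43}^+$, while the last pair $9_{43}^{1\mathrm R},\,-\mu(9_{43}^{1\mathrm R})$ has common $\mathscr L_+$ and distinguishes $-\mu(9_{43}^-)$ from $9_{43}^-$. This extra exchange class is the idea your proposal is missing.
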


The proof is presented in Figure~\ref{9-43-proof-fig}.

\begin{figure}[ht!]
\includegraphics[scale=.18]{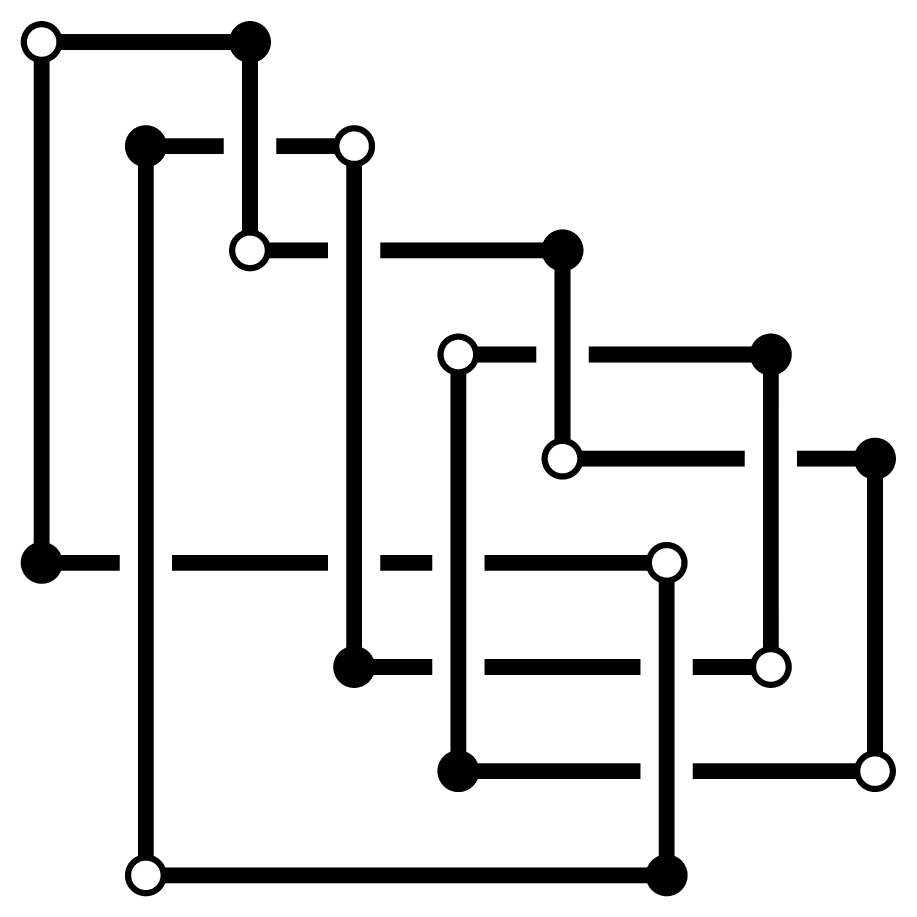}\put(-90,10){$9_{43}^{1\mathrm R}$}
\hskip1cm
\includegraphics[scale=.18]{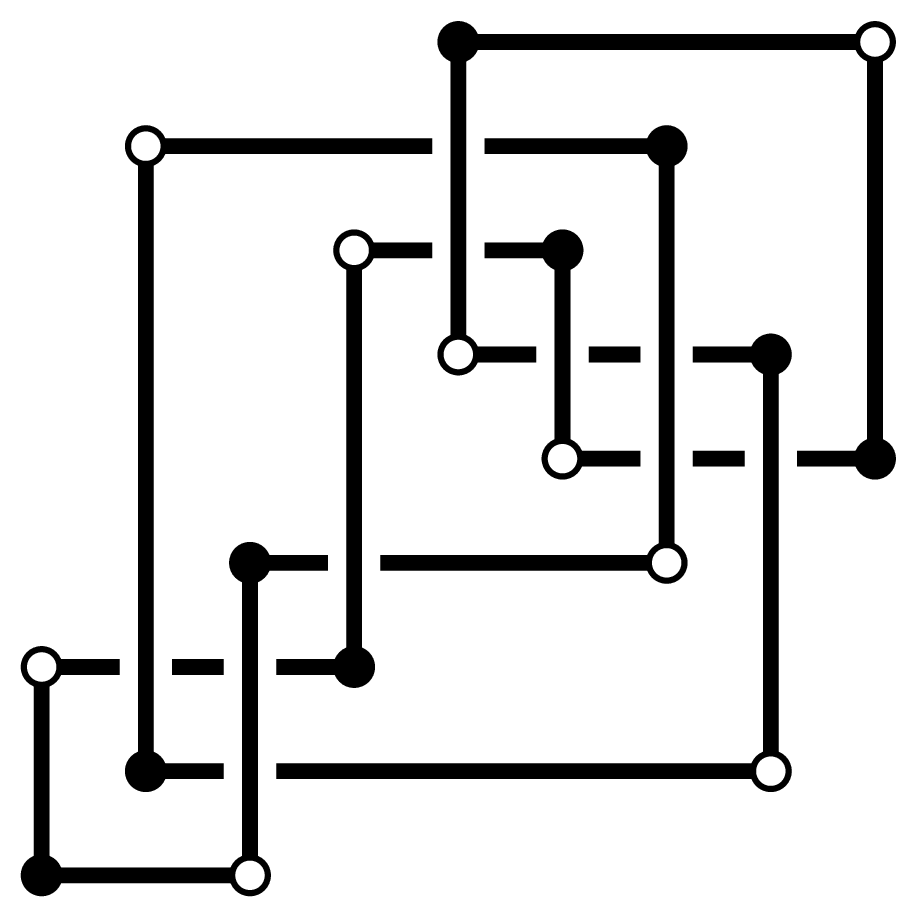}\put(-95,10){$9_{43}^{2\mathrm R}$}
\begin{center}
\begin{tikzpicture}[node distance = 2 cm,auto, ]
\node[] (-1) {\footnotesize $-9_{43}^{1\mathrm R}$} ;
\node[right = 1cm of -1] (dot1) {$\bullet$} ;
\node[above = 0.15cm of dot1] (dot1v1) {} ;
\node[below = 0.15cm of dot1] (dot1v2) {} ;
\node[right = 1cm of dot1] (2) {\footnotesize $9_{43}^{2\mathrm R}$} ;
\node[right = 1cm of 2] (dot2) {$\bullet$} ;
\node[above = 0.25cm of dot2] (-mu-) {\footnotesize $-\mu\bigl(9_{43}^{-}\bigr)$} ;
\node[right = 1cm of dot2] (1) {\footnotesize $9_{43}^{1\mathrm R}$} ;
\node[right = 1cm of 1] (dot3) {$\bullet$} ;
\node[above = 0.1cm of dot1v1] (-+) {\footnotesize $-9_{43}^{+}$} ;
\node[right = 1cm of dot3] (-mu1) {\footnotesize $-\mu\bigl(9_{43}^{1\mathrm R}\bigr)$} ;
\node[above = 0.45cm of -mu1] (+) {\footnotesize $9_{43}^{+}$} ;
\node[right = 0.5cm of -mu1] (-mu1v) {} ;
\node[right = 0.15cm of -mu1v] (-) {\footnotesize $9_{43}^{-}$} ;
\node[above right = 0.15cm and 0.5cm of dot3] (dot2v1) {} ;
\node[below right = 0.15cm and 0.5cm of dot3] (dot2v2) {} ;
\path[->,draw]
(-1) edge node[midway,above = 1pt,fill=white,inner sep=0pt] {\footnotesize $\overrightarrow{\mathrm{I}}$} (dot1)
(2) edge node[midway,above = 1pt,fill=white,inner sep=0pt] {\footnotesize $\overleftarrow{\mathrm{I}}$} (dot1)
(2) edge node[midway,above = 1pt,fill=white,inner sep=0pt] {\footnotesize $\overrightarrow{\mathrm{II}}$} (dot2)
(1) edge node[midway,above = 1pt,fill=white,inner sep=0pt] {\footnotesize $\overrightarrow{\mathrm{II}}$} (dot2)
(1) edge node[midway,above = 1pt,fill=white,inner sep=0pt] {\footnotesize $\overrightarrow{\mathrm{I}}$} (dot3)
(-mu1) edge node[midway,above = 1pt,fill=white,inner sep=0pt] {\footnotesize $\overrightarrow{\mathrm{I}}$} (dot3);
\begin{pgfonlayer}{background}
  \node[fit=(-1)(dot1)(2)(dot1v1)(dot1v2), hfit] {};
  \node[fit=(2)(dot2)(1), vfit] {};
  \node[fit=(1)(dot3)(-mu1)(dot2v1)(dot2v2), hfit] {};
  \node[fit=(-mu1)(-mu1v), vfit] {};
\end{pgfonlayer}
\end{tikzpicture}
\end{center}
\caption{Proof of Proposition~\ref{9_43_prop}} \label{9-43-proof-fig}
\end{figure}

\begin{figure}[ht!]
\includegraphics[scale=.5]{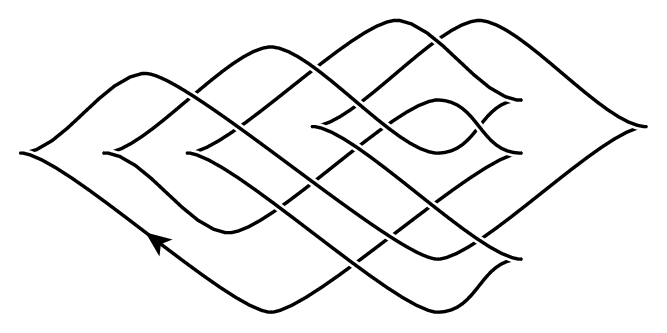}\put(-140,10){$9_{44}^{1+}$}
\hskip1cm
\includegraphics[scale=.5]{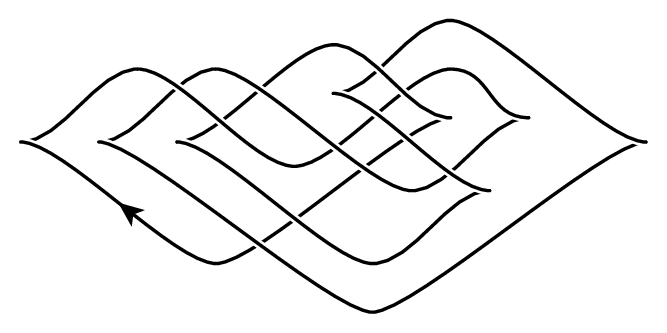}\put(-140,10){$9_{44}^{2+}$}
\\
\includegraphics[scale=.5]{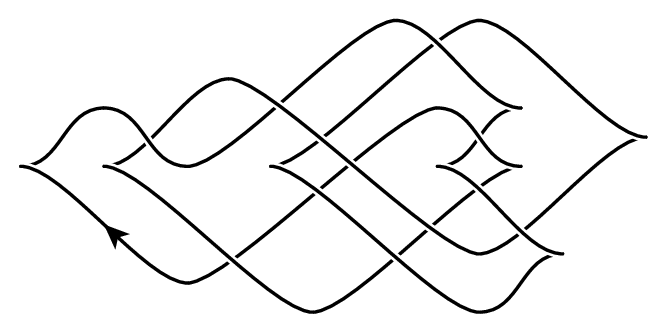}\put(-140,10){$9_{44}^{3+}$}
\hskip1cm
\includegraphics[scale=.5]{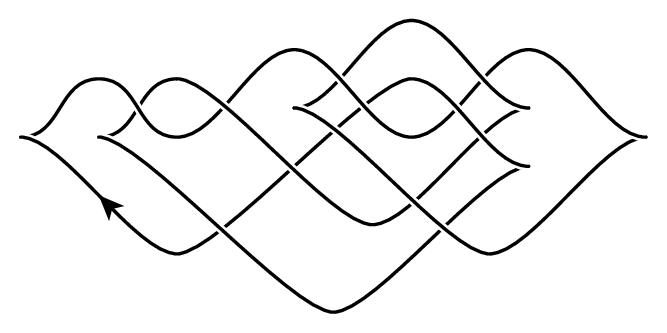}\put(-140,10){$r_\medvert(9_{44}^-)$}
\caption{The knots in Proposition~\ref{9_44_prop}}\label{9_44-fig}
\end{figure}
\begin{figure}[ht]
\includegraphics[scale=.18]{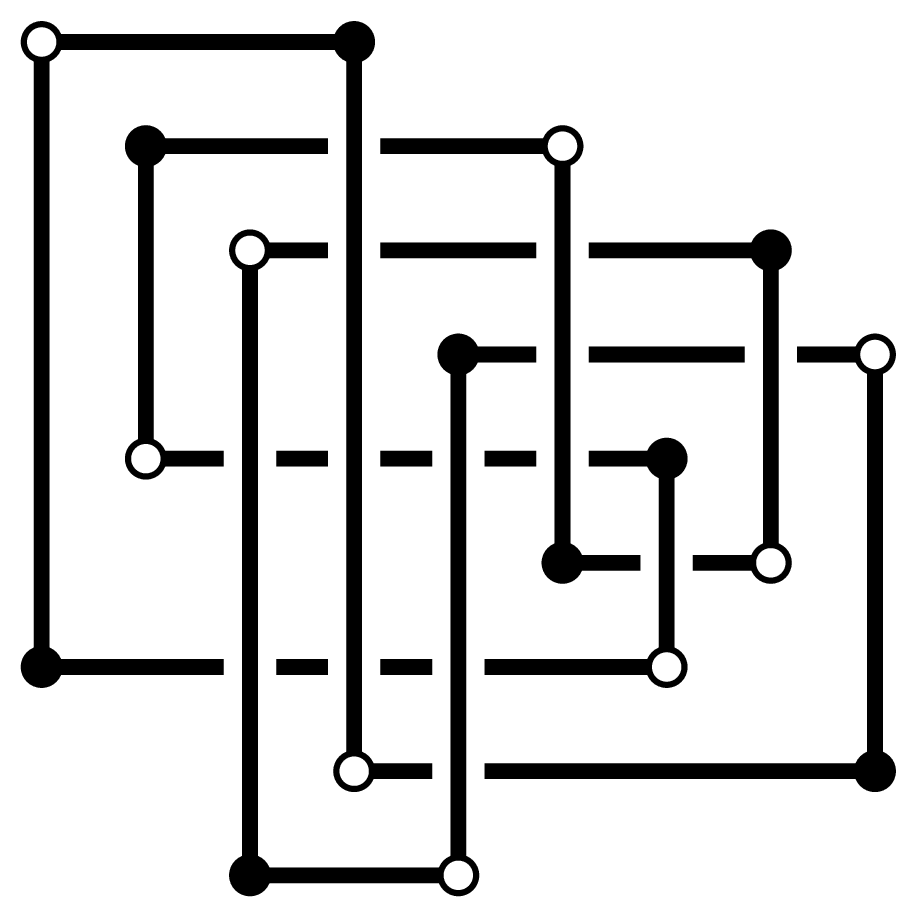}\put(-90,10){$9_{44}^{1\mathrm R}$}
\hskip1cm
\includegraphics[scale=.18]{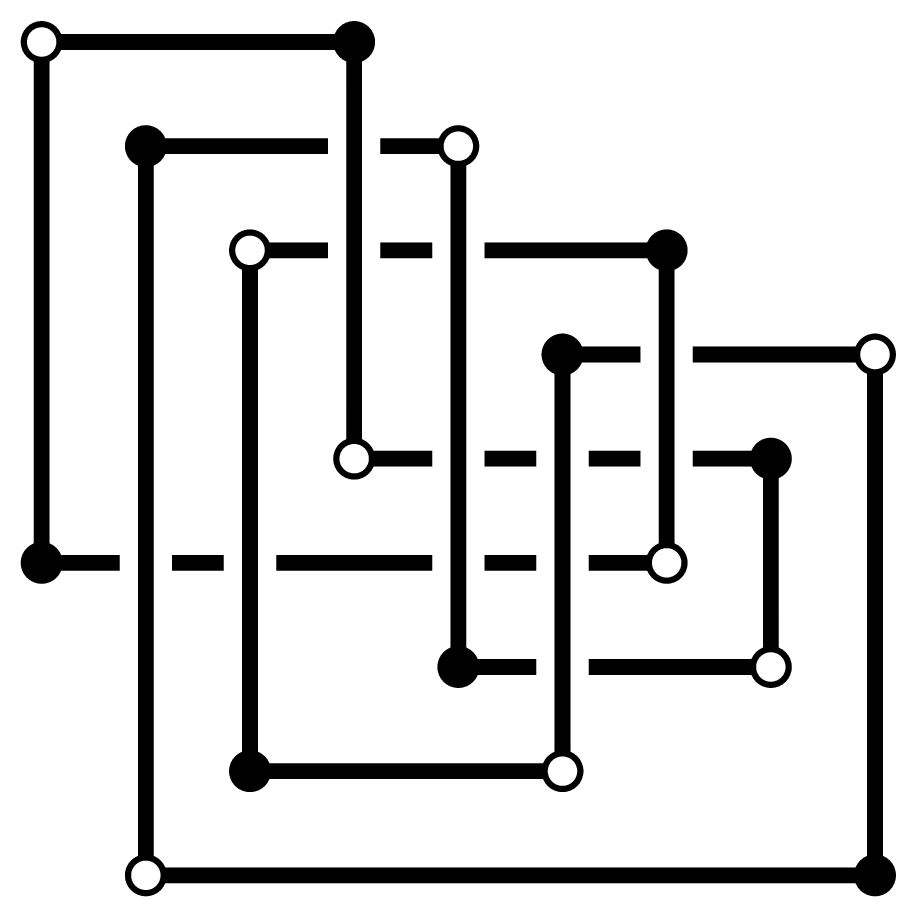}\put(-90,10){$9_{44}^{2\mathrm R}$}
\hskip1cm
\includegraphics[scale=.18]{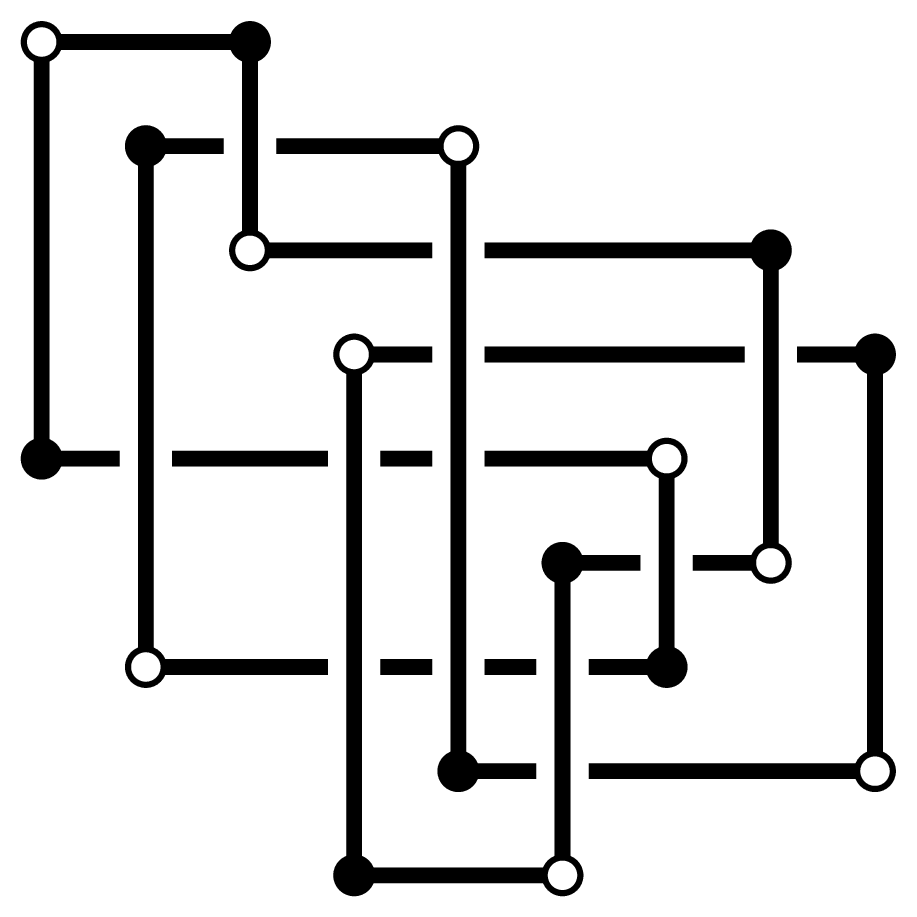}\put(-90,10){$9_{44}^{3\mathrm R}$}
\\
\includegraphics[scale=.18]{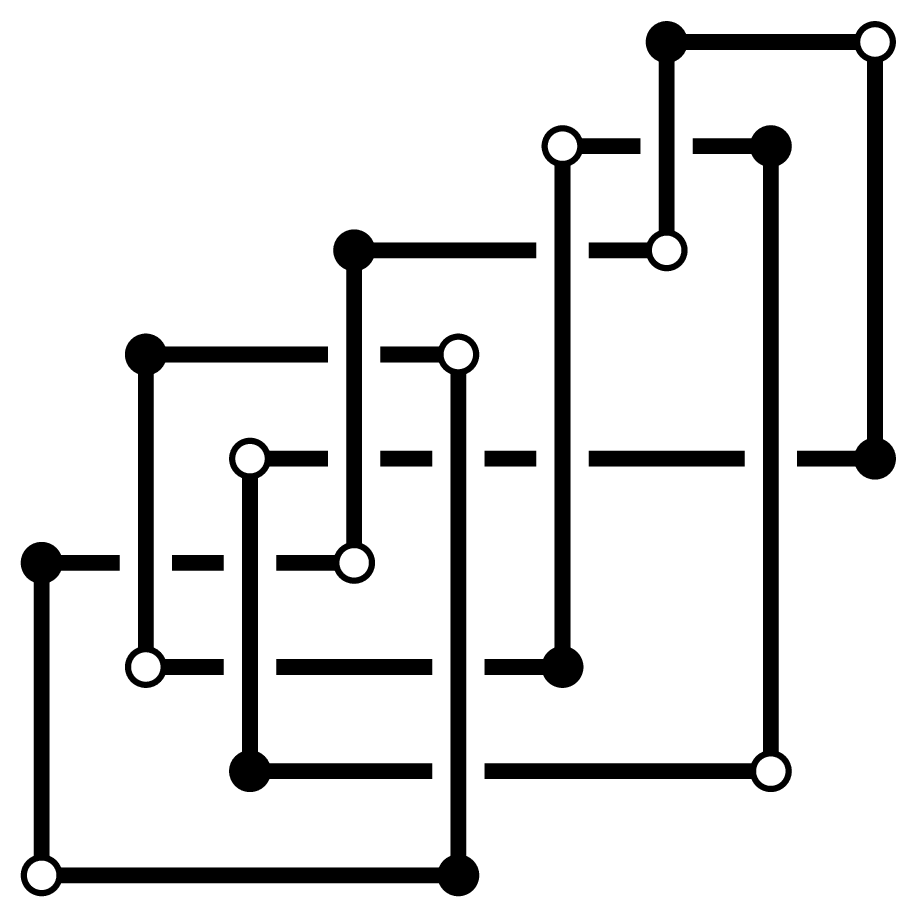}\put(-95,10){$9_{44}^{4\mathrm R}$}
\hskip1cm
\includegraphics[scale=.18]{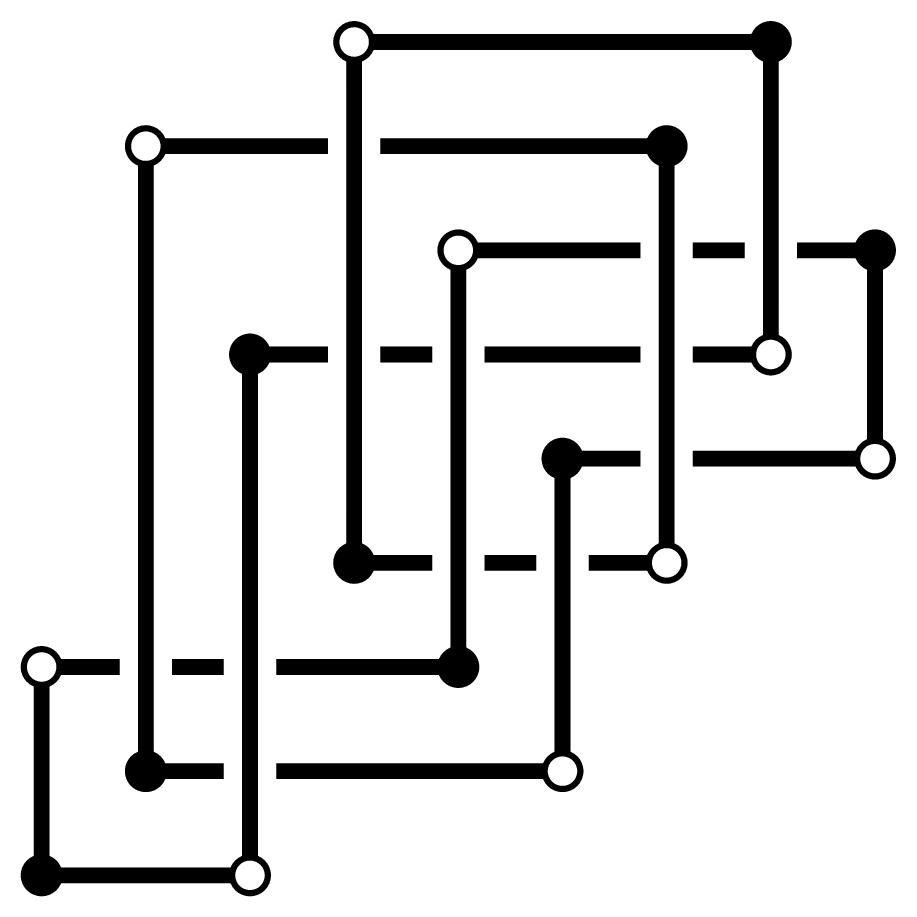}\put(-95,10){$9_{44}^{5\mathrm R}$}
\caption{Exchange classes used in the proof of Proposition~\ref{9_44_prop}}\label{9_44-rect-fig}
\end{figure}

\def\theenumi{\roman{enumi}}
\begin{prop}\label{9_44_prop}
For the $\xi_\pm$-Legendrian classes whose representatives are shown in Figure~\ref{9_44-fig}
the following holds\emph:
\begin{enumerate}
\item
the $\xi_+$-Legendrian classes
$9_{44}^{1+}$, $9_{44}^{2+}$, $9_{44}^{3+}$,
$-\mu(9_{44}^{1+})$,
$-\mu(9_{44}^{2+})$,
$-\mu(9_{44}^{3+})$ are pairwise distinct;
\item
for $k\in\{1,2,3,4\}$ the $\xi_+$-Legendrian classes
$S_+^k(9_{44}^{1+})$, $S_+^k(9_{44}^{2+})$, and~$S_+^k(9_{44}^{3+})$
are pairwise distinct;
\item
the $\xi_-$-Legendrian classes
$9_{44}^-$ and $-9_{44}^-$ are distinct.
\end{enumerate}
\end{prop}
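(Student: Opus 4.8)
The plan is to reproduce, for~$9_{44}$, the scheme already used in Propositions~\ref{9_42_prop} and~\ref{9_43_prop}: each $\xi_\pm$-Legendrian class appearing in the statement is realized as~$\mathscr L_+$ or~$\mathscr L_-$ of an explicit rectangular diagram, every required non-equivalence is reduced to a small number of \emph{direct checks} about exchange classes, and the conclusions are then drawn from Theorems~\ref{rect-desc-of-leg-theo} and~\ref{main-theo} together with the triviality of the orientation-preserving symmetry group of~$9_{44}$ (Proposition~\ref{table-knots-prop}).

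Concretely, I would first fix the five exchange classes $9_{44}^{1\mathrm R},\dots,9_{44}^{5\mathrm R}$ of Figure~\ref{9_44-rect-fig} (black vertices positive, white negative), chosen so that $\mathscr L_+(9_{44}^{i\mathrm R})=9_{44}^{i+}$ for $i=1,2,3$ and so that the $\xi_-$-Legendrian classes $\mathscr L_-(9_{44}^{i\mathrm R})$ are one and the same, equal to~$9_{44}^-$; rectangular diagrams realizing a prescribed pair of a $\xi_+$- and a $\xi_-$-Legendrian class of a given topological type exist by~\cite[Theorem~7]{bypasses}. Each identification $\mathscr L_+(9_{44}^{i\mathrm R})=9_{44}^{i+}$ and $\mathscr L_-(9_{44}^{i\mathrm R})=9_{44}^-$ is verified by exhibiting an explicit sequence of elementary moves relating the diagram to the corresponding front of Figure~\ref{9_44-fig}, using only type~I stabilizations and destabilizations in the first case and only type~II ones in the second (Theorem~\ref{rect-desc-of-leg-theo}). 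The two remaining classes $9_{44}^{4\mathrm R}$ and $9_{44}^{5\mathrm R}$ serve as bridges along which diagrams are linked through type~I and type~II stabilizations.

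The rest of the argument is purely combinatorial, and it turns on a single recipe: in order to prove $\mathscr L_+(A)\ne\mathscr L_+(B)$ for two rectangular diagrams~$A,B$ of knots isotopic to~$9_{44}$, it suffices to check directly (a) that~$A$ and~$B$ are \emph{not} related by exchange moves and (b) that some type~II stabilization of~$A$ is exchange-equivalent to some type~II stabilization of~$B$; indeed, (b) forces $\mathscr L_-(A)=\mathscr L_-(B)$ because type~II stabilizations preserve~$\mathscr L_-$, and then (a) together with Theorem~\ref{main-theo} forces $\mathscr L_+(A)\ne\mathscr L_+(B)$. The recipe dual to this one, with the roles of~$+$ and~$-$ and of types~I and~II interchanged, handles~$\mathscr L_-$. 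Part~(iii) of the proposition is the single-diagram instance of the dual recipe, exactly as in Proposition~\ref{9_42_prop}: one checks $9_{44}^{i\mathrm R}\ne-9_{44}^{i\mathrm R}$ and $S_{\overrightarrow{\mathrm I}}(9_{44}^{i\mathrm R})=S_{\overrightarrow{\mathrm I}}(-9_{44}^{i\mathrm R})$ for a suitably chosen~$i$ (the latter gives $\mathscr L_+(9_{44}^{i\mathrm R})=\mathscr L_+(-9_{44}^{i\mathrm R})$, the former then gives $9_{44}^-=\mathscr L_-(9_{44}^{i\mathrm R})\ne\mathscr L_-(-9_{44}^{i\mathrm R})=-9_{44}^-$). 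For part~(i), the recipe is applied to each of the fifteen pairs among $9_{44}^{1+},9_{44}^{2+},9_{44}^{3+},-\mu(9_{44}^{1+}),-\mu(9_{44}^{2+}),-\mu(9_{44}^{3+})$; several of these are already separated by their classical invariants, the identity $\mathscr L_\pm(-\mu(X))=-\mu(\mathscr L_\pm(X))$ roughly halves the number of genuinely distinct checks, and the bridge diagrams~$9_{44}^{4\mathrm R},9_{44}^{5\mathrm R}$ are used to supply, for each remaining pair, rectangular representatives whose~$\mathscr L_-$ coincide. Finally, for part~(ii) one uses that a positive Legendrian stabilization~$S_+$ corresponds on the rectangular side to an oriented-type-$\overleftarrow{\mathrm{II}}$ stabilization (Theorem~\ref{rect-desc-of-leg-theo}), which preserves~$\mathscr L_-$; hence, writing~$Z_i^k$ for a diagram obtained from~$9_{44}^{i\mathrm R}$ by~$k$ successive type-$\overleftarrow{\mathrm{II}}$ stabilizations, one has $\mathscr L_+(Z_i^k)=S_+^k(9_{44}^{i+})$ and $\mathscr L_-(Z_i^k)=9_{44}^-$ for $i=1,2,3$, so it remains only to verify the finite assertion that $Z_1^k,Z_2^k,Z_3^k$ are pairwise unrelated by exchange moves for $k=1,2,3,4$; Theorem~\ref{main-theo} then yields that $S_+^k(9_{44}^{1+}),S_+^k(9_{44}^{2+}),S_+^k(9_{44}^{3+})$ are pairwise distinct.

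The main obstacle is computational and organizational rather than conceptual. Each ``direct check'' amounts to generating the exchange classes involved---finite but potentially large sets of diagrams closed under exchange moves---and testing them for equality or disjointness; under the four iterated stabilizations required by part~(ii) the diagrams grow, so the searches and the choice of the bridge diagrams must be arranged carefully to keep the computations feasible. There is also a bookkeeping subtlety: for every pair of Legendrian classes to be separated one must simultaneously have at hand rectangular representatives whose \emph{other} associated class (the~$\mathscr L_-$, respectively~$\mathscr L_+$) agree, and making sure that all these choices can be made at once---equivalently, assembling the full web of exchange classes joined by stabilization arrows and exchange-equivalence groupings, in the style of Figures~\ref{42-fig} and~\ref{9-43-proof-fig}---is the part one has to get right. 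Once that web is in place, all of~(i)--(iii) can be read off from it.
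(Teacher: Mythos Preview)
Your strategy is sound and matches the paper's in spirit: realize everything by exchange classes, use type~I/II stabilizations to equate the ``other'' Legendrian class, and invoke Theorem~\ref{main-theo} with Proposition~\ref{table-knots-prop}. The paper's proof is organized exactly as your final paragraph anticipates---a web of exchange classes and stabilization arrows (Figure~\ref{44}) from which the conclusions are read off.

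There is, however, one substantive difference. The paper does \emph{not} prove all of~(i) and~(ii) by the exchange-class method: it invokes~\cite{chong2013} for the separations
\[
9_{44}^{1+},\,-\mu(9_{44}^{1+})\notin\{9_{44}^{2+},9_{44}^{3+}\}
\quad\text{and}\quad
S_+^k(9_{44}^{1+})\notin\{S_+^k(9_{44}^{2+}),S_+^k(9_{44}^{3+})\}\ \text{for all }k\in\mathbb N,
\]
which come from algebraic invariants computed in the Legendrian knot atlas. The scheme in Figure~\ref{44} then only has to establish $S_+^k(9_{44}^{2+})\ne S_+^k(9_{44}^{3+})$ for $k\le4$ (via the four-step $\overleftarrow{\mathrm{II}}$-chains in the second $\mathscr L_-$-box) and the remaining pairwise distinctions in~(i). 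Your plan to handle \emph{all} of~(ii) combinatorially would work for each fixed~$k$, but costs extra computation; the citation to~\cite{chong2013} buys the~$9_{44}^{1+}$ separations for free and for all~$k$ simultaneously.

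Two small corrections. First, none of the six classes in~(i) are separated by classical invariants: $-\mu$ preserves both~$\tb$ and~$r$ here, so every pair genuinely requires the method. Second, your assumption that $\mathscr L_-(9_{44}^{i\mathrm R})=9_{44}^-$ for $i=1,2,3$ does not hold for the diagrams actually drawn in Figure~\ref{9_44-rect-fig}: in the paper's scheme one has $\mathscr L_-(9_{44}^{2\mathrm R})=\mathscr L_-(9_{44}^{3\mathrm R})=-9_{44}^-$, and it is the bridge diagrams $9_{44}^{4\mathrm R},9_{44}^{5\mathrm R}$ that carry $\mathscr L_+=9_{44}^{3+},9_{44}^{2+}$ together with $\mathscr L_-=9_{44}^-$. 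This is only a relabeling issue and does not affect your argument.
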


\begin{proof}
Representatives of the exchange classes involved in the proof are shown in Figure~\ref{9_44-rect-fig}.
The fact that~$9_{44}^{1+},-\mu(9_{44}^{1+})\notin\{9_{44}^{2+},9_{44}^{3+}\}$ and~$S_+^k(9_{44}^{1+})\notin\bigl\{S_+^k(9_{44}^{2+}),S_+^k(9_{44}^{3+})\bigr\}$ for any
$k\in\mathbb N$, is established in~\cite{chong2013}. The proof of the remaining claims is presented in Figure~\ref{44} (where some of the known facts are also reproved).
\begin{figure}[ht!]
\begin{center}
\begin{tikzpicture}[node distance = 2 cm,auto, ]
\node[] (-mu1) {\footnotesize $-\mu\bigl(9_{44}^{1\mathrm R}\bigr)$} ;
\node[right = 1.5cm of -mu1] (5) {\footnotesize $9_{44}^{5\mathrm R}$} ;
\node[right = 1cm of 5] (dot1) {\footnotesize $\bullet$} ;
\node[right = 1cm of dot1] (2) {\footnotesize $9_{44}^{2\mathrm R}$} ;
\node[right = 1.5cm of 2] (-mu3) {\footnotesize $-\mu\bigl(9_{44}^{3\mathrm R}\bigr)$} ;
\node[below = 1cm of -mu1] (dot2) {} ;
\node[right = 1cm of dot2] (dot6) {\footnotesize $\bullet$} ;
\node[below = 1cm of -mu3] (-4) {\footnotesize $-9_{44}^{4\mathrm R}$} ;
\node[left = 1.5cm of -4] (dot8) {\footnotesize $\bullet$} ;
\node[below = 2.5cm of -mu1] (1) {\footnotesize $9_{44}^{1\mathrm R}$} ;
\node[below = 2.5cm of 5] (4) {\footnotesize $9_{44}^{4\mathrm R}$} ;
\node[right = 1cm of 4] (dot7) {\footnotesize $\bullet$} ;
\node[below = 2.5cm of 2] (3) {\footnotesize $9_{44}^{3\mathrm R}$} ;
\node[below = 2.5cm of -mu3] (-mu2) {\footnotesize $-\mu\bigl(9_{44}^{2\mathrm R}\bigr)$} ;
\node[below right = 2.3cm and 0.5cm of dot8] (dot9) {\footnotesize $\bullet$} ;
\node[below = 0.35cm of dot9] (dot10) {\footnotesize $\bullet$} ;
\node[below = 0.35cm of dot10] (dot11) {\footnotesize $\bullet$} ;
\node[below = 0.35cm of dot11] (dot110) {\footnotesize $\bullet$} ;
\node[above = 0.5cm of -mu1](-mu1+)  {};
\node[above = .8cm of -mu1](-mu1+n)  {\footnotesize $-\mu\bigl(9_{44}^{1+}\bigr)$};
\node[above right = 2.3cm and 0.5cm of dot8] (dot12) {\footnotesize $\bullet$} ;
\node[above = 0.35cm of dot12] (dot13) {\footnotesize $\bullet$} ;
\node[above = 0.35cm of dot13] (dot14) {\footnotesize $\bullet$} ;
\node[above = 0.35cm of dot14] (dot15) {\footnotesize $\bullet$} ;
\node[below = 0.5cm of 1](1+)  {};
\node[below = .8cm of 1](1+n)  {\footnotesize $9_{44}^{1+}$};
\node[above = 0.5cm of dot1](2+)  {};
\node[above = .8cm of dot1](2+n)  {\footnotesize $9_{44}^{2+}$};
\node[below = 0.5cm of dot7](3+)  {};
\node[below = .8cm of dot7](3+n)  {\footnotesize $9_{44}^{3+}$};
\node[right = 0.25cm of -mu2](-mu2+)  {};
\node[right = 0.6cm of -mu2](-mu2+n)  {\footnotesize $-\mu\bigl(9_{44}^{2+}\bigr)$};
\node[right = 0.25cm of -mu3](-mu3+)  {};
\node[right = .6cm of -mu3](-mu3+n)  {\footnotesize $-\mu\bigl(9_{44}^{3+}\bigr)$};
\node[left = 0.7cm of dot2](1-)  {\footnotesize $9_{44}^-$};
\node[right = 1.9cm of dot10](-1-)  {\footnotesize $-9_{44}^-$};
\path[->,draw]
(-mu1) edge node[midway,fill=white,inner sep=0pt] {\footnotesize $\overrightarrow{\mathrm{II}}$} (dot6)
(5) edge node[midway,fill=white,inner sep=0pt] {\footnotesize $\overrightarrow{\mathrm{II}}$} (dot6)
(4) edge node[midway,fill=white,inner sep=0pt] {\footnotesize $\overrightarrow{\mathrm{II}}$} (dot6)
(1) edge node[midway,fill=white,inner sep=0pt] {\footnotesize $\overrightarrow{\mathrm{II}}$} (dot6)
(5) edge node[midway,above = 1pt,fill=white,inner sep=0pt] {\footnotesize $\overrightarrow{\mathrm{I}}$} (dot1)
(2) edge node[midway,above = 1pt,fill=white,inner sep=0pt] {\footnotesize $\overrightarrow{\mathrm{I}}$} (dot1)
(4) edge node[midway,above = 1pt,fill=white,inner sep=0pt] {\footnotesize $\overrightarrow{\mathrm{I}}$} (dot7)
(3) edge node[midway,above = 1pt, fill=white,inner sep=0pt] {\footnotesize $\overrightarrow{\mathrm{I}}$} (dot7)
(2) edge node[midway, fill=white,inner sep=0pt] {\footnotesize $\overrightarrow{\mathrm{II}}$} (dot8)
(3) edge node[midway, fill=white,inner sep=0pt] {\footnotesize $\overrightarrow{\mathrm{II}}$} (dot8)
(-mu2) edge node[midway, fill=white,inner sep=0pt] {\footnotesize $\overrightarrow{\mathrm{II}}$} (dot8)
(-mu3) edge node[midway, fill=white,inner sep=0pt] {\footnotesize $\overrightarrow{\mathrm{II}}$} (dot8)
(-4) edge node[midway,above= 1pt, fill=white,inner sep=0pt] {\footnotesize $\overleftarrow{\mathrm{II}}$} (dot8)
(3) edge node[midway,above right = 1pt and 1pt, fill=white,inner sep=0pt] {\footnotesize $\overleftarrow{\mathrm{II}}$} (dot9)
(-mu2) edge node[midway,above left = 1pt and 1pt, fill=white,inner sep=0pt] {\footnotesize $\overleftarrow{\mathrm{II}}$} (dot9)
(dot9) edge node[midway,left = 1pt, fill=white,inner sep=0pt] {\footnotesize $\overleftarrow{\mathrm{II}}$} (dot10)
(dot10) edge node[midway,left = 1pt, fill=white,inner sep=0pt] {\footnotesize $\overleftarrow{\mathrm{II}}$}  (dot11)
(dot11) edge node[midway,left = 1pt, fill=white,inner sep=0pt] {\footnotesize $\overleftarrow{\mathrm{II}}$}  (dot110)
(2) edge node[midway,above = 4pt , fill=white,inner sep=0pt] {\footnotesize $\overleftarrow{\mathrm{II}}$} (dot12)
(-mu3) edge node[midway,above =4pt, fill=white,inner sep=0pt] {\footnotesize $\overleftarrow{\mathrm{II}}$} (dot12)
(dot12) edge node[midway,left = 1pt, fill=white,inner sep=0pt] {\footnotesize $\overleftarrow{\mathrm{II}}$} (dot13)
(dot13) edge node[midway,left = 1pt, fill=white,inner sep=0pt] {\footnotesize $\overleftarrow{\mathrm{II}}$}  (dot14)
(dot14) edge node[midway,left = 1pt, fill=white,inner sep=0pt] {\footnotesize $\overleftarrow{\mathrm{II}}$}  (dot15);
\begin{pgfonlayer}{background}
  \node[fit=(-mu1)(5)(1)(4)(dot2)(dot6), vfit] {};
  \node[fit=(5)(dot1)(2)(2+), hfit] {};
  \node[fit=(4)(dot7)(3)(3+), hfit] {};
  \node[fit=(2)(-mu2)(dot8)(-4)(3)(-mu3)(dot9)(dot110)(dot12)(dot15), vfit] {};
  \node[fit=(-mu1)(-mu1+), hfit] {};
  \node[fit=(1)(1+), hfit] {};
  \node[fit=(-mu3)(-mu3+), hfit] {};
  \node[fit=(-mu2)(-mu2+), hfit] {};
\end{pgfonlayer}
\end{tikzpicture}
\end{center}
\caption{Proof of Proposition~\ref{9_44_prop}}\label{44}
\end{figure}
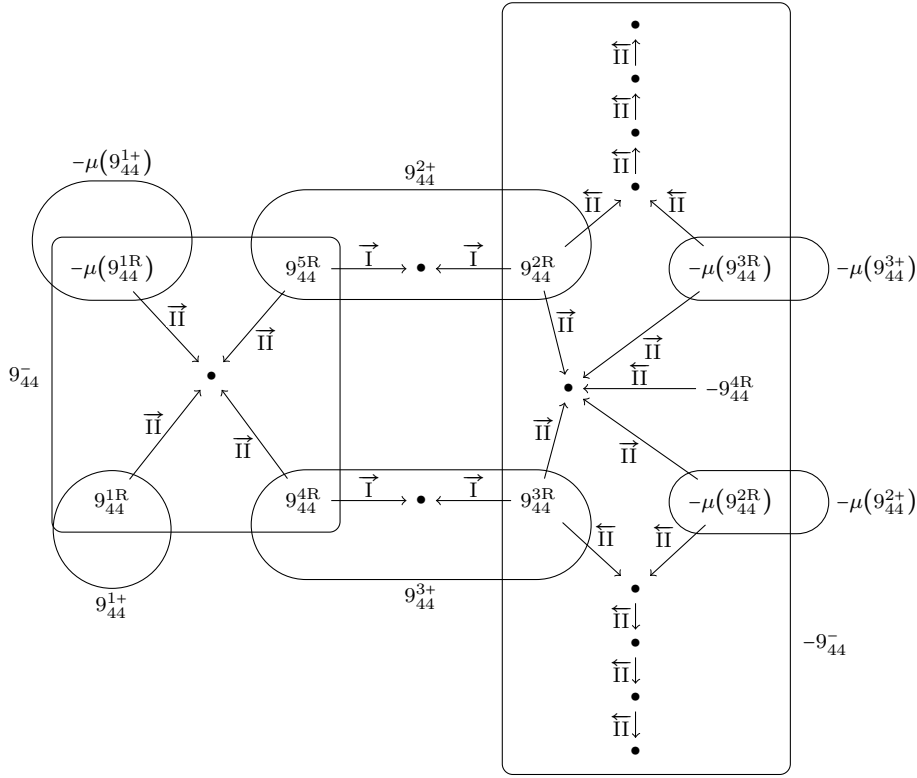
\end{proof}

\begin{rema}
It is conjectured in~\cite{chong2013} that the $\xi_+$-Legendrian classes
$S_+^k(9_{44}^{2+})$ and~$S_+^k(9_{44}^{3+})$
are distinct for any~$k\in\mathbb N$, not only~$k\leqslant4$.
The method of this paper allows, in principle, to test the claim for any fixed~$k$, and
this has been done by the authors for~$k\leqslant4$. (For larger~$k$, the simple---and far
from being optimized---exhaustive
search, which we used to test diagrams for exchange-equivalence, takes too much time.)

Proving the claim for all~$k$ is equivalent to distinguishing certain transverse knots.
The present technique has been upgraded in~\cite{trleg} to an algorithmic solution of this problem
(in the case of knots with trivial orientation-preserving symmetry group). This reduces the task
of verifying the inequality~$S_+^k(9_{44}^{2+})\ne S_+^k(9_{44}^{3+})$ for all~$k\in\mathbb N$
to a finite exhaustive search, which is still to be done.

A similar remark applies to part~(iii) of Proposition~\ref{9_45_prop} and
parts~(ii) of Propositions~\ref{10_128_prop} and~\ref{10_160_prop}.
In the last two cases the required exhaustive search appears to be trivial,
so the question for the knots~$10_{128}$ and~$10_{160}$ is settled in~\cite{trleg} completely.
\end{rema}

\begin{figure}[ht!]
\includegraphics[scale=.5]{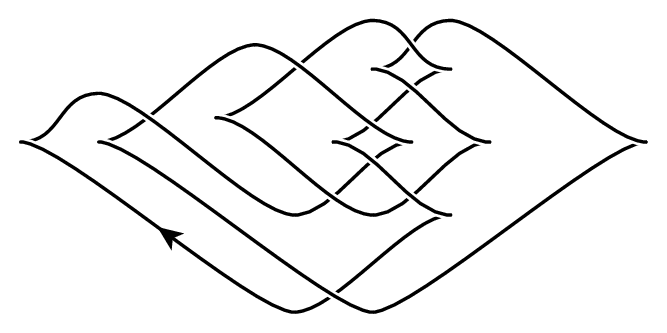}\put(-140,10){$9_{45}^{1+}$}
\hskip1cm
\includegraphics[scale=.5]{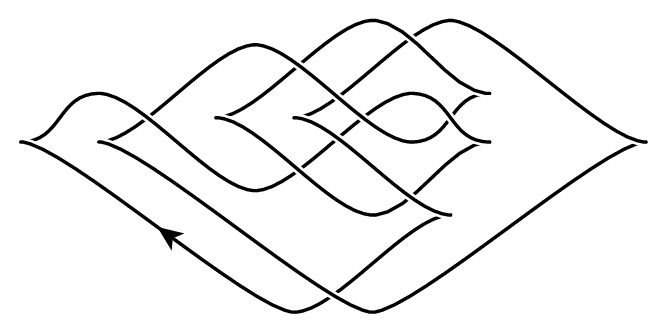}\put(-140,10){$9_{45}^{2+}$}
\\
\includegraphics[scale=.5]{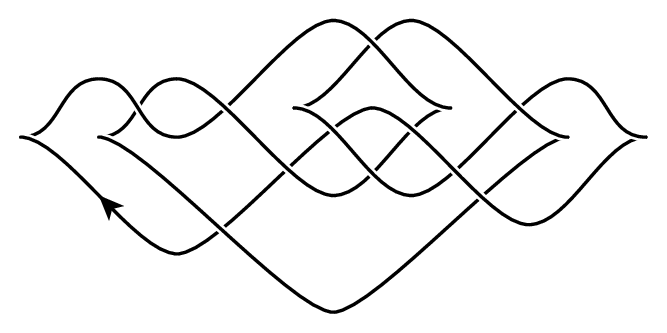}\put(-140,10){$9_{45}^{3+}$}
\hskip1cm
\includegraphics[scale=.5]{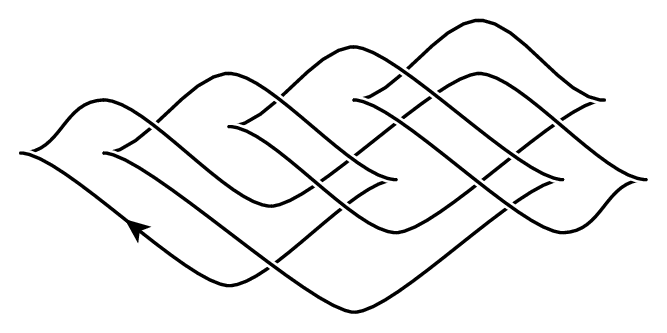}\put(-150,10){$r_\medvert(9_{45}^{1-})$}
\\
\includegraphics[scale=.5]{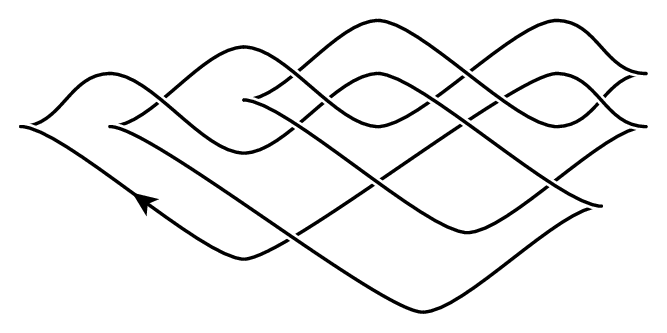}\put(-140,10){$r_\medvert(9_{45}^{2-})$}
\caption{The knots in Proposition~\ref{9_45_prop}}\label{45-knots-fig}
\end{figure}

\begin{figure}[ht]
\includegraphics[scale=.18]{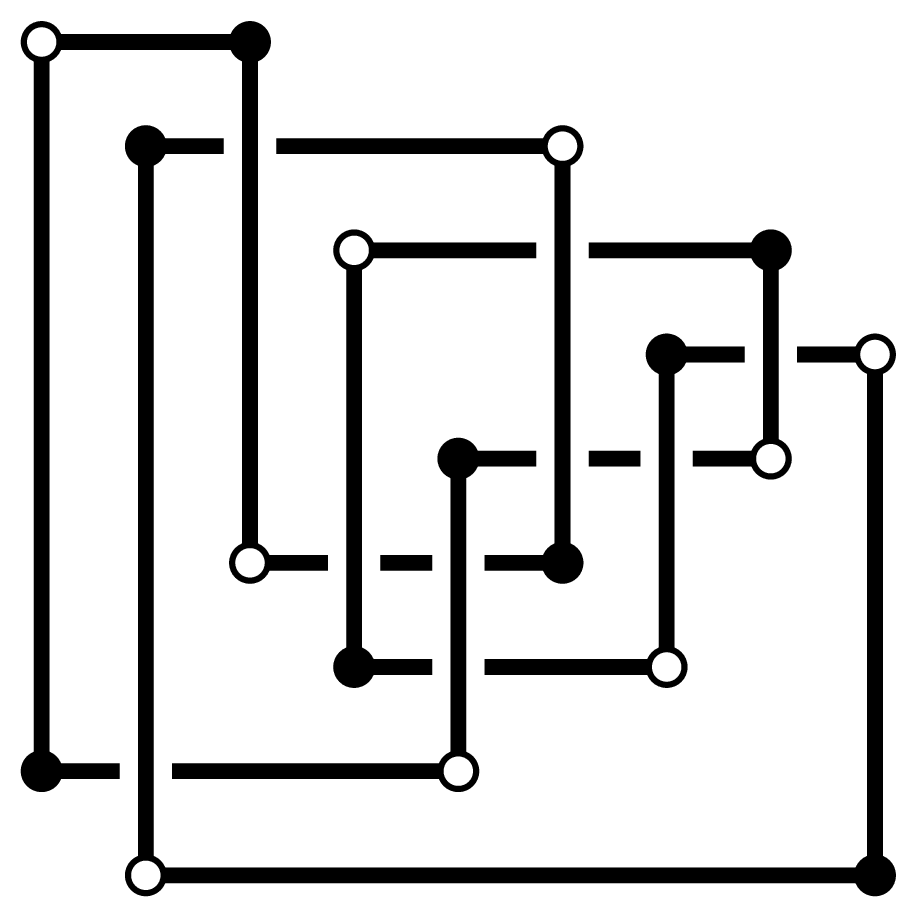}\put(-95,10){$9_{45}^{1\mathrm R}$}
\hskip1cm
\includegraphics[scale=.18]{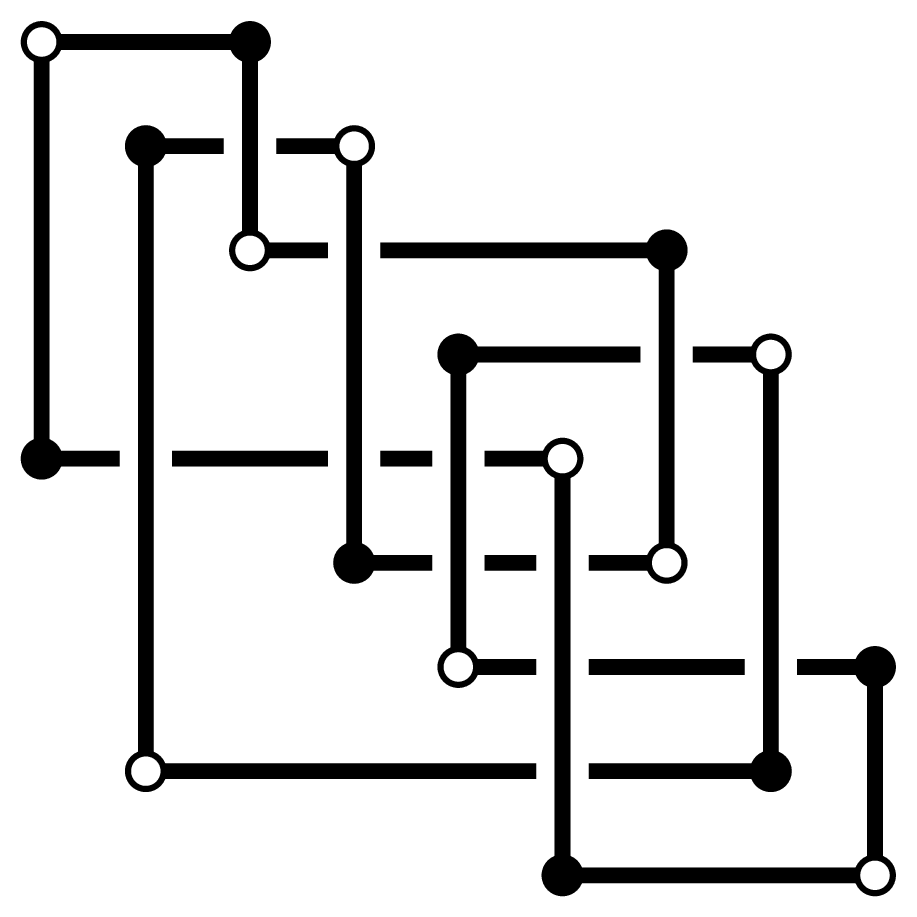}\put(-85,10){$9_{45}^{2\mathrm R}$}
\hskip1cm
\includegraphics[scale=.18]{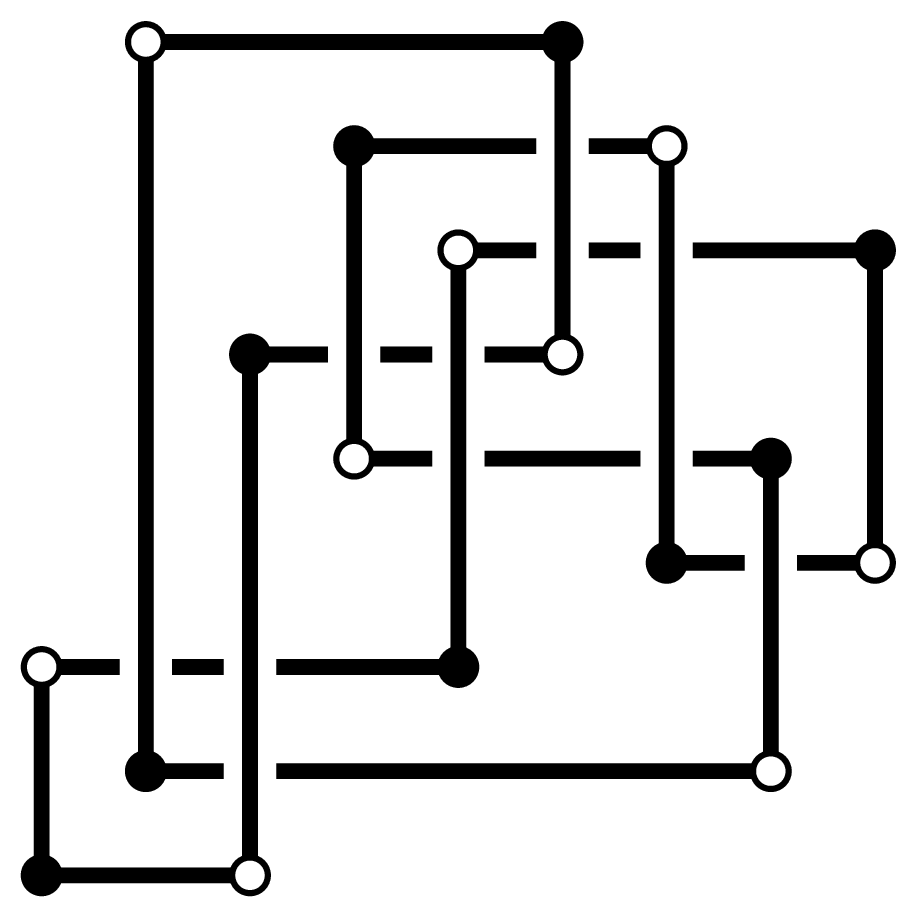}\put(-95,10){$9_{45}^{3\mathrm R}$}
\\
\includegraphics[scale=.18]{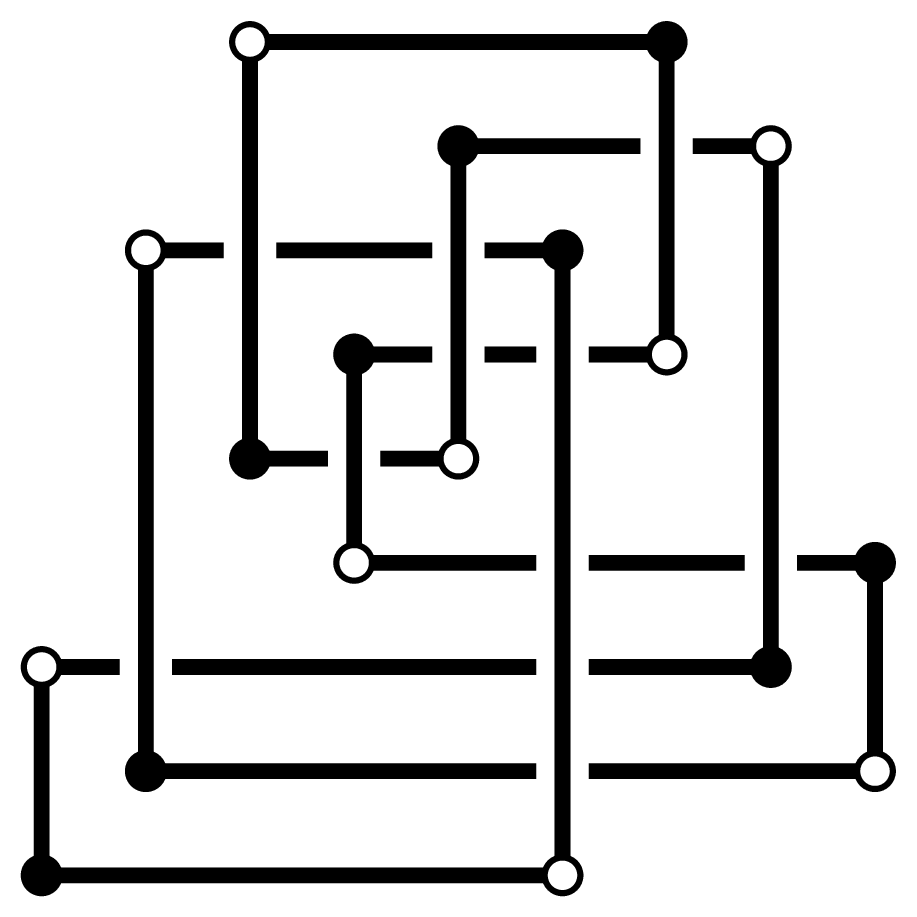}\put(-95,10){$9_{45}^{4\mathrm R}$}
\hskip1cm
\includegraphics[scale=.18]{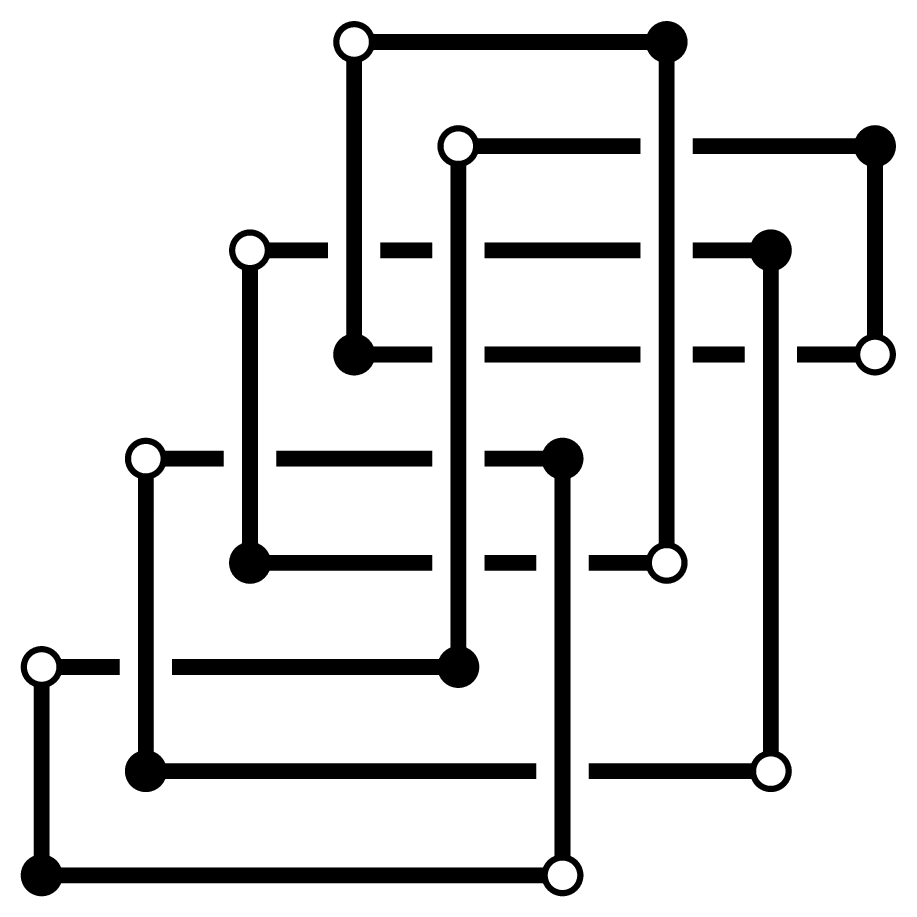}\put(-95,10){$9_{45}^{5\mathrm R}$}
\hskip1cm
\includegraphics[scale=.18]{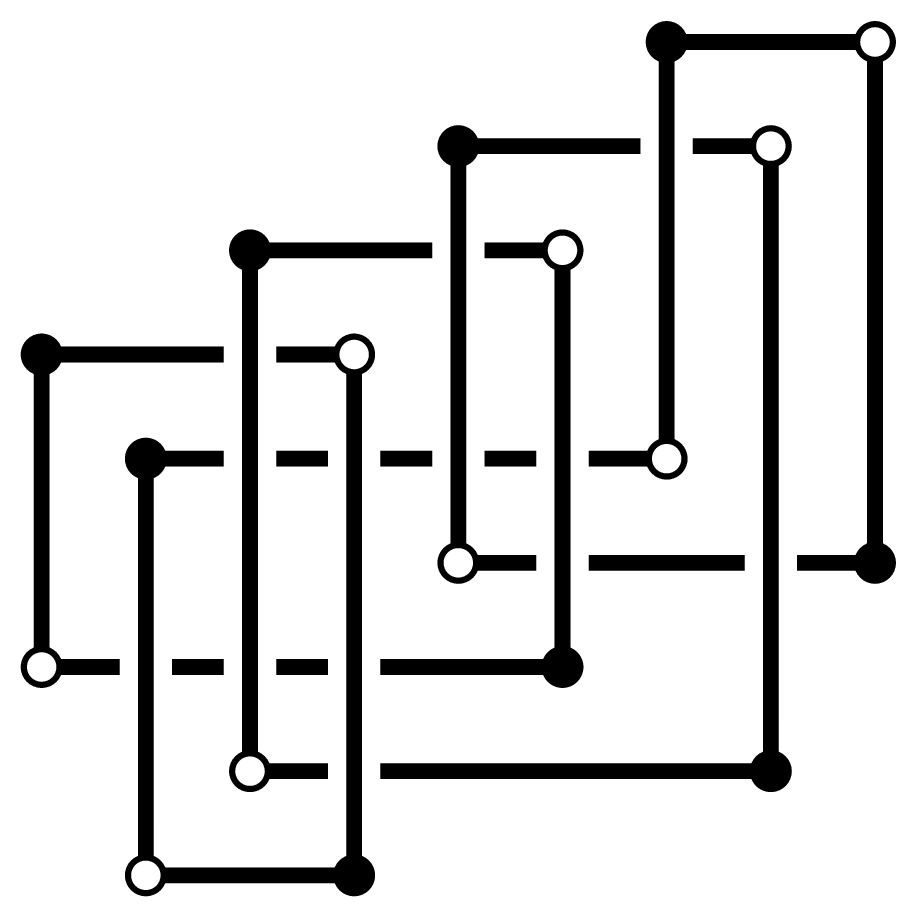}\put(-90,10){$9_{45}^{6\mathrm R}$}
\caption{Exchange classes used in the proof of Proposition~\ref{9_45_prop}}\label{9_45-rect-fig}
\end{figure}

\begin{prop}\label{9_45_prop}
For the $\xi_\pm$-Legendrian classes whose representatives are shown in Figure~\ref{45-knots-fig}
the following holds\emph:
\begin{enumerate}
\item
$9_{45}^{1+}$, $9_{45}^{2+}$, $9_{45}^{3+}$, $-\mu(9_{45}^{1+})$, and~$-\mu(9_{45}^{3+})$ are pairwise distinct\emph;
\item
$9_{45}^{1-}$, $-9_{45}^{1-}$, $\mu(9_{45}^{1-})$, $-\mu(9_{45}^{1-})$, $9_{45}^{2-}$, and~$\mu(9_{45}^{2-})$
are pairwise distinct\emph;
\item
for~$k\in\{1,2,3\}$ the $\xi_-$-Legendrian classes~$S_+^k(9_{45}^{2-})$ and~$S_+^k\bigl(-\mu(9_{45}^{2-})\bigr)$
are distinct.
\end{enumerate}
\end{prop}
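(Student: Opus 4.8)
The plan is to reproduce, for $9_{45}$, the scheme used in the proofs of Propositions~\ref{9_42_prop}--\ref{9_44_prop}, built on the six exchange classes $9_{45}^{1\mathrm R},\dots,9_{45}^{6\mathrm R}$ of Figure~\ref{9_45-rect-fig} together with their images under orientation reversal and under $\mu$. As preliminary bookkeeping I would use Theorem~\ref{rect-desc-of-leg-theo} to track the oriented types of the stabilizations relating the diagrams in Figure~\ref{9_45-rect-fig} to the fronts in Figure~\ref{45-knots-fig}, thereby recording the identities $\mathscr L_+(9_{45}^{i\mathrm R})=9_{45}^{i+}$ and, for the appropriate indices, $\mathscr L_-(\,\cdot\,)=9_{45}^{j-}$, along with the consequences $\mathscr L_\pm(-X)=-\mathscr L_\pm(X)$, $\mathscr L_\pm(\mu(X))=\mu(\mathscr L_\pm(X))$ and $\mathscr L_\pm(r_\medvert(X))=r_\medvert(\mathscr L_\mp(X))$; some of the required non-equivalences may already be available from~\cite{chong2013}, which I would cite to cut down the remaining work.

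The engine of the argument is the observation that underlies every proof-scheme figure such as Figure~\ref{44}. Because $9_{45}$ has trivial orientation-preserving symmetry group (Proposition~\ref{table-knots-prop}), Theorem~\ref{main-theo} says that two rectangular diagrams $X,Y$ of $9_{45}$ with $\mathscr L_+(X)=\mathscr L_+(Y)$ \emph{and} $\mathscr L_-(X)=\mathscr L_-(Y)$ must be exchange-equivalent. Consequently, to prove $\mathscr L_+(X)\ne\mathscr L_+(Y)$ it suffices to (a) check by a direct search that $X$ and $Y$ are not exchange-equivalent, and (b) exhibit $\mathscr L_-(X)=\mathscr L_-(Y)$; I would obtain (b) by a chain of stabilizations of oriented types $\overrightarrow{\mathrm{II}},\overleftarrow{\mathrm{II}}$, which preserve $\mathscr L_-$ by Theorem~\ref{rect-desc-of-leg-theo}, running from $X$ to $Y$ through common exchange classes. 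Symmetrically, to prove $\mathscr L_-(X)\ne\mathscr L_-(Y)$ it suffices to check $X\not\sim Y$ and to join $X$ to $Y$ by a chain of type-$\mathrm I$ stabilizations $\overrightarrow{\mathrm I},\overleftarrow{\mathrm I}$ (which preserve $\mathscr L_+$) so that $\mathscr L_+(X)=\mathscr L_+(Y)$. Parts~(i) and~(ii) then amount to choosing the right exchange classes and the right bridging stabilizations for each pair in the two lists, and running the (finitely many) exchange-equivalence tests.

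For part~(iii), I would realize $9_{45}^{2-}$ and $-\mu(9_{45}^{2-})$ as $\mathscr L_-$ of exchange classes $X_0,Y_0$ coming from $9_{45}^{5\mathrm R},9_{45}^{6\mathrm R}$ (or their mirrors), arrange by a type-$\mathrm I$ bridge that $\mathscr L_+(X_0)=\mathscr L_+(Y_0)$, and note that a positive stabilization of a $\xi_-$-Legendrian knot is, at the diagram level, a stabilization of oriented type $\overrightarrow{\mathrm I}$ (Theorem~\ref{rect-desc-of-leg-theo}), which preserves $\mathscr L_+$. Hence $\mathscr L_+(S_{\overrightarrow{\mathrm I}}^k(X_0))=\mathscr L_+(S_{\overrightarrow{\mathrm I}}^k(Y_0))$ for every $k$, and to conclude $S_+^k(9_{45}^{2-})\ne S_+^k(-\mu(9_{45}^{2-}))$ it remains, for each fixed $k$, only to verify by direct check that $S_{\overrightarrow{\mathrm I}}^k(X_0)$ and $S_{\overrightarrow{\mathrm I}}^k(Y_0)$ are not exchange-equivalent; I would carry this out for $k=1,2,3$.

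The main obstacle is entirely computational: once Theorem~\ref{main-theo} is in hand, everything reduces to (i) locating the appropriate exchange classes and stabilization bridges, which I would find by an automated search, and (ii) deciding exchange-equivalence of concrete rectangular diagrams, a finite but potentially large exhaustive search whose cost grows with the number of vertices; this is precisely why part~(iii) is tractable only for small $k$. A secondary point to get right is the type/orientation bookkeeping: each bridge must use a stabilization of exactly the oriented type that both realizes the intended Legendrian stabilization and preserves the intended one of $\mathscr L_+,\mathscr L_-$, since confusing $\overrightarrow{\mathrm I}$ with $\overleftarrow{\mathrm I}$, or type $\mathrm I$ with type $\mathrm{II}$, would invalidate the deduction.
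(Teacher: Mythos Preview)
Your proposal is correct and follows essentially the same scheme as the paper: the engine is exactly Theorem~\ref{main-theo} combined with Theorem~\ref{rect-desc-of-leg-theo} and Proposition~\ref{table-knots-prop}, with results from~\cite{chong2013} invoked to reduce the number of pairs to check, and with part~(iii) handled by iterated $\overrightarrow{\mathrm I}$ stabilizations inside a common $\mathscr L_+$-class. The only discrepancy is in the bookkeeping of which exchange classes serve which part: in the paper, $9_{45}^{5\mathrm R}$ and $9_{45}^{6\mathrm R}$ are used for part~(ii) (distinguishing $9_{45}^{1-}$ from $-9_{45}^{1-}$ and $\mu(9_{45}^{1-})$), while part~(iii) is run with $9_{45}^{3\mathrm R}$ and $-\mu(9_{45}^{2\mathrm R})$; since you explicitly leave the precise choice of diagrams to an automated search, this is not a gap.
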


\begin{proof}
Representatives of the exchange classes involved in the proof are shown in Figure~\ref{9_45-rect-fig}.
It is established in~\cite{chong2013} that~$9_{45}^{2+}=-\mu(9_{45}^{2+})$. So, to
prove part~(i) of the proposition it suffices to show that~$9_{45}^{1+}$, $9_{45}^{3+}$, $-\mu(9_{45}^{1+})$,
and~$-\mu(9_{45}^{3+})$ are pairwise distinct. The proof of this and of part~(iii) is
presented in Figure~\ref{45_1-proof-fig}.
\begin{figure}[ht!]
\begin{center}
\begin{tikzpicture}[node distance = 2 cm,auto, ]
\node[] (1) {\footnotesize $9_{45}^{1\mathrm R}$} ;
\node[right = 2cm of 1] (4) {\footnotesize $9_{45}^{4\mathrm R}$} ;
\node[below right = 0.8cm and 0.8cm of 1] (dot1) {$\bullet$} ;
\node[below = 1.3cm of dot1](dot1v) {} ;
\node[above = 1.3cm of dot1](dot2v) {} ;
\node[left = 1.65cm of dot1](2-) {\footnotesize $9_{45}^{2-}$} ;
\node[below = 2cm of 1] (2) {\footnotesize $9_{45}^{2\mathrm R}$} ;
\node[right = 2cm of 2] (3) {\footnotesize $9_{45}^{3\mathrm R}$} ;
\node[below = 1cm of 3] (dot2) {$\bullet$} ;
\node[below = 0.5cm of dot2] (dot21) {$\bullet$} ;
\node[below = 0.5cm of dot21] (dot22) {$\bullet$} ;
\node[right = 1cm of dot2] (dot3) {$\bullet$} ;
\node[below = 2cm of dot3] (-mu3+) {\footnotesize $-\mu\bigl(9_{45}^{3+}\bigr)$} ;
\node[right = 1cm of 4] (dot5) {$\bullet$} ;
\node[right = 0.5cm of dot5] (dot6) {$\bullet$} ;
\node[above = 0.15cm of dot6] (dot6v1) {} ;
\node[above = 0.15cm of dot6v1] (-mu1+) {\footnotesize $-\mu\bigl(9_{45}^{1+}\bigr)$} ;
\node[below = 0.15cm of dot6] (dot6v2) {} ;
\node[right = 0.5cm of dot6] (dot7) {$\bullet$} ;
\node[right = 1cm of dot7] (-mu1) {\footnotesize $-\mu\bigl(9_{45}^{1\mathrm R}\bigr)$} ;
\node[right = 1.8cm of 3] (-mu2) {\footnotesize $-\mu\bigl(9_{45}^{2\mathrm R}\bigr)$} ;
\node[below = 0.97cm of -mu2] (dot8) {$\bullet$} ;
\node[below = 0.5cm of dot8] (dot81) {$\bullet$} ;
\node[below = 0.5cm of dot81] (dot82) {$\bullet$} ;
\node[left = 0.35cm of 1] (1v) {} ;
\node[left = 0.15cm of 1v] (1+) {\footnotesize $9_{45}^{1+}$} ;
\node[left = 0.35cm of 2] (3v) {} ;
\node[left = 0.15cm of 3v] (3+) {\footnotesize $9_{45}^{3+}$} ;
\node[above = 0cm of -mu2] (-mu2v) {} ;
\node[above = 0.15cm of -mu2v] (-mu2+) {\footnotesize $-\mu\bigl(9_{45}^{2-}\bigr)$} ;
\path[->,draw]
(1) edge node[midway,fill=white,above right = 1pt and 1pt,inner sep=0pt] {\footnotesize $\overrightarrow{\mathrm{II}}$} (dot1)
(4) edge node[midway,fill=white,above left = 1pt and 1pt,inner sep=0pt] {\footnotesize $\overrightarrow{\mathrm{II}}$} (dot1)
(2) edge node[midway,fill=white,below right = 1pt and 1pt,inner sep=0pt] {\footnotesize $\overrightarrow{\mathrm{II}}$} (dot1)
(3) edge node[midway,fill=white,below left = 1pt and 1pt,inner sep=0pt] {\footnotesize $\overrightarrow{\mathrm{II}}$} (dot1)
(4) edge node[midway,fill=white,above = 1pt ,inner sep=0pt] {\footnotesize $\overrightarrow{\mathrm{I}}$} (dot5)
(dot5) edge node[midway,fill=white,above = 1pt ,inner sep=0pt] 
{\footnotesize $\overleftarrow{\mathrm{I}}$} (dot6)
(dot7) edge node[midway,fill=white,above = 1pt ,inner sep=0pt] {\footnotesize $\overrightarrow{\mathrm{I}}$} (dot6)
(-mu1) edge node[midway,fill=white,above = 1pt ,inner sep=0pt] {\footnotesize $\overleftarrow{\mathrm{I}}$} (dot7)
(3) edge node[midway,fill=white,left = 1pt ,inner sep=0pt] {\footnotesize $\overrightarrow{\mathrm{I}}$} (dot2)
(dot2) edge node[midway,fill=white,left = 1pt ,inner sep=0pt] {\footnotesize $\overrightarrow{\mathrm{I}}$} (dot21)
(dot21) edge node[midway,fill=white,left = 1pt ,inner sep=0pt] {\footnotesize $\overrightarrow{\mathrm{I}}$} (dot22)
(dot2) edge node[midway,fill=white,below = 1pt ,inner sep=0pt] {\footnotesize $\overleftarrow{\mathrm{I}}$} (dot3)
(dot8) edge node[midway,fill=white,below = 1pt ,inner sep=0pt] {\footnotesize $\overleftarrow{\mathrm{I}}$} (dot3)
(-mu2) edge node[midway,fill=white,left = 1pt ,inner sep=0pt] {\footnotesize $\overrightarrow{\mathrm{I}}$} (dot8)
(dot8) edge node[midway,fill=white,left = 1pt ,inner sep=0pt] {\footnotesize $\overrightarrow{\mathrm{I}}$} (dot81)
(dot81) edge node[midway,fill=white,left = 1pt ,inner sep=0pt] {\footnotesize $\overrightarrow{\mathrm{I}}$} (dot82);
\begin{pgfonlayer}{background}
  \node[fit=(1)(2)(4)(3)(dot1v)(dot2v), vfit] {};
  \node[fit=(-mu2)(-mu2v),vfit]{};
  \node[fit=(3)(-mu2)(dot82)(dot22), hfit] {};
  \node[fit=(4)(-mu1)(dot6v1)(dot6v2), hfit] {};
  \node[fit=(1)(1v), hfit] {};
  \node[fit=(2)(3v), hfit] {};
\end{pgfonlayer}
\end{tikzpicture}
\end{center}
\caption{Proof of parts~(i) and~(iii) of Proposition~\ref{9_45_prop}} \label{45_1-proof-fig}
\end{figure}
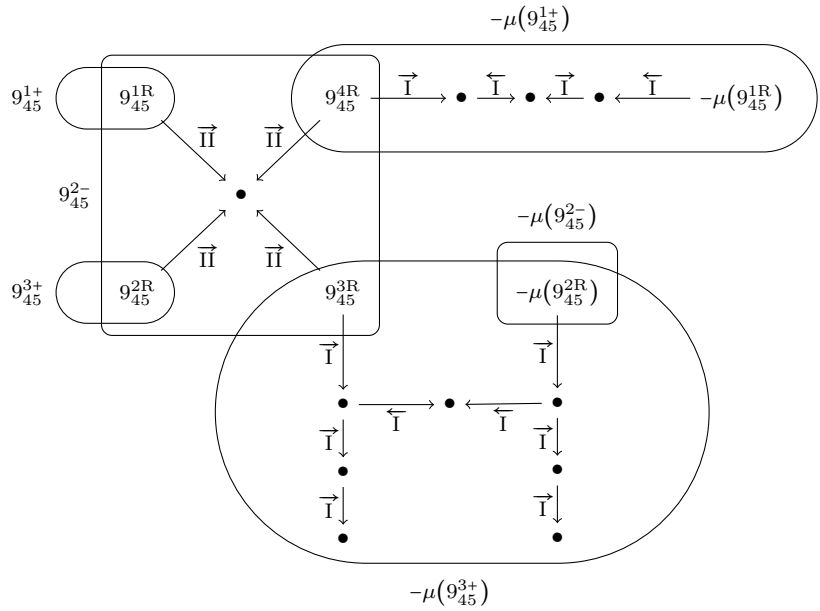
\begin{figure}[ht!]
\begin{center}
\begin{tikzpicture}[node distance = 2 cm,auto, ]
\node[] (6) {\footnotesize $9_{45}^{6\mathrm R}$} ;
\node[right = 1.5cm of 6] (dot1) {$\bullet$} ;
\node[above = 0.35cm of dot1] (dot1v) {} ;
\node[above = 0.15cm of dot1v] (1-) {\footnotesize $9_{45}^{1-}$} ;
\node[right = 1.5cm of dot1] (5) {\footnotesize $9_{45}^{5\mathrm R}$} ;
\node[right = 1cm of 5] (dot2) {$\bullet$} ;
\node[below = 1cm of dot2] (mu6) {\footnotesize $\mu\bigl(9_{45}^{6\mathrm R}\bigr)$} ;
\node[below = 0.35cm of mu6] (mu4v) {} ;
\node[below = 0.15cm of mu4v] (mu1-) {\footnotesize $\mu\bigl(9_{45}^{1-}\bigr)$} ;
\node[right = 0.5cm of dot2] (dot3) {$\bullet$} ;
\node[right = 0.5cm of dot3] (dot4) {$\bullet$} ;
\node[right = 1cm of dot4] (-6) {\footnotesize $-9_{45}^{6\mathrm R}$} ;
\node[above = 0.35cm of -6] (-4v) {} ;
\node[above = 0.15cm of -4v] (-1-) {\footnotesize $-9_{45}^{1-}$} ;
\path[->,draw]
(6) edge node[midway,fill=white,above = 1pt,inner sep=0pt] {\footnotesize $\overrightarrow{\mathrm{II}}$} (dot1)
(5) edge node[midway,fill=white,above = 1pt,inner sep=0pt] {\footnotesize $\overleftarrow{\mathrm{II}}$} (dot1)
(5) edge node[midway,fill=white,above = 1pt,inner sep=0pt] {\footnotesize $\overleftarrow{\mathrm{I}}$} (dot2)
(dot2) edge node[midway,fill=white,above = 1pt,inner sep=0pt] {\footnotesize $\overrightarrow{\mathrm{I}}$} (dot3)
(dot4) edge node[midway,fill=white,above = 1pt,inner sep=0pt] {\footnotesize $\overleftarrow{\mathrm{I}}$} (dot3)
(-6) edge node[midway,fill=white,above = 1pt,inner sep=0pt] {\footnotesize $\overrightarrow{\mathrm{I}}$} (dot4)
(mu6) edge node[midway,fill=white,right = 1pt,inner sep=0pt] {\footnotesize $\overleftarrow{\mathrm{I}}$} (dot2);
\begin{pgfonlayer}{background}
  \node[fit=(5)(-6)(mu6), hfit] {};
  \node[fit=(6)(5)(dot1v), vfit] {};
  \node[fit=(mu6)(mu4v), vfit] {};
  \node[fit=(-6)(-4v), vfit] {};
\end{pgfonlayer}
\end{tikzpicture}
\end{center}
\caption{Proof of part~(ii) of Proposition~\ref{9_45_prop}} \label{45_2-proof-fig}
\end{figure}
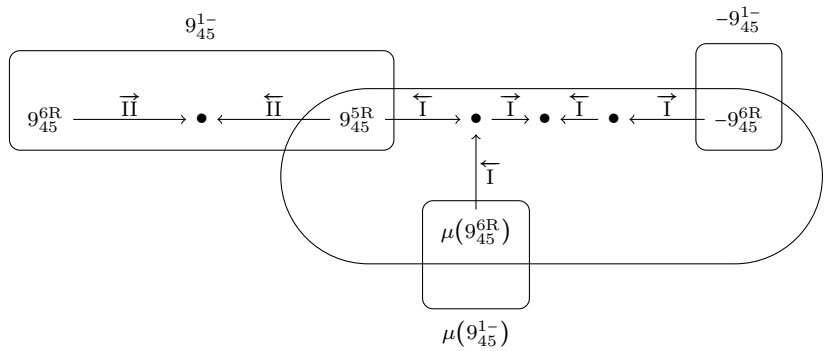

It is already established in~\cite{chong2013} that~$9_{45}^{2-}=-9_{45}^{2-}\notin\bigl\{
9_{45}^{1-},-9_{45}^{1-},\mu(9_{45}^{1-}),-\mu(9_{45}^{1-}),
\mu(9_{45}^{2-})\bigr\}$,
so it remains to show that~$9_{45}^{1-}$, $-9_{45}^{1-}$, $\mu(9_{45}^{1-})$, and~$-\mu(9_{45}^{1-})$
are pairwise distinct. To this end, it suffices to show that some three of these four classes are pairwise distinct.
This is done in Figure~\ref{45_2-proof-fig}.
\end{proof}

\begin{prop}\label{10_128_prop}
For the Legendrian classes whose representatives are shown in Figure~\ref{10_128-leg-fig} the following holds\emph:
\begin{enumerate}
\item
the Legendrian classes $10_{128}^{1+}$, $10_{128}^{2+}$, $-\mu\bigl(10_{128}^{1+}\bigr)$,
and~$-\mu\bigl(10_{128}^{2+}\bigr)$ are pairwise distinct\emph;
\item
for any~$k\in\{1,2,3,4\}$ the Legendrian classes~$S_-^k(10_{128}^{1+})=S_-^k(10_{128}^{2+})$
and~$S_-^k\bigl(-\mu(10_{128}^{1+})\bigr)=S_-^k\bigl(-\mu(10_{128}^{2+})\bigr)$ are distinct.
\end{enumerate}
\end{prop}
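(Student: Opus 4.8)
The plan is to follow the pattern of the proofs of Propositions~\ref{9_42_prop}--\ref{9_45_prop}: I would translate both parts into statements about exchange classes of rectangular diagrams of the knot~$10_{128}$, which has trivial orientation-preserving symmetry group by Proposition~\ref{table-knots-prop}, and then invoke Theorems~\ref{rect-desc-of-leg-theo} and~\ref{main-theo}. First I would fix rectangular-diagram representatives $10_{128}^{1\mathrm R}$ and $10_{128}^{2\mathrm R}$ of the classes $10_{128}^{1+}$ and $10_{128}^{2+}$ of Figure~\ref{10_128-leg-fig}, together with a few auxiliary exchange classes to serve as intermediate nodes, and verify $\mathscr L_+(10_{128}^{j\mathrm R})=10_{128}^{j+}$ for $j=1,2$ by means of Theorem~\ref{rect-desc-of-leg-theo}; the classes $-\mu(10_{128}^{j+})$ are then $\mathscr L_+\bigl(-\mu(10_{128}^{j\mathrm R})\bigr)$ by the functoriality of $\mathscr L_\pm$ under orientation reversal and under $\mu$ recorded before Proposition~\ref{9_42_prop}.

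The heart of part~(ii), and the source of part~(i), would be a single scheme (in the style of Figure~\ref{44}) exhibiting a common node $\bullet$ reached both from $10_{128}^{1\mathrm R}$ and from $10_{128}^{2\mathrm R}$ by one stabilization move of oriented type $\overrightarrow{\mathrm{II}}$. By the ``moreover'' part of Theorem~\ref{rect-desc-of-leg-theo} (a $\overrightarrow{\mathrm{II}}$-stabilization realizes $S_-$ on $\mathscr L_+$) this gives $S_-(10_{128}^{1+})=\mathscr L_+(\bullet)=S_-(10_{128}^{2+})$, hence $S_-^k(10_{128}^{1+})=S_-^k(10_{128}^{2+})$ for every $k\ge 1$; applying $-\mu$ to the scheme gives the companion equality in part~(ii). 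Since type~II elementary moves preserve the $\xi_-$-Legendrian class, the same node also yields $\mathscr L_-(10_{128}^{1\mathrm R})=\mathscr L_-(10_{128}^{2\mathrm R})$.

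For part~(i), I would combine this last equality with a direct check that $10_{128}^{1\mathrm R}$ and $10_{128}^{2\mathrm R}$ are \emph{not} related by a sequence of exchange moves: by the contrapositive of Theorem~\ref{main-theo} this forces $\mathscr L_+(10_{128}^{1\mathrm R})\ne\mathscr L_+(10_{128}^{2\mathrm R})$, i.e.\ $10_{128}^{1+}\ne 10_{128}^{2+}$, and applying $-\mu$ gives $-\mu(10_{128}^{1+})\ne-\mu(10_{128}^{2+})$. The remaining inequalities of part~(i), namely those of the form $10_{128}^{j+}\ne-\mu(10_{128}^{j'+})$ that are not already recorded in~\cite{chong2013}, I would treat pair by pair: for the two exchange classes $X$ and $Y$ in question I would exhibit in the scheme a sequence of elementary moves from $X$ to $Y$ in which every stabilization and destabilization is of type~II (so $\mathscr L_-(X)=\mathscr L_-(Y)$ by Theorem~\ref{rect-desc-of-leg-theo}), and combine it with a direct check that $X$ and $Y$ are not exchange-equivalent; Theorem~\ref{main-theo} then yields $\mathscr L_+(X)\ne\mathscr L_+(Y)$. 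The single inequality in part~(ii) would be proved the same way, with $X$ and $Y$ replaced by their $k$-fold $\overrightarrow{\mathrm{II}}$-stabilizations, using once more that those preserve $\mathscr L_-$.

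I expect the only genuine difficulty to be computational rather than conceptual. One has to actually locate the stabilizing and destabilizing sequences that realize the required $\mathscr L_-$-equalities (equivalently, to draw the scheme and the diagrams underlying Figure~\ref{10_128-leg-fig} and its exchange-class companion), and one has to certify each non-exchange-equivalence by exhausting the relevant exchange class. The latter is a finite search for each fixed~$k$, since exchange moves preserve the number of vertices of a diagram and there are only finitely many combinatorial types of diagrams with a given number of vertices; but the size of the search grows quickly with~$k$, and this is exactly why the inequality in part~(ii) is asserted only for $k\le 4$ with the unoptimized exhaustive search used here.
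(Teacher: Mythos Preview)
Your proposal is correct and follows essentially the same approach as the paper. The paper's proof is exactly the scheme you describe: the four exchange classes $10_{128}^{1\mathrm R}$, $10_{128}^{2\mathrm R}$, $-\mu(10_{128}^{1\mathrm R})$, $-\mu(10_{128}^{2\mathrm R})$ are linked by a chain of type~II stabilizations (a shared $\overrightarrow{\mathrm{II}}$-node between the first two and between the last two, and a shared $\overleftarrow{\mathrm{II}}$-node between the second and third), which puts all four in the same $\mathscr L_-$-class, and then a direct check that the four exchange classes are pairwise distinct yields part~(i) via Theorem~\ref{main-theo}; the downward $\overrightarrow{\mathrm{II}}$-chains from the first and third nodes give the stabilized exchange classes needed for part~(ii). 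Two minor deviations from the paper: no auxiliary exchange classes beyond the stabilized bullets are used, and the paper does not appeal to~\cite{chong2013} for this proposition---the chain of type~II moves already makes the argument self-contained.
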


\begin{proof}
The proof is presented in Figure~\ref{128-proof-fig}.
\begin{figure}[ht!]
\includegraphics[scale=.5]{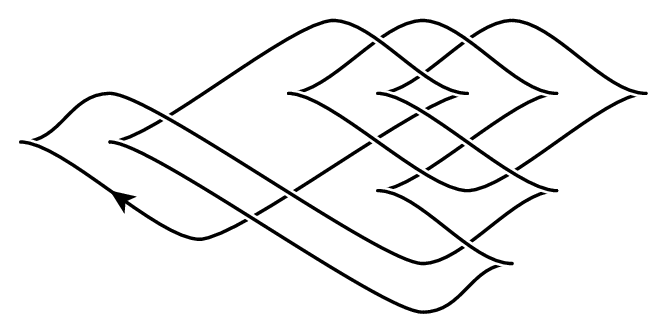}\put(-130,10){$10_{128}^{1+}$}
\hskip1cm
\includegraphics[scale=.5]{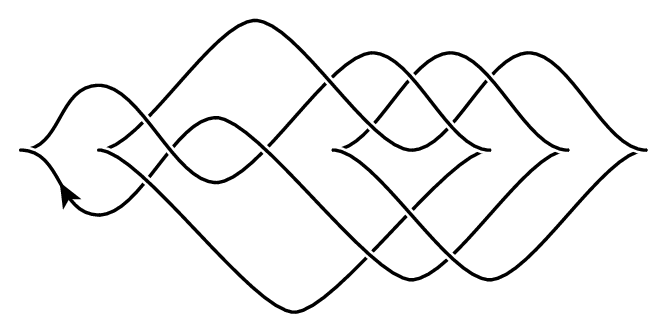}\put(-140,10){$10_{128}^{2+}$}
\caption{Knots in Proposition~\ref{10_128_prop}}\label{10_128-leg-fig}
\end{figure}
\begin{figure}[ht]
\includegraphics[scale=.18]{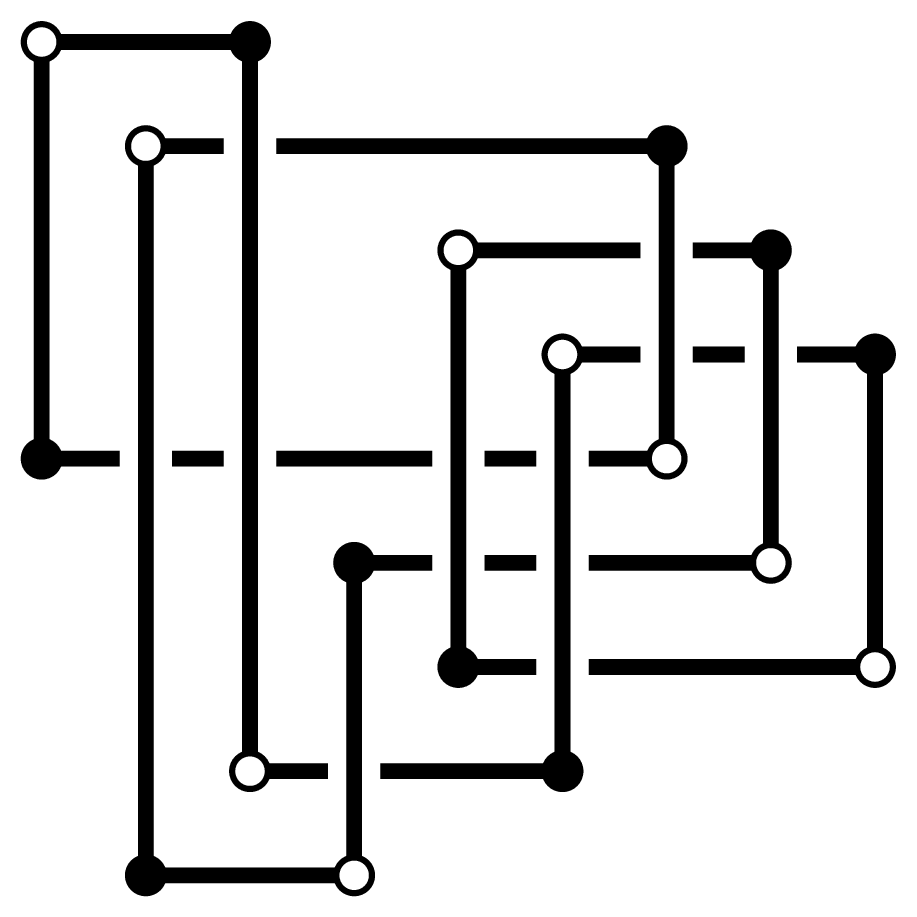}\put(-90,10){$10_{128}^{1\mathrm R}$}
\hskip1cm
\includegraphics[scale=.18]{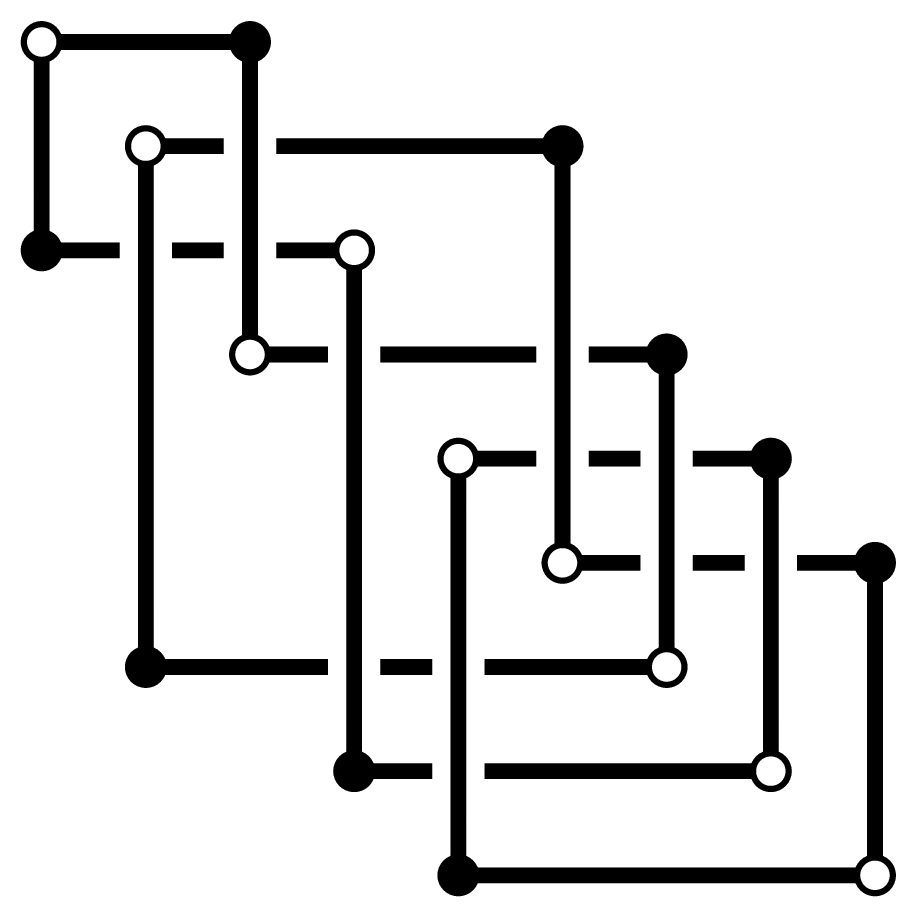}\put(-90,10){$10_{128}^{2\mathrm R}$}
\begin{center}
\begin{tikzpicture}[node distance = 2 cm,auto, ]
\node[] (1) {\footnotesize $10_{128}^{1\mathrm R}$} ;
\node[above = 0.15cm of 1] (1+) {\footnotesize $10_{128}^{1+}$} ;
\node[right = 1cm of 1] (dot1) {$\bullet$} ;
\node[below = 0.5cm of dot1] (dot11) {$\bullet$} ;
\node[below = 0.5cm of dot11] (dot12) {$\bullet$} ;
\node[below = 0.5cm of dot12] (dot13) {$\bullet$} ;
\node[right = 1cm of dot1] (2) {\footnotesize $10_{128}^{2\mathrm R}$} ;
\node[above = 0.15cm of 2] (2+) {\footnotesize $10_{128}^{2+}$} ;
\node[right = 1cm of 2] (dot2) {$\bullet$} ;
\node[right = 1cm of dot2] (-mu1) {\footnotesize $-\mu\bigl(10_{128}^{1\mathrm R}\bigr)$} ;
\node[above = 0.15cm of -mu1] (-mu1+) {\footnotesize $-\mu\bigl(10_{128}^{1+}\bigr)$} ;
\node[right = 1cm of -mu1] (dot3) {$\bullet$} ;
\node[below = 0.5cm of dot3] (dot31) {$\bullet$} ;
\node[below = 0.5cm of dot31] (dot32) {$\bullet$} ;
\node[below = 0.5cm of dot32] (dot33) {$\bullet$} ;
\node[right = 1cm of dot3] (-mu2) {\footnotesize $-\mu\bigl(10_{128}^{2\mathrm R}\bigr)$} ;
\node[above = 0.15cm of -mu2] (-mu2+) {\footnotesize $-\mu\bigl(10_{128}^{2+}\bigr)$} ;
\path[->,draw]
(1) edge node[midway,above = 1pt,fill=white,inner sep=0pt] {\footnotesize $\overrightarrow{\mathrm{II}}$} (dot1)
(2) edge node[midway,above = 1pt,fill=white,inner sep=0pt] {\footnotesize $\overrightarrow{\mathrm{II}}$} (dot1)
(2) edge node[midway,above = 1pt,fill=white,inner sep=0pt] {\footnotesize $\overleftarrow{\mathrm{II}}$} (dot2)
(-mu1) edge node[midway,above = 1pt,fill=white,inner sep=0pt] {\footnotesize $\overleftarrow{\mathrm{II}}$} (dot2)
(-mu1) edge node[midway,above = 1pt,fill=white,inner sep=0pt] {\footnotesize $\overrightarrow{\mathrm{II}}$} (dot3)
(-mu2) edge node[midway,above = 1pt,fill=white,inner sep=0pt] {\footnotesize $\overrightarrow{\mathrm{II}}$} (dot3)
(dot1) edge node[midway,right = 1pt,fill=white,inner sep=0pt] {\footnotesize $\overrightarrow{\mathrm{II}}$} (dot11)
(dot11) edge node[midway,right = 1pt,fill=white,inner sep=0pt] {\footnotesize $\overrightarrow{\mathrm{II}}$} (dot12)
(dot12) edge node[midway,right = 1pt,fill=white,inner sep=0pt] {\footnotesize $\overrightarrow{\mathrm{II}}$} (dot13)
(dot3) edge node[midway,right = 1pt,fill=white,inner sep=0pt] {\footnotesize $\overrightarrow{\mathrm{II}}$} (dot31)
(dot31) edge node[midway,right = 1pt,fill=white,inner sep=0pt] {\footnotesize $\overrightarrow{\mathrm{II}}$} (dot32)
(dot32) edge node[midway,right = 1pt,fill=white,inner sep=0pt] {\footnotesize $\overrightarrow{\mathrm{II}}$} (dot33);
\begin{pgfonlayer}{background}
  \node[fit=(1), hfit] {};
  \node[fit=(2), hfit] {};
  \node[fit=(-mu1), hfit] {};
  \node[fit=(-mu2), hfit] {};
\end{pgfonlayer}

\end{tikzpicture}
\end{center}
\caption{Proof of Proposition~\ref{10_128_prop}} \label{128-proof-fig}
\end{figure}
\end{proof}

\begin{prop}\label{10_160_prop}
For the Legendrian classes whose representatives are shown in Figure~\ref{10_160-leg-fig} the following holds\emph:
\begin{enumerate}
\item
the classes
$10_{160}^{1+}{}=-10_{160}^{1+}$, $\mu\bigl(10_{160}^{1+}\bigr)$, $10_{160}^{2+}$, $-10_{160}^{2+}$, $ \mu\bigl(10_{160}^{2+}\bigr)$,
and~$-\mu\bigl(10_{160}^{2+}\bigr)$ are pairwise distinct\emph;
\item
for any~$k\in\{1,2,3,4\}$ the classes~$S_-^k(10_{160}^{2+})$ and~$S_-^k(-10_{160}^{2+})$ are distinct.
\end{enumerate}
\end{prop}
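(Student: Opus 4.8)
The plan is to run the same machine as in the proofs of Propositions~\ref{9_42_prop}--\ref{10_128_prop}. First I would realize each $\xi_+$-Legendrian class in the statement as $\mathscr L_+$ of a concrete rectangular diagram of a knot: starting from the front projections in Figure~\ref{10_160-leg-fig}, I would convert them to rectangular diagrams, simplify by exchanges and destabilizations, and track the effect on the associated Legendrian class via Theorem~\ref{rect-desc-of-leg-theo}, obtaining exchange classes $10_{160}^{1\mathrm R}$, $10_{160}^{2\mathrm R}$ with $\mathscr L_+(10_{160}^{i\mathrm R})=10_{160}^{i+}$, together with possibly a few auxiliary exchange classes $10_{160}^{j\mathrm R}$ to serve as ``bridges'' (compare Figure~\ref{9_44-rect-fig}). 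By Proposition~\ref{table-knots-prop} the group $\mathrm{Sym^*}(10_{160})$ is trivial, so Theorem~\ref{main-theo} applies: two rectangular diagrams of $10_{160}$ carry the same pair $(\mathscr L_+,\mathscr L_-)$ if and only if they are exchange-equivalent. Combined with the two halves of Theorem~\ref{rect-desc-of-leg-theo} (type~I elementary moves preserve $\mathscr L_+$, type~II moves preserve $\mathscr L_-$, and an oriented type~$\overrightarrow{\mathrm{II}}$ stabilization realizes $S_-$ on $\mathscr L_+$), this reduces every assertion of the proposition to a finite question about exchange classes and the operations $-$, $\mu$, and the four oriented stabilization moves.

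For part~(i), the classical invariants already separate some of the six classes: $\mu$ reverses the rotation number, so whenever two of the six have different $r$ they are distinguished for free (in particular $10_{160}^{1+}$ and $\mu(10_{160}^{1+})$, as soon as $r(10_{160}^{1+})\ne0$). It then remains to handle those classes sharing the same $\tb$, $r$, and topological type --- in the worst case the whole $\mu$/reversal orbit of $10_{160}^{2+}$, which has $r=0$. For these I would first check by exhaustive search over exchange moves that the relevant exchange classes ($10_{160}^{2\mathrm R}$, $-10_{160}^{2\mathrm R}$, $\mu(10_{160}^{2\mathrm R})$, $-\mu(10_{160}^{2\mathrm R})$, and possibly $10_{160}^{1\mathrm R}$, $\mu(10_{160}^{1\mathrm R})$) are pairwise distinct. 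Since Theorem~\ref{main-theo} only forbids coincidence of the full pairs $(\mathscr L_+,\mathscr L_-)$, I would then build a web of type~II stabilizations through the bridge diagrams, exactly as in Figure~\ref{128-proof-fig}, to force the $\xi_-$-classes of these diagrams to agree up to $-$ and $\mu$ (concretely, to show $10_{160}^{2-}=-10_{160}^{2-}=\mu(10_{160}^{2-})$); because type~II moves preserve $\mathscr L_-$, this kills the $\mathscr L_-$ obstruction for every pair, and the pairwise exchange-inequivalence upgrades to pairwise inequivalence of the $\xi_+$-classes by the contrapositive of Theorem~\ref{main-theo}.

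For part~(ii), write $X=10_{160}^{2\mathrm R}$ and let $X_k$, $(-X)_k$ be obtained from $X$, $-X$ by $k$ successive stabilizations of oriented type~$\overrightarrow{\mathrm{II}}$, so that $\mathscr L_+(X_k)=S_-^k(10_{160}^{2+})$ and $\mathscr L_+((-X)_k)=S_-^k(-10_{160}^{2+})$ by Theorem~\ref{rect-desc-of-leg-theo}. For each $k\in\{1,2,3,4\}$ I would verify by exhaustive search that $X_k$ and $(-X)_k$ are not exchange-equivalent. As in part~(i), type~$\overrightarrow{\mathrm{II}}$ stabilizations preserve $\mathscr L_-$, so $\mathscr L_-(X_k)=10_{160}^{2-}$ and $\mathscr L_-((-X)_k)=-10_{160}^{2-}$; the equality $10_{160}^{2-}=-10_{160}^{2-}$ established in part~(i) --- obtained via a type~$\overleftarrow{\mathrm{II}}$ bridge (or a short chain through auxiliary diagrams), which is ``transverse'' to the $\overrightarrow{\mathrm{II}}$-tower and hence does not collapse the $k=1$ case --- removes the $\mathscr L_-$ obstruction, and the contrapositive of Theorem~\ref{main-theo} then gives $S_-^k(10_{160}^{2+})\ne S_-^k(-10_{160}^{2+})$ for $k\leqslant4$.

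The main obstacle is computational rather than conceptual: deciding exchange-equivalence of two rectangular diagrams is done by a brute-force enumeration of the exchange class, which is finite but can be large and is not optimized, so it is the true bottleneck --- this is exactly why part~(ii) is stated only for $k\leqslant4$, and why its extension to all $k$ (which amounts to distinguishing the corresponding transverse knots) lies outside the present method. A secondary, more delicate point is the choice of the auxiliary bridge diagrams $10_{160}^{j\mathrm R}$ and of the oriented types of the connecting stabilizations: the web must be arranged so that the desired $\mathscr L_-$-coincidences are forced while the $\mathscr L_+$-inequivalences we want are not accidentally produced; in practice this is found by a guided search, but it has to be exhibited explicitly in a figure analogous to Figures~\ref{128-proof-fig} and~\ref{44}.
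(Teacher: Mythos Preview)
Your overall strategy is exactly the paper's: realize the classes by rectangular diagrams, invoke Proposition~\ref{table-knots-prop} and Theorem~\ref{main-theo}, and build a web of stabilizations linking exchange classes so that shared $\mathscr L_-$ plus exchange-inequivalence forces distinct $\mathscr L_+$. The concrete execution you outline, however, does not go through.

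The gap is the equality $\mathscr L_-(10_{160}^{2\mathrm R})=-\mathscr L_-(10_{160}^{2\mathrm R})$ (your ``$10_{160}^{2-}=-10_{160}^{2-}$''), which you plan to establish by a type~II bridge modeled on Figure~\ref{128-proof-fig} and then use for both parts. This equality is in fact \emph{false}. In the paper's scheme (Figure~\ref{160-proof-fig}) one finds an auxiliary diagram $10_{160}^{3\mathrm R}$ reached from $10_{160}^{2\mathrm R}$ by \emph{type~I} moves, so $\mathscr L_+(10_{160}^{3\mathrm R})=\mathscr L_+(10_{160}^{2\mathrm R})=10_{160}^{2+}$; since $10_{160}^{2\mathrm R}$ and $10_{160}^{3\mathrm R}$ are distinct exchange classes, Theorem~\ref{main-theo} forces $\mathscr L_-(10_{160}^{2\mathrm R})\ne\mathscr L_-(10_{160}^{3\mathrm R})$. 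On the other hand $10_{160}^{3\mathrm R}$ is joined to $-10_{160}^{2\mathrm R}$ by type~II moves, so $\mathscr L_-(10_{160}^{3\mathrm R})=\mathscr L_-(-10_{160}^{2\mathrm R})$. Hence $\mathscr L_-(10_{160}^{2\mathrm R})\ne\mathscr L_-(-10_{160}^{2\mathrm R})$, and no type~II web can produce the coincidence you want. The $10_{128}$ model you follow happens to work only because there all four $\pm\mu$-images \emph{do} share one $\mathscr L_-$ class; for $10_{160}$ they split into two. (For the same reason your rotation-number shortcut for $10_{160}^{1+}$ versus $\mu(10_{160}^{1+})$ fails: the scheme shows $-\mu(10_{160}^{1+})=\mu(10_{160}^{1+})$, hence $r(10_{160}^{1+})=0$.)

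The fix is precisely what the paper does: use the type~I bridge $10_{160}^{2\mathrm R}\leftrightsquigarrow 10_{160}^{3\mathrm R}$ (and analogously $-\mu(10_{160}^{1\mathrm R})\leftrightsquigarrow\mu(10_{160}^{1\mathrm R})$) to move a representative of $10_{160}^{2+}$ into the $\mathscr L_-$-box containing $-10_{160}^{2\mathrm R}$. For part~(i) one then has two $\mathscr L_-$-boxes, $\{10_{160}^{1\mathrm R},10_{160}^{2\mathrm R},-\mu(10_{160}^{1\mathrm R}),-\mu(10_{160}^{2\mathrm R})\}$ and $\{10_{160}^{3\mathrm R},\mu(10_{160}^{1\mathrm R}),-10_{160}^{2\mathrm R}\}$, and exchange-inequivalence within each (together with the symmetry under $\mu$) yields all fifteen distinctions. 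For part~(ii) one stabilizes $10_{160}^{3\mathrm R}$ and $-10_{160}^{2\mathrm R}$ (not $10_{160}^{2\mathrm R}$ and $-10_{160}^{2\mathrm R}$) by $\overrightarrow{\mathrm{II}}$; these now live in the same $\mathscr L_-$-class, and exchange-inequivalence at each level $k\leqslant4$ gives $S_-^k(10_{160}^{2+})\ne S_-^k(-10_{160}^{2+})$.
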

\begin{proof}
The proof is presented in Figure~\ref{160-proof-fig}. (The~$\xi_-$-Legendrian class~$10_{160}^-$
can be guessed from the scheme. We don't provide a picture as this class is not involved in any
of our statements.)
\begin{figure}[ht!]
\includegraphics[scale=.5]{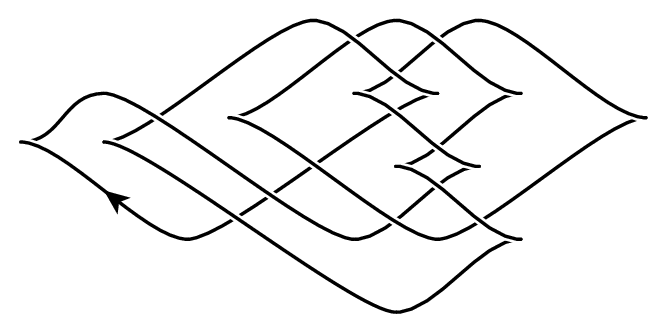}\put(-130,10){$10_{160}^{1+}$}
\hskip1cm
\includegraphics[scale=.5]{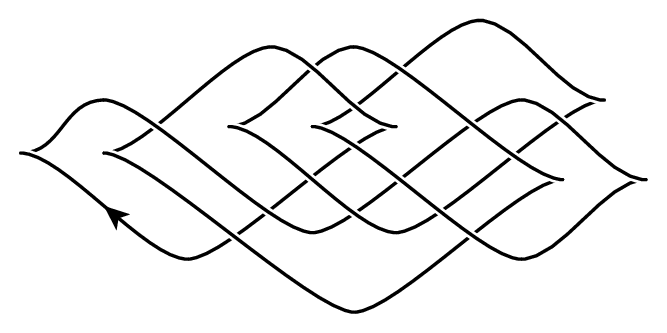}\put(-140,10){$10_{160}^{2+}$}
\caption{Knots in Proposition~\ref{10_160_prop}}\label{10_160-leg-fig}
\end{figure}
\begin{figure}[ht]
\includegraphics[scale=.18]{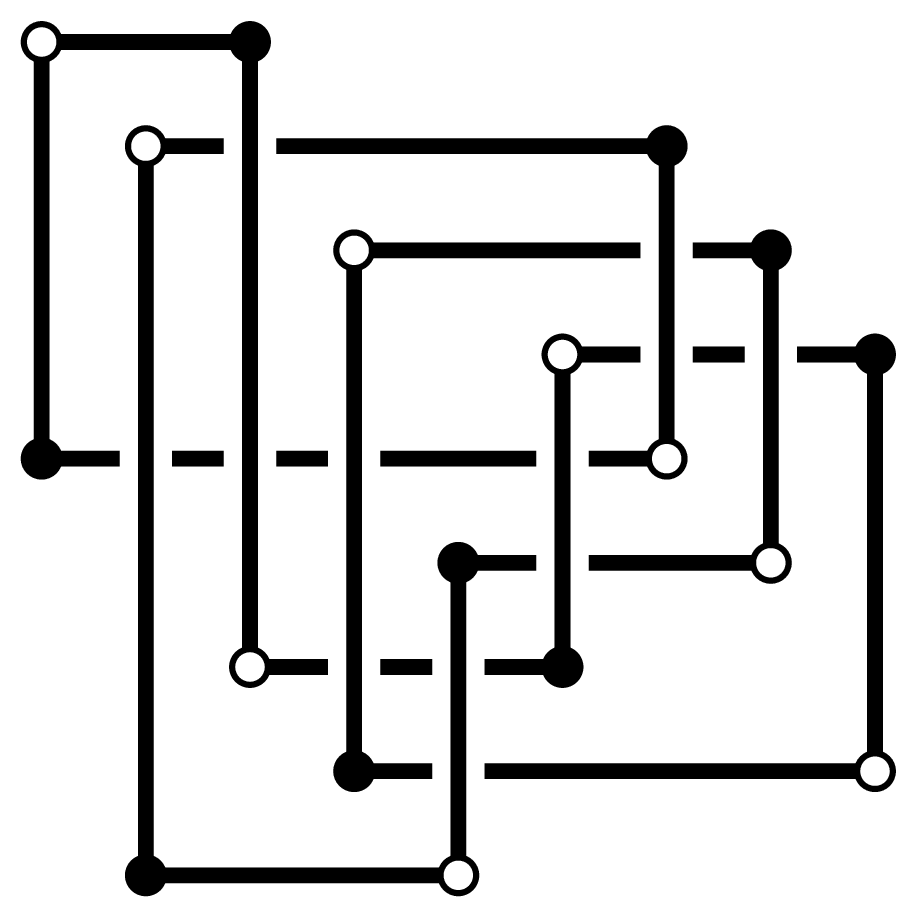}\put(-95,10){$10_{160}^{1\mathrm R}$}
\hskip1cm
\includegraphics[scale=.18]{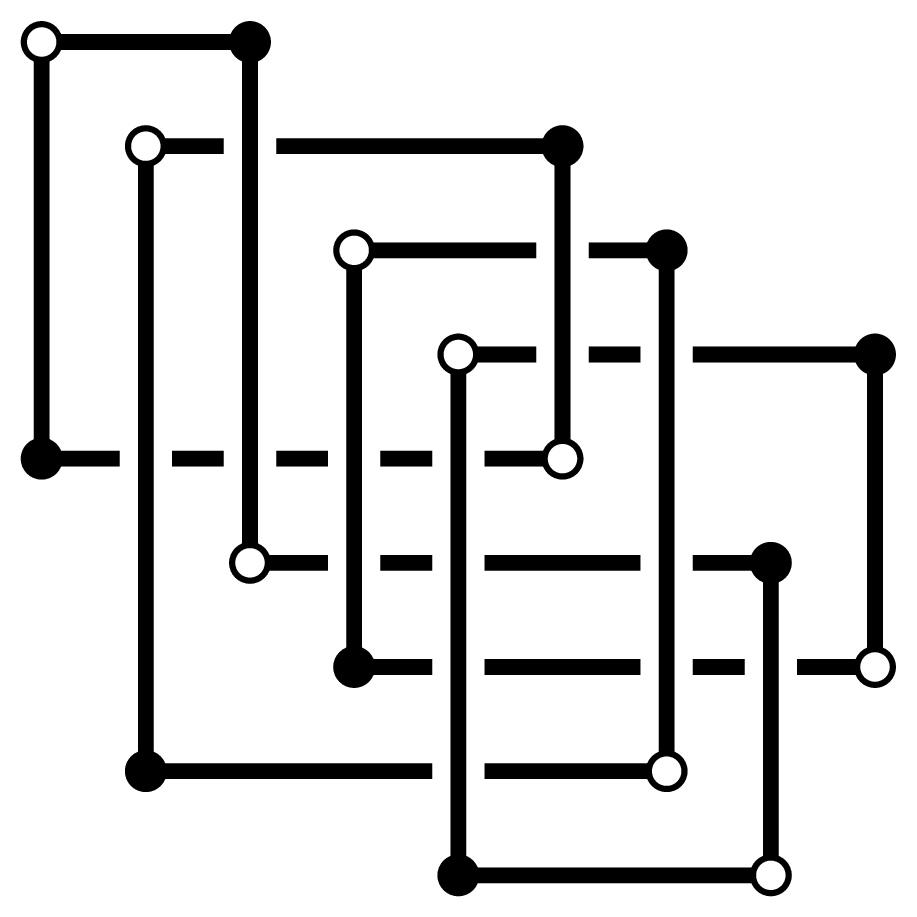}\put(-95,10){$10_{160}^{2\mathrm R}$}
\hskip1cm
\includegraphics[scale=.18]{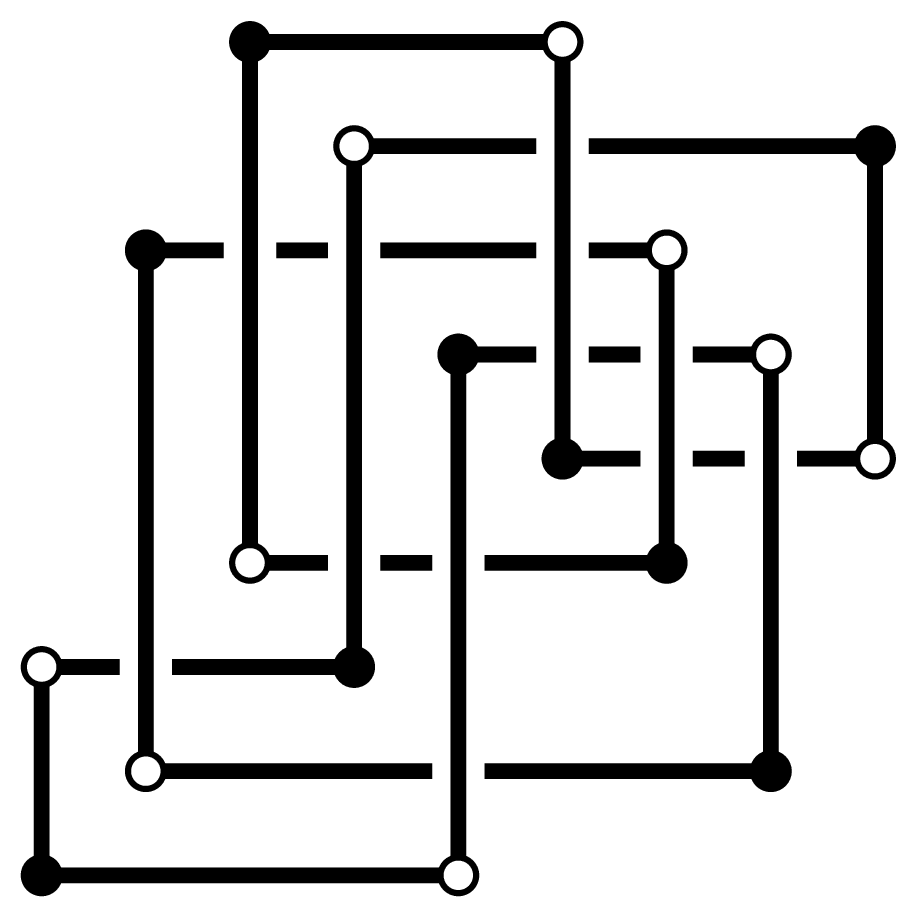}\put(-100,10){$10_{160}^{3\mathrm R}$}
\begin{center}
\begin{tikzpicture}[node distance = 2 cm,auto, ]
\node[] (1) {\footnotesize $10_{160}^{1\mathrm R}$} ;
\node[left = 0.3cm of 1] (1v) {} ;
\node[left = 0.1cm of 1v] (1+) {\footnotesize $10_{160}^{1+}$} ;
\node[right = 1cm of 1] (dot1) {$\bullet$} ;
\node[above = 0.9cm of dot1] (1-) {\footnotesize $10_{160}^{-}$} ;
\node[below = 1cm of 1] (dot2) {$\bullet$} ;
\node[below = 1cm of dot2] (-mu2) {\footnotesize $-\mu\bigl(10_{160}^{2\mathrm R}\bigr)$} ;
\node[left = 0.0cm of -mu2] (-mu2v) {} ;
\node[left = 0.1cm of -mu2v] (-mu2+) {\footnotesize $-\mu\bigl(10_{160}^{2+}\bigr)$} ;
\node[right = 0.9cm of dot1] (2) {\footnotesize $10_{160}^{2\mathrm R}$} ;
\node[below = 1cm of 2] (dot3) {$\bullet$} ;
\node[below = 1cm of dot3] (-mu1) {\footnotesize $-\mu\bigl(10_{160}^{1\mathrm R}\bigr)$} ;
\node[right = 0.7cm of -mu2] (dot4) {$\bullet$} ;
\node[right = 1.2cm of 2] (dot5) {$\bullet$} ;
\node[above = 0.2cm of dot5] (2+) {\footnotesize $10_{160}^{2+}$};
\node[right = 1cm of dot5] (3) {\footnotesize $10_{160}^{3\mathrm R}$} ;
\node[right = 1cm of 3] (dot10) {$\bullet$} ;
\node[right = 1cm of dot10] (mu2) {\footnotesize $\mu\bigl(10_{160}^{2\mathrm R}\bigr)$} ;
\node[right = 0.1cm of mu2] (mu2v) {} ;
\node[right = 0.1cm of mu2v] (mu2+) {\footnotesize $\mu\bigl(10_{160}^{2+}\bigr)$} ;
\node[below = 1cm of 3] (dot7) {$\bullet$} ;
\node[right = 0.5cm of dot7] (dot71) {$\bullet$} ;
\node[right = 0.5cm of dot71] (dot72) {$\bullet$} ;
\node[above = 0.4cm of dot72] (dot73) {$\bullet$} ;
\node[right = 0.9cm of -mu1] (dot6) {$\bullet$} ;
\node[below = 1cm of dot7] (mu1) {\footnotesize $\mu\bigl(10_{160}^{1\mathrm R}\bigr)$} ;
\node[below = 0.2cm of dot6] (mu1+) {\footnotesize $\mu\bigl(10_{160}^{1+}\bigr)$} ;
\node[below = -0.2cm of mu1+] (-mu1+) {\footnotesize $|\hspace{0.3mm}|$} ;
\node[below = 0.2cm of mu1+] (-mu1+) {\footnotesize $-\mu\bigl(10_{160}^{1+}\bigr)$} ;
\node[right = 1cm of mu1] (dot8) {$\bullet$} ;
\node[right = 1cm of dot8] (-2) {\footnotesize $-10_{160}^{2\mathrm R}$} ;
\node[above = 0.7cm of -2] (dot9) {$\bullet$} ;
\node[right = 0.1cm of -2] (-2v) {} ;
\node[right = 0.1cm of -2v] (-2+) {\footnotesize $-10_{160}^{2+}$} ;
\node[left = 0.5cm of dot9] (dot91) {$\bullet$} ;
\node[above = 0.5cm of dot91] (dot92) {$\bullet$} ;
\node[right = 0.5cm of dot92] (dot93) {$\bullet$} ;
\node[above = 0.5cm of dot1] (dot1v) {} ;
\node[below = 0.5cm of dot4] (dot4v) {} ;
\node[below = 0.5cm of dot8] (dot8v) {} ;
\node[above right = 0.3cm and 2cm of 3] (3v) {} ;
\node[above left = 0.15cm and 0.2cm of 3v] (mu1-) {\footnotesize $\mu\bigl(10_{160}^{-}\bigr)$} ;
\node[left = 0.0cm of dot2] (dot2v) {} ;
\path[->,draw]
(1) edge node[midway,above = 1pt,fill=white,inner sep=0pt] {\footnotesize $\overleftarrow{\mathrm{II}}$} (dot1)
(1) edge node[midway,left = 1pt,fill=white,inner sep=0pt] {\footnotesize $\overrightarrow{\mathrm{II}}$} (dot2)
(-mu2) edge node[midway,left = 1pt,fill=white,inner sep=0pt] {\footnotesize $\overrightarrow{\mathrm{II}}$} (dot2)
(2) edge node[midway,above = 1pt,fill=white,inner sep=0pt] {\footnotesize $\overleftarrow{\mathrm{II}}$} (dot1)
(2) edge node[midway,right = 1pt,fill=white,inner sep=0pt] {\footnotesize $\overrightarrow{\mathrm{II}}$} (dot3)
(-mu1) edge node[midway,right = 1pt,fill=white,inner sep=0pt] {\footnotesize $\overrightarrow{\mathrm{II}}$} (dot3)
(-mu2) edge node[midway,above = 1pt,fill=white,inner sep=0pt] {\footnotesize $\overleftarrow{\mathrm{II}}$} (dot4)
(-mu1) edge node[midway,above = 1pt,fill=white,inner sep=0pt] {\footnotesize $\overleftarrow{\mathrm{II}}$} (dot4)
(2) edge node[midway,above = 1pt,fill=white,inner sep=0pt] {\footnotesize $\overleftarrow{\mathrm{I}}$} (dot5)
(3) edge node[midway,above = 1pt,fill=white,inner sep=0pt] {\footnotesize $\overrightarrow{\mathrm{I}}$} (dot5)
(-mu1) edge node[midway,above = 1pt,fill=white,inner sep=0pt] {\footnotesize $\overleftarrow{\mathrm{I}}$} (dot6)
(mu1) edge node[midway,above = 1pt,fill=white,inner sep=0pt] {\footnotesize $\overrightarrow{\mathrm{I}}$} (dot6)
(mu1) edge node[midway,right= 1pt,fill=white,inner sep=0pt] {\footnotesize $\overrightarrow{\mathrm{II}}$} (dot7)
(3) edge node[midway,right= 1pt ,fill=white,inner sep=0pt] {\footnotesize $\overrightarrow{\mathrm{II}}$} (dot7)
(dot7) edge node[midway,above = 1pt,fill=white,inner sep=0pt] {\footnotesize $\overrightarrow{\mathrm{II}}$} (dot71)
(dot71) edge node[midway,above = 1pt,fill=white,inner sep=0pt] {\footnotesize $\overrightarrow{\mathrm{II}}$} (dot72)
(dot72) edge node[midway,right = 1pt,fill=white,inner sep=0pt] {\footnotesize $\overrightarrow{\mathrm{II}}$} (dot73)
(mu1) edge node[midway,above= 1pt,fill=white,inner sep=0pt] {\footnotesize $\overleftarrow{\mathrm{II}}$} (dot8)
(-2) edge node[midway,above= 1pt,fill=white,inner sep=0pt] {\footnotesize $\overleftarrow{\mathrm{II}}$} (dot8)
(-2) edge node[midway,right= 1pt,fill=white,inner sep=0pt] {\footnotesize $\overrightarrow{\mathrm{II}}$} (dot9)
(dot9) edge node[midway,below= 1pt,fill=white,inner sep=0pt] {\footnotesize $\overrightarrow{\mathrm{II}}$} (dot91)
(dot91) edge node[midway,right= 1pt,fill=white,inner sep=0pt] {\footnotesize $\overrightarrow{\mathrm{II}}$} (dot92)
(dot92) edge node[midway,above= 1pt,fill=white,inner sep=0pt] {\footnotesize $\overrightarrow{\mathrm{II}}$} (dot93)
(3) edge node[midway,above= 1pt,fill=white,inner sep=0pt] {\footnotesize $\overleftarrow{\mathrm{II}}$} (dot10)
(mu2) edge node[midway,above= 1pt,fill=white,inner sep=0pt] {\footnotesize $\overleftarrow{\mathrm{II}}$} (dot10);

\begin{pgfonlayer}{background}
  \node[fit=(3)(-2)(mu1)(dot72)(dot92)(3v)(dot8v), vfit] {};
  \node[fit=(1)(2)(-mu2)(-mu1)(dot1v)(dot4v), vfit] {};
  \node[fit=(-mu1)(mu1), hfit] {};
  \node[fit=(2)(3), hfit] {};
  \node[fit=(1)(1v), hfit] {};
  \node[fit=(-mu2)(-mu2v), hfit] {};
  \node[fit=(-2)(-2v), hfit] {};
  \node[fit=(mu2)(mu2v), hfit] {};
\end{pgfonlayer}
\end{tikzpicture}
\end{center}
\caption{Proof of Proposition~\ref{10_160_prop}} \label{160-proof-fig}
\end{figure}
\end{proof}

\begin{rema}
The fact that~$10_{160}^{1+}\notin\bigl\{10_{160}^{2+},-10_{160}^{2+},\mu(10_{160}^{2+}),-\mu(10_{160}^{2+})\bigr\}$
and~$10_{160}^{1+}=-10_{160}^{1+}$ is established already in~\cite{chong2013}.
\end{rema}

\def\theenumi{\arabic{enumi}}
\begin{proof}[Proof of Theorem~\ref{not-equiv-thm}]
The front projections of~$K_1$ and~$K_2$ shown in Figure~\ref{monster-knots-fig} are produced from
two rectangular diagrams~$R_1$ and~$R_2$, respectively, via the procedure described in Section~\ref{rd_of_knots-sec}
and illustrated in Figure~\ref{associated-legendrian-fig}. Thus, we have~$K_i\in\mathscr L_+(R_i)$, $i=1,2$.

Now we recall the origin of~$R_1$, $R_2$. Shown in Figure~35 of~\cite{representability} is
a rectangular diagram~$\Pi$ of a surface such that:
\begin{enumerate}
\item
the associated surface~$\widehat\Pi$ is an annulus;
\item
the relative Thurston--Bennequin numbers~$\tb(\widehat R_i;\widehat\Pi)$, $i=1,2$, vanish;
\item
$\widehat\Pi$ can be endowed with an orientation so that~$\partial\widehat\Pi=\widehat R_1\cup(-\widehat R_2)$;
\item
$\Pi$ has the form~$\{r_i\}_{i=1,2,\ldots,74}$, where, for each~$i=1,\ldots,74$ the intersection~$r_{i-1}\cap r_i$
is the bottom left vertex of~$r_i$ (we put~$r_0=r_{74}$).
\end{enumerate}

The last condition in this list means that there are~$\theta_0,\theta_1,\ldots,\theta_{74}=\theta_0\in\mathbb S^1$
and~$\varphi_0,\varphi_1,\ldots,\varphi_{74}=\varphi_0\in\mathbb S^1$ such
that~$r_i=[\theta_{i-1};\theta_i]\times[\varphi_{i-1};\varphi_i]$
and~$R_1\cup R_2=\{(\theta_{i-1},\varphi_i),(\theta_i,\varphi_{i-1})\}_{i=1,\ldots,74}$.
Moreover, the signs of the vertices~$(\theta_{i-1},\varphi_i)$ and $(\theta_i,\varphi_{i-1})$ in~$R_1\cup R_2$
are opposite.

We now show that a sequence of elementary moves including a type~II stabilization, exchange moves, and a type~II destabilization
transforms~$R_1\cup R_2$ to a rectangular diagram of a link in which the connected components become
combinatorially equivalent. To this end, pick an~$\varepsilon>0$ smaller than one half of
the length of any interval~$[\theta_i;\theta_j]$ and~$[\varphi_i;\varphi_j]$, $i\ne j$, and make the following
replacements in~$R_1\cup R_2$:
$$\begin{aligned}
(\theta_1,\varphi_0)&\rightsquigarrow
(\theta_0-\varepsilon,\varphi_0),(\theta_0-\varepsilon,\varphi_1-\varepsilon),(\theta_1,\varphi_1-\varepsilon)
&&\text{(type~II stabilization)},\\
(\theta_1,\varphi_1-\varepsilon),(\theta_1,\varphi_2)
&\rightsquigarrow
(\theta_2-\varepsilon,\varphi_1-\varepsilon),(\theta_2-\varepsilon,\varphi_2)
&&\text{(exchange)},\\
(\theta_2-\varepsilon,\varphi_2),(\theta_3,\varphi_2)
&\rightsquigarrow
(\theta_2-\varepsilon,\varphi_3-\varepsilon),(\theta_3,\varphi_3-\varepsilon)
&&\text{(exchange)},\\
(\theta_3,\varphi_3-\varepsilon),(\theta_3,\varphi_4)
&\rightsquigarrow
(\theta_4-\varepsilon,\varphi_3-\varepsilon),(\theta_4-\varepsilon,\varphi_4)
&&\text{(exchange)},\\
&\ldots\\
(\theta_{72}-\varepsilon,\varphi_{72}),(\theta_{73},\varphi_{72})
&\rightsquigarrow
(\theta_{72}-\varepsilon,\varphi_{73}-\varepsilon),(\theta_{73},\varphi_{73}-\varepsilon)
&&\text{(exchange)},\\
(\theta_{73},\varphi_{73}-\varepsilon),(\theta_{73},\varphi_0),(\theta_0+\varepsilon,\varphi_0)
&\rightsquigarrow
(\theta_0-\varepsilon,\varphi_{73}-\varepsilon)
&&\text{(type~II destabilization)}.
\end{aligned}$$
This sequence of moves is illustrated in Figure~\ref{r1->r2-fig}.
\begin{figure}[ht]
\includegraphics[scale=.35]{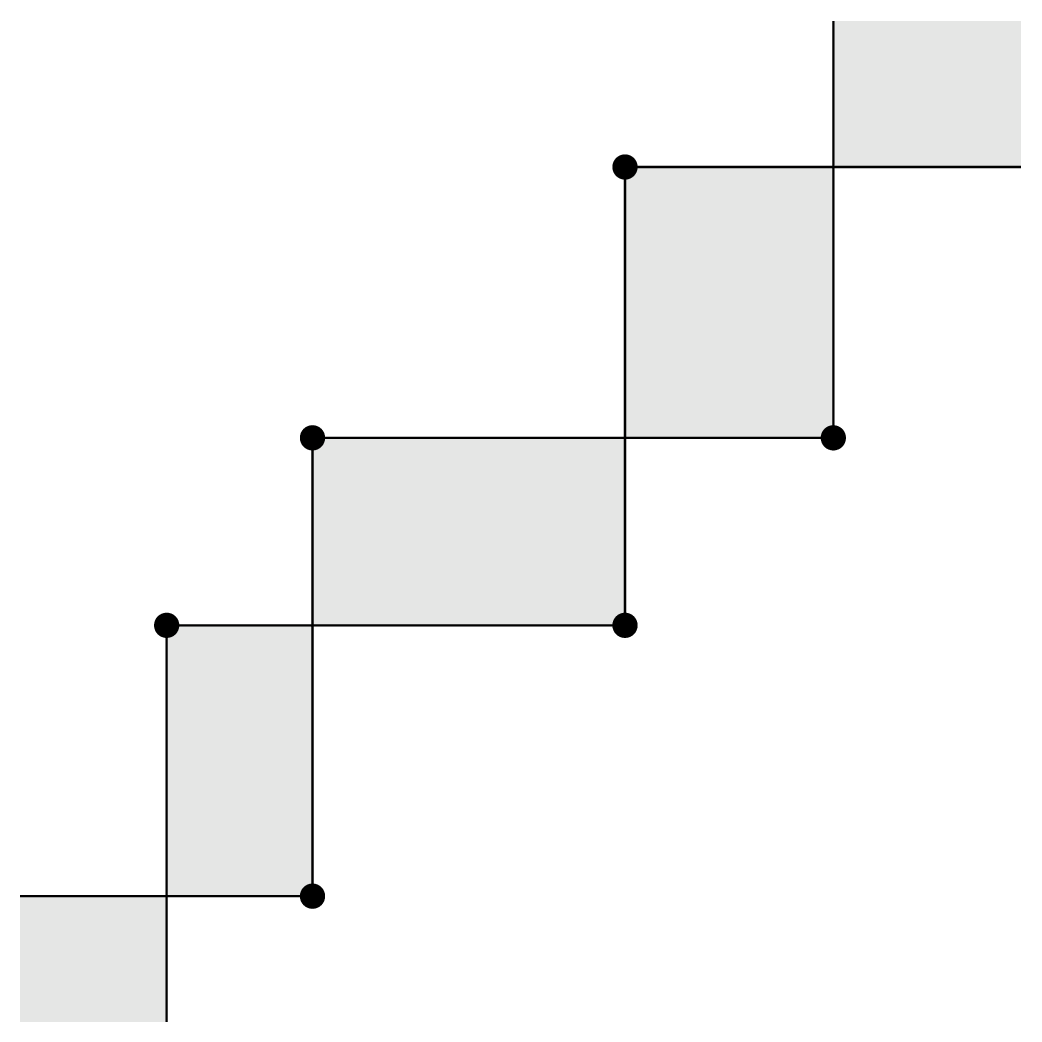}\put(-137,45){$r_1$}\put(-100,84){$r_2$}\put(-55,122){$r_3$}
\raisebox{80pt}{$\longrightarrow$}
\includegraphics[scale=.35]{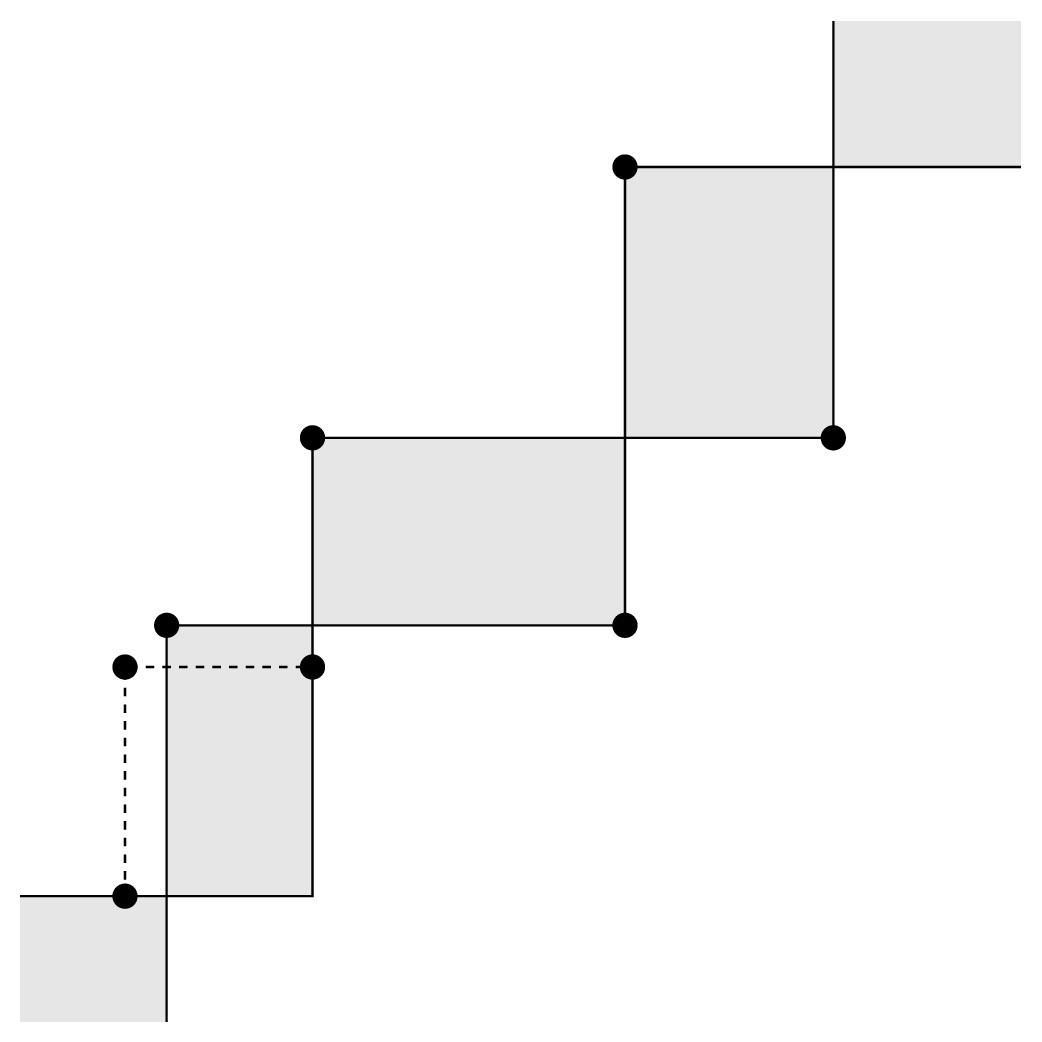}\put(-137,45){$r_1$}\put(-100,84){$r_2$}\put(-55,122){$r_3$}
\raisebox{80pt}{$\longrightarrow$}\\
\includegraphics[scale=.35]{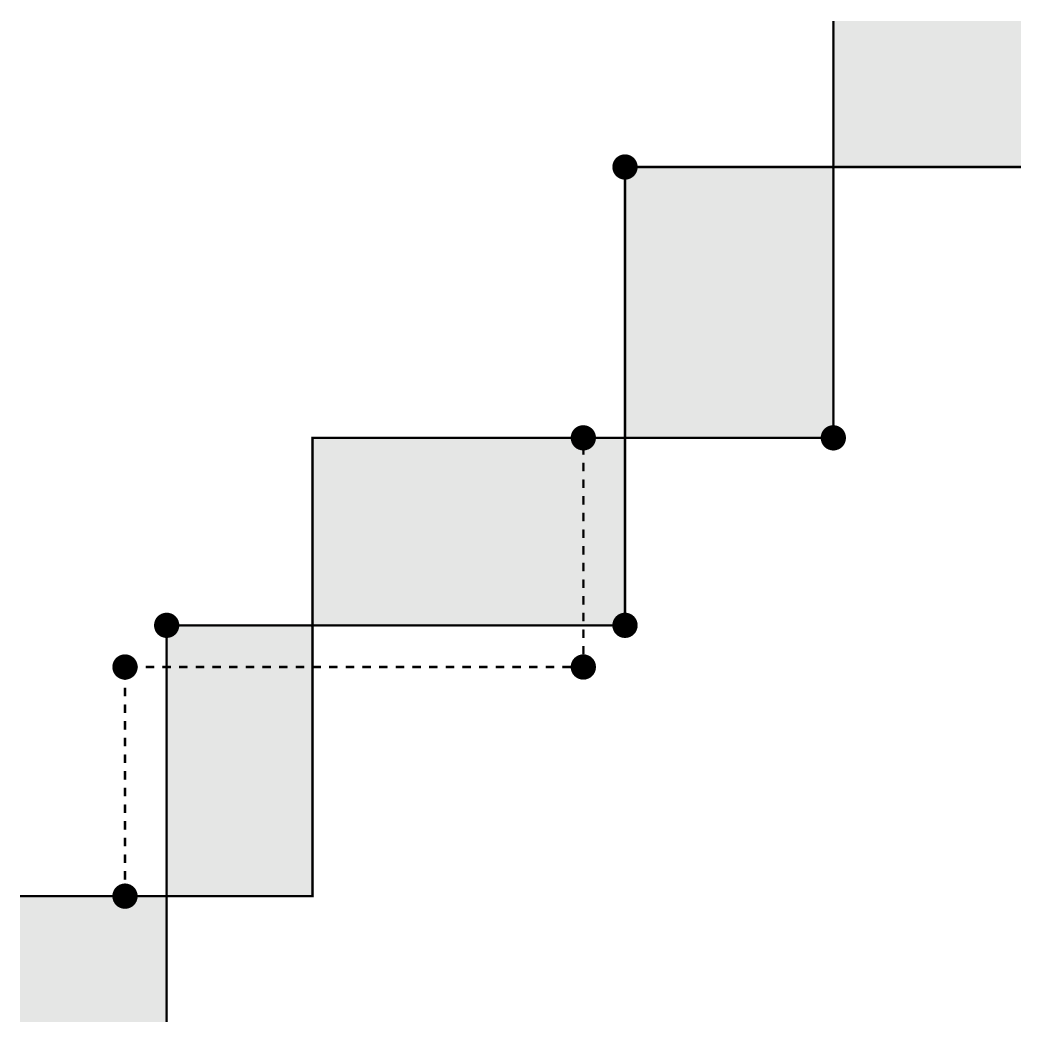}\put(-137,45){$r_1$}\put(-100,84){$r_2$}\put(-55,122){$r_3$}
\raisebox{80pt}{$\longrightarrow$}
\includegraphics[scale=.35]{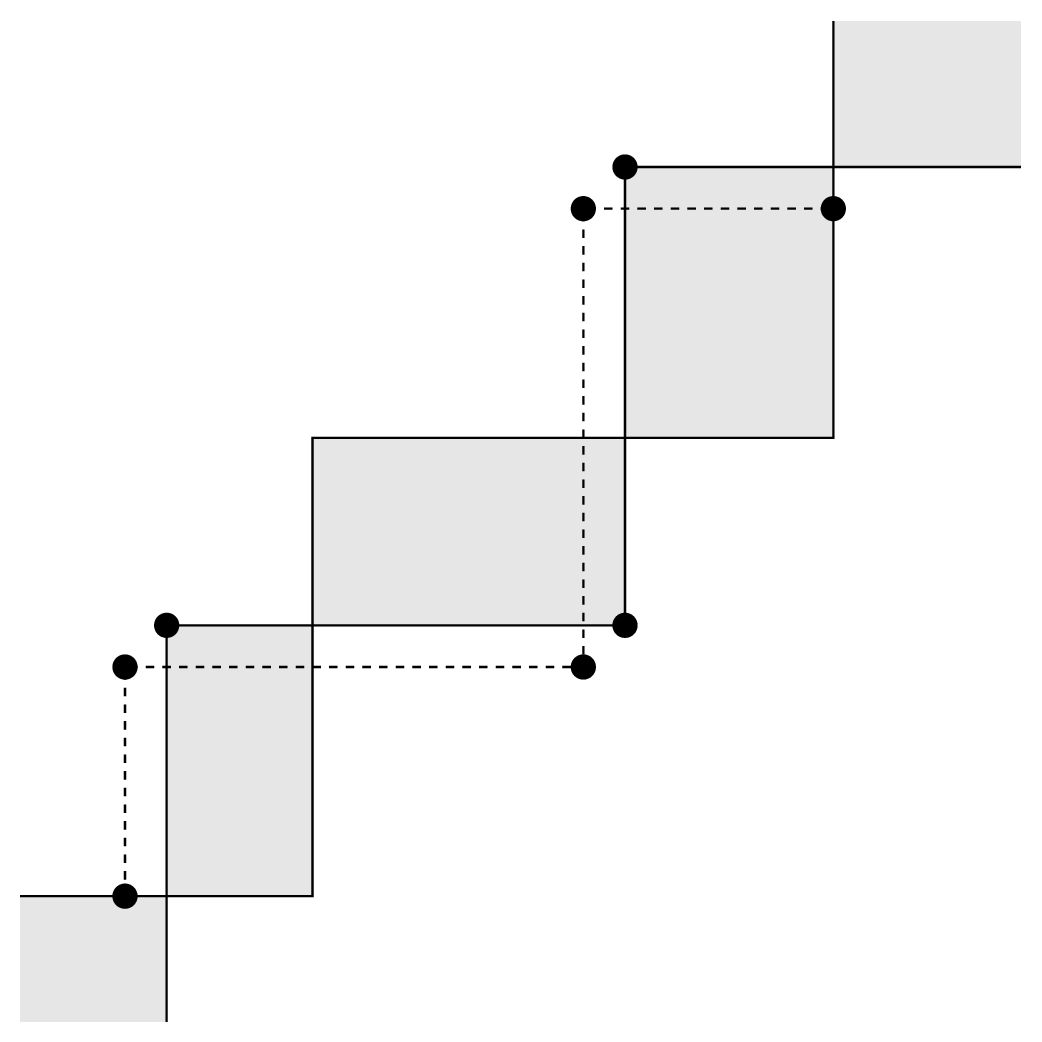}\put(-137,45){$r_1$}\put(-100,84){$r_2$}\put(-55,122){$r_3$}
\raisebox{80pt}{$\longrightarrow\ldots$}\\
\raisebox{65pt}{$\ldots\longrightarrow$}
\includegraphics[scale=.35]{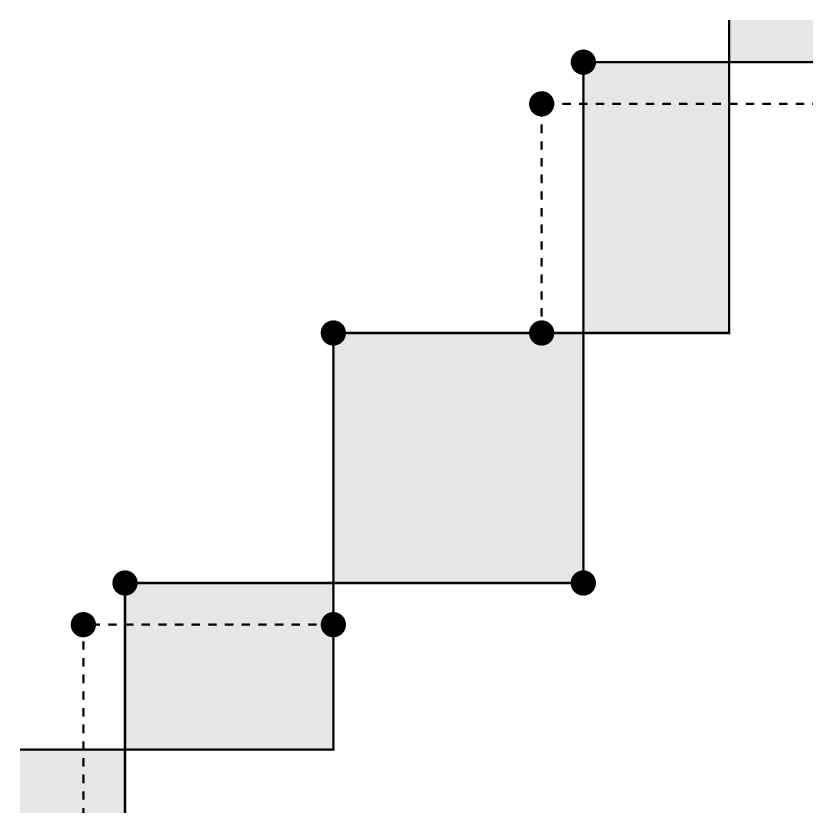}\put(-107,26){$r_{73}$}\put(-68,62){$r_{74}$}\put(-32,105){$r_1$}
\raisebox{65pt}{$\longrightarrow$}
\includegraphics[scale=.35]{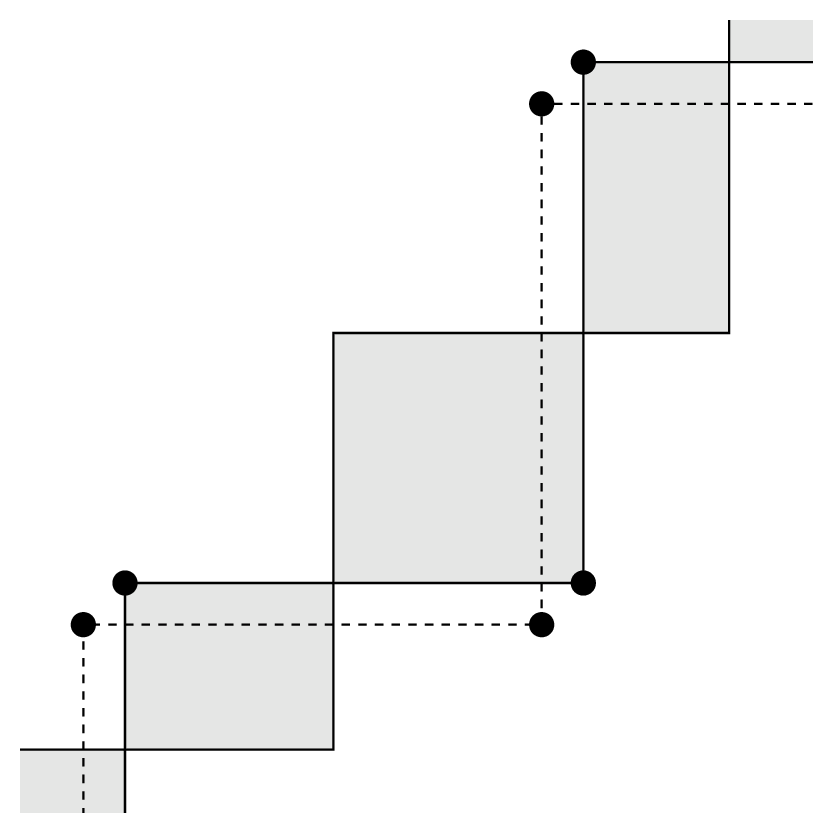}\put(-107,26){$r_{73}$}\put(-68,62){$r_{74}$}\put(-32,105){$r_1$}
\caption{Transforming one of~$R_1$ and~$R_2$ to the other by elementary moves}\label{r1->r2-fig}
\end{figure}

This proves that~$\mathscr L_-(R_1)=\mathscr L_-(R_2)$. The diagrams~$R_1$ and~$R_2$ are not combinatorially equivalent
and do not admit any non-trivial exchange move.
The knots represented by~$R_1$ and~$R_2$ have trivial orientation-preserving symmetry group by
Proposition~\ref{monster-knot-has-trivial-group-prop}.
Therefore, by Theorem~\ref{main-theo}, $\mathscr L_+(R_1)\ne\mathscr L_+(R_2)$.
\end{proof}

\section{Appendix: $K_1$ and~$K_2$ are not satellite knots}\label{append-sec}
Here we explain how to verify, with very little computations, that the complement of~$K_1$ (and $K_2$) contains no incompressible non-boundary-parallel torus.
To do so we use a method that can be viewed as a modification of Haken's method of normal surfaces, which allows one, in general, to find
all incompressible surfaces of minimal genus. Haken's algorithm in general
has very high computational complexity, which makes it infeasible to implement in most cases.
However, in certain cases including our particular one, a modified version of Haken's method can be efficiently used to search all incompressible
surfaces of \emph{non-negative Euler characteristic}.

First we describe the general idea for the reader well familiar with the difficulties
in using Haken's method in practice.
Haken's normal surfaces are encoded by certain \emph{normal coordinates} $x_1,\ldots,x_N$,
which take integer values. To determine a normal surface they must satisfy a bunch of conditions that are naturally
partitioned into the following three groups:
\begin{enumerate}
\item
\emph{non-negativity conditions}, which are the inequalities~$x_i\geqslant0$, $i=1,\ldots,N$;
\item
\emph{matching conditions}, which are linear equations with integer coefficients;
\item
\emph{compatibility conditions}, which are equations of the form~$x_ix_j=0$ for some set of pairs~$(i,j)$.
\end{enumerate}

The Euler characteristic of a normal surface~$F$ can be expressed as a linear combination
of the normal coordinates of~$F$ in numerous ways, and some of these expressions
have only non-positive coefficients.
If we are looking for normal surfaces of non-negative Euler
characteristic, for any such expression~$\sum_ia_ix_i$ with non-positive~$a_i$'s,
we may add the inequality~$\sum_ia_ix_i\geqslant0$ to the system.
Together with the non-negativity conditions this implies~$x_i=0$ whenever~$a_i<0$.
This reduces the number of variables in the system, and chances are that, after
the reduction, the space of solutions of the system of matching equations alone
has very small dimension.

Now we turn to our concrete case. The idea explained above will be realized
in quite different terms. The reduction of variables will occur in Lemma~\ref{maximal-rect-lem}.

The rectangular diagrams from which
the Legendrian knots~$K_1$ and~$K_2$ shown in Figure~\ref{monster-knots-fig}
are produced have~$37$ edges
of each direction. For this reason we rescale the coordinates~$\theta,\varphi$
on~$\mathbb T^2$ so that they take values in~$\mathbb R/(37\cdot\mathbb Z)$,
and the vertices of the diagrams will form a subset of~$\mathbb Z_{37}\times\mathbb Z_{37}$.

We will work with the knot~$K_1$. The corresponding rectangular diagram of a knot,
which we denote by~$R$, has the following
list of vertices:
\begin{multline*}
(0, 13), (0, 28), (1, 14), (1, 35), (2, 15), (2, 36), (3, 0), (3, 19), (4, 1), (4, 22), (5, 6), (5, 23), (6, 7), (6, 24),\\
 (7, 9), (7, 25), (8, 10), (8, 26), (9, 11), (9, 27), (10, 12), (10, 29), (11, 13), (11, 34), (12, 20), (12, 35),\\
 (13, 21),  (13, 36), (14, 8), (14, 22), (15, 9), (15, 31), (16, 10), (16, 32), (17, 11), (17, 33), (18, 18), (18, 34),\\
 (19, 4),(19, 19), (20, 5), (20, 20), (21, 6), (21, 21), (22, 7), (22, 23), (23, 8), (23, 30), (24, 12), (24, 31),\\
(25, 16), (25, 32), (26, 17), (26, 33), (27, 2), (27, 18), (28, 3), (28, 24), (29, 4), (29, 28), (30, 14), (30, 29),\\
(31, 15), (31, 30), (32, 0), (32, 16), (33, 1), (33, 17), (34, 2), (34, 25), (35, 3), (35, 26), (36, 5), (36, 27).
\end{multline*}

According to~\cite[Theorem~1]{representability} any incompressible torus in the complement
of~$\widehat R$ is isotopic to a surface of the form~$\widehat\Pi$, where~$\Pi$ is a rectangular
diagram of a surface. Let such a diagram~$\Pi$ be chosen so that the number of rectangles
in~$\Pi$ is as minimal as possible (which is equivalent to requesting that~$\widehat\Pi$
has minimal possible number of intersections with~$\mathbb S^1_{\tau=0}\cup\mathbb S^1_{\tau=1}$).
We fix it from now on.

With any rectangle~$r=[\theta';\theta'']\times[\varphi';\varphi'']$
with~$\{\theta',\theta'',\varphi',\varphi''\}\cap\mathbb Z_{37}=\varnothing$
we associate a \emph{type} which is a $4$-tuple~$(i,j,k,l)\in(\mathbb Z_{37})^4$
defined by the following conditions:
$$(i;i+1)\ni\theta',\quad(j;j+1)\ni\varphi',\quad
(k;k+1)\ni\theta'',\quad(l;l+1)\ni\varphi''.$$
Since~$\partial\widehat\Pi=\varnothing$ we have~$\{\theta',\theta'',\varphi',\varphi''\}\cap\mathbb Z_{37}=\varnothing$,
so every rectangle in~$\Pi$ has a type.

Recall from~\cite{distinguishing} that by \emph{an occupied level} of~$\Pi$ we mean
any meridian~$m_{\theta_0}=\{\theta_0\}\times\mathbb S^1$ or any longitude~$\ell_{\varphi_0}=\mathbb S^1\times\{\varphi_0\}$
that contains a vertex of some rectangle in~$\Pi$.

\begin{lemm}
There are no rectangles in~$\Pi$ of type~$(i,j,k,l)$ with~$i=k$ or~$j=l$.
\end{lemm}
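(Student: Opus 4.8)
The plan is to argue by contradiction, exploiting the minimality of $|\Pi|$. Suppose some rectangle $r=[\theta';\theta'']\times[\varphi';\varphi'']\in\Pi$ has type $(i,j,k,l)$ with $i=k$; the case $j=l$ is treated identically with the roles of $\theta$ and $\varphi$ interchanged (the vertices of $R$ also have integer $\varphi$-coordinate, so the strip $\mathbb S^1\times(j;j+1)$ is likewise disjoint from $R$), so I only discuss $i=k$. Then $\theta'$ and $\theta''$ both lie in the arc $(i;i+1)\subset\mathbb S^1$, i.e.\ $r$ is \emph{thin} in the $\theta$-direction: no integer meridian $\{m\}\times\mathbb S^1$, $m\in\mathbb Z_{37}$, separates its two vertical sides.

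The first observation is the one that makes such a rectangle removable. Since every vertex of $R$ has integer $\theta$-coordinate, the open strip $(i;i+1)\times\mathbb S^1$ is disjoint from $R$, so the region of $\mathbb S^3$ lying over it meets $\widehat R$ only near $\mathbb S^1_{\tau=0}$, away from the points $\theta',\theta''$ at which $\widehat r$ touches $\mathbb S^1_{\tau=1}$ (these are non-integer, hence not on $\widehat R$). Consequently $\widehat r$, together with a thin collar of it inside $\widehat\Pi$, lies in a part of $\mathbb S^3\setminus\widehat R$ in which $\widehat\Pi$ may be freely isotoped.

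The main step is to convert this into a genuine simplification of $\Pi$. First I would note that compatibility forces any two distinct rectangles of $\Pi$ to share at most one vertex, except for the degenerate configuration in which one of them uses the complementary ``long'' arc, which I would dispose of separately; hence, since $\partial\Pi=\varnothing$, each of the four vertices of $r$ is shared with exactly one further rectangle, attached diagonally, so that $r$ participates precisely in relations $r_{\mathrm{SW}}\udotdot r\udotdot r_{\mathrm{NE}}$ and $r_{\mathrm{NW}}\ddotdot r\ddotdot r_{\mathrm{SE}}$ with four distinct rectangles of $\Pi$. Because $\theta'$ and $\theta''$ lie in the same arc $(i;i+1)$, this local picture is exactly the input of a destabilization-type elementary move on rectangular diagrams of surfaces in the sense of \cite{representability} --- equivalently, of an explicit isotopy of $\widehat\Pi$ supported in the $\widehat R$-free region over $(i;i+1)\times\mathbb S^1$ that reduces $\#\bigl(\widehat\Pi\cap(\mathbb S^1_{\tau=0}\cup\mathbb S^1_{\tau=1})\bigr)$. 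Performing it produces a rectangular diagram of a surface $\Pi'$ with $\widehat{\Pi'}$ ambient isotopic to $\widehat\Pi$ in $\mathbb S^3\setminus\widehat R$ and $|\Pi'|<|\Pi|$; since $\widehat{\Pi'}$ is then again a diagram of the kind over which $\Pi$ was chosen minimal, this is a contradiction. (In the spirit of the Haken-type philosophy of this section, the same conclusion can presumably be reached by a counting argument: expressing $\chi(\widehat\Pi)$ through the rectangle data as a sum of non-positive terms in which a thin rectangle contributes a strict deficit, the equality $\chi(\widehat\Pi)=0$ forbids it.)

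I expect the hard part to be making the simplification step precise: pinning down exactly which move applies, verifying that its input is forced by $i=k$ together with the structure of a minimal closed diagram, keeping track of the (at most four) neighbouring rectangles, and excluding the long-arc degeneracy --- equivalently, showing cleanly that a thin rectangle is incompatible with $|\Pi|$ being minimal. The most economical route is probably the isotopy one: slide the sub-disc $\widehat r$ across within the region over $(i;i+1)\times\mathbb S^1$ until one of its intersection points with $\mathbb S^1_{\tau=1}$ disappears, and then re-encode the resulting torus via \cite[Theorem~1]{representability}, obtaining a diagram with strictly fewer rectangles than $\Pi$.
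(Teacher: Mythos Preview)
Your argument has a genuine gap right at the start. From $i=k$ you correctly deduce that $\theta',\theta''\in(i;i+1)$, but you then assert that $r$ is ``thin'', i.e.\ that $[\theta';\theta'']$ is the short sub-arc of $(i;i+1)$ and no integer meridian separates the vertical sides. This does not follow: the arc $[\theta';\theta'']$ is directed, so if $\theta''$ precedes $\theta'$ inside $(i;i+1)$ then $[\theta';\theta'']$ is the \emph{long} complementary arc containing all $37$ integer points. Your whole isotopy/destabilization scheme is premised on $\widehat r$ living over the $R$-free strip $(i;i+1)\times\mathbb S^1$, which is simply false in this long case. You do flag a ``long arc'' degeneracy, but only for the neighbouring rectangles, not for $r$ itself.

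The paper's proof is much shorter and disposes of both cases without any local analysis of neighbours or moves from \cite{representability}. It first establishes, by a one-line minimality argument, that \emph{every} rectangle of $\Pi$ satisfies $(\theta';\theta'')\cap\mathbb Z_{37}\ne\varnothing$ (and symmetrically for $\varphi$): if a rectangle had $(\theta';\theta'')$ free of integers and the strip $(\theta';\theta'')\times\mathbb S^1$ free of other vertices of $\Pi$, an isotopy would reduce $|\widehat\Pi\cap\mathbb S^1_{\tau=1}|$; the general statement follows. This already excludes the short-arc possibility. For the long-arc possibility, since $[\theta';\theta'']$ then meets every integer meridian and $R$ has a vertex on every integer longitude, the condition $r\cap R=\varnothing$ forces $(\varphi';\varphi'')\subset(j;j+1)$, i.e.\ the short case in the $\varphi$-direction, which is again excluded. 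So even where your approach is headed in the right direction (the short case), the paper reaches the conclusion with far less machinery, and it covers the case you omitted.
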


\begin{proof}
Let~$r=[\theta';\theta'']\times[\varphi';\varphi'']$ be a rectangle of~$\Pi$
such that the annulus~$(\theta';\theta'')\times\mathbb S^1$
contains no occupied level of~$\Pi$. Then the interval~$(\theta';\theta'')$
contains at least one point from~$\mathbb Z_{37}$ since otherwise
the number of intersections of~$\widehat\Pi$ with~$\mathbb S^1_{\tau=1}$
could be reduced by an isotopy.

This implies that for \emph{any} rectangle~$r=[\theta';\theta'']\times[\varphi';\varphi'']$
of~$\Pi$ the intersection~$(\theta';\theta'')\cap\mathbb Z_{37}$ is non-empty.
Indeed, if there is an occupied level of~$\Pi$ contained in~$(\theta';\theta'')\times\mathbb S^1$,
then there is a rectangle~$[\theta''';\theta'''']\times[\varphi''';\varphi'''']$ in~$\Pi$ with
$[\theta''';\theta'''']\subset(\theta';\theta'')$. By taking the narrowest such rectangle
we will have that~$(\theta''';\theta'''')\times\mathbb S^1$ contains
no occupied level of~$\Pi$, and hence $(\theta''';\theta'''')$ has a non-empty intersection with~$\mathbb Z_{37}$.
Similarly,~$(\varphi';\varphi'')\cap\mathbb Z_{37}\ne\varnothing$ for any rectangle of~$\Pi$.

Now let~$(i,j,k,l)$ be the type of some rectangle~$r=[\theta';\theta'']\times[\varphi';\varphi'']\in\Pi$.
The equality~$i=k$ would mean that~$(\theta';\theta'')\subset(i;i+1)$
or~$(\theta'';\theta')\subset(i;i+1)$. The former case is impossible as we
have just seen. In the latter case, we must have~$(\varphi';\varphi'')\subset(j;j+1)$
as otherwise~$r$ would contain a vertex of~$R$.
Therefore, this case also does not occur, and we have~$i\ne k$.

The inequality~$j\ne l$ is established similarly.
\end{proof}

The type~$(i,j,k,l)$ of a rectangle~$r$ is said to be \emph{admissible} if~$r\cap R=\varnothing$.
It is said to be \emph{maximal} if it is admissible, and none of the types~$(i-1,j,k,l)$, $(i,j-1,k,l)$,
$(i,j,k+1,l)$, and~$(i,j,k,l+1)$ is admissible.

\begin{lemm}\label{maximal-rect-lem}
The type of any rectangle in~$\Pi$ is maximal.
\end{lemm}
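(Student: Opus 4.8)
The plan is to derive a contradiction from the minimality of $\Pi$: I will show that if some rectangle $r\in\Pi$ had a non-maximal type, then $\widehat\Pi$ would be isotopic, in the complement of $\widehat R$, to a surface of the form $\widehat{\Pi'}$ for a rectangular diagram of a surface $\Pi'$ with strictly fewer rectangles — equivalently, with strictly fewer intersection points with $\mathbb S^1_{\tau=1}\cup\mathbb S^1_{\tau=0}$ — which contradicts the choice of $\Pi$.

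Suppose then that $r=[\theta';\theta'']\times[\varphi';\varphi'']\in\Pi$ has admissible type $(i,j,k,l)$ that is not maximal. There are four symmetric cases according to which enlargement is admissible; they are interchanged by the reflections $\theta\mapsto-\theta$, $\varphi\mapsto-\varphi$ of $\mathbb T^2$ and by the involution of $\mathbb S^3$ swapping $\mathbb S^1_{\tau=1}$ with $\mathbb S^1_{\tau=0}$, each of which is induced by a self-homeomorphism of $\mathbb S^3$, preserves the number of rectangles, and (applied to $R$ as well) sends incompressible non-boundary-parallel tori in $\mathbb S^3\setminus\widehat R$ to the same. So I may assume that the type $(i,j,k+1,l)$ is admissible. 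Unwinding the definitions, admissibility of $(i,j,k,l)$ means $R$ has no vertex $(a,b)$ with $a$ in the arc $[i+1;k]\subset\mathbb Z_{37}$ and $b$ in the arc $[j+1;l]\subset\mathbb Z_{37}$, and admissibility of $(i,j,k+1,l)$ adds that $R$ has no vertex on the meridian $\{k+1\}\times\mathbb S^1$ with $\varphi$-coordinate in $[j+1;l]$. Hence, for $\tilde\theta''\in(k+1;k+2)$ close enough to $k+1$, the rectangle $\tilde r=[\theta';\tilde\theta'']\times[\varphi';\varphi'']$ is disjoint from $R$ and the open slab $S=(\theta'';\tilde\theta'')\times(\varphi';\varphi'')$ contains no vertex of $R$.

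The idea is now to push the side $\{\theta''\}\times[\varphi';\varphi'']$ of $r$ across the meridian $\{k+1\}\times\mathbb S^1$ onto $\{\tilde\theta''\}$. Since $\partial\widehat\Pi=\varnothing$, every corner of $r$ is shared with exactly one other rectangle of $\Pi$, in one of the two diagonal configurations recorded by the relations $\udotdot$ and $\ddotdot$; in particular the only rectangles of $\Pi$ abutting that side of $r$ are the two meeting it at the top‑right and bottom‑right corners of $r$, both lying to the right of $r$ with their left side on $\{\theta''\}$. I would first use exchange‑type elementary moves on rectangular diagrams of surfaces — those preserving both the isotopy class of $\widehat\Pi$ rel $\widehat R$ and the number of rectangles, cf.\ \cite{representability,distinguishing} — to clear the slab $S$ of any rectangles of $\Pi$ meeting it (this is exactly where the preceding lemma and the absence of a vertex of $R$ on $\{k+1\}\times\mathbb S^1$ between rows $j+1$ and $l$ are invoked, to exclude a rectangle being trapped in $S$), and then slide the side of $r$, dragging along the rectangles that are forced to follow it up and down the meridian $\{\theta''\}$, across $\{k+1\}\times\mathbb S^1$. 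Because there is no vertex of $R$ in column $k+1$ in the relevant rows, this is realized by an isotopy of $\widehat\Pi$ in $\mathbb S^3\setminus\widehat R$; and, as I discuss below, after it the number of intersection points of $\widehat\Pi$ with $\mathbb S^1_{\tau=1}\cup\mathbb S^1_{\tau=0}$ has strictly decreased, producing the desired $\Pi'$ and the contradiction.

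The hard part, where essentially all the work lies, is making this ``push'' precise. One has to (i) verify that the rectangles of $\Pi$ meeting $S$ can indeed be removed from $S$ by exchange moves — here the preceding lemma and the nonexistence of a vertex of $R$ in column $k+1$ between rows $j+1$ and $l$ are what rule out the obstructing configurations; and (ii) go carefully through the case analysis of the tiles that the moving side of $r$ drags along the meridian $\{\theta''\}$, confirming both that the whole modification is an ambient isotopy of $\mathbb S^3$ fixing $\widehat R$ and that it genuinely lowers the intersection count with $\mathbb S^1_{\tau=1}\cup\mathbb S^1_{\tau=0}$ rather than merely relocating intersection points. Both points are routine given the elementary‑move calculus for rectangular diagrams of surfaces from \cite{representability,distinguishing}, but they are lengthy and are where all the care is needed.
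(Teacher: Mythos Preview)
Your approach has a genuine gap at precisely the step you flag as ``where all the care is needed'': there is no reason the push reduces the number of rectangles. Since $\partial\widehat\Pi=\varnothing$, the entire meridian $\{\theta''\}\times\mathbb S^1$ is covered by alternating left and right sides of rectangles of $\Pi$. Moving the right side of $r$ forces moving the left sides of its two neighbours; but their \emph{other} corners on $\{\theta''\}$ are shared with further rectangles, and this cascades all the way around the meridian. So you cannot move ``just'' the side of $r$: you must slide the whole occupied level $m_{\theta''}$. Among the rectangles with \emph{right} side on $m_{\theta''}$ there may well be one whose $\varphi$-range does contain a vertex of $R$ in column $k+1$, blocking the slide; the admissibility of $(i,j,k+1,l)$ says nothing about those other rectangles. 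And even when the slide is unobstructed, it merely moves the occupied level from the gap $(k;k+1)$ to the gap $(k+1;k+2)$ --- the intersection count with $\mathbb S^1_{\tau=1}$ is unchanged. No merge is produced, so no contradiction with minimality follows.

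The paper's proof is entirely different and does not invoke minimality of $\Pi$ here. It uses two special features of this situation: the diagram $R$ is \emph{rigid} (any two neighbouring edges interlock, so $R$ admits no nontrivial exchange move), and $\widehat\Pi$ is a torus. Rigidity forces every occupied level to carry at least four rectangles; since every tile is a $4$-gon and $\chi(\widehat\Pi)=0$, this pins the valence at each vertex of the tiling to exactly four. That exact count, read back through rigidity, says the four rectangle sides at an occupied level are separated precisely by the four relevant vertices of $R$, which is exactly maximality of the types. None of this is available to a generic minimality argument, and in particular your sketch never uses either rigidity of $R$ or the torus hypothesis.
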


\begin{proof}
Here we will use the fact that the diagram~$R$ is \emph{rigid}, which means that it admits no non-trivial exchange move.
In other words, for any two neighboring edges~$\{(i,j_1),(i,j_2)\}$, $\{(i+1,j_3),(i+1,j_4)\}$ or~$\{(j_1,i),(j_2,i)\}$, $\{(j_3,i+1),(j_4,i+1)\}$
of~$R$, exactly one of~$j_3,j_4$ lies in~$(j_1;j_2)$, and the other lies in~$(j_2;j_1)$.

Let~$\{(i,j_1),(i,j_2)\}$, $\{(i+1,j_3),(i+1,j_4)\}$ be two neighboring vertical edges of~$R$, and let~$m_{\theta_0}$ with $\theta_0\in(i;i+1)$
be an occupied level of~$\Pi$. Since the surface~$\widehat\Pi$ is closed the whole meridian~$m_{\theta_0}$
is covered by the vertical sides of rectangles in~$\Pi$. Therefore, there are rectangles~$r_1,r_2,\ldots,r_{2p}\in\Pi$
of the form
$$r_{2k-1}=[\theta_{2k-1};\theta_0]\times[\varphi_{2k-1};\varphi_{2k}],\quad
r_{2k}=[\theta_0;\theta_{2k}]\times[\varphi_{2k};\varphi_{2k+1}],\quad k=1,\ldots,p,$$
where~$\varphi_{2p+1}=\varphi_1$.

We claim that each interval~$[\varphi_k;\varphi_{k+1}]$, $k=1,\ldots,2p$,
contains at most one of~$j_1,j_2,j_3,j_4$. Indeed, let~$k$ be odd.
Then~$r_k$ has the form~$[\theta_k;\theta_0]\times[\varphi_k;\varphi_{k+1}]$.
Since it is disjoint from~$R\supset\{(i,j_1),(i,j_2)\}$ we must have either~$[\varphi_k;\varphi_{k+1}]\subset(j_1;j_2)$
or~$[\varphi_k;\varphi_{k+1}]\subset(j_2;j_1)$. Due to rigidity of~$R$ each of the intervals~$(j_1;j_2)$
and~$(j_2;j_1)$ contains exactly one of~$j_3,j_4$, hence the claim.
In the case when~$k$ is even the proof is similar with the roles of~$\{j_1,j_2\}$ and~$\{j_3,j_4\}$ exchanged.

Thus, $p$ is at least~$2$. We now claim that~$p$ is exactly~$2$. Indeed, the number of tiles of~$\widehat\Pi$
attached to the vertex corresponding to~$m_{\theta_0}$ is equal to~$2p$, and we
have just seen that~$2p\geqslant4$. The same applies similarly to any other vertex of the tiling.
Since every tile is a $4$-gon and the surface~$\widehat\Pi$ is a torus,
every vertex of the tiling must be adjacent to \emph{exactly} four tiles.

The equality~$p=2$ implies that every interval~$(\varphi_k;\varphi_{k+1})$, $k=1,2,3,4$,
contains exactly one of~$j_1,j_2,j_3,j_4$, which means that
the rectangles
$$[\theta_1;\theta_0+1]\times[\varphi_1;\varphi_2],\
[\theta_0-1;\theta_2]\times[\varphi_2;\varphi_3],\ 
[\theta_3;\theta_0+1]\times[\varphi_3;\varphi_4],\
[\theta_0-1;\theta_4]\times[\varphi_4;\varphi_1]$$
are not of an admissible type.
In other words, whenever~$\Pi$ contains a rectangle
of type~$(i,j,k,l)$ (respectively, of type~$(k,l,i,j)$),
the type~$(i-1,j,k,l)$ (respectively, $(k,l,i+1,j)$)
is not admissible. Since~$i\in\mathbb Z_{37}$ was chosen
arbitrarily, we can put it another way:
whenever~$\Pi$ contains a rectangle of type~$(i,j,k,l)$,
the types~$(i-1,j,k,l)$ and~$(i,j,k+1,l)$ are not admissible.

Similar reasoning applied to a horizontal occupied
level~$\ell_{\varphi_0}$ of~$\Pi$ instead of~$m_{\theta_0}$
shows that whenever~$\Pi$ contains a rectangle of type~$(i,j,k,l)$
the types~$(i,j-1,k,l)$ and~$(i,j,k,l+1)$ are not admissible.
Therefore, every rectangle in~$\Pi$ is of a maximal type.
\end{proof}

A simple exhaustive search shows that there are exactly 623 maximal types
of rectangles for~$R$. For every maximal type~$(i,j,k,l)$ we denote by~$x_{i,j,k,l}$
the number of rectangles of type~$(i,j,k,l)$ in~$\Pi$. From the fact that
every vertex of a rectangle in~$\Pi$ is shared by exactly two rectangles,
which are disjoint otherwise, we get the following \emph{matching conditions}:
\begin{equation}\label{matching-eq}
\sum_{k,l\in\mathbb Z_{37}}x_{i,j,k,l}=
\sum_{k,l\in\mathbb Z_{37}}x_{k,l,i,j},\quad (i,j)\in(\mathbb Z_{37})^2,
\end{equation}
where we put~$x_{i,j,k,l}=0$ unless~$(i,j,k,l)$ is a maximal type.
For a complete list of maximal types and matching conditions the reader is referred to~\cite{anc}.

It is now a direct check that the system~\eqref{matching-eq} is of rank~$621$, and thus has
two-dimensional solution space. Is is another direct check that only one
solution in this space, up to positive scale, satisfies the non-negativity conditions~$x_{i,j,k,l}\geqslant0$.
Therefore, there exists at most one isotopy class of incompressible tori
in the complement of~$K_1$, which implies that every incompressible
torus is boundary-parallel.

\end{document}